\documentclass[a4paper,11pt]{article}
\usepackage{pdfsync}

\usepackage[plainpages=false]{hyperref}
\usepackage{amsfonts,amsmath,amssymb,amsthm,mathtools}
\usepackage{latexsym,lscape,rawfonts,mathrsfs,slashed}
\usepackage{enumitem}
\usepackage{scalerel}[2014/03/10]
\usepackage[usestackEOL]{stackengine}
\usepackage{CJKutf8}

\usepackage[all]{xy}
\usepackage{makeidx}
\usepackage{graphicx,psfrag}
\usepackage{pstool}
\usepackage{tikz-cd}

\usepackage{array,tabularx}

\usepackage{setspace}

\usepackage[titletoc,title]{appendix}
\usepackage{txfonts}

\newcommand{\R}{\ensuremath{\mathbb{R}}}

\def\XX{\mathcal{X}}
\def\RR{\mathcal{R}}
\def\MM{\mathcal{M}}
\def\LL{\mathcal{L}}
\def\BB{\mathcal{B}}
\def\Tr{\mathrm{Tr}}
\def\KK{\mathcal{K}}

\def\rE{\mathbf{r}_E}

\def\r{\textbf{r}}
\def\d{\mathrm{d}}

\newcommand{\ba}{\begin{align*}}
	\newcommand{\ea}{\end{align*}}

\newcommand{\na}{\nabla}
\newcommand{\la}{\langle}
\newcommand{\ra}{\rangle}

\newcommand{\lc}{\left(}
\newcommand{\rc}{\right)}

\newcommand{\ep}{\epsilon}

\def\Th{\Theta}

\def\cc{\mathfrak{c}}
\def\scal{\mathrm{R}}

\def\vol{\mathrm{Vol}}

\def\HHH{\mathscr{H}}

\def\CCC{\mathscr{C}}
\def\CC{\mathcal{C}}

\def\NNN{\mathscr{N}}

\def\WW{\mathcal{W}}

\def\Div{\mathrm{div}}

\def\VV{\mathcal{V}}
\def\III{\mathbb{I}}
\def\isd{\mathrm{ISD}}

\newcommand{\Ric}{\ensuremath{\mathrm{Ric}}}

\renewcommand{\t}{\mathfrak{t}}

\def\rBC{\textbf{r}_{B,\sigma}}

\def\rA{\textbf{r}_A}
\def\MS{\mathcal{S}}

\makeatletter
\def\ExtendSymbol#1#2#3#4#5{\ext@arrow 0099{\arrowfill@#1#2#3}{#4}{#5}}

\makeatother

\makeatletter
\def\ExtendSymbol#1#2#3#4#5{\ext@arrow 0099{\arrowfill@#1#2#3}{#4}{#5}}

\makeatother

\DeclarePairedDelimiter\abs{\lvert}{\rvert}%
\makeatletter
\let\oldabs\abs
\def\abs{\@ifstar{\oldabs}{\oldabs*}}
\makeatother

\def\aint{\,\ThisStyle{\ensurestackMath{%
			\stackinset{c}{.2\LMpt}{c}{.5\LMpt}{\SavedStyle-}{\SavedStyle\phantom{\int}}}%
		\setbox0=\hbox{$\SavedStyle\int\,$}\kern-\wd0}\int}

\numberwithin{equation}{section}

\newtheorem{thm}{Theorem}[section]
\newtheorem{cor}[thm]{Corollary}
\newtheorem{prop}[thm]{Proposition}
\newtheorem{lem}[thm]{Lemma}

\newtheorem{defn}[thm]{Definition}

\newtheorem{exmp}[thm]{Example}

\newtheorem{claim}[thm]{Claim}

\title{Strong uniqueness and rectifiability of generalized cylindrical singularities in Ricci flow}
\author{Hanbing Fang \quad and \quad Yu Li} 
\date{\today}

\begin{document}
\maketitle

\begin{abstract}
In this paper, we extend the results of \cite{fang2025strong, fang2025singular} to generalized cylinders. More precisely, we establish a Lojasiewicz inequality for the pointed \(\WW\)-entropy in Ricci flow under the assumption that the geometry near the base point is close to a generalized cylinder \(\mathbb{R}^k \times N^{n-k}\), where \(N\) is an Einstein manifold with obstruction of order three satisfying a suitable spectral condition. As an application, we prove the strong uniqueness of generalized cylindrical tangent flows. Furthermore, we show that the subset $\MS^k_{\mathrm{qc}}(N)\subset \mathcal{S}^k$, consisting of points at which some tangent flow is given by \(\mathbb{R}^k \times N^{n-k}\) or its quotient, is horizontally parabolic \(k\)-rectifiable.
\end{abstract}

	\tableofcontents
    
\section{Introduction}

The analysis of singularities is one of the central themes in Ricci flow. 
Given a Ricci flow developing a finite-time singularity, one studies the local geometry near the singular point by taking parabolic blow-ups. Any such blow-up limit is called a tangent flow. In general, the tangent flow may depend on the choice of rescaling sequence, and proving its uniqueness is a subtle problem.

In this paper, we study the strong uniqueness problem for tangent flows modeled on generalized cylinders
\[
    \mathbb R^k \times N^{n-k},
\]
where \(N^{n-k}\) is a closed Einstein manifold satisfying
\[
    \operatorname{Ric}(g_N)=\frac12 g_N.
\]
This generalizes the cylindrical case considered in \cite{fang2025strong}, where the Einstein factor is the round sphere \(S^{n-k}\). The passage from the round sphere to a general Einstein factor introduces new difficulties. In particular, the kernel of the stability operator is no longer generated only by the quadratic Hermite modes on the Euclidean factor; it also contains infinitesimal solitonic deformations coming from \(N\). Thus the variational analysis of Perelman's \(\WW\)-functional must take into account the deformation theory of the Einstein factor.

More precisely, we consider the generalized weighted cylinder
\[
    \left(\CC_N^k\right)_{-1}:=(\bar M,\bar g,\bar f)
    =
    \left(
        \mathbb R^k \times N^{n-k},
        g_E+g_N,
        \frac{|\vec x|^2}{4}+\frac{n-k}{2}+\Theta_{N,n-k}
    \right),
\]
where \(\Theta_{N,n-k}\) is chosen so that the weighted volume is normalized (see Example \ref{exa:cylinder}). We assume that \(N\) has an obstruction of order \(3\) (see Definition \ref{def:obs}) and satisfies the spectral condition
\[
    -\frac l2 \notin \operatorname{spec}\left(\LL_N|_{\mathrm{TT}}\right),
    \qquad \forall l\in \mathbb N^+,
\]
where $\operatorname{spec}\left(\LL_N|_{\mathrm{TT}}\right)$ denotes the spectrum of $\LL_N$ restricted to the traceless-transverse directions. These assumptions are precisely those used in the rigidity theory for generalized cylindrical Ricci shrinkers developed in \cite{li2023rigidity}. They allow one to control the non-integrable directions coming from the Einstein factor and lead to a weaker, but still effective, Lojasiewicz exponent. Examples of such Einstein manifolds \(N\) include the standard \(\mathbb{CP}^k\) equipped with the Fubini--Study metric (see \cite{kroncke2016rigidity} and \cite{li2023rigidityCPn}), $S^2 \times S^2$ equipped with the standard product metric (see \cite{sunzhu25}), and many compact symmetric spaces (see \cite{caohe15}).

Let $\mathcal{C}_N^k = (\bar M, (\bar g(t))_{t<0}, (\bar f(t))_{t<0})$ denote the corresponding Ricci flow with potential function such that $t=0$ is the singular time. Then $\bar{\mathcal{C}}_N^k$ is defined as the completion of $(\bar M, (\bar g(t))_{t \in (-\infty, 0)})$ with respect to the spacetime distance $d_{\mathcal{C}}^*$ (see \cite[Section 9]{fang2025RFlimit}). The base point $p^*$ is taken to be the limit of $(\bar p, t)$ as $t \nearrow 0$, where $\bar p$ is a minimum point of $\bar f$.

Our first main result is a strong uniqueness theorem for generalized cylindrical tangent flows. Let
\[
   \XX=\{M^n,(g(t))_{t\in[-T,0)}\}
\]
be a closed Ricci flow with entropy bounded below by $-Y$, where \(t=0\) is the first singular time. Let \((Z,d_Z,\mathfrak t)\) be the \(d^*\)-completion of \(X_{[-0.98T,0)}\) (see \cite[Section 9]{fang2025RFlimit}), and let \(z\in Z_0\) be a singular point such that one tangent flow at \(z\) is given by \(\bar{\mathcal{C}}_N^k\). Let \(f_z\) be the potential function based at \(z\), and define the modified Ricci flow
\begin{align} \label{equ:MRFintroI}
       g^z(s)=e^s\varphi_{-e^{-s}}^*g(-e^{-s}),
    \qquad
    f^z(s)=\varphi_{-e^{-s}}^*f_z(-e^{-s}), 
\end{align}
where \(\varphi_t\) is generated by \(-\nabla f_z(t)\) with $\varphi_{-T} = \mathrm{id}$. We prove that this modified flow converges to the generalized cylinder under a fixed gauge, with a quantitative rate. Compared with the round cylindrical case in \cite{fang2025strong}, the convergence rate is weaker, reflecting the larger deformation space of the Einstein factor.

\begin{thm}[Strong uniqueness of generalized cylindrical tangent flows] \label{thm:stong}
Suppose that \((N^{n-k},g_N)\) satisfies the assumptions above. If \(z\in Z_0\) has one tangent flow that is isometric to \(\bar{\mathcal{C}}_N^k\), then the generalized cylindrical tangent flow at \(z\) is strongly unique. More precisely, for every sufficiently small \(\ep>0\), there exists \(\bar s\) such that for every integer \(j\ge \bar s\), there is a diffeomorphism
\[
    \psi_j:\Omega_j:=\{\bar f\le (1-\ep)\log j\}\subset \bar M
    \longrightarrow M
\]
onto its image, satisfying \(\psi_{j+1}=\psi_j\) on \(\Omega_j\), and for all \(s\ge j\),
\[
    \bigl[\psi_j^*g^z(s)-\bar g\bigr]_{[\ep^{-1}]}
    +
    \bigl[\psi_j^*f^z(s)-\bar f\bigr]_{[\ep^{-1}]}
    \le
    C(n,Y,\ep)e^{\bar f/2}s^{-1/2+\ep}.
\]
Here, for any integer $l \ge 0$, the norm $[\cdot]_l$ is defined by
	\begin{align*}
		[\cdot]_l := \sum_{i=0}^l \bigl|\nabla_{\bar g}^i (\cdot) \bigr|_{\bar g}.
	\end{align*}
\end{thm}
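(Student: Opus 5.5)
The proof follows the strategy of \cite{fang2025strong}, with the round sphere replaced by $N$. The key new input is a Lojasiewicz inequality for the pointed $\WW$-entropy near $\CC_N^k$, whose exponent is weakened by the obstruction of order $3$.

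\textbf{Step 1: Lojasiewicz inequality.} First, we prove the Lojasiewicz inequality along the modified flow \eqref{equ:MRFintroI}. Let $W(s)$ denote the pointed $\WW$-entropy at $z$ at scale $e^{-s}$. Suppose the geometry of $(g^z(s),f^z(s))$ is close to $(\bar g,\bar f)$ on $\{\bar f\le L\}$ with $L$ large. Then
\[
  |W(s)-W_\infty|^{1-\theta}\le C\,\| \Ric(g^z)+\nabla^2 f^z-\tfrac12 g^z\|_{L^2(e^{-f^z})},
\]
with $\theta$ arbitrarily close to $1/3$. To prove this:
\begin{itemize}
\item Write the metric near $\bar g$ as $\bar g+h$ in a gauge, and perform a Lyapunov--Schmidt reduction onto the kernel $\mathcal K$ of the stability operator $\LL_{\bar g}$.
\item The kernel $\mathcal K$ consists of two kinds of modes: the quadratic Hermite modes on $\R^k$ tensored with $g_N$, and the infinitesimal Einstein deformations of $N$ in the TT directions.
\item The spectral condition $-l/2\notin \operatorname{spec}(\LL_N|_{\mathrm{TT}})$ ensures there are no mixed kernel elements, namely Hermite polynomials of degree $l$ times TT-eigentensors of $N$.
\item The order-$3$ obstruction shows that the reduced functional on the $N$-modes is bounded below by a cubic in the kernel coordinates. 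The Hermite modes behave as in \cite{fang2025strong}.
\item The reduced functional therefore satisfies a Lojasiewicz inequality with exponent giving rate $s^{-1}$ for $W-W_\infty$. Terms from the cutoff at $\{\bar f\le L\}$ are $e^{-L/2}$-small and are absorbed.
\end{itemize}

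\textbf{Step 2: Rate of convergence.} Next, we use the Lojasiewicz inequality and monotonicity $W'(s)=2\|\cdot\|^2$ to obtain the standard ODE inequality $(W-W_\infty)'\gtrsim |W-W_\infty|^{2-2\theta}$. With $\theta=1/3$, this gives $|W(s)-W_\infty|\lesssim s^{-1}$. Integrating $\|\partial_s g^z\|\le \|\cdot\|$ over dyadic intervals then yields $\int_s^\infty\|\partial_s g^z\|\,ds\lesssim s^{-1/2}$, up to the $\ep$-loss coming from $\theta<1/3$. This step runs on a bootstrap:
\begin{itemize}
\item Closeness on $\{\bar f\le L(s)\}$ with $L(s)=(1-\ep)\log s$ is assumed at time $s$.
\item It is propagated using pseudolocality and the weighted-norm estimates of \cite{fang2025strong}.
\item The factor $e^{\bar f/2}$ arises from converting weighted $L^2$ bounds into pointwise bounds via local derivative estimates.
\end{itemize}

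\textbf{Step 3: Gauges $\psi_j$.} At integer times $j$, we construct diffeomorphisms $\psi_j$ by solving a gauge-fixing (harmonic-map/$f$-divergence-free) problem. This is done inductively so that $\psi_{j+1}=\psi_j$ on $\Omega_j$. Since the total motion after time $j$ is summable, no further gauge change is needed, and the estimate for $s\ge j$ follows by integrating the $L^2$ speed and interpolating to $C^{[\ep^{-1}]}$.

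\textbf{Main obstacle.} The main difficulty is Step 1: proving the Lojasiewicz inequality with the nonintegrable Einstein deformations present. The quadratic Hermite modes and the cubic $N$-obstruction interact in the Taylor expansion of $\WW$, and we must show that the cross terms do not destroy the cubic lower bound. The approach is to adapt the expansion in \cite{li2023rigidity} to a quantitative, non-soliton setting, controlling all error terms by the soliton defect.
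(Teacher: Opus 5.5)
\textbf{There is a genuine gap: the Lojasiewicz exponent in Step 1 is wrong, so the rate in Step 2 does not follow.} Your overall plan is the paper's: an entropy Lojasiewicz inequality, decay of the entropy deficit, then integration of $\partial_s g^z=-2\Phi$ in a fixed gauge. The problem is the quantitative core.

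The value $\theta\to 1/3$ in $|W-W_\infty|^{1-\theta}\le C\|\Phi\|$ is the \emph{round}-cylinder exponent. There only the Hermite modes $u$ occur, the deviation of $\WW$ is cubic ($\lesssim\alpha^3$), and rigidity gives $\alpha^2\lesssim\mathfrak{X}$, so $|\WW-\Theta_{N,m}|\lesssim\mathfrak{X}^{3/2}$. The order-$3$ obstruction does not bound the reduced functional on the $N$-modes below by a cubic. It bounds the $\isd$-projection of $\Phi$ to third order along the curve induced by $h_1$, i.e.\ the reduced \emph{gradient}, below by $\ep\beta^3$. This yields only $\beta^3\lesssim\mathfrak{X}$ (Corollary~\ref{cor:rigidineq}).

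The reduced functional itself is quartic. The fourth-order expansion of $\WW$ along $g(s,t)=\bar g+s\zeta+th_1+\tfrac{t^2}{2}h_2$ leaves error terms $\beta^4$ and a mixed Hermite--$N$ term $\alpha^2\beta$, which comes from $W_{sst}$ (Theorem~\ref{lojaforF}). Both are of size $\mathfrak{X}^{4/3}$. So the correct inequality is $|\WW-\Theta_{N,m}|\lesssim\mathfrak{X}^{4/3}+|\VV-1|$, i.e.\ $1-\theta=3/4$ in your convention, as for the model $F(\beta)=\beta^4$, $|F'|\approx\beta^3$. Equivalently, $\gamma<2/3$ in the entropy-gap form of Theorem~\ref{thm:loja}. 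This is the weakening you announce but never implement. The cross terms you flag as the main obstacle are exactly what produce $\alpha^2\beta$.

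Step 2 is then inconsistent in both directions. With your $\theta=1/3$, the ODE $E'\le -cE^{2-2\theta}=-cE^{4/3}$ for $E=W_\infty-W$ gives $E\lesssim s^{-3}$ and a displacement bound of order $s^{-1}$, the round-cylinder behaviour, not $s^{-1/2}$. Conversely, the bound $E\lesssim s^{-1}$ you state gives no rate by your dyadic argument. Cauchy--Schwarz on $[s,2s]$ gives $\int_s^{2s}\|\Phi\|\le s^{1/2}E(s)^{1/2}=O(1)$, which is not summable.

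The correct chain runs as follows. From $E^{3/4+\ep}\lesssim\|\Phi\|$ you get $E'\le -cE^{3/2+2\ep}$, hence $E\lesssim s^{-2+C\ep}$. Then $\frac{d}{ds}E^{1/4-\ep}\le -c\|\Phi\|$ gives $\int_s^\infty\|\Phi\|\,ds'\lesssim s^{-1/2+C\ep}$. In the paper this is Claim~\ref{cla:sumW} with $\zeta\approx 1/2$, combined with the pointwise bound $[\Phi]_l\lesssim e^{\bar f/2}|W(s+\log 2)-W(s-\log 2)|^{(1-\ep)^2/2}$.

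Two further points in Step 1 are not automatic. Absorbing the cut-off errors requires the region of closeness to reach the entropy-radius scale. The paper obtains this from $\rA\ge\rBC-2$ and Theorems~\ref{thm:ext1}--\ref{thm:ra}, which give $\min\{\rA,\rBC\}\ge(1-3\sigma)\rE$. Also, the quantity controlling $|\WW-\Theta_{N,m}|$ is $\mathfrak{X}$, which includes the gauge term $\|\Div_{\bar f}h\|_{W^{1,2}}$ and the center-of-mass term $|\BB(h,\chi)|$, not just $\|\Phi\|$.
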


The analytic core of the proof is a Lojasiewicz inequality for the pointed \(\WW\)-entropy (see \cite[Definition 2.7]{fang2025strong}). In the round cylindrical case, the leading unstable directions are controlled by a cubic term, which leads to an upper bound of \(3/4\) for the exponent $\gamma$ (see \eqref{equ:lojaintro}). In the present generalized setting, the obstruction-order-three assumption on \(N\) gives a different balance between the Euclidean Hermite modes and the infinitesimal solitonic deformations of \(N\). As a result, we obtain the following Lojasiewicz inequality.

\begin{thm}[Lojasiewicz inequality] \label{thm:loja}
Let \(z\in Z_0\) be a generalized cylindrical singularity modeled on \(\bar{\mathcal{C}}_N^k\). Then for any \(\gamma\in(0,2/3)\), there exists a constant \(C=C(n,N,Y)\) such that, for all sufficiently small \(\tau>0\),
\begin{align}\label{equ:lojaintro}
|\WW_z(\tau)-\Theta_{N,n-k}|
    \le
    C\bigl(\WW_z(\tau/2)-\WW_z(2\tau)\bigr)^\gamma .	
\end{align}

\end{thm}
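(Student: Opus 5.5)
We follow the scheme of \cite{fang2025strong}, replacing the round-cylinder analysis by the rigidity theory for generalized cylinders from \cite{li2023rigidity}. Fix a small $\tau$ and consider the modified flow $(g^z(s),f^z(s))$ from \eqref{equ:MRFintroI} at $s=-\log\tau$. Since one tangent flow at $z$ is $\bar{\mathcal{C}}_N^k$ and $\WW_z$ is monotone, for $\tau$ small the flow on a large parabolic region $\{\bar f\le L\}$ is $C^\infty$-close, in a suitable gauge, to the weighted cylinder $(\CC^k_N)_{-1}$. This uses the quantitative closeness statements for the $d^*$-completion $Z$ and the structure theory of \cite{fang2025RFlimit}. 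The quantity $\WW_z(\tau/2)-\WW_z(2\tau)$ is the spacetime integral of $\tau|\Ric+\nabla^2 f-g/(2\tau)|^2$ weighted by the conjugate heat kernel. By a pseudolocality/Shi-type argument it therefore controls a weighted $L^2$ norm of the soliton defect $\Phi(g,f):=\Ric+\nabla^2 f-\tfrac12 g$ at a good time slice, together with its derivatives on $\{\bar f\le L\}$. The Gaussian decay of the weight supplies errors of size $e^{-cL}$.

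The core step is to apply a Lojasiewicz inequality for the $\WW$-functional at the generalized cylinder. We first choose a gauge (diffeomorphism and rotation/translation on $\R^k$) minimizing the weighted distance to $\bar g$, so that the difference $h$ is orthogonal to the directions generated by diffeomorphisms and symmetries. We then decompose $h=h_0+h^\perp$, where $h_0$ lies in $\ker L_{\bar f}$. By the analysis of \cite{li2023rigidity}, this kernel is spanned by quadratic Hermite modes $(x_ix_j-2\delta_{ij})g_N$ and by infinitesimal Einstein deformations $v\in\ker(\LL_N|_{\mathrm{TT}})$. The spectral condition $-l/2\notin\operatorname{spec}(\LL_N|_{\mathrm{TT}})$ ensures that products of Hermite polynomials with TT eigentensors of $N$ never create extra kernel, so $L_{\bar f}$ is invertible on the orthogonal complement and $\|h^\perp\|\lesssim\|\Phi\|+\|h\|^2$. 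Projecting the soliton equation onto the kernel, a Lyapunov--Schmidt reduction gives a reduced map on the finite-dimensional space $\ker L_{\bar f}$. The cubic Hermite term satisfies $|\Pi\Phi|\gtrsim |h_0^{\mathrm{Herm}}|^2$. The obstruction of order three on $N$ gives $|\Pi\Phi|\gtrsim |h_0^{N}|^{2}$ for the Einstein part, i.e.\ the first nonvanishing obstruction is at third order in the entropy, hence quadratic in the gradient. Combining these, $|\WW-\Theta_{N,n-k}|\lesssim |h_0|^3+\|\Phi\|^2$ while $\|\Phi\|\gtrsim |h_0|^2$. This yields exponent $\gamma=2/3$ up to losses: $|\WW-\Theta|\lesssim\|\Phi\|^{3/2}$, and $\|\Phi\|^2\lesssim \WW_z(\tau/2)-\WW_z(2\tau)$ gives $\gamma=3/4$ in the pure Hermite case. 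Because the Einstein obstruction interacts with the noncompact Euclidean factor, we can only control $\|\Phi\|$ in a localized norm, which degrades the exponent.

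The main obstacle is exactly this localization. The expansion holds only on $\{\bar f\le L\}$ with cutoff errors $e^{-cL}$, while the nonlinear estimates need $L^\infty$ control of $h$ in terms of its weighted $L^2$ norm. These are obtained via the polynomial-growth lemmas from \cite{fang2025strong}, in which Hermite modes grow like $|x|^2$ and lose a factor $e^{L/2}$. We will choose $L\approx \kappa|\log\|\Phi\||$ with $\kappa$ small in terms of $\gamma$. This balances the Gaussian tail against the growth of the kernel modes, and it is the source of the requirement $\gamma<2/3$ with $C$ depending on $\gamma$. Finally, we average over $s\in[-\log 2\tau,-\log\tau/2]$ to pass from the single-slice estimate to \eqref{equ:lojaintro}.
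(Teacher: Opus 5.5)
\paragraph{The core estimate fails.} The central quantitative step rests on a misreading of the hypothesis. By Definition~\ref{def:obs}, obstruction of order $3$ only gives $\|\pi_{\isd}(\tfrac12 M_2+\tfrac16 M_3)\|_{L^2}\ge \ep\|h_1\|_{L^2}^3$. So the soliton defect projected onto $\isd_N$ is bounded below only \emph{cubically} in $\beta=\|h_1\|_{L^2}$. The quadratic bound $|\Pi\Phi|\gtrsim|h_0^N|^2$ you use is what an obstruction of order $2$ would give, which is a stronger hypothesis than the one assumed.

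With the correct input you only have $\alpha^2+\beta^3\le C\mathfrak X$ (Corollary~\ref{cor:rigidineq}). Then your entropy bound $|\WW-\Theta_{N,n-k}|\lesssim |h_0|^3+\|\Phi\|^2$ yields merely $|\WW-\Theta_{N,n-k}|\lesssim \mathfrak X$, i.e.\ $\gamma$ near $1/2$. The bound $|\WW-\Theta_{N,n-k}|\lesssim\|\Phi\|^{3/2}$ is not available: in $\isd_N$-directions the entropy gap can be of size $\beta^4$ while $\|\Phi\|$ is only of size $\beta^3$.

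What is needed is the sharper estimate of Theorem~\ref{lojaforF}, obtained in Section~\ref{sec:variationineq} by a two-parameter Taylor expansion of $W(s,t)$ along $\bar g+s\zeta+th_1+\tfrac{t^2}{2}h_2$:
\[
|\WW-\Theta_{N,n-k}|\lesssim \alpha^3+\beta^4+\alpha^2\beta+\mathfrak X^{5/3}+|\VV-1|.
\]
Two features of that expansion matter. First, the induced correction $h_2$ of Definition~\ref{def:h_k} is subtracted, so that $\|\zeta'\|_{W^{2,2}}\lesssim\mathfrak X^{5/6}$. Second, the cubic term along $\isd_N$,
\[
W_{ttt}(0,0)=\int\langle\Phi_{tt},h_1\rangle\,\mathrm{d}V_{\bar f}+(\bar W-1)V_{ttt}(0,0),
\]
is bounded by $\beta^4+\mathfrak X^{4/3}$ rather than $\beta^3$, using $\Phi_{tt}\in\isd_N$ (Proposition~\ref{2ndderiat0}). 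Since $\beta^4$ and $\alpha^2\beta$ are only $\lesssim\mathfrak X^{4/3}$, one gets Corollary~\ref{cor:lojaforF}. This power $4/3$ is what produces $2/3=\tfrac12\cdot\tfrac43$.

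\paragraph{The exponent loss is misattributed.} Because of the misreading, you attribute the drop from $3/4$ to $2/3$ to localization, via $L\approx\kappa|\log\|\Phi\||$. Your sketch contains no computation producing $2/3$, and this is not where the exponent comes from. Your own sentence deriving ``$\gamma=2/3$ up to losses'' actually concludes $\gamma=3/4$.

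In the paper, localization is handled by the contraction--extension scheme, and its cost can be made arbitrarily small:
\begin{itemize}
\item Proposition~\ref{prop:con}, which is where Propositions~\ref{pro:rigidityineq1} and~\ref{pro:rigidityineq2} enter, together with Theorems~\ref{thm:ext1} and~\ref{thm:ext2}, gives $\min\{\rA,\rBC\}\ge(1-3\sigma)\rE$ for any small $\sigma$ (Theorem~\ref{thm:ra}).
\item Within $\rBC$ one has $\int|\Phi|^2\,\mathrm{d}V_{\bar f}\le e^{-\rBC^2/(4-\sigma)}$, so the $\mathfrak X^{4/3}$ bound gives $|\WW-\Theta_{N,n-k}|\le Ce^{-\rBC^2/6}$ (Theorem~\ref{thm:lojarBC}).
\item With $e^{-\rE^2/4}=\WW(1/2)-\WW(2)$, this gives $\gamma=\tfrac23(1-3\sigma)^2$.
\end{itemize}
In the round case the same scheme gives every $\gamma<3/4$, so localization cannot account for $3/4\to 2/3$.

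Your single-slice ``pseudolocality/Shi'' step is also too weak on its own. A priori the flow is close to the cylinder only on a region of bounded size. Relating the size of the good region to $\WW_z(\tau/2)-\WW_z(2\tau)$ is exactly the job of the iterative extension argument, which uses the potential $\tilde F$ of Proposition~\ref{pro:estimateforbetterpotential} and the compactness theory.
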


The proof of this inequality follows the general contraction--extension strategy of \cite{colding2021singularities,li2023rigidity,fang2025strong}, but the variational estimate near the generalized cylinder is substantially different. The main technical step is to expand the \(\WW\)-functional near
\[
    \mathbb R^k\times N^{n-k}
\]
and separate the perturbation into three parts: the quadratic Hermite component on \(\mathbb R^k\), the infinitesimal solitonic deformation component from \(N\), and the remaining coercive part. The obstruction condition on \(N\) is then used to control the Einstein-factor deformation, while the spectral condition rules out additional resonances. This yields a variational inequality of the form
\[
    |\WW(g,f)-\WW(\bar g,\bar f)|
    \le
    C \mathfrak{X}^{4/3},
\]
for normalized compactly supported perturbations, where \(\mathfrak{X}\) measures the Ricci shrinker defect, the gauge error, and the center-of-mass error; see Corollary \ref{cor:lojaforF}.

Finally, we apply the Lojasiewicz inequality to the structure of singular sets in noncollapsed Ricci flow limit spaces. Let \(\MS^k(N)\subset \MS^k\) denote the set of points at which some tangent flow is given by the generalized cylinder \(\bar{\mathcal{C}}_N^k\). Combining the entropy summability obtained here (see Proposition \ref{sumWonRFlimit}) with the cylindrical neck region argument of \cite{fang2025singular} (see Definition \ref{defiofcylneckregion}), we prove that \(\MS^k(N)\) is horizontally parabolic \(k\)-rectifiable. The same argument also applies to finite quotients of generalized cylinders.

\begin{thm}[Rectifiability of generalized cylindrical singular sets] 
The set \(\MS^k(N)\) is horizontally parabolic \(k\)-rectifiable with respect to the spacetime distance \(d_Z\). More generally, the subset consisting of points whose tangent flow is \(\bar{\mathcal{C}}_N^k\) or a finite quotient thereof is horizontally parabolic \(k\)-rectifiable.
\end{thm}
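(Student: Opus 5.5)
The plan is to follow the neck-region strategy of \cite{fang2025singular}. The only model-specific input there is summability of the entropy drop along scales, and we replace it with the summability coming from Theorem~\ref{thm:loja}. That result produces the Lojasiewicz exponent $\gamma<2/3$, which is weaker than the cylindrical case but still adequate.

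\textbf{Step 1: entropy summability.} Set $a_i:=\WW_z(2^{-i})-\Theta_{N,n-k}\ge 0$. Theorem~\ref{thm:loja} gives $a_i\le C(a_{i-1}-a_{i+1})^{\gamma}$. A standard discrete argument then yields $a_i\le C i^{-\gamma/(1-\gamma)}$. Taking $\gamma$ close to $2/3$ makes this $a_i\lesssim i^{-2+\delta}$. Summing dyadic blocks gives
\[
\sum_i (a_{i-1}-a_{i+1})^{1/2}<\infty ,
\]
and in fact the tail sums decay polynomially. This is the content of Proposition~\ref{sumWonRFlimit}. It controls the total drift of the approximating generalized cylinders (their $\mathbb R^k$-directions and centers) across scales, with a uniform modulus at every point of $\MS^k(N)$. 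Strong uniqueness (Theorem~\ref{thm:stong}) also shows that at each $z\in\MS^k(N)$ every tangent flow is $\bar{\mathcal C}_N^k$. The quantitative version is needed uniformly on a neighborhood, so Theorem~\ref{thm:loja} must hold with constants depending only on $(n,N,Y)$ for points that are merely $\epsilon$-close to $\bar{\mathcal C}_N^k$ at a given scale.

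\textbf{Step 2: neck regions and rectifiability.} Next, decompose $\MS^k(N)$ into countably many pieces $\MS^k_{\epsilon,r_0}$. On each piece the flow is $\epsilon$-close to $\bar{\mathcal C}_N^k$ at all scales below $r_0$ and the entropy pinching is small. On a ball, build a cylindrical neck region as in Definition~\ref{defiofcylneckregion}, with the center set $\mathcal C$ containing the piece. Then apply the neck-region theorem of \cite{fang2025singular}. Its proof uses only three inputs: $\epsilon$-regularity away from the center set, the $L^2$-splitting/cone-splitting estimates, and the summability in Step~1. The first two only require that $\mathbb R^k\times N$ has no further symmetry. Points with $(k+1)$ almost-splitting directions are excluded because $N$ is a closed Einstein manifold not splitting a line. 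The output is a bi-Lipschitz (horizontally) parametrization of the center set by subsets of $\mathbb R^k$ via the approximate splitting maps. This is exactly horizontal parabolic $k$-rectifiability with respect to $d_Z$.

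\textbf{Step 3: quotients, and the main obstacle.} For finite quotients $(\mathbb R^k\times N)/\Gamma$ that still split $\mathbb R^k$, we pass to the local universal cover on the $N$-factor, or apply the Lojasiewicz inequality equivariantly. The $\WW$-expansion, obstruction and spectral conditions are inherited by $\Gamma$-invariant perturbations, so Steps~1--2 repeat verbatim. The main obstacle I expect is Step~1's uniformity together with the weaker exponent. The neck argument in \cite{fang2025singular} needs the square-root sum of entropy drops to be uniformly small along every center point. This requires the exponent to exceed $1/2$, so that $\gamma/(1-\gamma)>1$. I would check this carefully: $\gamma$ near $2/3$ gives decay $i^{-2+\delta}$, hence $\sum \sqrt{a_{i-1}-a_{i+1}}$ converges by Cauchy--Schwarz over dyadic blocks. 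The quantitative smallness of the tail then lets the neck-region induction close.
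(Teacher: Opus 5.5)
Your outline is the paper's argument. The Lojasiewicz inequality with $\gamma\in(1/2,2/3)$ gives summability of $|\Delta\widetilde\WW|^{\zeta}$ for every $\zeta\in(1/3,1)$ (Proposition~\ref{sumWonRFlimit}, which contains your $\zeta=1/2$ computation). This is fed unchanged into the neck-region decomposition, Ahlfors regularity and bi-Lipschitz parametrization of \cite{fang2025singular}, followed by a covering argument, and quotients are handled the same way.

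One small slip: since $\WW_z$ is nonincreasing in $\tau$ and tends to $\Theta_{N,n-k}$, the nonnegative quantity is $\Theta_{N,n-k}-\WW_z(2^{-i})$. Also, at neck centers in $\CCC_+$ you need the two-sided quantitative version over a finite range of scales, not the decay-to-zero argument.
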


The paper is organized as follows. Section~\ref{sec:pre} recalls the necessary definitions and preliminary results on generalized weighted cylinders and almost cylindrical points. Section~\ref{sec:variationineq} proves the variational inequality for the \(\WW\)-functional near generalized cylinders. Section~\ref{sec:loja} establishes the Lojasiewicz inequality and derives the strong uniqueness theorem. Section~\ref{sec:singset} applies these results to the rectifiability of generalized cylindrical singular sets.

\vspace{1cm}

\textbf{Acknowledgments}: Hanbing Fang would like to thank his advisor, Prof. Xiuxiong Chen, for his encouragement and support. Hanbing Fang is supported by the Simons Foundation. Yu Li is supported by National Key Research and Development Program of China 2025YFA1018200, NSFC-12522105, YSBR-001 and research funds from University of Science and Technology of China and Chinese Academy of Sciences.

\section{Preliminaries}\label{sec:pre}
\subsection*{Weighted manifolds}
An $n$-dimensional \textbf{weighted Riemannian manifold} $(M^n,g,f)$ is a complete Riemannian manifold $(M^n,g)$ coupled with a smooth function $f:M\to \R$. For a weighted Riemannian manifold $(M^n,g,f)$, we define
\begin{align*}
	\mathbf{\VV}(g,f):=(4\pi)^{-\frac{n}{2}}\int_M e^{-f}\,\d V_g.
\end{align*}
Here, $\d V_g$ denotes the volume form of $(M,g)$. For simplicity, we set $\d V_f:=(4\pi)^{-\frac{n}{2}}e^{-f}\,\d V_g$. We say that $(M^n,g,f)$ is \textbf{normalized} if it satisfies the following normalization:
\begin{align}\label{equ:nor}
	\mathbf{\VV}(g,f)=1.
\end{align}

\begin{exmp}\label{exa:cylinder}
	Let $(N^{m},g_N)$ be an $m$-dimensional Einstein manifold with $\Ric(g_N)=g_N/2$. For any $n\geq \max\{m,3\}$, we define the \textbf{generalized weighted cylinders} as
	\begin{align*}
		\lc \mathcal{C}_N^{n-m}\rc_{-1}:=(\bar M,\bar g,\bar f)=\lc \R^{n-m}\times N^m, g_E\times g_N, \frac{|\vec x|^2}{4}+\frac{m}{2}+\Theta_{N,m}\rc,
	\end{align*}  
	where $g_E$ is the Euclidean metric on $\R^{n-m}$ and the vector $\vec x=(x_1,\ldots,x_{n-m})$ denotes the standard coordinate function on $\R^{n-m}$. The constant $\Theta_{N,m}$ is chosen to ensure that $\mathbf{\VV}(\bar g,\bar f)=1$. From a direct calculation, we can obtain that
	\begin{align*}
	\Theta_{N,m}=-\log\lc\frac{(4\pi)^{\frac{m}{2}}}{\vol(N)}\rc-\frac{m}{2}.
	\end{align*} 
\end{exmp}

\begin{defn}
	For a weighted Riemannian manifold $(M^n,g,f)$, we define:
	\begin{align*}
		\mathbf{\Phi}(g,f)&:=\frac{g}{2}-\Ric(g)-\na^2 f,\\
		\mathbf{\WW}(g,f)&:=\int_M2\Delta f-|\na f|^2+\scal+f-n\,\d V_f,\\
		\mathbf{\mu}(g,f)&:=2\Delta f-|\na f|^2+\scal+f-n.
	\end{align*}
\end{defn}

It is clear from the definition that $\mathbf{\Phi}(g,f)=0$ implies that $(M,g,f)$ is given by a Ricci shrinker. On the other hand, $\mathbf{\WW}(g,f)$ is Perelman's functional $\mathbf{\WW}(g,f,1)$ (see \cite{perelman2002entropy}). For the generalized weighted cylinder $\lc \CC_N^{n-m}\rc_{-1}=(\bar M,\bar g,\bar f)$ as in Example \ref{exa:cylinder}, we have
$$\mathbf{\Phi}(\bar g,\bar f)=0,\quad \mathrm{and} \quad \mathbf{\WW}(\bar g,\bar f)=\Theta_{N,m}.$$
In other words, $\Th_{N,m}$ denotes the entropy of $(\bar M,\bar g,\bar f)$ as a Ricci shrinker. For more information regarding the entropy of a Ricci shrinker, we refer readers to \cite[Section 5]{li2020heat}.

On a given weighted Riemannian manifold $( M^n, g, f)$, we can define the weighted Sobolev spaces, the weighted divergence operator, norms of tensors, weighted Laplacian and the stability operator $\LL$ as \cite[Definition 2.3]{fang2025strong}.

\subsection*{Preliminary results on generalized weighted cylinders}\label{sec:pregencyl}
In this subsection, we review preliminary results on generalized weighted cylinders $\lc \CC_N^{n-m}\rc_{-1}$ from \cite{li2023rigidity}. To begin with, let $(N^m,g_N)$ be an $m$-dimensional Einstein manifold with $\Ric(g_N)=g_N/2$. We remark that some statements in this subsection also hold for general compact Ricci shrinkers, but for simplicity, we only state them for $(N,g_N)$. 

For any Riemannian metric $g$ on $N$, Perelman's entropy $\mu(g,1)$ is defined as
\begin{align*}
\mu(g,1):=\inf_{\rho}\lc (4\pi)^{-\frac{n}{2}}\int_N \big(|\na \rho|^2+\scal+\rho-n\big) e^{-\rho}\,\d V_g\rc,
\end{align*}
where the infimum is taken for all smooth function $\rho$ satisfying $\int_N e^{-\rho}\,\d V_g=(4\pi)^{\frac{n}{2}}$. It was proved in \cite[Lemma 2.2]{sun2015kahler} that there exists a small neighborhood $\mathcal{U}$ of $g_N$ in $C^{2,\alpha}(S^2(N))$ such that for any $g\in \mathcal{U}$, the minimizer $\rho_g$ of $\mu(g,1)$ is unique and depends analytically on $g$.

\begin{defn}
	For any $g\in \mathcal{U}$, we define
	\begin{align*}
		\Phi(g)=\frac{g}{2}-\Ric(g)-\na^2_g \,\rho_g.
	\end{align*}
	In particular, $\Phi(g)=0$ if and only if $(N,g,\rho_g)$ is a Ricci shrinker with normalization $\scal(g)+|\na_g \rho_g|^2-\rho_g=\mu (g,1)$.
\end{defn}

By \cite[Lemma 2.6]{li2023rigidity}, on $(N,g_N)$, $\Phi'$ can be explicitly calculated by
\begin{align}\label{equ:1stPhi}
	\Phi'(h)=\frac{1}{2}\LL(h)+\Div^*\,\Div h+\na^2(\Tr_{g_N}(h)/2-f_1),
\end{align}
where $h\in C^{2,\alpha}(S^2(N))$ and $f_1$ is determined by
\begin{align*}
	(2\Delta+1)(\Tr_{g_N}(h)/2-f_1)=\Div\, \Div h.
\end{align*}
In particular, if $h\in \ker\Div$, then $f_1=\Tr_{g_N}(h)/2$ 
 and we have $\Phi'(h)=\frac{1}{2}\LL h$. Next, we recall the following definition.

\begin{defn}\label{def:ISD}
	The infinitesimal solitonic deformation space with respect to $(N,g_N)$ is defined as
	\begin{align*}
		\isd:=\left\{h\in C^\infty(S^2(N))|h\in \ker (\Phi')\cap \ker(\Div)\right\}.
	\end{align*}
\end{defn}

By \eqref{equ:1stPhi} and Definition \ref{def:ISD}, we immediately conclude that $\isd$ is a finite-dimensional linear space. Moreover, by \cite[Lemma 2.7]{li2023rigidity}, we have
\begin{align*}
	\ker(\Phi')=\isd\oplus \mathrm{Im}(\Div^*).
\end{align*}

\begin{defn}\label{def:h_k}
	Given $h_1\in\isd$, we say the sequence of symmetric $2$-tensors $\{h_i\}$ is induced by $h_1$ if $h_i\in (\ker\Phi')^{\perp}$ and for any $l\geq 2$,
	\begin{align*}
		\Phi'(h_l)+\pi_{(\ker\Phi')^{\perp}}(M_l(h_1,\cdots,h_{l-1}))=0,
	\end{align*}
	where $\pi_{(\ker\Phi')^{\perp}}$ is the projection onto $(\ker\Phi')^{\perp}$ in $L^2$.
\end{defn}

Here, the expressions $M_l(h_1,\cdots,h_{l-1})$ are given in \cite[Definition 2.13]{li2023rigidity}. For instance, we have
$$M_2(h_1)=\Phi^{(2)}(h_1,h_1)\quad \mathrm{and}\quad M_3(h_1,h_2)=3\Phi^{(2)}(h_1,h_2)+\Phi^{(3)}(h_1,h_1,h_1).$$

It follows from \cite[Lemma 2.7]{li2023rigidity} that there exists a unique sequence of $\{h_i\}$ induced by $h_1$. 
\begin{defn}\label{def:obs}
	For any $k\geq 2$, we say $(N,g_N)$ has an obstruction of order $k$ if there exists $\ep>0$ such that for any $h_1\in\isd$ with $\rVert h_1 \rVert_{L^2}<\ep$, 
	\begin{align*}
		\left\rVert\pi_{\isd}\bigg( \sum_{l=2}^k\frac{1}{l!}M_l(h_1,\cdots,h_{l-1})\bigg)\right\rVert_{L^2}\geq\ep \rVert h_1\rVert_{L^2}^k,
	\end{align*}
	where $\{h_i\}$ is the sequence induced by $h_1$. We say $(N,g_N)$ has an obstruction of order $1$ if $\isd=0$.
\end{defn}
For instance, if $(N,g_N)$ has an obstruction of order $3$, then for all $h_1\in\isd$ with $\rVert h_1\rVert_{L^2}$ small enough, 
\begin{align*}
	\left\rVert\pi_{\isd}\bigg( \frac{1}{6}\Phi^{(3)}(h_1,h_1,h_1)+\frac{1}{2}\Phi^{(2)}(h_1,h_2)+\frac{1}{2}\Phi^{(2)}(h_1,h_1)\bigg)\right\rVert_{L^2}\geq\ep \rVert h_1\rVert_{L^2}^3.
\end{align*}

In the next definition, we consider the sequence $\{f_i\}$ induced by $h_1\in\isd$ as in \cite[Definition 2.29]{li2023rigidity}.
\begin{defn}\label{def:f_k}
	Given $h_1\in\isd$, we say that the sequence of functions $\{f_i\}$ is induced by $h_1$ if $f_1=\Tr_{g_N}(h_1)/2$ and for any $l\geq 2$,
	\begin{align*}
		(2\Delta +1)f_l+L_l(h_1,\cdots, h_l,f_1,\cdots,f_{l-1})=0,
	\end{align*}
	where $\{h_i\}$ is the sequence induced by $h_1$ and $L_l$ is obtained from differentiating \emph{\cite[Equation (2.51)]{li2023rigidity}}.
\end{defn}

From now on, we assume that $(N^m, g_N)$ has obstruction of order \(3\) and satisfies the spectral condition
\[
    -\frac l2 \notin \operatorname{spec}\left(\LL_N|_{\mathrm{TT}}\right),
    \qquad \forall l\in \mathbb N^+.
\]
We consider $(\bar M,\bar g,\bar f)$ as in Example \ref{exa:cylinder}. By \cite[Proposition 4.2]{li2023rigidity}, we know that
\begin{align}\label{equ:ISDbarM}
	\isd=\isd_N\oplus \KK_0,
\end{align}
where $\isd,\isd_N$ are the spaces of infinitesimal solitonic deformations with respect to $(\bar M,\bar g,\bar f)$ and $(N,g_N)$; and $\KK_0$ is the linear space of quadratic Hermite polynomials on $\R^{n-m}$ (see \cite[Definition 2.4]{fang2025strong}). Regarding the definition of $\isd$ space in noncompact Ricci shrinker, we refer to \cite[Definition 3.18]{li2023rigidity}.

For any $\{g,f\}$ near $\{\bar g,\bar f\}$, we set $\{h,\chi\}=\{g-\bar g,f-\bar f\}$ and we assume that $h$ and $\chi$ are compactly supported. From \eqref{equ:ISDbarM}, we can write $\pi_{\isd}(h)=h_1+ug_N$ where $u\in\KK_0$ and $h_1\in \isd_N$. Let $\{h_i\}$ and $\{f_i\}$ be the sequences induced by $h_1$. Then we set $\zeta=h-h_1-h_2/2$ and $q=\chi-f_1-f_2/2$. Moreover, we define 
\begin{align*}
	\zeta':=\zeta-ug_N \quad \mathrm{and}\quad q':=q-\frac{m}{2}u.
\end{align*}

For simplicity, we set $\alpha:=\rVert u\rVert_{L^2},\,\beta:=\rVert h_1\rVert_{L^2}$ and $\BB(h,\chi)=(\BB_1(h,\chi),\cdots,\BB_{n-m}(h,\chi))$, where
\begin{align*}
		\BB_i(h,\chi):=\int_{\bar M}\left\la\partial_{x_i},\bar \na \lc\frac{1}{2}\Tr_{\bar g}(h)-\chi\rc\right\ra \,\d V_{\bar f}=\frac{1}{2}\int_{\bar M} x_i\lc\frac{1}{2}\Tr_{\bar g}(h)-\chi\rc\,\d V_{\bar f}.
\end{align*}

The next two rigidity inequalities from \cite[Proposition 4.9, Theorem 4.10]{li2023rigidity} will be of crucial importance: 
\begin{prop}\label{pro:rigidityineq1}
	There exists a constant $C=C(n,m,k,N)$ such that the following holds\textup{:}
	\begin{align*}
		\rVert\zeta'\rVert_{W^{k,2}}+\rVert\na q'\rVert_{W^{k-1,2}}\leq C\lc \rVert\Phi(g,f)\rVert_{W^{k-2,2}}+\rVert \Div_{\bar f}h\rVert_{W^{k-1,2}}+|\BB(h,\chi)|+\alpha^2+\alpha\beta+\beta^3\rc.
	\end{align*}
\end{prop}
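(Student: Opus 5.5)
This proposition is quoted from \cite[Proposition 4.9]{li2023rigidity}. The plan is to reproduce its proof structure: write $\Phi(g,f)$ as a Taylor expansion around $(\bar g,\bar f)$, subtract the pieces coming from $h_1,h_2$ and $u$, and use coercivity of $\Phi'$ on the complement of its kernel.

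\textbf{Step 1 (expansion).} Expand
\[
\Phi(\bar g+h,\bar f+\chi)=\Phi'(h,\chi)+\tfrac12\Phi^{(2)}((h,\chi),(h,\chi))+O(|(h,\chi)|^3)
\]
in weighted Sobolev norms. Insert $h=h_1+h_2/2+ug_N+\zeta'$ and $\chi=f_1+f_2/2+\tfrac m2u+q'$.
\begin{itemize}
\item By Definitions \ref{def:h_k} and \ref{def:f_k}, the combination $\Phi'(h_1+h_2/2,f_1+f_2/2)+\tfrac12\Phi^{(2)}(h_1,h_1)$ reduces to its $\isd_N$-projection plus $O(\beta^3)$, since $h_2$ solves the second-order equation up to a kernel projection.
\item The pair $(ug_N,\tfrac m2 u)$ lies in the kernel of the linearization, because $u\in\KK_0$ gives an element of $\isd$. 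Its quadratic interactions are $O(\alpha^2+\alpha\beta)$.
\end{itemize}
This yields
\[
\Phi'(\zeta',q')=\Phi(g,f)+E,\qquad \|E\|\le C\bigl(\alpha^2+\alpha\beta+\beta^3+\|(\zeta',\nabla q')\|^2\bigr).
\]
Here $E$ includes an $\isd_N$-valued term of order $\beta^2$, which is harmless after projecting away from $\ker$.

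\textbf{Step 2 (coercivity modulo kernel).} On $(\bar M,\bar g,\bar f)$, the operator $\Phi'$ together with $\Div_{\bar f}$ is Fredholm on weighted spaces. Its kernel splits into $\isd=\isd_N\oplus\KK_0$, diffeomorphism directions $\mathrm{Im}(\Div^*)$, and translations in $\R^{n-m}$. The last are detected by $\BB$. One then proves
\[
\|\zeta'\|_{W^{k,2}}+\|\nabla q'\|_{W^{k-1,2}}\le C\Bigl(\|\Phi'(\zeta',q')\|_{W^{k-2,2}}+\|\Div_{\bar f}\zeta'\|_{W^{k-1,2}}+|\BB(\zeta',q')|+\|\pi_{\isd}\zeta'\|\Bigr).
\]
The spectral condition $-l/2\notin\operatorname{spec}(\LL_N|_{\mathrm{TT}})$ is used here: after separating variables into Hermite modes on $\R^{n-m}$ times eigentensors on $N$, the weighted Lichnerowicz operator $\LL$ has no kernel except $\isd$, and a uniform gap follows. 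The term $\pi_{\isd}\zeta'$ is quadratic by the choice of decomposition, since $\pi_{\isd}h=h_1+ug_N$ and $h_2\perp\ker\Phi'$. Likewise, $\Div_{\bar f}\zeta'$ and $\BB(\zeta',q')$ differ from $\Div_{\bar f}h$ and $\BB(h,\chi)$ only by the divergence and center-of-mass of $h_2$ and $ug_N$, which are $O(\beta^2+\alpha)$-type terms. These must be checked to vanish or be absorbed: $\Div(ug_N)=du$, which pairs to zero against $x_i$-moments after the trace correction.

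\textbf{Step 3 (absorb).} Combining Steps 1 and 2 gives the estimate with an extra $\|(\zeta',\nabla q')\|^2$ on the right. Since $(g,f)$ is close to $(\bar g,\bar f)$, this is absorbed by smallness. The compact support makes all weighted norms finite, and the polynomial weights from $\bar f$ are controlled by the Gaussian weight.

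\textbf{Main obstacle.} The hardest step is Step 2: proving the weighted coercivity with a uniform constant on the noncompact product. This requires the mode decomposition and the spectral condition, and also requires controlling the $f$-component $q'$ through the equation $(2\Delta+1)$, whose kernel on $\R^{n-m}$ consists of linear functions. That kernel is exactly where $\BB$ enters.
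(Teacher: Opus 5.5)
Your architecture matches the argument in \cite{li2023rigidity}, which the paper simply cites: Taylor-expand $\Phi$ about $(\bar g,\bar f)$ along $h_1+\frac12 h_2$, $ug_N$ and $\zeta'$; prove coercivity of $(h,\chi)\mapsto(\Phi'(h,\chi),\Div_{\bar f}h,\BB(h,\chi))$ modulo $\isd$, Lie-derivative and translation directions; then absorb. However, your reduction from $(h,\chi)$ to $(\zeta',q')$ in Step 2 is wrong exactly where exactness is required.

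The identity $\Div(ug_N)=du$ is false. The tensor $g_N$ is parallel on the product and annihilates $\na u$ and $\na\bar f$, both of which are tangent to $\R^{n-m}$, so $\Div_{\bar f}(ug_N)=0$ identically. Taken at face value, your formula would make $\Div_{\bar f}\zeta'$ differ from $\Div_{\bar f}h$ by a term linear in $\alpha$. That term cannot be absorbed, because the right-hand side contains only $\alpha^2+\alpha\beta$. Your remark about $x_i$-moments concerns $\BB$ and does nothing for $\|\Div_{\bar f}\zeta'\|_{W^{k-1,2}}$. For the same reason, an $O(\beta^2)$ discrepancy from $h_2$ cannot be absorbed into $\beta^3$, so ``vanish or be absorbed'' really means ``must vanish''.

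They do vanish:
\begin{itemize}
\item $\Div_{\bar f}h_1=0$ because $h_1\in\isd_N$, and $\Div_{\bar f}h_2=0$ because $h_2\perp\ker\Phi'\supset\mathrm{Im}(\Div^*)$. Hence $\Div_{\bar f}\zeta'=\Div_{\bar f}h$ exactly.
\item $\BB(\zeta',q')=\BB(h,\chi)$ exactly. This uses $f_1=\frac12\Tr h_1$, $\Tr_{\bar g}(ug_N)=mu$, and the fact that $\frac12\Tr h_2-f_2$ is independent of $\vec x$.
\item $\pi_{\isd}\zeta'=-\frac12\pi_{\isd}h_2=0$ exactly, not merely quadratically. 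Indeed $h_2\perp\isd_N$, and $h_2$, being independent of $\vec x$, is orthogonal to $\KK_0 g_N$.
\end{itemize}

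Step 1 also needs an argument rather than an assertion. The second-order residual is $\frac12\Phi_{tt}(0,0)=\frac12\pi_{\ker\Phi'}\Phi^{(2)}(h_1,h_1)$, and a priori it only lies in $\ker\Phi'=\isd_N\oplus\mathrm{Im}(\Div^*)$. Its $\isd$-part drops out of the coercivity estimate because $\mathrm{Im}\,\Phi'\perp\isd$, by self-adjointness of $\LL$ and $\Div_{\bar f}k=0$ for $k\in\isd$. But $\mathrm{Im}\,\Phi'$ is not orthogonal to $\mathrm{Im}(\Div^*)$: linearizing the identity $\Div_f\mathbf{\Phi}(g,f)=-\frac12\na\mathbf{\mu}(g,f)$ gives $\Div_{\bar f}\Phi'(h,\chi)=-\frac12\na\mu'(h,\chi)$. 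So you genuinely need the residual to lie in $\isd_N$. This follows by differentiating $\Div_{\rho_g}\Phi(g)\equiv 0$ on $N$ twice, and it is the fact $\Phi_{tt}\in\isd_N$ quoted in Proposition \ref{2ndderiat0}(i).

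Finally, the error in Step 3 is not of the form $\|(\zeta',\na q')\|^2$. Since $|u|\le C\alpha(1+|\vec x|^2)$, cross terms such as $\Phi^{(2)}(ug_N,\zeta')$ carry polynomial weights, which by Lemma \ref{lem:coninequ} cost extra derivatives. They are absorbed by interpolating against the pointwise bound $[\zeta']_2\le C\delta(1+|\vec x|^2)$ and applying Young's inequality. This is what produces the $\alpha^2$ term on the right-hand side.
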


\begin{prop}\label{pro:rigidityineq2}
	There exist constants $\delta,C,C_\ep$ depending on $n,m,N$, such that if $\rVert h\rVert_{C^2}+\rVert\chi\rVert_{C^2}\leq\delta$, then for any $\ep>0$, 
	\begin{align*}
		\alpha^2+\beta^3\leq& C\lc\rVert(1+|\vec{x}|^2)\Phi(g,f)\rVert_{L^1}+\beta(\rVert\Phi(g,f)\rVert_{L^2}+\rVert\Div_{\bar f}h\rVert_{W^{1,2}}+|\BB(h,\chi)|)\rc\nonumber\\
		&+C_\ep\lc \rVert\Phi(g,f)\rVert_{L^2}^{2-\ep}+\rVert\Div_{\bar f}h\rVert_{W^{1,2}}^{2-\ep}+|\BB(h,\chi)|^{2-\ep}\rc.
	\end{align*}
\end{prop}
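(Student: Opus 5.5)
The plan is to prove Proposition \ref{pro:rigidityineq2}, taken from \cite[Theorem 4.10]{li2023rigidity}, by a Taylor expansion of $\Phi(g,f)$ around $(\bar g,\bar f)$, tested against the two kinds of kernel directions in \eqref{equ:ISDbarM}. Write $h=h_1+\tfrac12 h_2+ug_N+\zeta'$ and $\chi=f_1+\tfrac12 f_2+\tfrac m2 u+q'$. Expanding to third order gives
\[
\Phi(g,f)=\Phi'(h,\chi)+\tfrac12\Phi^{(2)}(h,\chi)+\tfrac16\Phi^{(3)}(h,\chi)+O\bigl(|(h,\chi)|^4\bigr).
\]
By Definitions \ref{def:h_k} and \ref{def:f_k}, the pure $h_1$-part reduces, up to $O(\beta^4)$, to $\sum_{l=2}^3\frac{1}{l!}M_l(h_1,h_2)$ modulo $\operatorname{Im}\Phi'$. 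The pure $u$-part contributes $\Phi'(ug_N)=0$ at first order plus an explicit quadratic term $\tfrac12\Phi^{(2)}(ug_N,ug_N)$. The remaining terms are linear or bilinear in $(\zeta',q')$, together with mixed $u h_1$ terms.

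For the $\beta^3$ bound, I will pair $\Phi(g,f)$ in $L^2(\d V_{\bar f})$ with $\tilde h:=\pi_{\isd_N}\bigl(\sum_{l\le3}\frac{1}{l!}M_l\bigr)/\|\cdot\|$, viewed as a tensor on $\bar M$ that is constant in $\vec x$. Self-adjointness kills the $\Phi'(\zeta',q')$ contribution. The obstruction of order $3$ (Definition \ref{def:obs}) gives a lower bound $\ep\beta^3$ for the main term. The error terms are then of size $\beta\|\zeta'\|+\|\zeta'\|^2+\alpha\beta+\alpha^2+\beta^4$, up to $\nabla q'$ analogues. The mixed $u h_1$ term should integrate to zero, because $u\in\KK_0$ is $L^2$-orthogonal to constants in $\vec x$. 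Its second-order remnants appear with a factor $\beta$.

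For the $\alpha^2$ bound, I will follow \cite{colding2021singularities,fang2025strong}. Pair $\Phi(g,f)$ with $\bar g$-traces, or with $ug_N$-type test tensors weighted by $(1+|\vec x|^2)$. The quadratic Hermite interaction $\pi_{\KK_0}(u^2)$ is nondegenerate, so this produces $c\,\alpha^2$ up to errors. This pairing is exactly what forces the weighted $L^1$ norm $\|(1+|\vec x|^2)\Phi\|_{L^1}$ on the right-hand side. Here $\KK_0$ enters through the gauge relation $\chi\approx\frac m2 u$, and the center-of-mass term $\BB$ removes the linear Hermite modes.

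Next I bound all errors with Proposition \ref{pro:rigidityineq1}, which controls $\|\zeta'\|_{W^{k,2}}+\|\nabla q'\|$ by $\|\Phi\|+\|\Div_{\bar f}h\|+|\BB|+\alpha^2+\alpha\beta+\beta^3$. I substitute this into the errors, which are products of a small quantity with this bound. For example, $\beta\|\zeta'\|\le C\beta(\|\Phi\|_{L^2}+\|\Div_{\bar f}h\|_{W^{1,2}}+|\BB|)+C\beta(\alpha^2+\alpha\beta+\beta^3)$. Smallness $\|h\|_{C^2}+\|\chi\|_{C^2}\le\delta$ makes $\alpha,\beta$ small. This lets the terms $\beta\alpha^2$ and $\beta^4$ be absorbed into the left side, and $\alpha\beta^2\le\eta\alpha^2+C_\eta\beta^4$ is absorbed as well. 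Quadratic error terms such as $\|\zeta'\|^2$ produce $(\|\Phi\|+\dots)^2$. These are acceptable provided the weighted norms can be compared.

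The main obstacle is exactly that comparison. The polynomial weights $|\vec x|^2$ and the cubic nonlinearity on the noncompact factor mean that the errors are naturally measured in weighted $L^p$ norms with polynomial growth, not in the $W^{k,2}$ norms of Proposition \ref{pro:rigidityineq1}. I plan to handle this by Hölder interpolation against the Gaussian measure $\d V_{\bar f}$, using the $C^2$ smallness as an $L^\infty$ bound. A quantity such as $\int|\vec x|^{N}|\zeta'|^2$ is then bounded by $C_\ep\|\zeta'\|_{L^2}^{2-\ep}$. This loss is the origin of the exponents $2-\ep$ and the constants $C_\ep$ in the statement.
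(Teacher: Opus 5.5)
The paper does not prove this itself; it quotes it from \cite[Theorem 4.10]{li2023rigidity}. Your outline is essentially the argument of that source and is correct in substance: a third-order expansion of $\Phi$, testing against $\isd_N$ and $\KK_0 g_N$ so that $\Phi'(\zeta',q')$ drops out, the order-$3$ obstruction for $\beta^3$, nondegeneracy of the quadratic Hermite interaction for $\alpha^2$, Proposition \ref{pro:rigidityineq1} for $\zeta'$, and Gaussian H\"older interpolation as the source of the $2-\ep$ loss.

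Three bookkeeping points. First, the $\alpha^2$ test tensors must be $vg_N$ with $v\in\KK_0$, i.e.\ take the $g_N$-trace and not the $\bar g$-trace: $v\bar g\notin\isd$ does not annihilate $\Phi'(\zeta',q')$, and would leave a linear $\zeta'$-error carrying $\alpha^2$, $\beta^3$ and $\|\Div_{\bar f}h\|_{W^{1,2}}$ with $O(1)$ constants. Second, the $\alpha^2$ error you list in the $\beta^3$ pairing actually vanishes, because the $N$-block of the second variation in the $ug_N$ direction is $-|\na u|^2g_N$ and $\int_N\Tr_{g_N}k=0$ for $k\in\isd_N$; otherwise, with an $O(1)$ constant, it could not be absorbed directly into the left side. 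Third, terms linear in $\zeta'$ (such as the $\beta$-$\zeta'$ cross terms) should be bounded by Cauchy--Schwarz against the Gaussian, which costs nothing; reserve the interpolation loss for terms quadratic in $\zeta'$, since an error like $\beta\|\zeta'\|^{1-\ep}$ is not dominated by the right-hand side.
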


\subsection*{Cylindrical and almost cylindrical points}\label{subsec:cylsing}
For basic conventions and results for closed Ricci flow, we refer to \cite[Section 2]{fang2025RFlimit}; for the construction and basic properties of Ricci flow limit spaces, we refer to \cite{fang2025RFlimit}. In this subsection, we focus on cylindrical flows. We consider the standard Ricci flow solution on the generalized cylinder as before:
\begin{align*}\index{$\mathcal C^k$}
\mathcal C^k_N:=(\bar M,(\bar g(t))_{t<0},(\bar f(t))_{t<0})=\left(\R^{k}\times N^{n-k}, g_E \times |t| g_{N}, \frac{|\vec{x}|^2}{4|t|}+\frac{n-k}{2}+\Theta_{N,n-k} \right).
\end{align*}
We denote by $d_{\mathcal C}^*$\index{$d_{\mathcal C}^*$} the spacetime distance on $\mathcal C^k_N$ as in \cite[Definition 3.5]{fang2025RFlimit}, with respect to a spacetime distance constant $\ep_0$ depending on $n$ and $Y$. Here, we implicitly assume $\Theta_{N,n-k} \ge -Y$.

Then, we set the completion of $\mathcal C^k_N$ under $d_{\mathcal C}^*$ by $\bar{\mathcal C}_N^k$\index{$\bar{\mathcal C}^k$}. It is straightforward to verify that the metric completion adds only the singular set $\R^k \times \{0\}$, which is the spine (see \cite[Definition D.4]{fang2025RFlimit}) of $\bar{\mathcal C}_N^k$. We then define the base point $p^*$ as the limit of $(\bar p, t)$ as $t \nearrow 0$ with respect to $d_{\mathcal C}^*$, where $\bar p \in \bar M$ is a minimum point of $\bar f(-1)$. It is clear that $p^*$ is independent of the choice of $\bar p$. Moreover, for any $t<0$,
	\begin{align*}
\nu_{p^*;t}=(4\pi |t|)^{-\frac n 2} e^{-\bar f(t)} \,\mathrm{d}V_{\bar g(t)}.
	\end{align*}

In general, let $\XX=\{M^n, (g(t))_{t \in [-T,0)}\}$ be a closed Ricci flow with entropy bounded below by $-Y$, where $0$ is the first singular time. Suppose $(Z, d_Z, \t)$ is the completion of $\XX$. Fix a point $z \in Z_0$. Then the potential function $f_z$ is smooth for $t<0$. Moreover, we write
	\begin{align*}
\nu_{z;t}=(4\pi |t|)^{-\frac n 2} e^{-f_z} \,\mathrm{d}V_{g(t)}.
	\end{align*}
Then we can define the modified Ricci flow as in \eqref{equ:MRFintroI}.

Next, we consider a general noncollapsed Ricci flow limit space $(Z, d_Z, \t)$ over $\III$, obtained as the limit of a sequence in $\mathcal M(n, Y, T)$ (see \cite[Section 3]{fang2025RFlimit}). Then, we have a definition similar to \cite[Definition 2.22]{fang2025strong}.

\begin{defn}
A point $z \in Z_{\III^-}$ is called an \textbf{$N$-cylindrical point} if one tangent flow at $z$ (see \emph{\cite[Definition 7.8]{fang2025RFlimit}}) is isometric to $\bar{\mathcal C}_N^k$.
\end{defn}

Note that by \cite[Remark 2.24]{fang2025strong}, the tangent flow at an $N$-cylindrical point is unique under our assumption on $N$. Next, we introduce the following definition.

\begin{defn}\label{def:almost0}
Let $(Z, d_Z, \t)$ be a noncollapsed Ricci flow limit space arising as the pointed Gromov--Hausdorff limit of a sequence in $\MM(n, Y, T)$. A point $z \in Z_{\III^-}$ is called \textbf{$(k,\ep,r)$-$N$-cylindrical}\index{$(k,\ep,r)$-cylindrical} if $\t(z)-\ep^{-1} r^2 \in \III^-$ and
	  \begin{align*}
(Z, r^{-1} d_Z, z, r^{-2}(\t-\t(z))) \quad \text{is $\ep$-close to} \quad (\bar{\mathcal C}^k_N ,d^*_{\mathcal C}, p^*,\t) \quad \text{over} \quad [-\ep^{-1}, \ep^{-1}].
  \end{align*} 

Let $\tilde \phi$ be an $\ep$-map as defined in \emph{\cite[Definition 5.37]{fang2025RFlimit}}, which is from $B^*(p^*,\ep^{-1}) \cap \left(\bar{\mathcal C}^k_N\right)_{[-\ep^{-1}, \ep^{-1}]}$ to $Z_{[\t(z)-\ep^{-1} r^2, \t(z)+\ep^{-1} r^2]}$, where $B^*(p^*,\ep^{-1})$ is the metric ball in $\bar{\mathcal C}_N^k$ with respect to $d_{\mathcal C}^*$. Then, we define\emph{:}
\begin{align*}
\LL_{z,r}:=\tilde \phi \lc B^*(p^*,\ep^{-1}) \cap \left(\bar{\mathcal C}^k_N\right)_0 \rc,\index{$\LL_{z,r}$}
\end{align*}
	and say that $z$ is \textbf{$(k,\ep,r)$-$N$-cylindrical with respect to $\LL_{z,r}$}.
\end{defn}

\section{Variations of the \texorpdfstring{$\WW$}{WW}-functional near generalized weighted cylinders}\label{sec:variationineq}

In this section, we derive the variational estimates for $\WW$. Let $(N^{m},g_N)$ be a closed Einstein manifold with $\Ric(g_N)=g_N/2$, which has obstruction of order \(3\) and satisfies the spectral condition
\[
    -\frac l2 \notin \operatorname{spec}\left(\LL_N|_{\mathrm{TT}}\right),
    \qquad \forall l\in \mathbb N^+.
\]
We will consider the generalized weighted cylinder $\lc \CC^{n-m}_N\rc_{-1}=(\bar M,\bar g,\bar f)$ as defined in Example \ref{exa:cylinder}. We follow the notations as in Section \ref{sec:pre}. The constant $C$ appearing in this section depends on $n,m, N$ and may vary from line to line. We denote by $C(A,B,\ldots)$ the constants depending on $A,B$, etc.

For any $\{g,f\}$ near $\{\bar g,\bar f\}$, set $\{h,\chi\}=\{g-\bar g, f-\bar f\}$ and we assume that $h$ and $\chi$ are compactly supported. Assume 
\begin{align*}
	\delta:=\rVert h\rVert_{C^2}+\rVert \chi\rVert_{C^2}\ll 1.
\end{align*}
As in Section \ref{sec:pregencyl}, we set $\pi_{\isd}(h)=h_1+ug_N$ and $\{h_i\},\,\{f_i\}$ to be two sequences of $2$-tensors and functions induced by $h_1$ (see Definitions \ref{def:h_k} and \ref{def:f_k}).  Write
\begin{align}\label{equ:defuh_1}
h=\zeta'+ug_N+h_1+\frac{1}{2} h_2,\quad \chi=q'+\frac{m}{2}u+f_1+\frac{1}{2} f_2,	
\end{align}
where $ug_N\in \KK_0 g_N$ is a quadratic Hermite polynomial on $\R^{n-m}$.  Note that for $u$, the following estimate holds (see \cite[Lemma 5.17]{colding2021singularities}):
\begin{lem}
	There exists a constant $C=C(n)$ such that the following holds for $u\in \KK_0$ on $\R^{n-m}$\textup{:}
	\begin{equation*}
		|u|+|\vec{x}||\nabla u|+(1+|\vec{x}|^2)| \na^2u|\leq C (1+|\vec{x}|^2) \rVert u\rVert_{L^2}.
	\end{equation*}
\end{lem}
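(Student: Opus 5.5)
The estimate is a finite-dimensional statement: $\KK_0$ is the finite-dimensional space of quadratic Hermite polynomials on $\R^{n-m}$. So the plan is to write $u$ explicitly and bound each term directly. Every $u\in\KK_0$ has the form
\[
u(\vec x)=\sum_{i,j}a_{ij}\,(x_ix_j-2\delta_{ij}),
\]
with $(a_{ij})$ a symmetric matrix. The point here is that the constant term is forced by orthogonality to constants, and the normalization of $\KK_0$ follows \cite[Definition 2.4]{fang2025strong}. The map $(a_{ij})\mapsto \|u\|_{L^2}$ (with the Gaussian weight $\d V_{\bar f}$, which on the $\R^{n-m}$ factor is a fixed Gaussian measure) is a norm on the finite-dimensional space of symmetric matrices. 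Hence it is equivalent to the Euclidean norm $|A|$, with constants depending only on $n$. One can even compute it exactly: Hermite orthogonality gives $\|u\|_{L^2}^2=c_1|A|^2+c_2(\Tr A)^2$ with $c_1,c_2\ge0$ and $c_1>0$, so $|A|\le C(n)\|u\|_{L^2}$.

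It then suffices to bound each term pointwise by $C|A|(1+|\vec x|^2)$:
\begin{itemize}
\item $|u|\le |A||\vec x|^2+2|\Tr A|\le C|A|(1+|\vec x|^2)$;
\item $\nabla u=2A\vec x$, so $|\vec x||\nabla u|\le 2|A||\vec x|^2$;
\item $\nabla^2u=2A$, so $(1+|\vec x|^2)|\nabla^2u|\le 2|A|(1+|\vec x|^2)$.
\end{itemize}
Summing these and using $|A|\le C(n)\|u\|_{L^2}$ gives the claimed inequality.

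The only point requiring any care is the norm equivalence on $\KK_0$. This is either abstract (finite-dimensionality plus the fact that $\|\cdot\|_{L^2}$ is a genuine norm, since a nonzero polynomial is nonzero a.e.) or a direct Gaussian-moment computation. Either route gives a constant depending only on the dimension $n-m\le n$, so $C=C(n)$. The $N$-factor plays no role: $u$ is independent of $N$, and the normalization of $\d V_{\bar f}$ makes the $N$-integral contribute a fixed factor absorbed into the constant.
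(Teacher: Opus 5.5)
Your proof is correct. The paper gives no argument of its own here and simply cites \cite[Lemma 5.17]{colding2021singularities}. Your route is the standard proof of that fact: write $u=\vec x^{T}A\vec x-2\Tr A$, use norm equivalence on the finite-dimensional space $\KK_0$ (in fact $\|u\|_{L^2}^2=8|A|^2$ for the normalized Gaussian), and bound $u$, $\nabla u=2A\vec x$, $\nabla^2u=2A$ pointwise. Because $\d V_{\bar f}$ is normalized, the $N$-factor contributes exactly $1$, so the constant depends only on $n$.
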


We also need the following estimate from \cite[Lemma 3.6]{li2023rigidity}:
\begin{lem}\label{lem:coninequ}
	For any $k\geq 0$, there exists a constant $C_k=C_k(n)$ such that for any tensor $T\in W^{k,2}(T^{r,s}\bar M)$,
	\begin{align*}
		\int_{\bar M}\bar f^k|T|^2\,\d V_{\bar f}\leq C_k\rVert T\rVert_{W^{k,2}}^2.
	\end{align*}
\end{lem}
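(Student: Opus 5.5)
The plan is to reduce the statement to a single weighted Poincaré-type inequality on the Gaussian factor and then iterate it over derivatives. Since $\bar f=\frac{|\vec x|^2}{4}+\frac m2+\Theta_{N,m}$ satisfies $|\bar f|\le C(1+|\vec x|^2)$, it suffices to prove
\begin{align*}
\int_{\bar M}(1+|\vec x|^2)^k|T|^2\,\d V_{\bar f}\le C_k\rVert T\rVert_{W^{k,2}}^2 .
\end{align*}
Working with $1+|\vec x|^2$ rather than $\bar f$ itself also removes any sign issue, since $\bar f$ may be negative if $\vol(N)$ is small. By density it is enough to treat compactly supported smooth $T$, so integrations by parts produce no boundary terms.

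\textbf{Step 1 (base inequality).} For a compactly supported Lipschitz function $\varphi$, use the identities
\begin{align*}
|\bar\na\bar f|^2=\frac{|\vec x|^2}{4},\qquad \Delta\bar f=\frac{n-m}{2},\qquad \Delta_{\bar f}\bar f=\frac{n-m}{2}-|\bar\na \bar f|^2 .
\end{align*}
Integrating $\varphi^2\Delta_{\bar f}\bar f$ against $\d V_{\bar f}$ and integrating by parts gives
\begin{align*}
\int|\bar\na\bar f|^2\varphi^2\,\d V_{\bar f}=\frac{n-m}{2}\int\varphi^2\,\d V_{\bar f}+2\int\varphi\la\bar\na\varphi,\bar\na\bar f\ra\,\d V_{\bar f}.
\end{align*}
Apply Cauchy--Schwarz to the last term and absorb half of $\int|\bar\na\bar f|^2\varphi^2$ into the left-hand side. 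This yields
\begin{align*}
\int|\vec x|^2\varphi^2\,\d V_{\bar f}\le C\int\big(\varphi^2+|\bar\na\varphi|^2\big)\,\d V_{\bar f}.
\end{align*}

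\textbf{Step 2 (induction on the weight).} Prove by induction on $j$ that for all $i,j\ge0$,
\begin{align*}
\int(1+|\vec x|^2)^j|\bar\na^iT|^2\,\d V_{\bar f}\le C\rVert T\rVert^2_{W^{i+j,2}} .
\end{align*}
The case $j=0$ is the definition of the norm, and the lemma is the case $i=0$, $j=k$. For the inductive step, apply Step 1 with
\begin{align*}
\varphi=(1+|\vec x|^2)^{(j-1)/2}|\bar\na^iT| .
\end{align*}
By Kato's inequality $|\bar\na|\bar\na^iT||\le|\bar\na^{i+1}T|$, and $|\bar\na(1+|\vec x|^2)^{(j-1)/2}|\le C(1+|\vec x|^2)^{(j-2)/2}$, so
\begin{align*}
|\bar\na\varphi|^2\le C(1+|\vec x|^2)^{j-1}|\bar\na^{i+1}T|^2+C(1+|\vec x|^2)^{j-2}|\bar\na^iT|^2 .
\end{align*}
Both terms on the right carry weight exponent at most $j-1$, so the induction hypothesis bounds them by $C\rVert T\rVert^2_{W^{i+j,2}}$. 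The term $\int\varphi^2$ has the same weight exponent $j-1$ and is handled identically.

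The only real point is the absorption in Step 1. It works because $\Delta_{\bar f}\bar f$ equals the bounded constant $\frac{n-m}{2}$ minus exactly $|\bar\na\bar f|^2\sim|\vec x|^2$. This is a consequence of the Gaussian structure of $\bar f$ on the Euclidean factor; the compact factor $N$ plays no role. Everything else is bookkeeping of weight powers, and the constants depend only on $n$ and $k$ (the factor $N$ does not enter).
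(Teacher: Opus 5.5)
Your argument is correct and is the standard proof of the estimate, which the paper does not prove itself but quotes from \cite[Lemma 3.6]{li2023rigidity}: the identity $\Delta_{\bar f}\bar f=\frac{n-m}{2}-|\bar\na\bar f|^2$ gives the one-step weighted Poincar\'e inequality, and induction on the weight with Kato's inequality gives the higher powers. One correction to your closing remark: the reduction $|\bar f|\le C(1+|\vec x|^2)$ requires $C\ge|\frac m2+\Theta_{N,m}|$, so your final constant depends on $N$ through $\vol(N)$. Some such dependence is unavoidable, since $T\equiv1$ with $k=2$ forces $C_2\ge(\frac n2+\Theta_{N,m})^2$ by Cauchy--Schwarz. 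This is harmless here, because all constants in this section are allowed to depend on $N$.
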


As in \cite[Section 4]{li2023rigidity}, we set 
\begin{align*}
	g(s,t)=\bar g+s\zeta +th_1+\frac{t^2}{2}h_2,\quad f(s,t)=\bar f+sq+tf_1+\frac{t^2}{2}f_2,
\end{align*}
where $\zeta=\zeta'+ug_N,\, q=q'+\frac{m}{2}u$ and
\begin{align*}
	g^0(s,t)=\bar g+sug_N +th_1+\frac{t^2}{2}h_2,\quad f^0(s,t)=\bar f+\frac{m}{2}su+tf_1+\frac{t^2}{2}f_2.
\end{align*}
Then $g=g(1,1), f=f(1,1)$. For simplicity, we set
\begin{align*}
	\begin{dcases}
		& \d V_{s,t}:=\d V_{f(s,t)},\\
		&V(s,t)=\int_{\bar M} 1 \,\d V_{s,t},\\
		&Q(s,t)=\frac{1}{2} \Tr_{g(s,t)} \zeta-q,\\
		&P(s,t):=\frac{1}{2}\Tr_{g(s,t)}(h_1+th_2)-(f_1+tf_2),\\
		&w(s,t):= 2\Delta_{g(s,t)} f(s,t)-|\nabla_{g(s,t)} f(s,t)|^2+\scal (g(s,t))+f(s,t)-n,\\
		&W(s,t)=\mathbf{\WW}(g(s,t),f(s))= \int_{\bar M}  w(s,t) \,\d V_{s,t},\\
		&\Phi(s,t)=\mathbf{\Phi}(g(s,t), f(s,t))=\frac{g(s,t)}{2}-\Ric(g(s,t))-\na^2_{g(s,t)} f(s,t),\\
		&\Phi^0(s,t)=\mathbf{\Phi}(g^0(s,t),f^0(s,t))=\frac{g^0(s,t)}{2}-\Ric(g^0(s,t))-\na^2_{g^0(s,t)} f^0(s,t).
	\end{dcases}
\end{align*}
For the rest of this section, all geometric quantities such as $\Ric$, $\na$, $\na^2$, and $\Div_f$ are defined with respect to the pair $(g(s,t), f(s,t))$, though we omit the subscript $g(s,t)$ and the parameter $s$ and $t$ for simplicity. The norms $\|\cdot\|_{L^1}$, $\|\cdot\|_{L^2}$, $|\cdot|$ and $[\cdot]_l$ are taken with respect to $\d V_{s,t}$ and $g(s,t)$, respectively. Notice that since $\|h\|_{C^2}+\|\chi\|_{C^2}$ is sufficiently small, the volume form $\d V_{s,t}$ is uniformly comparable to $\d V_{0,0}=\d V_{\bar f}$, and the norms $[\cdot]_l$ are uniformly comparable to the corresponding norms defined with respect to $\bar g$ for $l \le 2$.

As in Section \ref{sec:pregencyl}, we define $\alpha:=\rVert u\rVert_{L^2}$ and $\beta:=\rVert h_1\rVert_{L^2}$. Then by \cite[Lemma 4.5]{li2023rigidity}, we have
\begin{align}\label{equ:basicesti1}
	\alpha+\beta\leq C\delta, \quad \rVert h_1\rVert_{C^5}+\rVert f_1\rVert_{C^5}\leq C\beta, \quad \rVert h_2\rVert_{C^5}+\rVert f_2\rVert_{C^5}\leq C\beta^2.
\end{align}

For simplicity, we define
\begin{align}\label{equ:defmathfrakX}
\mathfrak{X}:=\|\Phi(g,f)\|_{L^2}+\|\Div_{\bar f} h\|_{W^{1,2}}+|\mathcal B(h,\chi)|	.
\end{align}
Then we can easily get 
$$\mathfrak{X}\leq C\delta,\quad [\zeta]_2+[q]_2\leq C(\delta+\beta)\leq C\delta,\quad [\zeta']_2+[q']_2\leq C\delta(1+|\vec x|^2).$$
And it follows from Propositions \ref{pro:rigidityineq1}, \ref{pro:rigidityineq2} that
\begin{cor}\label{cor:rigidineq}
	We have
	\begin{align}\label{basicesti2}
		\alpha^2+\beta^3\leq C\mathfrak{X},\quad \rVert\zeta'\rVert_{W^{2,2}}+\rVert\na q'\rVert_{W^{1,2}}\leq C\mathfrak{X}^{\frac{5}{6}}.
	\end{align}
\end{cor}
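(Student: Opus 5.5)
The first inequality of \eqref{basicesti2} will come from Proposition~\ref{pro:rigidityineq2}, and the second from inserting the first into Proposition~\ref{pro:rigidityineq1} with $k=2$. Throughout we use $\delta\ll 1$, so $\mathfrak X\le C\delta\le 1$.

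\textbf{Step 1: the bound $\alpha^2+\beta^3\le C\mathfrak X$.} The only term in Proposition~\ref{pro:rigidityineq2} not of the form appearing in $\mathfrak X$ is the weighted $L^1$ term. By Cauchy--Schwarz and Lemma~\ref{lem:coninequ} (using $|\vec x|^2\le 4\bar f$), this term satisfies
\[
\|(1+|\vec x|^2)\Phi\|_{L^1}\le C\|\Phi\|_{L^2}\le C\mathfrak X .
\]
I am reading $\|\Phi\|_{L^2}$ as the $L^2$ norm over the finite measure $\d V_{\bar f}$, with the quadratic weight absorbed. Strictly, Lemma~\ref{lem:coninequ} with $k=2$ would ask for a $W^{2,2}$ norm of $\Phi$. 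I expect to replace this by a direct estimate: split $\{\bar f\le R\}$ from its complement, and use the $C^0$ smallness of $\Phi$ together with the Gaussian decay of $\d V_{\bar f}$ on the complement. If that fails, I would accept the $L^1$ term as part of $\mathfrak X$, as the paper implicitly does.

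The remaining terms are handled as follows. The term $\beta(\|\Phi\|_{L^2}+\cdots)$ is at most $\beta\mathfrak X\le C\delta\mathfrak X$. The $C_\ep$ terms are at most $C_\ep\mathfrak X^{2-\ep}\le C\mathfrak X$, after fixing, say, $\ep=1/2$ and using $\mathfrak X\le 1$.

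\textbf{Step 2: the bound $\|\zeta'\|_{W^{2,2}}+\|\nabla q'\|_{W^{1,2}}\le C\mathfrak X^{5/6}$.} Take $k=2$ in Proposition~\ref{pro:rigidityineq1}. The first three terms on its right side are bounded by $C\mathfrak X$, since $\|\Phi\|_{W^{0,2}}=\|\Phi\|_{L^2}$. For the nonlinear terms, Step 1 gives $\alpha\le C\mathfrak X^{1/2}$ and $\beta\le C\mathfrak X^{1/3}$. Hence:
\begin{align*}
\alpha^2&\le C\mathfrak X,\\
\beta^3&\le C\mathfrak X,\\
\alpha\beta&\le C\mathfrak X^{1/2+1/3}=C\mathfrak X^{5/6}.
\end{align*}
Since $\mathfrak X\le 1$, every term is at most $C\mathfrak X^{5/6}$, which gives the claim.

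The main obstacle is the treatment of the weighted $L^1$ term in Step 1; everything else is exponent bookkeeping. Note also that the cross term $\alpha\beta$ is exactly what produces the exponent $5/6$ in Step 2.
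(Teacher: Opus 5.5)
Your overall route matches the paper's, which states the corollary as an immediate consequence of Propositions \ref{pro:rigidityineq1} and \ref{pro:rigidityineq2}. Your Step 2 is right: $\alpha\le C\mathfrak X^{1/2}$ and $\beta\le C\mathfrak X^{1/3}$, so $\alpha\beta\le C\mathfrak X^{5/6}$ is the worst term. So is your treatment of the $\beta(\cdots)$ and $C_\ep(\cdots)^{2-\ep}$ terms in Step 1.

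The gap is the step you flag yourself, the weighted $L^1$ term. You leave it open, and none of your three justifications works as stated.
\begin{itemize}
\item Lemma \ref{lem:coninequ} is the wrong tool. As you notice, putting the weight on $\Phi$ before Cauchy--Schwarz requires $\|\Phi\|_{W^{2,2}}$, which is not part of $\mathfrak X$.
\item The splitting argument loses a logarithm. Its tail on $\{\bar f>R\}$ has size about $C\delta e^{-cR}$, up to powers of $R$, and does not scale with $\mathfrak X$. (Note also that $\Phi$ is not uniformly $C^0$-small: $|\Phi|\le C\delta(1+|\vec x|)$, because of the $\nabla^2_g\bar f$ term.) Making the tail $\le C\mathfrak X$ forces $R\sim\log(\delta/\mathfrak X)$. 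The inner piece is then of size $R\,\mathfrak X$, so you only get $\alpha^2+\beta^3\le C\mathfrak X$ up to a power of $\log(1/\mathfrak X)$. That loss would also spoil the exponent $5/6$ in Step 2.
\item Absorbing the $L^1$ term into $\mathfrak X$ is not what the paper does. $\mathfrak X$ is fixed by \eqref{equ:defmathfrakX}, so redefining it changes the statement.
\end{itemize}

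The missing observation is that the weight should go on the other factor in Cauchy--Schwarz:
\begin{align*}
\int_{\bar M}(1+|\vec x|^2)\,|\Phi(g,f)|\,\d V_{\bar f}\le\lc\int_{\bar M}(1+|\vec x|^2)^2\,\d V_{\bar f}\rc^{1/2}\|\Phi(g,f)\|_{L^2}\le C(n,m,N)\,\|\Phi(g,f)\|_{L^2}.
\end{align*}
This holds because $\d V_{\bar f}$ is a normalized Gaussian in $\vec x$ times a constant multiple of the volume form of $N$, so all polynomial moments in $|\vec x|$ are finite. The measures $\d V_{s,t}$ are also uniformly comparable to $\d V_{\bar f}$, so the choice of reference measure is harmless. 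With this line replacing your Step 1 discussion, the weighted term is at most $C\mathfrak X$, and the proof is complete and coincides with the argument the paper has in mind.
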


The next proposition follows immediately from Definitions \ref{def:ISD}, \ref{def:h_k} and \ref{def:f_k}: 
\begin{prop}
	$h_1,h_2,f_1,f_2$ satisfy that at $s=t=0$,
	\begin{align*}
		\mathbf{\Phi}^{(1)}((h_1,f_1))=0,\quad \Div_{\bar f}h_1=0,\quad \mathbf{\Phi}^{(1)}((h_2,f_2))+\pi_{(\ker\Phi')^{\perp}}\mathbf{\Phi}^{(2)}((h_1,f_1),(h_1,f_1))=0,
	\end{align*}
	and
	\begin{align*}
		f_1=\frac{1}{2}\Tr_{\bar g}h_1(\Leftrightarrow P(0,0)=0),\quad (2\Delta_{\bar f}+1)f_2+L_2(h_1,h_2,f_1)=0.
	\end{align*}
\end{prop}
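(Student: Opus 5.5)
The proposition is essentially a translation of Definitions \ref{def:ISD}, \ref{def:h_k} and \ref{def:f_k}, stated for $(N,g_N)$, into statements about the pair $(h,\chi)$ on the product $(\bar M,\bar g,\bar f)$. The plan is to verify each identity by lifting the $N$-quantities to $\bar M$ through the product structure. The first step is to relate the linearization of $\mathbf{\Phi}$ on $\bar M$ to $\Phi'$ on $N$. Since $h_1,h_2,f_1,f_2$ are pulled back from $N$, every covariant derivative in the $\R^{n-m}$ directions vanishes. Moreover, for tensors independent of $\vec x$, the weight $\bar f$ only couples to the Euclidean factor. Hence $\Div_{\bar f}h_1=\Div_{g_N}h_1$, and for such variations $\mathbf{\Phi}^{(1)}((h,f))=\frac h2-\Ric'(h)-\na^2 f-\frac12(\na h)*\na\bar f$. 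The last term vanishes here, because $h$ has no $\R^{n-m}$-components and no $\vec x$-dependence.

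Second, we handle the first-order identities. By Definition \ref{def:ISD}, $h_1\in\ker\Div$, which gives $\Div_{\bar f}h_1=0$ directly. By the formula \eqref{equ:1stPhi}, $h_1\in\ker\Div$ forces the auxiliary function $f_1$ there to equal $\Tr_{g_N}(h_1)/2$. This matches Definition \ref{def:f_k}, and since $\Tr_{\bar g}h_1=\Tr_{g_N}h_1$ we get $P(0,0)=0$. Next, $\Phi'(h_1)$ on $N$ is precisely the first variation of $\frac g2-\Ric-\na^2\rho_g$ in the direction $h_1$, with $\rho_g$ varying by $f_1$, up to the constant normalization, which does not affect $\na^2$. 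It is therefore the restriction of $\mathbf{\Phi}^{(1)}((h_1,f_1))$, and this vanishes.

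Third, the second-order identity. Differentiating $\Phi(g)=\mathbf{\Phi}(g,\rho_g)$ twice gives $M_2(h_1)=\Phi^{(2)}(h_1,h_1)$. This equals $\mathbf{\Phi}^{(2)}((h_1,f_1),(h_1,f_1))+\mathbf{\Phi}^{(1)}((0,\rho''))$, where $\rho''$ is the second variation of the minimizer. The defining equation of $f_2$ in Definition \ref{def:f_k} is exactly the linearized minimizer equation $(2\Delta+1)\rho''+L_2=0$, with $L_2$ obtained by differentiating \cite[Equation (2.51)]{li2023rigidity} twice. So $f_2$ is the variation of $\rho$ along $h_1+\frac{t^2}{2}h_2$ at second order, and composing with Definition \ref{def:h_k} gives $\mathbf{\Phi}^{(1)}((h_2,f_2))+\pi_{(\ker\Phi')^\perp}\mathbf{\Phi}^{(2)}((h_1,f_1),(h_1,f_1))=0$. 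The $\Delta$ in this equation becomes $\Delta_{\bar f}$ on functions independent of $\vec x$.

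The main obstacle is the bookkeeping in this second-order step. One has to check that the $\Phi^{(2)}$, $M_2$ conventions of \cite{li2023rigidity}, where $\rho_g$ is built into $\Phi$, agree with the two-variable $\mathbf{\Phi}^{(2)}$ used here once $f_2$ absorbs the $\rho''$ contribution. It must also be checked that the projection onto $(\ker\Phi')^\perp$ on $N$ coincides with the projection on $\bar M$ restricted to $\vec x$-independent tensors. The first check I would do by an explicit chain rule along the curve $t\mapsto g_N+th_1+\frac{t^2}{2}h_2$. The second I would do using that the kernel splits as in \eqref{equ:ISDbarM}.
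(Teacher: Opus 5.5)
Your route is the same as the paper's, which only says that the proposition follows from Definitions \ref{def:ISD}, \ref{def:h_k} and \ref{def:f_k}. Your treatment of $\Div_{\bar f}h_1=0$, $\mathbf{\Phi}^{(1)}((h_1,f_1))=0$, $f_1=\frac12\Tr_{\bar g}h_1$ and the $f_2$-equation via the product structure is fine.

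\textbf{The gap.} The failing step is the sentence ``composing with Definition \ref{def:h_k} gives \ldots''. Write $\rho'(h)$ and $\rho''=\rho^{(2)}(h_1,h_1)$ for the derivatives of $g\mapsto\rho_g$ at $g_N$, so that $f_1=\rho'(h_1)$ and, as you say yourself, $f_2=\rho''+\rho'(h_2)$ up to a constant. Using $\mathbf{\Phi}^{(1)}((0,\phi))=-\na^2\phi$, the chain rule you propose gives
\begin{align*}
\Phi'(h_2)=\mathbf{\Phi}^{(1)}((h_2,f_2))+\na^2\rho'',\qquad
\Phi^{(2)}(h_1,h_1)=\mathbf{\Phi}^{(2)}((h_1,f_1),(h_1,f_1))-\na^2\rho''.
\end{align*}
Hessians lie in $\mathrm{Im}(\Div^*)\subset\ker\Phi'$. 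So the projection in Definition \ref{def:h_k} removes $\na^2\rho''$ from the second term, but nothing removes it from the first. Definition \ref{def:h_k} therefore yields
\begin{align*}
\mathbf{\Phi}^{(1)}((h_2,f_2))+\pi_{(\ker\Phi')^{\perp}}\mathbf{\Phi}^{(2)}((h_1,f_1),(h_1,f_1))=-\na^2\rho''.
\end{align*}
Letting $f_2$ ``absorb'' $\rho''$ moves that term from a projected slot to an unprojected one. This is exactly the bookkeeping issue you flagged, which you then resolved by assertion rather than by the computation.

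\textbf{The leftover term is nonzero in general.} Take $N=S^2\times S^2$ with $\Ric=\frac12 g_N$, and $h_1=\frac12 v\,g_2$ with $v$ a first eigenfunction of the first factor ($\Delta v=-v$). Then $h_1$ is divergence free and $\LL h_1=\frac12(\Delta v+v)g_2=0$, so $h_1\in\isd_N$. Differentiate the Euler--Lagrange equation $2\Delta\rho-|\na\rho|^2+\scal+\rho=\mathrm{const}$ twice along the warped products $g_1+(1+\frac t2 v)g_2=g_N+th_1$. This gives $(2\Delta+1)\rho''=\mathrm{const}-\frac34|\na v|^2+\frac12 v^2$, which is not constant. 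Hence $\na^2\rho''\neq0$.

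So the third identity, read literally with $\mathbf{\Phi}^{(1)}$ the two-variable derivative, does not follow from the definitions, and the paper's one-line justification does not address this. What the definitions do give, using $\Phi'(h_2)=-\pi_{(\ker\Phi')^\perp}M_2(h_1)\in(\ker\Phi')^\perp$, is
\begin{align*}
\pi_{(\ker\Phi')^{\perp}}\Big(\mathbf{\Phi}^{(1)}((h_2,f_2))+\mathbf{\Phi}^{(2)}((h_1,f_1),(h_1,f_1))\Big)=0.
\end{align*}
Equivalently, $\Phi_{tt}(0,0)=\pi_{\ker\Phi'}\Phi^{(2)}(h_1,h_1)\in\ker\Phi'$, or the stated identity holds with $f_2$ replaced by $\rho'(h_2)$. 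This is the version you should prove. It is also the form the paper actually relies on later, through $\Phi_{tt}(0,0)\in\isd_N$ in Proposition \ref{2ndderiat0}.
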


In the remaining part of this section, we will calculate and estimate derivatives of $W$, following similar ideas as in \cite[Section 4]{fang2025strong}.
Denote for simplicity, 
\begin{align*}
	\bar W:=W(0,0)(=w(0,0)),\quad W:=W(1,1),\quad \bar Q:=\int_{\bar M} Q(0,0)\,\d V_{0,0}.
\end{align*}

Note first that at $s=t=0$, we have
\begin{align}\label{phideri1}
	\Phi_s^0(0,0)=\Phi_t^0(0,0)=\Phi_t(0,0)=0,\quad \Phi_s(0,0)=\frac{1}{2}\LL\zeta+\Div_{\bar f}^*\, \Div_{\bar f}\zeta+\na^2Q(0,0).
\end{align}
And for any $s,t\in [0,1]$, we have $\Phi_s=\mathbf{\Phi}^{(1)}\big((\zeta,q)\big), \Phi_t=\mathbf{\Phi}^{(1)}\big((h_1,f_1)\big)$ and
\begin{align*}
	 \Phi_{ss}=\mathbf{\Phi}^{(2)}\left((\zeta,q),(\zeta,q)\right),\quad \Phi_{st}=\mathbf{\Phi}^{(2)}\big((\zeta,q),(h_1+th_2,f_1+tf_2)\big),\\ \Phi_{tt}=\mathbf{\Phi}^{(2)}\big((h_1+th_2,f_1+tf_2),(h_1+th_2,f_1+tf_2)\big)+\mathbf{\Phi}^{(1)}\big((h_2,f_2)\big).
\end{align*}
Here we use lower index to indicate derivatives of $\Phi(s,t)$ with respect to $s,\,t$ and we use upper index to indicate derivatives of $\Phi(g,f)$ with respect to $(g,f)$, e.g., 
\begin{align*}
	\mathbf{\Phi}^{(2)}\left((\zeta,q),(\zeta,q)\right)|_{(g,f)}:=\frac{\d ^2}{\d s^2}\bigg|_{s=0} \mathbf{\Phi}(g+s\zeta, f+s q).
\end{align*}

 The next lemma calculates the first derivatives.
 \begin{lem}\label{lem:1stderi}
 	For any $s,t\in [0,1]$, we can calculate the first derivatives as\textup{:}
 	\begin{enumerate}[label=\textnormal{(\roman{*})}]
 		\item 
 		$w_s=-2(\Delta_{ f}+\frac{1}{2})Q+\la\Phi,\zeta\ra+\Div_f\,\Div_f\zeta$.
 		\item $w_t=-2(\Delta_f+\frac{1}{2})P+\la \Phi, h_1+th_2\ra +\Div_f\,\Div_f (h_1+th_2)$.
 		\item $(\d V_{s,t})_s=Q\,\d V_{s,t},\, (\d V_{s,t})_t=P\,\d V_{s,t}$.
 		\item $	W_s=\int_{\bar M}\la \Phi,\zeta\ra \,\d V_{s,t}+\int_{\bar M}Q(w-1)\,\d V_{s,t}$.
 		\item $W_t=\int_{\bar M}\la \Phi,h_1+th_2\ra \,\d V_{s,t}+\int_{\bar M}P(w-1)\,\d V_{s,t}$.
 		\item $P_s=Q_t=-\frac{1}{2}\la \zeta,h_1+th_2\ra.$
 		\item $Q_s=-\frac{1}{2}|\zeta|^2,\, P_t=\frac{1}{2}\Tr_{g(s,t)} h_2-\frac{1}{2}|h_1+th_2|^2-f_2$.
 	\end{enumerate}
 \end{lem}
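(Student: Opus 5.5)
The plan is to differentiate the family $(g(s,t),f(s,t))$ directly. Since $g_s=\zeta$, $f_s=q$, $g_t=h_1+th_2$ and $f_t=f_1+tf_2$, every $t$-derivative formula is the $s$-derivative formula with $(\zeta,q)$ replaced by $(h_1+th_2,f_1+tf_2)$. So I will prove (i), (iii), (iv) for a general variation $(\delta g,\delta f)=(k,\phi)$ and then specialize: $Q$ is the case $(k,\phi)=(\zeta,q)$ and $P$ is the case $(k,\phi)=(h_1+th_2,f_1+tf_2)$.

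\textbf{Step 1 (volume form), item (iii).} We have $\d V_{s,t}=(4\pi)^{-n/2}e^{-f}\d V_g$ and $\delta(\d V_g)=\tfrac12\Tr_g k\,\d V_g$. Hence
\[
\delta\,\d V_{f}=\bigl(\tfrac12\Tr_g k-\phi\bigr)\d V_f .
\]
This gives $(\d V_{s,t})_s=Q\,\d V_{s,t}$ and $(\d V_{s,t})_t=P\,\d V_{s,t}$.

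\textbf{Step 2 (pointwise variation), items (i)--(ii).} I will use the classical first variation formulas:
\[
\delta\scal=-\Delta\Tr k+\Div\Div k-\la\Ric,k\ra,\qquad
\delta(\Delta f)=\Delta\phi-\la k,\na^2 f\ra-\la \Div k-\tfrac12\na\Tr k,\na f\ra,
\]
\[
\delta|\na f|^2=-k(\na f,\na f)+2\la\na f,\na\phi\ra .
\]
Combining them with $\delta f=\phi$, I will rewrite all terms via $\Div_f=\Div-\iota_{\na f}$ and $\Delta_f=\Delta-\na_{\na f}$. The target identity is
\[
\delta w=-2\bigl(\Delta_f+\tfrac12\bigr)\bigl(\tfrac12\Tr k-\phi\bigr)+\la\Phi,k\ra+\Div_f\Div_f k .
\]
The way to organize this is to expand $\Div_f\Div_f k=\Div\Div k-2\la\Div k,\na f\ra-\la k,\na^2 f\ra+k(\na f,\na f)$. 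Then I match terms: the $\la\Ric,k\ra$ and $\la\na^2f,k\ra$ terms combine with $\tfrac12\Tr k$ into $\la\Phi,k\ra$. The Laplacian-of-trace terms and $\phi$ terms assemble into $-2(\Delta_f+\tfrac12)(\tfrac12\Tr k-\phi)$. This bookkeeping is the only genuinely laborious step, and I expect it to be the main obstacle. It is purely algebraic, however, and identical to the computation in \cite[Section 4]{fang2025strong}.

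\textbf{Step 3 (integrated variation), items (iv)--(v).} By Steps 1 and 2, with $v=\tfrac12\Tr k-\phi$,
\[
\delta\!\int w\,\d V_f=\int\bigl(\delta w+wv\bigr)\d V_f .
\]
Here $\int\Div_f\Div_f k\,\d V_f=0$ and $\int\Delta_f v\,\d V_f=0$, by weighted integration by parts. This is legitimate because $h,\chi$, and hence $k,v$, are compactly supported (in the $t$-direction, $h_1,h_2,f_1,f_2$ are smooth and controlled by \eqref{equ:basicesti1}, with $v$ decaying suitably against $e^{-f}$). What remains is $\int\la\Phi,k\ra\d V_f+\int v(w-1)\d V_f$, which gives (iv) and (v).

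\textbf{Step 4 (derivatives of $P$ and $Q$), items (vi)--(vii).} I will use $\partial(g^{-1})=-g^{-1}(\partial g)g^{-1}$. For the $s$-derivative of $Q=\tfrac12\Tr_g\zeta-q$, since $\zeta,q$ do not depend on $s$, we get $Q_s=-\tfrac12|\zeta|^2$. Similarly $Q_t=-\tfrac12\la\zeta,h_1+th_2\ra$, and $P_s=-\tfrac12\la h_1+th_2,\zeta\ra$, so $P_s=Q_t$. Finally, $P_t$ picks up the explicit $t$-dependence:
\[
P_t=\tfrac12\Tr_g h_2-\tfrac12|h_1+th_2|^2-f_2 .
\]
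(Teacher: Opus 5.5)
Your proposal is correct and follows the paper's route. The paper cites \cite[Lemma 4.9]{fang2025strong} for (i)--(ii), whose algebra you reproduce (your term-matching checks out). It treats (iii), (vi), (vii) as direct computations, and obtains (iv)--(v) by integrating $w_s+wQ$ (resp.\ $w_t+wP$) by parts against $\d V_{s,t}$, as you do.

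The one slip is in Step 3. There, $\zeta=h-h_1-\tfrac12 h_2$ and $q=\chi-f_1-\tfrac12 f_2$ are not compactly supported, because $h_1,h_2,f_1,f_2$ come from $N$ and are independent of $\vec x$. The integrations by parts should instead be justified as in your $t$-direction remark: these tensors are bounded in $C^2$, and $e^{-f(s,t)}$ has Gaussian decay.
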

 \begin{proof}
 	Item (i) and (ii) follow from \cite[Lemma 4.9]{fang2025strong} and item (iii), (vi) and (vii) follow from direct calculations. For item (iv) and (v), notice that
 $$W_s=\int_{\bar M}w_s\,\d V_{s,t}+\int_{\bar M}w \,(\d V_{s,t})_s,\quad W_t=\int_{\bar M}w_t\,\d V_{s,t}+\int_{\bar M}w \,(\d V_{s,t})_t. $$
Thus by integration by parts using item (i) and (iii), we have
\begin{align*}
	W_s&=\int_{\bar M}-2(\Delta_{ f}+\frac{1}{2})Q+\la\Phi,\zeta\ra+\Div_f\Div_f\zeta\, \d V_{s,t}+\int_{\bar M} wQ\,\d V_{s,t}\\
	&=\int_{\bar M}\la \Phi,\zeta\ra \,\d V_{s,t}+\int_{\bar M}Q(w-1)\,\d V_{s,t}.
\end{align*}
A similar formula also holds for $W_t$. This completes the proof.
 \end{proof}

When evaluating at $(s,t)=(0,0)$, we can obtain:
\begin{cor}\label{cor:1stderiat0}
	The following holds at $(s,t)=(0,0)$\textup{:}
	\begin{enumerate}[label=\textnormal{(\roman{*})}]
		\item $w_s=-2(\Delta_{\bar f}+\frac{1}{2})Q(0,0)+\Div_{\bar f}\, \Div_{\bar f} \zeta,\, w_t= \Div_{\bar f}\, \Div_{\bar f} h_1=0$.
		
		\item $(\d V_{s,t})_s(0,0)=Q(0,0)\,\d V_{0,0},\, (\d V_{s,t})_t(0,0)=0$.
		
		\item $W_s(0,0)=(\bar W-1)\bar Q,\, W_t(0,0)=0$.
		
		\item $P_s=Q_t=-\frac{1}{2}\la\zeta,h_1\ra$.
		
		\item  $Q_s=-\frac{1}{2}|\zeta|^2,\, P_t=\frac{1}{2}\Tr_{\bar g}h_2-\frac{1}{2}|h_1|^2-f_2$.
	\end{enumerate}
	
\end{cor}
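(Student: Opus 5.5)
The plan is to evaluate each formula of Lemma \ref{lem:1stderi} at $(s,t)=(0,0)$, where $g(0,0)=\bar g$ and $f(0,0)=\bar f$, and use that $(\bar M,\bar g,\bar f)$ is a normalized Ricci shrinker with $\mathbf{\Phi}(\bar g,\bar f)=0$ and $w(0,0)\equiv \bar W$ constant. This last fact holds because on the generalized cylinder $\scal+|\bar\na\bar f|^2-\bar f$ is constant and $\Delta\bar f=\frac n2-\scal$. Hence $\bar W=W(0,0)=w(0,0)$, using $\mathbf{\VV}(\bar g,\bar f)=1$.

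For (i), substitute $\Phi(0,0)=0$ into Lemma \ref{lem:1stderi}(i),(ii). The $w_s$ formula follows immediately. For $w_t$ we additionally need $P(0,0)=0$, which is exactly the relation $f_1=\frac12\Tr_{\bar g}h_1$ from the preceding proposition. We also need $\Div_{\bar f}h_1=0$, so that $\Div_{\bar f}\Div_{\bar f}h_1=0$; this is again in that proposition, since $h_1\in\isd_N$ is divergence free.

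For (ii), read off Lemma \ref{lem:1stderi}(iii) and use $P(0,0)=0$.

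For (iii), plug $\Phi=0$ and $w=\bar W$ into Lemma \ref{lem:1stderi}(iv),(v). This gives $W_s(0,0)=(\bar W-1)\int Q(0,0)\,\d V_{0,0}=(\bar W-1)\bar Q$ and $W_t(0,0)=(\bar W-1)\int P(0,0)\,\d V_{0,0}=0$.

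Items (iv) and (v) are Lemma \ref{lem:1stderi}(vi),(vii) specialized to $t=0$, with $g(0,0)=\bar g$.

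The only point requiring care is justifying that $w(0,0)$ is pointwise constant and equals $\bar W$. The constancy comes from the explicit form of $\bar f$ on $\R^{n-m}\times N$. The identification with $\bar W$ then uses the normalization $\mathbf{\VV}(\bar g,\bar f)=1$.
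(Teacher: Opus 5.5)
Your proposal is correct and matches the paper. The paper obtains the corollary by evaluating Lemma \ref{lem:1stderi} at $(s,t)=(0,0)$, using $\mathbf{\Phi}(\bar g,\bar f)=0$, $P(0,0)=0$, $\Div_{\bar f}h_1=0$ and $w(0,0)\equiv\bar W$. Your remark that $w(0,0)=-(\scal+|\na\bar f|^2-\bar f)=\Theta_{N,m}$ is constant, and equals $\bar W$ by $\mathbf{\VV}(\bar g,\bar f)=1$, supplies the one fact the paper uses without comment.
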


Then we consider second derivatives of $P$ and $Q$. In the remaining of this section, we always do the calculations under normal coordinates of $g(s,t)$.
\begin{lem}\label{lem:2ndderi1}
	For any $s,t\in [0,1]$, we have
	\begin{enumerate}[label=\textnormal{(\roman{*})}]
		\item $P_{ss}=Q_{st}=\zeta_{ij}(h_1+th_2)_{jl}\zeta_{il},\, P_{st}=Q_{tt}=-\frac{1}{2}\la \zeta,h_2\ra+\zeta_{ij}(h_1+th_2)_{jl}(h_1+th_2)_{il}$.
		
		\item $P_{tt}=-\frac{3}{2}(h_1+th_2)_{ij}(h_2)_{ij}+(h_1+th_2)_{ij}(h_1+th_2)_{jl}(h_1+th_2)_{il}$.
		
		\item $Q_{ss}=\zeta_{ij}\zeta_{jl}\zeta_{il}$.
	\end{enumerate}
\end{lem}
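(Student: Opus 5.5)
All six identities follow by differentiating the expressions for the first derivatives in Lemma \ref{lem:1stderi} (vi), (vii) once more. The only inputs are the $(s,t)$-dependence of $g(s,t)$ and the variation formula for the inverse metric. Set $H:=h_1+th_2$. Since $g(s,t)=\bar g+s\zeta+th_1+\frac{t^2}{2}h_2$, we have
\begin{align*}
\partial_s g=\zeta,\quad \partial_t g=H,\quad \partial_s^2 g=\partial_s\partial_t g=0,\quad \partial_t^2 g=h_2 .
\end{align*}
The tensors $\zeta,h_1,h_2$ and the functions $q,f_1,f_2$ are independent of $(s,t)$. For any fixed symmetric $2$-tensor $T$ and any parameter $a\in\{s,t\}$,
\begin{align*}
\partial_a\big(g^{ij}\big)=-g^{ik}(\partial_a g)_{kl}g^{lj}.
\end{align*}
Hence $\partial_a \Tr_g T=-\la \partial_a g,T\ra$. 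For a bilinear contraction we get
\begin{align*}
\partial_a\la A,B\ra=\la\partial_aA,B\ra+\la A,\partial_aB\ra-2(\partial_ag)_{ij}A_{jl}B_{il}.
\end{align*}
Here the factor $2$ comes from the two inverse metrics in $g^{ik}g^{jl}A_{ij}B_{kl}$, and we use the symmetry of all tensors involved. In normal coordinates of $g(s,t)$ each contraction is a plain sum over repeated indices, so the cubic terms take the form $A_{ij}B_{jl}C_{il}$, which is symmetric under permuting $A,B,C$.

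First, item (iii). Start from $Q_s=-\frac12|\zeta|^2$. Because $\zeta$ does not depend on $s$, only the inverse-metric terms contribute. This gives $Q_{ss}=-\frac12\cdot(-2)\zeta_{ij}\zeta_{jl}\zeta_{il}=\zeta_{ij}\zeta_{jl}\zeta_{il}$.

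Second, item (i). From $Q_t=P_s=-\frac12\la\zeta,H\ra$ we get $P_{ss}=Q_{st}$ and $P_{st}=Q_{tt}$ at once. For $P_{ss}=\partial_s(-\frac12\la\zeta,H\ra)$, we have $\partial_sH=0$, so only the inverse metrics contribute, giving $\zeta_{ij}H_{jl}\zeta_{il}$. For $P_{st}=\partial_t(-\frac12\la\zeta,H\ra)$, the identity $\partial_tH=h_2$ produces $-\frac12\la\zeta,h_2\ra$, and the inverse metrics produce $\zeta_{ij}H_{jl}H_{il}$.

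Third, item (ii). We differentiate $P_t=\frac12\Tr_g h_2-\frac12|H|^2-f_2$ in $t$, term by term.
\begin{itemize}
\item The first term gives $-\frac12\la H,h_2\ra$.
\item The second term gives $-\la H,h_2\ra$ from $\partial_t H=h_2$, plus $+H_{ij}H_{jl}H_{il}$ from the inverse metrics.
\item The third term, $f_2$, is constant in $t$ and contributes nothing.
\end{itemize}
Summing yields $-\frac32 H_{ij}(h_2)_{ij}+H_{ij}H_{jl}H_{il}$, as claimed.

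There is no real obstacle; this is bookkeeping. The one point needing care is getting the factor $2$ and the sign right when differentiating the two inverse metrics in a full contraction $\la A,B\ra$, as opposed to the single inverse metric in a trace. This is also why the cubic terms appear with coefficient $1$ rather than $\frac12$.
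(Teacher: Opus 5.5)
Your proposal is correct and is essentially the paper's approach: the paper states this lemma without proof, as a direct computation in normal coordinates of $g(s,t)$, and differentiating Lemma \ref{lem:1stderi} (vi), (vii) once more using $\partial_a g^{ij}=-g^{ik}(\partial_a g)_{kl}g^{lj}$ is exactly that computation. Your bookkeeping is right: the factor $-2$ from the two inverse metrics in $\la A,B\ra$ versus $-1$ in a trace, and the permutation-invariance of $\Tr(ABC)$ for symmetric tensors, together produce the stated coefficients.
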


As a direct corollary of Lemmas \ref{lem:1stderi} and \ref{lem:2ndderi1}, we have:
\begin{cor}\label{estihighderiPQ}
	For any $s,t\in [0,1]$, we have\textup{:}
	\begin{enumerate}[label=\textnormal{(\roman{*})}]
		\item $[P_s]_0\leq C\beta\big([\zeta']_0+(1+|\vec x|^2)\alpha\big),\,[P_t]_0\leq C\beta^2$.
		\item $[Q_s]_0\leq C\big([\zeta']_0^2+(1+|\vec x|^4)\alpha^2\big),\,[Q_t]_0\leq C\beta\big([\zeta']_0+(1+|\vec x|^2)\alpha\big)$.
		\item $[P_{ss}]_0+[Q_{st}]_0\leq C\beta\big([\zeta']_0^2+(1+|\vec x|^4)\alpha^2\big)$.
		\item $[P_{st}]_0+[Q_{tt}]_0\leq C\beta^2\big([\zeta']_0+(1+|\vec x|^2)\alpha\big)$.
		\item $[P_{tt}]_0\leq C\beta^3,\, [Q_{ss}]_0\leq C\big([\zeta']_0^3+(1+|\vec x|^6)\alpha^3\big)$.
	\end{enumerate}
\end{cor}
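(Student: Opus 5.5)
Corollary \ref{estihighderiPQ} is a pointwise bookkeeping consequence of the explicit formulas in Lemma \ref{lem:1stderi} (vi)--(vii) and Lemma \ref{lem:2ndderi1}. Each of $P_s,P_t,Q_s,Q_t$ and each second derivative is a polynomial contraction, taken with respect to $g(s,t)$, of the tensors $\zeta$, $h_1+th_2$, $h_2$ and $f_2$. Since $\delta\ll1$, the metric $g(s,t)$ is uniformly equivalent to $\bar g$, so $[\cdot]_0$ for $g(s,t)$ and for $\bar g$ are comparable. It therefore suffices to bound each factor pointwise in the $\bar g$-norm and multiply.

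The inputs are as follows. First, by \eqref{equ:basicesti1}, $[h_1]_0\le C\beta$, $[h_2]_0+[f_2]_0\le C\beta^2$, and hence $[h_1+th_2]_0\le C\beta$ for $t\in[0,1]$. Second, since $\zeta=\zeta'+ug_N$, the pointwise bound on $u\in\KK_0$ stated above gives $|ug_N|\le C(1+|\vec x|^2)\alpha$. Hence
\[
[\zeta]_0\le [\zeta']_0+C(1+|\vec x|^2)\alpha .
\]
Taking powers and using $(a+b)^k\le C(a^k+b^k)$, we get $[\zeta]_0^2\le C([\zeta']_0^2+(1+|\vec x|^4)\alpha^2)$ and $[\zeta]_0^3\le C([\zeta']_0^3+(1+|\vec x|^6)\alpha^3)$.

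The items then follow by substitution. For (i), $P_s=-\frac12\la\zeta,h_1+th_2\ra$ is bounded by $C\beta[\zeta]_0$. Also $P_t=\frac12\Tr h_2-\frac12|h_1+th_2|^2-f_2$ is bounded by $C\beta^2$. Item (ii) is the same computation: $Q_s=-\frac12|\zeta|^2$ and $Q_t=P_s$. For (iii), the cubic term $\zeta_{ij}(h_1+th_2)_{jl}\zeta_{il}$ is bounded by $C\beta[\zeta]_0^2$. For (iv), $\la\zeta,h_2\ra$ and $\zeta*(h_1+th_2)*(h_1+th_2)$ are both bounded by $C\beta^2[\zeta]_0$. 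For (v), $P_{tt}$ is bounded by $C(\beta\cdot\beta^2+\beta^3)=C\beta^3$, and $Q_{ss}$ is bounded by $[\zeta]_0^3$. Inserting the expansions of the powers of $[\zeta]_0$ from the previous paragraph yields exactly the stated bounds.

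The only point needing care is the metric comparability. The contractions in Lemma \ref{lem:2ndderi1} are written in normal coordinates of $g(s,t)$, whereas the bounds on $h_i$, $f_i$ and $u$ are in $\bar g$. Because $\|h\|_{C^2}\le\delta$ is small, $g(s,t)$ and $\bar g$ differ by a small uniformly bounded tensor, so this costs only a multiplicative constant depending on $n,m,N$. There is no substantive obstacle.
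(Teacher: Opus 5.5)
Your proposal is correct and is the same argument the paper intends. The paper states this as a direct corollary of Lemmas \ref{lem:1stderi} and \ref{lem:2ndderi1} with no further proof: one substitutes the explicit formulas, bounds $h_1,h_2,f_2$ by \eqref{equ:basicesti1}, bounds $u$ pointwise by the Hermite estimate via $\zeta=\zeta'+ug_N$, and uses the uniform equivalence of $g(s,t)$ and $\bar g$, exactly as you do.
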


\begin{lem}\label{lem:Qesti1}
	For any $s,t\in [0,1]$, we have
	\begin{equation*}
		\int_{\bar M} Q^2(s,t)\,\d V_{s,t}\leq C(\alpha^4+\beta^4+\alpha^2\mathfrak{X}^{\frac{5}{3}}+\beta^2 \mathfrak{X}^{\frac{5}{3}}+\delta^2 \mathfrak{X}^{\frac{5}{3}})+4\int_{\bar M} Q^2(0,0)\,\d V_{0,0}.
	\end{equation*}
\end{lem}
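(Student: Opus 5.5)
We use a Taylor expansion of $Q(s,t)$ around $(0,0)$, with the pointwise derivative bounds from Corollary \ref{estihighderiPQ}, followed by integration using Lemma \ref{lem:coninequ} and Corollary \ref{cor:rigidineq}. Write
\begin{align*}
Q(s,t)=Q(0,0)+\int_0^1\frac{\d}{\d\tau}Q(\tau s,\tau t)\,\d\tau ,
\end{align*}
so that pointwise $|Q(s,t)|\le |Q(0,0)|+\sup_{[0,1]^2}\bigl([Q_s]_0+[Q_t]_0\bigr)$. Then $(a+b)^2\le 2a^2+2b^2$ gives
\begin{align*}
Q^2(s,t)\le 2Q^2(0,0)+2\sup\bigl([Q_s]_0+[Q_t]_0\bigr)^2 .
\end{align*}
We integrate against $\d V_{s,t}$. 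This measure is uniformly comparable to $\d V_{0,0}$ with constant close to $1$ (say at most $2$, since $\delta\ll1$). This accounts for the factor $4$ in front of $\int Q^2(0,0)\,\d V_{0,0}$.

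Next we bound the derivative terms using Corollary \ref{estihighderiPQ}(ii):
\begin{align*}
[Q_s]_0+[Q_t]_0\le C\bigl([\zeta']_0^2+(1+|\vec x|^4)\alpha^2+\beta[\zeta']_0+\beta(1+|\vec x|^2)\alpha\bigr).
\end{align*}
Squaring and integrating against $\d V_{\bar f}$ produces four kinds of terms.

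\textbf{Polynomial terms.} These are $\int(1+|\vec x|^8)\alpha^4\,\d V_{\bar f}\le C\alpha^4$, since Gaussian moments are finite, and $\beta^2\alpha^2\int(1+|\vec x|^4)\,\d V_{\bar f}\le C\alpha^2\beta^2\le C(\alpha^4+\beta^4)$.

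\textbf{Mixed term.} By Corollary \ref{cor:rigidineq}, $\beta^2\int[\zeta']_0^2\le \beta^2\|\zeta'\|_{W^{2,2}}^2\le C\beta^2\mathfrak X^{5/3}$.

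\textbf{Quartic term.} Here we use $[\zeta']_0^4\le [\zeta']_0^2\sup[\zeta']_0^2$. The pointwise bound is only $[\zeta']_0\le C\delta(1+|\vec x|^2)$, which is not uniform. We therefore write $[\zeta']_0^4\le C\delta^2(1+|\vec x|^4)[\zeta']_0^2$. Since $|\vec x|^2\le 4\bar f$ up to constants, Lemma \ref{lem:coninequ} with $k=2$ gives
\begin{align*}
\int\bar f^2|\zeta'|^2\,\d V_{\bar f}\le C\|\zeta'\|_{W^{2,2}}^2\le C\mathfrak X^{5/3}.
\end{align*}
This yields the term $\delta^2\mathfrak X^{5/3}$.

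The main point needing care is that the norms $[\cdot]_0$ are taken with respect to $g(s,t)$ rather than $\bar g$. The two are uniformly comparable because $\delta$ is small, so this only changes constants. The same comparability justifies passing from $\d V_{s,t}$ to $\d V_{\bar f}$ in the derivative terms.

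One may also want the term $\alpha^2\mathfrak X^{5/3}$, which arises if one keeps the cross product $(1+|\vec x|^4)\alpha^2[\zeta']_0^2$ instead of splitting it by Cauchy--Schwarz into $\alpha^4$ and $[\zeta']_0^4$. Either route fits within the stated bound. Collecting all the terms gives the lemma.
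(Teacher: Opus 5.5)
Your argument is correct, but it takes a different route from the paper. The paper works with the integrated quantity $\eta(s,t)=\int_{\bar M}Q^2(s,t)\,\d V_{s,t}$ directly:
\begin{itemize}
\item It differentiates $\eta$ in $s$, picking up a $Q^3$ term from $(\d V_{s,t})_s=Q\,\d V_{s,t}$, which it absorbs using $|Q|\le C\delta$.
\item Using $Q_s=-\frac12|\zeta|^2$, Cauchy--Schwarz and the $L^4$ bound on $\zeta$, it obtains $\eta_s\le C(\cdots)\eta^{1/2}+C\delta\eta$.
\item It integrates this Gr\"onwall-type inequality through $\psi=(e^{-C\delta s}\eta)^{1/2}$ to get $\eta(s,t)\le C(\cdots)+2\eta(0,t)$.
\item It then repeats the procedure in $t$, so the factor $4=2\cdot 2$ comes from chaining two one-variable steps.
\end{itemize}

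You instead apply the fundamental theorem of calculus pointwise along the ray $\tau\mapsto(\tau s,\tau t)$. You handle the $(s,t)$-dependence of the measure by the uniform comparability $\d V_{s,t}\le(1+C\delta)\,\d V_{0,0}$, which holds since $|f(s,t)-\bar f|+|g(s,t)-\bar g|\le C\delta$. Your factor $4$ therefore comes from $(a+b)^2\le 2a^2+2b^2$ together with this measure ratio.

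The analytic input is the same in both routes: an $L^2$ bound on $|Q_s|+|Q_t|$, i.e.\ on $|\zeta|^2+\beta|\zeta|$. Your version avoids the differential inequality entirely and applies verbatim to $P$ in Proposition~\ref{pro:Pesti}. Using $(a+b)^2\le(1+\epsilon)a^2+C_\epsilon b^2$, it would even give a constant arbitrarily close to $1$ in front of $\int Q^2(0,0)\,\d V_{0,0}$. The paper's approach tracks the evolution of the weighted $L^2$ norm, which fits the template it reuses elsewhere, but gains nothing extra here.

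Your treatment of the quartic term also differs slightly. You use the pointwise bound $[\zeta']_0\le C\delta(1+|\vec x|^2)$ and Lemma~\ref{lem:coninequ} with $k=2$ to get $\delta^2\mathfrak X^{5/3}$. The paper instead bounds $|\zeta'|^2\le C(u^2+|h|^2+\beta^2)$, which produces its $\alpha^2\mathfrak X^{5/3}$ and $\beta^2\mathfrak X^{5/3}$ terms. Both are within the stated bound.
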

\begin{proof}
	Let $\eta(s,t):= \int_{\bar M} Q^2(s,t)\,\d V_{s,t}$. We can calculate
	\begin{align}\label{Qineq1}
		\eta_s=\int_{\bar M} 2QQ_s\,\d V_{s,t}+\int_{\bar M} Q^3\,\d V_{s,t}\leq -\int_{\bar M} |Q(s)||\zeta|^2\,\d V_{s,t}+ C\delta \eta.
	\end{align}
	Here, in the last inequality, we have used the fact $|Q|\leq C\delta$. Note that
	$$|\zeta|^4\leq C(u^4+|\zeta'|^4)\leq Cu^4+C|\zeta'|^2(u^2+|h|^2+\beta^2)\leq Cu^4+C\alpha^2(1+|\vec x|^2)|\zeta'|^2+C\delta^2 |\zeta'|^2+C\beta^2|\zeta'|^2,$$
	thus it follows from Lemma \ref{lem:coninequ} and Corollary \ref{cor:rigidineq} that 
	\begin{align}\label{equ:zeta4}
	\int_{\bar M}|\zeta|^4\,\d V_{s,t}\leq C(\alpha^4+\alpha^2\mathfrak{X}^{\frac{5}{3}}+\beta^2 \mathfrak{X}^{\frac{5}{3}}+\delta^2\mathfrak{X}^{\frac{5}{3}}).	
	\end{align}
So by Cauchy-Schwarz inequality, we have
	\begin{align}\label{Qineq2}
		\int_{\bar M} |Q(s)||\zeta|^2\,\d V_{s,t}\leq C(\alpha^2+\alpha\mathfrak{X}^{\frac{5}{6}}+\beta \mathfrak{X}^{\frac{5}{6}}+\delta \mathfrak{X}^{\frac{5}{6}})\eta^{1/2}(s).
	\end{align}
	Combining \eqref{Qineq1} with \eqref{Qineq2}, we have
	\begin{align*}
		\eta_s\leq C(\alpha^2+\alpha\mathfrak{X}^{\frac{5}{6}}+\beta \mathfrak{X}^{\frac{5}{6}}+\delta \mathfrak{X}^{\frac{5}{6}})\eta^{1/2}(s)+C\delta\eta.
	\end{align*}
	
	Now if we denote by $\psi(s,t)=(e^{-C\delta s}\eta)^{1/2}$, we get
	\begin{align*}
		\psi_s\leq \frac{C}{2}e^{-\frac{C \delta s}{2}}(\alpha^2+\alpha\mathfrak{X}^{\frac{5}{6}}+\beta \mathfrak{X}^{\frac{5}{6}}+\delta \mathfrak{X}^{\frac{5}{6}}).
	\end{align*}
	Thus by modifying the constant $C$, it follows that
	$$\psi(s,t)\leq C(\alpha^2+\alpha\mathfrak{X}^{\frac{5}{6}}+\beta \mathfrak{X}^{\frac{5}{6}}+\delta \mathfrak{X}^{\frac{5}{6}})+\psi(0,t)=C(\alpha^2+\alpha\mathfrak{X}^{\frac{5}{6}}+\beta \mathfrak{X}^{\frac{5}{6}}+\delta \mathfrak{X}^{\frac{5}{6}})+\eta^{1/2}(0,t).$$
	Using Young's inequality, we get
	\begin{align}\label{equ:Qesti2}
		\eta(s,t)\leq C(\alpha^4+\alpha^2\mathfrak{X}^{\frac{5}{3}}+\beta^2 \mathfrak{X}^{\frac{5}{3}}+\delta^2 \mathfrak{X}^{\frac{5}{3}})+2\eta(0,t).
	\end{align}
	
	On the other hand, by the same calculations as above and using Lemma \ref{lem:1stderi}, we have
	\begin{align*}
		\eta_t=-\int_{\bar M}Q\la h_1+th_2,\zeta\ra \,\d V_{s,t}+\int_{\bar M}Q^2P\,\d V_{s,t}\leq C\lc\int_{\bar M}|h_1+th_2|^2|\zeta|^2\,\d V_{s,t}\rc^{1/2}\eta^{1/2}+C\delta \eta.
	\end{align*}
	Using the fact that
	$$\int_{\bar M}|h_1+th_2|^2|\zeta|^2\,\d V_{s,t}\leq C\beta^2\int_{\bar M}u^2+|\zeta'|^2\,\d V_{s,t}\leq C\beta^2(\alpha^2+\mathfrak{X}^{\frac{5}{3}}),$$
	we obtain
	\begin{align*}
		\eta_t\leq C\beta(\alpha+\mathfrak{X}^{\frac{5}{6}})\eta^{1/2}+C\delta\eta.
	\end{align*}
	Integrating as in obtaining \eqref{equ:Qesti2}, it follows that
	\begin{align}\label{equ:Qesti1}
		\eta(s,t)\leq C\beta^2(\alpha^2+\mathfrak{X}^{\frac{5}{3}})+2\eta(s,0).
	\end{align}
	
	Combining \eqref{equ:Qesti2} with \eqref{equ:Qesti1}, we get
	\begin{align*}
		\eta(s,t)\leq C(\alpha^4+\beta^4+\alpha^2\mathfrak{X}^{\frac{5}{3}}+\beta^2 \mathfrak{X}^{\frac{5}{3}}+\delta^2 \mathfrak{X}^{\frac{5}{3}})+4\eta(0,0).
	\end{align*}
	This completes the proof.
\end{proof}

By the same argument, the following integral estimate for $P$ holds (note that $P(0,0)=0$):
\begin{prop}\label{pro:Pesti}
	For any $s,t\in [0,1]$, we have
	\begin{equation*}
		\int_{\bar M} P^2(s,t)\,\d V_{s,t}\leq C(\alpha^2\beta^2+\beta^2 \mathfrak{X}^{\frac{5}{3}}+\beta^4).
	\end{equation*}
\end{prop}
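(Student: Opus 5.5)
I will mirror the proof of Lemma \ref{lem:Qesti1}, working with $\theta(s,t):=\int_{\bar M}P^2(s,t)\,\d V_{s,t}$ and integrating first in $t$ along $s=0$ and then in $s$. By Lemma \ref{lem:1stderi}(iii), differentiating gives
\begin{align*}
\theta_t=\int_{\bar M}2PP_t\,\d V_{s,t}+\int_{\bar M}P^3\,\d V_{s,t},\qquad
\theta_s=\int_{\bar M}2PP_s\,\d V_{s,t}+\int_{\bar M}P^2Q\,\d V_{s,t}.
\end{align*}
Since $|P|+|Q|\le C\delta$ pointwise by \eqref{equ:basicesti1} and the bounds on $[\zeta]_2,[q]_2$, the cubic terms are bounded by $C\delta\theta$. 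For the linear terms I use Cauchy--Schwarz together with the pointwise bounds of Corollary \ref{estihighderiPQ}(i).

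First fix $s=0$. By Corollary \ref{estihighderiPQ}(i), $[P_t]_0\le C\beta^2$, so $\theta_t\le C\beta^2\theta^{1/2}+C\delta\theta$. Setting $\psi=(e^{-C\delta t}\theta)^{1/2}$, we get $\psi_t\le C\beta^2$. Since $P(0,0)=0$ gives $\psi(0,0)=0$, integrating over $t\in[0,1]$ yields $\theta(0,t)\le C\beta^4$.

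Next fix $t$ and vary $s$. Corollary \ref{estihighderiPQ}(i) gives $|P_s|\le C\beta([\zeta']_0+(1+|\vec x|^2)\alpha)$. To bound its weighted $L^2$ norm, I control $\int|\zeta'|^2\,\d V_{s,t}$ by $C\mathfrak{X}^{5/3}$ via Corollary \ref{cor:rigidineq}. I control $\int(1+|\vec x|^2)^2\,\d V_{s,t}$ by a constant, using that $\d V_{s,t}$ is comparable to the Gaussian measure $\d V_{\bar f}$ (or Lemma \ref{lem:coninequ}). This gives $\|P_s\|_{L^2}\le C\beta(\alpha+\mathfrak{X}^{5/6})$, hence $\theta_s\le C\beta(\alpha+\mathfrak{X}^{5/6})\theta^{1/2}+C\delta\theta$.

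Integrating with the same exponential-weight trick, then applying Young's inequality, gives
\[
\theta(s,t)\le C\beta^2(\alpha^2+\mathfrak{X}^{5/3})+2\theta(0,t).
\]
Combined with the first step, this is the claimed bound $C(\alpha^2\beta^2+\beta^2\mathfrak{X}^{5/3}+\beta^4)$.

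The only point requiring care is the uniform comparability of $\d V_{s,t}$ with $\d V_{\bar f}$, so that the polynomial weights $(1+|\vec x|^2)^2$ coming from the Hermite component $u$ remain integrable. This holds because $\chi$ and $h$ are compactly supported with small $C^2$ norm, and $f(s,t)-\bar f$ is bounded. Unlike for $Q$, no term of type $|\zeta|^4$ appears here: every term carries a factor $\beta$, which is why no $\alpha^4$ or $\delta^2\mathfrak{X}^{5/3}$ contribution arises.
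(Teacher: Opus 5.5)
Your proposal is correct and follows essentially the paper's route: the paper proves this ``by the same argument'' as Lemma \ref{lem:Qesti1}, using the differential inequalities for $\int_{\bar M}P^2\,\d V_{s,t}$ in $s$ and in $t$ with the exponential-weight trick, the bounds $[P_t]_0\le C\beta^2$ and $|P_s|\le C\beta|\zeta|$, and $P(0,0)=0$. Integrating first in $t$ along $s=0$ and then in $s$ is just a reordering of the paper's combination step.
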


\begin{prop}\label{pro:Qesti2}
	For any $s,t\in [0,1]$, we have
	\begin{equation*}
		\int_{\bar M} Q^2(s,t)\,\d V_{s,t}\leq C(\alpha^4+\beta^4+\mathfrak{X}^{\frac{5}{3}}+|V(1,1)-V(0,0)|^2).
	\end{equation*}
\end{prop}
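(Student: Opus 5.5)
By Lemma \ref{lem:Qesti1} it suffices to prove
\[
\int_{\bar M}Q^2(0,0)\,\d V_{0,0}\le C\bigl(\alpha^4+\beta^4+\mathfrak{X}^{\frac53}+|V(1,1)-V(0,0)|^2\bigr),
\]
since the extra terms $\alpha^2\mathfrak{X}^{5/3},\beta^2\mathfrak{X}^{5/3},\delta^2\mathfrak{X}^{5/3}$ there are already bounded by $C\mathfrak{X}^{5/3}$.

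I will split $Q(0,0)$ into its mean $\bar Q$ and its oscillation $Q(0,0)-\bar Q$, and treat each separately.

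For the oscillation, I use $Q(0,0)=\frac12\Tr_{\bar g}\zeta-q$ together with the decomposition $\zeta=\zeta'+ug_N$ and $q=q'+\frac m2u$. The $u$-contributions cancel exactly: $\frac12\Tr_{\bar g}(ug_N)-\frac m2u=0$. Hence $Q(0,0)=\frac12\Tr_{\bar g}\zeta'-q'$, and its gradient satisfies $|\bar\nabla Q(0,0)|\le C(|\bar\nabla\zeta'|+|\bar\nabla q'|)$. The weighted Poincaré inequality on $(\bar M,\bar g,\bar f)$ (spectral gap $1/2$ for $\Delta_{\bar f}$) then gives
\[
\int(Q(0,0)-\bar Q)^2\,\d V_{\bar f}\le C\int|\bar\nabla Q(0,0)|^2\,\d V_{\bar f}\le C\bigl(\|\zeta'\|_{W^{1,2}}^2+\|\nabla q'\|_{L^2}^2\bigr)\le C\mathfrak{X}^{\frac53},
\]
where the last step is Corollary \ref{cor:rigidineq}.

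For the mean, I will relate $\bar Q$ to the volume change along the path. Write
\[
V(1,1)-V(0,0)=\int_0^1\frac{\d}{\d\tau}V(\tau,\tau)\,\d\tau=\int_0^1\!\!\int_{\bar M}(Q+P)\,\d V_{\tau,\tau}\,\d\tau,
\]
using Lemma \ref{lem:1stderi}(iii). I then bound $\int(Q(\tau,\tau)-Q(0,0))\,\d V_{\tau,\tau}$ by integrating the first derivatives in Lemma \ref{lem:1stderi}(vi),(vii), namely $Q_s=-\frac12|\zeta|^2$ and $Q_t=-\frac12\la\zeta,h_1+th_2\ra$. The first is controlled via \eqref{equ:zeta4}: $\int|\zeta|^2\le C(\alpha^2+\mathfrak{X}^{5/6})$. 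The second is at most $C\beta(\alpha+\mathfrak{X}^{5/6})$.

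Two further corrections need to be absorbed:
\begin{itemize}
\item[(a)] the change of measure, $\int Q(0,0)(\d V_{\tau,\tau}-\d V_{0,0})$, which is bounded by $C\delta\|Q(0,0)\|_{L^2}$ and can be absorbed into the left-hand side;
\item[(b)] the $P$-integral, controlled by Proposition \ref{pro:Pesti}: $\|P\|_{L^2}\le C(\alpha\beta+\beta\mathfrak{X}^{5/6}+\beta^2)$.
\end{itemize}

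Putting these together gives
\[
|\bar Q|\le |V(1,1)-V(0,0)|+C\bigl(\alpha^2+\beta^2+\mathfrak{X}^{\frac56}\bigr)+C\delta\|Q(0,0)\|_{L^2},
\]
after applying Young's inequality to the cross terms $\alpha\beta$ and $\beta\mathfrak{X}^{5/6}$. Squaring, adding the oscillation bound, and absorbing the $C\delta$ term for $\delta$ small yields the claimed estimate at $(0,0)$. Feeding this into Lemma \ref{lem:Qesti1} proves the proposition.

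The main obstacle is the mean estimate. Its difficulty is the quadratic term $\int|\zeta|^2\,\d V$, which is only of order $\alpha^2+\mathfrak{X}^{5/6}$. After squaring, this must come out no worse than $\alpha^4+\mathfrak{X}^{5/3}$. That works only because $\|\zeta'\|_{L^2}\le C\mathfrak{X}^{5/6}$ enters squared, giving $\int|\zeta'|^2\le C\mathfrak{X}^{5/3}$ and hence $\int|\zeta|^2\le C(\alpha^2+\mathfrak{X}^{5/3})$. So I must track $|\zeta|^2\le 2u^2+2|\zeta'|^2$ carefully rather than use the cruder $L^4$ bound.
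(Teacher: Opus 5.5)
Your argument is correct, but it bounds the mean $\bar Q$ by a different and more elementary route than the paper.

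\textbf{How the paper does it.} The paper expands $V$ to second order at $(0,0)$. The $Q^2$ part of $V_{ss}(0,0)=\int(-\frac12|\zeta|^2+Q^2(0,0))\,\d V_{0,0}$ then gives
$$\Bigl|\bar Q+\tfrac12\int Q^2(0,0)\,\d V_{0,0}\Bigr|\le C(\alpha^2+\beta^2+\mathfrak{X}^{5/3})+\tfrac43\sup|V^{(3)}|+|V(1,1)-V(0,0)|.$$
It then bounds all third derivatives of $V$, which uses Lemma \ref{lem:Qesti1} and costs a term $C\delta\int Q^2(0,0)\,\d V_{0,0}$. It applies the same Poincar\'e inequality you use and closes by the quadratic absorption $(\frac12+C\delta)\bar Q^2\le C\delta|\bar Q|$.

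\textbf{How yours differs.} You integrate $V_s+V_t$ along the diagonal. The only second-order effect left is the change of measure, which you handle linearly using $|Q|+|P|\le C\delta$. No third-derivative estimates are needed, and Lemma \ref{lem:Qesti1} enters only in the last step.

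\textbf{What each buys.} The paper's route gives a sharper mean estimate, $|\bar Q|\le C(\alpha^2+\beta^2+\mathfrak{X}^{5/3}+|V(1,1)-V(0,0)|)$. Your displayed bound on $|\bar Q|$ carries an $\mathfrak{X}^{5/6}$ error instead. The paper's route also produces the derivative formulas for $V$ that are reused in Lemma \ref{4thderiV} and Corollary \ref{taylorofV6}. For the proposition as stated, your weaker mean bound is enough, because only its square enters.

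\textbf{Two corrections to the write-up.}
\begin{itemize}
\item The measure-change term (a) cannot be absorbed into $|\bar Q|$ directly, since $\|Q(0,0)\|_{L^2}$ is not controlled by $|\bar Q|$ alone. It can only be absorbed after combining with the oscillation bound through
$$\|Q(0,0)\|_{L^2}^2=\bar Q^2+\|Q(0,0)-\bar Q\|_{L^2}^2.$$
This identity uses that $\d V_{0,0}$ is a probability measure ($V(0,0)=1$). Your final step does exactly this, so phrase (a) accordingly.
\item Your closing paragraph identifies an obstacle that is not there. The cruder bound $\int|\zeta|^2\le C(\alpha^2+\mathfrak{X}^{5/6})$ already suffices, because after squaring it contributes $\alpha^4+\mathfrak{X}^{5/3}$, which is exactly the target. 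The finer bound $\int|\zeta|^2\le C(\alpha^2+\mathfrak{X}^{5/3})$ is true but not needed.
\end{itemize}
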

\begin{proof}
	For $V(s,t)=\int_{\bar M} \,\d V_{s,t}$, by direct calculations, we have
	$$V_s=\int_{\bar M} Q\,\d V_s, \quad V_t=\int_{\bar M}P\,\d V_{s,t},$$
	$$ V_{ss}=\int_{\bar M} Q_s+Q^2\,\d V_{s,t},\quad V_{st}=\int_{\bar M}Q_t+QP\,\d V_{s,t},\quad V_{tt}=\int_{\bar M}P_t+P^2\,\d V_{s,t}.$$
	
	By Corollary \ref{cor:1stderiat0}, it follows that
	\begin{align*}
		V_s(0,0)=\bar Q, \quad V_t(0,0)=0, \quad V_{ss}(0,0)=\int_{\bar M}-\frac{1}{2}|\zeta|^2+Q^2(0,0)\,\d V_{0,0},
	\end{align*}
	and 
	\begin{align}\label{deriVat02}
		V_{tt}(0,0)=\int_{\bar M}\frac{1}{2}\Tr_{\bar g}h_2-f_2-\frac{1}{2}|h_1|^2\,\d V_{0,0},\quad V_{st}(0,0)=\int_{\bar M}-\frac{1}{2}\la h_1,\zeta\ra\,\d V_{0,0}.
	\end{align}
	
	By Taylor expansion of $V$, we have
	\begin{equation}\label{taylorofV1}
		\left|V(1,1)-V(0,0)-V_s-V_t-\frac{1}{2}(V_{ss}+2V_{st}+V_{tt})\right|\leq \frac{4}{3}\sup_{s,t\in [0,1]}|V^{(3)}(s,t)|,
	\end{equation}
	where on the left hand side, all derivatives are evaluated at $(0,0)$ and $V^{(3)}$ denotes third derivatives of $V$. Note that from Corollary \ref{cor:rigidineq},
	$$\int_{\bar M}|\zeta|^2\,\d V_{0,0}\leq C(\alpha^2+\mathfrak{X}^{\frac{5}{3}}),$$
	so by \eqref{deriVat02}, we have
	\begin{align*}
		&|V_{st}(0,0)|\leq C\left|\int_{\bar M}\la h_1,\zeta\ra \,\d V_{0,0}\right| \leq C\beta (\alpha+\mathfrak{X}^{\frac{5}{6}}),\quad |V_{tt}(0,0)|\leq C\beta^2.
	\end{align*}
	Therefore, it follows from \eqref{taylorofV1} that
	\begin{equation}\label{taylorofV2}
		\left|\bar Q+\frac{1}{2}\int_{\bar M} Q^2(0,0)\,\d V_{0,0}\right|\leq C(\alpha^2+\beta^2+\mathfrak{X}^{\frac{5}{3}})+\frac{4}{3}\sup_{s,t\in [0,1]}|V^{(3)}(s,t)|+|V(1,1)-V(0,0)|.
	\end{equation}
	
	Now we estimate the third derivatives. Notice that for any $s,t\in [0,1]$, $|P|+|Q|\leq C\delta$, and 
	\begin{align}
		\label{equ:3rdQderi1}&V_{sss}=\int_{\bar M}Q_{ss}+3QQ_s+Q^3\,\d V_{s,t},\\ \label{equ:3rdQderi2}&V_{sst}=\int_{\bar M}Q_{st}+Q_sP+2Q_tQ+Q^2P\,\d V_{s,t},\\
		\label{equ:3rdQderi3} &V_{stt}=\int_{\bar M}Q_{tt}+2Q_tP+QP_t+QP^2\,\d V_{s,t},\\
		\label{equ:3rdQderi4} &V_{ttt}=\int_{\bar M}P_{tt}+3PP_t+P^3\,\d V_{s,t}.
	\end{align}
	By Proposition \ref{pro:Pesti}, $\rVert P\rVert_{L^2}\leq C(\alpha^2+\beta^2+\alpha\mathfrak{X}^{\frac{5}{6}}+\beta\mathfrak{X}^{\frac{5}{6}}+\delta\mathfrak{X}^{\frac{5}{6}})$. By Corollary \ref{estihighderiPQ} and Lemma \ref{lem:Qesti1}, we obtain
	\begin{align*}
		&\int_{\bar M}P^3+Q^3+|P|Q^2+|Q|P^2\, \d V_{s,t}\leq C\delta \int_{\bar M}Q^2+P^2\, \d V_{s,t}\\
		\leq& C\delta \left(\alpha^4+\beta^4+\alpha^2\mathfrak{X}^{\frac{5}{3}}+\beta^2 \mathfrak{X}^{\frac{5}{3}}+\delta^2 \mathfrak{X}^{\frac{5}{3}}+\int_{\bar M}Q^2(0,0)\,\d V_{0,0}\right), 
	\end{align*}
	and
	\begin{align*}
		\int_{\bar M}|Q_s Q|\, \d V_{s,t}&\leq C\int_{\bar M} \lc [\zeta']_0^2+(1+|\vec x|^4)\alpha^2\rc |Q|\, \d V_{s,t}\leq C\delta(\mathfrak{X}^{\frac{5}{3}}+\alpha^2),
		\end{align*}
			and similarly, 
	\begin{align*}
		 \int_{\bar M}|Q_tQ|+|P_tQ|+|Q_sP|+|Q_tP|+|P_tP|\,\d V_{s,t}&\leq C(\alpha^2+\beta^2+\alpha\mathfrak{X}^{\frac{5}{6}}+\beta\mathfrak{X}^{\frac{5}{6}}+\delta\mathfrak{X}^{\frac{5}{6}}),\\
		\int_{\bar M}|Q_{ss}|+|Q_{st}|+|Q_{tt}|+|P_{tt}|\,\d V_{s,t}&\leq C(\alpha^2+\beta^2+\mathfrak{X}^{\frac{5}{3}}).
	\end{align*}
	Thus 
	\begin{align*}
		\sup_{s,t\in [0,1]}|V^{(3)}(s,t)|\leq C(\alpha^2+\beta^2+\mathfrak{X}^{\frac{5}{3}})+C\delta\int_{\bar M}Q^2(0,0)\,\d V_{0,0},
	\end{align*}
	which, combining with \eqref{taylorofV2}, gives
	\begin{align}\label{taylorofV3}
		\left|\bar Q+\frac{1}{2}\int_{\bar M} Q^2(0,0)\,\d V_{0,0}\right|\leq C(\alpha^2+\beta^2+\mathfrak{X}^{\frac{5}{3}})+C\delta\int_{\bar M}Q^2(0,0)\,\d V_{0,0}+|V(1,1)-V(0,0)|.
	\end{align}
	By Poincar\'e's inequality on cylinder,\,\eqref{basicesti2} and the fact $|\na Q(0,0)|\leq C([\zeta']_1+[\na q']_0)$, we have
	\begin{align}\label{taylorofV4}
		0\leq\int_{\bar M} Q^2(0,0)\,\d V_{0,0}-\lc\int_{\bar M} Q(0,0)\,\d V_{0,0}\rc^2\leq C\int_{\bar M} |\nabla Q(0,0)|^2\,\d V_{0,0}\leq C\mathfrak{X}^{\frac{5}{3}}.
	\end{align}
	Combining \eqref{taylorofV3} and \eqref{taylorofV4}, we get
	$$|\bar Q|\leq C(\alpha^2+\beta^2+\mathfrak{X}^{\frac{5}{3}})+\left(\frac{1}{2}+C\delta\right)\bar Q^2+|V(1,1)-V(0,0)|.$$
	Since $|\bar Q | \leq C\delta$, we conclude that if $\delta\ll 1$, then
	\begin{equation*}
		|\bar Q|= \left|\int_{\bar M} Q(0,0)\,\d V_{0,0}\right| \leq C(\alpha^2+\beta^2+\mathfrak{X}^{\frac{5}{3}}+|V(1,1)-V(0,0)|).
	\end{equation*}
	By \eqref{taylorofV4} again, it follows that
	\begin{align}\label{taylorofV5}
		\int_{\bar M} Q^2(0,0)dV_{0,0} \leq C(\alpha^4+\beta^4+\mathfrak{X}^{\frac{5}{3}}+|V(1,1)-V(0,0)|^2).
	\end{align}
	Therefore Proposition \ref{pro:Qesti2} follows from Lemma \ref{lem:Qesti1} and \eqref{taylorofV5}. This completes the proof.
\end{proof}

\begin{lem}\label{lem:PQesti2}
	For any $s,t\in [0,1]$, we have
	\begin{align*}
		\int_{\bar M}(1+|\vec x|^2)(Q^2(s,t)+P^2(s,t))\,\d V_{s,t}\leq C(\delta \alpha^3+\beta^4+\mathfrak{X}^{\frac{5}{3}}+|V(1,1)-V(0,0)|^2).
	\end{align*}
\end{lem}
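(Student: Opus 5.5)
We aim to prove the weighted bound in Lemma \ref{lem:PQesti2} by repeating the ODE argument of Lemma \ref{lem:Qesti1} with the weight $(1+|\vec x|^2)$, and then controlling the weighted initial value $\int(1+|\vec x|^2)Q^2(0,0)\,\d V_{0,0}$ separately. The tools are the unweighted bounds of Propositions \ref{pro:Pesti} and \ref{pro:Qesti2}, a weighted Poincar\'e inequality, and Lemma \ref{lem:coninequ}.

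\textbf{Step 1: propagation in $s$ and $t$.} Set
\[
\tilde\eta(s,t):=\int_{\bar M}(1+|\vec x|^2)Q^2(s,t)\,\d V_{s,t}.
\]
By Lemma \ref{lem:1stderi} and $|Q|,|P|\le C\delta$, we have
\[
\tilde\eta_s\le C\Bigl(\int(1+|\vec x|^2)|\zeta|^4\Bigr)^{1/2}\tilde\eta^{1/2}+C\delta\tilde\eta,
\]
and the analogous inequality holds in $t$ with $|h_1+th_2|^2|\zeta|^2$ in place of $|\zeta|^4$. For the weighted quartic term we split $\zeta=\zeta'+ug_N$.
\begin{itemize}
\item For the $u$-part, the Hermite estimate gives $|u|\le C(1+|\vec x|^2)\alpha$. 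Using one factor $|u|\le C\delta(1+|\vec x|^2)$, which holds since $|\zeta|\le C\delta$, we get $\int(1+|\vec x|^2)u^4\le C\delta\alpha^3$. This is the source of the $\delta\alpha^3$ term in the statement.
\item For the $\zeta'$-part, Lemma \ref{lem:coninequ} with $\bar f\sim|\vec x|^2$ together with Corollary \ref{cor:rigidineq} gives $\int(1+|\vec x|^2)|\zeta|^2|\zeta'|^2\le C\delta^2\|\zeta'\|_{W^{1,2}}^2\le C\mathfrak{X}^{5/3}$.
\item The $t$-term is bounded by $C\beta^2(\alpha^2+\mathfrak X^{5/3})$, which is at most $C(\beta^4+\alpha^4+\mathfrak X^{5/3})$.
\end{itemize}
Integrating the differential inequality as before yields
\[
\tilde\eta(s,t)\le C(\delta\alpha^3+\beta^4+\mathfrak X^{5/3})+4\tilde\eta(0,0).
\]

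\textbf{Step 2: the initial value.} We write $Q(0,0)=\bar Q+(Q(0,0)-\bar Q)$. The constant part contributes $C\bar Q^2$, and the proof of Proposition \ref{pro:Qesti2} gives
\[
\bar Q^2\le C(\alpha^4+\beta^4+\mathfrak X^{10/3}+|V(1,1)-V(0,0)|^2).
\]
For the oscillating part we apply the weighted Poincar\'e inequality on the Gaussian factor,
\[
\int|\vec x|^2(\phi-\bar\phi)^2\,\d V_{\bar f}\le C\int\bigl(|\na\phi|^2+|\na^2\phi|^2\bigr)\,\d V_{\bar f},
\]
which follows by applying the ordinary Poincar\'e inequality to $x_i(\phi-\bar\phi)$ or by Lemma \ref{lem:coninequ} with $k=1$. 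Since $|\na Q(0,0)|\le C([\zeta']_1+[\na q']_0)$ and $[\na^2Q(0,0)]\le C([\zeta']_2+[\na q']_1)$, Corollary \ref{cor:rigidineq} bounds the right-hand side by $C\mathfrak X^{5/3}$. The $u$-component cancels in $Q$ exactly because $Q=\tfrac12\Tr\zeta-q$ and the $ug_N$ and $\tfrac m2u$ parts match. We then absorb $\alpha^4\le C\delta\alpha^3$, using $\alpha\le C\delta$.

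\textbf{Step 3: the $P$-term.} We use the same weighted ODE, starting from $P(0,0)=0$. The driving terms are $P_s\sim\langle\zeta,h_1\rangle$ and $P_t$, which is bounded by $C\beta^2$ with $h_1,h_2$ smooth and bounded on the compact factor $N$. Their weighted $L^2$ norms are at most $C\beta^2(\alpha^2+\mathfrak X^{5/3})+C\beta^4$, and these are absorbed as in Step 1.

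\textbf{Main obstacle.} The delicate point is the weighted quartic $u$-term in Step 1. A naive bound gives only $\alpha^4$ integrated against $(1+|\vec x|^2)^5$, which is still $O(\alpha^4)$; however, the interaction terms $u^2|\zeta'|^2$ with weight must be handled using $|u|\le C\delta$ pointwise rather than through the Hermite growth, so that the estimate does not lose powers of $|\vec x|$. We expect the resulting $\delta\alpha^3$ form to come from exactly this pointwise-times-$L^3$ splitting.
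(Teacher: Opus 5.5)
Your proposal is correct in substance, but it propagates from $(0,0)$ differently from the paper. The paper does not rerun the Gr\"onwall argument of Lemma \ref{lem:Qesti1} for the weighted quantity. Instead it integrates the explicit pointwise bounds of Corollary \ref{estihighderiPQ} along the segment from $(0,0)$ to $(s,t)$, which gives $|Q(s,t)|\le|Q(0,0)|+C([\zeta']_0^2+(1+|\vec x|^2)\alpha^2)+C\beta([\zeta']_0+(1+|\vec x|^2)\alpha)$. It then squares this and integrates against $(1+|\vec x|^2)\,\d V_{s,t}$.

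The initial value is handled as in your Step 2: Lemma \ref{lem:coninequ} applied to $Q(0,0)$, followed by Proposition \ref{pro:Qesti2} and \eqref{taylorofV4}. The term $P$ is treated in the same way as $Q$. Since $Q_s=-\frac12|\zeta|^2$, $Q_t$, $P_s$ and $P_t$ are explicit pointwise quantities, the pointwise route is shorter and needs no differential inequality for $\tilde\eta^{1/2}$. Your route has the advantage of reusing the machinery of Lemma \ref{lem:Qesti1} verbatim. Both routes rest on the same key input, a bound for $\int(1+|\vec x|^2)|\zeta|^4$.

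On that input, your closing remark is wrong and should be dropped. The bound $|u|\le C\delta$ is false pointwise, because a nonzero $u\in\KK_0$ is an unbounded quadratic polynomial. Also, in your first bullet, $|u|\le C\delta(1+|\vec x|^2)$ follows from $\alpha\le C\delta$ in \eqref{equ:basicesti1}, not from $|\zeta|\le C\delta$.

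If you split $|\zeta|^4\lesssim u^4+|\zeta'|^4$, you get a cross term $(1+|\vec x|^2)u^2|\zeta'|^2$. Through the Hermite growth of $u$, it needs $\int(1+|\vec x|^2)^3|\zeta'|^2$, i.e.\ $W^{3,2}$-control of $\zeta'$, which Corollary \ref{cor:rigidineq} does not provide. Your two bullets nevertheless close Step 1, because pointwise
\[
|\zeta|^4\le 4|\zeta|^2|\zeta'|^2+4|ug_N|^4 .
\]
This follows from $|\zeta|^2\le 2|\zeta'|^2+2|ug_N|^2$ together with $2|\zeta|^2|ug_N|^2\le\frac12|\zeta|^4+2|ug_N|^4$. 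No $u^2|\zeta'|^2$ term then appears, and the $u$-contribution is simply $O(\alpha^4)\le C\delta\alpha^3$.

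The paper avoids the cross term another way. It first spends one factor $|\zeta|\le C\delta$ and then uses $|\zeta|^3\le C(|u|^3+|\zeta'|^2(|\zeta|+|u|))$. Then $u$ meets $|\zeta'|^2$ only linearly, so the weight $(1+|\vec x|^2)^2$, and hence $W^{2,2}$-control, suffices. That is where the $\delta\alpha^3$ term actually comes from.
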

\begin{proof}
	The proof could follow from the same argument as in \cite[Lemma 4.13]{fang2025strong}. Here we provide a slightly different argument. First note that by Lemma \ref{lem:coninequ} and Proposition \ref{pro:Qesti2}, we have
	\begin{align}\label{equ:Qesti11}
		\int_{\bar M}(1+|\vec x|^2)Q^2(0,0)\,\d V_{0,0}&\leq C\int_{\bar M}|Q(0,0)|^2+|\na Q(0,0)|^2\,\d V_{0,0}\nonumber\\
		&\leq C(\alpha^4+\beta^4+\mathfrak{X}^{\frac{5}{3}}+|V(1,1)-V(0,0)|^2).
	\end{align}
	By Corollary \ref{estihighderiPQ}, it follows that
	\begin{align}\label{equ:Qesti22}
		|Q(s,t)|\leq |Q(0,0)|+C([\zeta']_0^2+(1+|\vec x|^2)\alpha^2)+C\beta ([\zeta']_0+(1+|\vec x|^2)\alpha).
	\end{align}
	Combining \eqref{equ:Qesti11} and \eqref{equ:Qesti22}, we obtain
	\begin{align*}
		\int_{\bar M}(1+|\vec x|^2)Q^2(s,t)\,\d V_{s,t}&\leq \int_{\bar M}(1+|\vec x|^2)Q^2(0,0)\,\d V_{0,0}+C(\alpha^4+\alpha^2 \beta ^2+\beta^2\mathfrak{X}^{\frac{5}{3}}+\delta \alpha^3+\delta\mathfrak{X}^{\frac{5}{3}})\nonumber\\
		&\leq C(\delta\alpha^3+\beta^4+\mathfrak{X}^{\frac{5}{3}}+|V(1,1)-V(0,0)|^2).
	\end{align*}
	Here in the first inequality, we have used $[\zeta]_0\leq C(\delta+\beta)\leq C\delta$ and
	\begin{align*}
		\int_{\bar M}(1+|\vec x|^2)[\zeta']_0^4\,\d V_{s,t}&\leq C\int_{\bar M}(1+|\vec x|^2)[\zeta]_0^4\,\d V_{s,t}+C \alpha^4\nonumber\\
		&\leq C\delta \int_{\bar M}(1+|\vec x|^2)[\zeta]_0^3\,\d V_{s,t}+C \alpha^4\\
		&\leq C\delta \int_{\bar M}(1+|\vec x|^2)([u]_0^3+[\zeta']_0^2([\zeta]_0+[u]_0))\,\d V_{s,t}+C \alpha^4\\
		&\leq C\delta(\alpha^3+\delta \mathfrak{X}^{\frac{5}{3}}+\alpha \mathfrak{X}^{\frac{5}{3}}).
	\end{align*}
	 Similarly, we can obtain the estimate for $P(s,t)$.
\end{proof}

The following two lemmas provide estimates for second derivatives of $\Phi$ and $w$.
\begin{lem}\label{lem:2ndderi3}
	For any $s,t\in [0,1]$, the following holds\textup{:}
	\begin{enumerate}[label=\textnormal{(\roman{*})}]
		\item $|\Phi_s|\leq C\big([\zeta]_2+[\na q]_1+(1+|\vec x|)[\zeta]_1\big),\, |\Phi_t|\leq C(1+|\vec x|)\beta$. In particular, $|\Phi|\leq C\big([\zeta]_2+[\na q]_1+(1+|\vec x|)([\zeta]_1+\beta)\big)$.
		\item $|\Phi_{ss}|\leq C\big([\zeta]_1^2+[\zeta]_0[\zeta]_2+(1+|\vec x|)[\zeta]_0[\zeta]_1+[\zeta]_1[\na q]_0\big)$.
		\item $|\Phi_{st}|\leq C\beta\big((1+|\vec x|)[\zeta]_1+[\zeta]_2+[\na q]_0\big)$.
		\item $|\Phi_{tt}|\leq C\beta^2(1+|\vec x|)$.
	\end{enumerate}
\end{lem}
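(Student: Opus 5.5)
The plan is to compute the first and second variations of $\mathbf{\Phi}$ explicitly in normal coordinates of $g(s,t)$, and then bound each resulting term pointwise. The structural input is the first variation formula for $\mathbf\Phi$ at a general pair $(g,f)$, in direction $(h,\chi)$:
\begin{align*}
\mathbf{\Phi}^{(1)}((h,\chi))=\frac{h}{2}-\Ric'(h)-\na^2\chi+\Gamma'(h)*\na f,
\end{align*}
where
\begin{align*}
\Ric'(h)=-\frac12\Delta h-\frac12\na^2\Tr h+\Div^*\Div h+\Rm*h,
\end{align*}
and $\Gamma'(h)=\frac12(\na h+\na h-\na h)$ schematically. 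Along the family, $g(s,t)$ is $C^2$-close to $\bar g$, so $|\Rm|\le C$. Moreover $\na f=\na\bar f+O([\zeta]_1+[q]_1+\beta)$, and $|\na\bar f|\le C(1+|\vec x|)$. This last growth is the only source of the weight $(1+|\vec x|)$.

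\textbf{Item (i).} Take $(h,\chi)=(\zeta,q)$ for $\Phi_s$. Then $|\Phi_s|\le C([\zeta]_2+[\na q]_1+|\na f|[\zeta]_1+[\zeta]_0)$, which is the claimed bound. For $\Phi_t$, take $(h,\chi)=(h_1+th_2,f_1+tf_2)$ and use \eqref{equ:basicesti1}: the $C^5$ norms of $h_1,f_1,h_2,f_2$ are at most $C\beta$. This gives $|\Phi_t|\le C(1+|\vec x|)\beta$. For the ``in particular'' statement, note that $\Phi(0,0)=\mathbf\Phi(\bar g,\bar f)=0$ and write $\Phi(s,t)=\int_0^1\frac{d}{d\tau}\Phi(\tau s,\tau t)\,d\tau$. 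The integrand equals $s\Phi_s+t\Phi_t$ evaluated along the segment, and the bounds above are uniform in $(s,t)$, so the estimate follows.

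\textbf{Items (ii)--(iv).} Differentiate the first-variation formula once more. The second variation $\mathbf\Phi^{(2)}((h,\chi),(k,\psi))$ is a sum of bilinear terms. Schematically, with universal coefficients depending on $g$, these are
\begin{align*}
h*\na^2k,\quad \na h*\na k,\quad \Rm*h*k,\quad h*\na k*\na f,\quad \na h*\na\psi,
\end{align*}
together with the symmetric versions in $(h,k)$. The point to be careful about is that second derivatives of $f$ never appear: $\na^2\chi$ varies only through $\Gamma'(k)*\na\chi$. Also, no term contains $\na^2$ of both entries.
\begin{itemize}
\item For (ii), with both entries equal to $(\zeta,q)$, this gives
\begin{align*}
[\zeta]_1^2+[\zeta]_0[\zeta]_2+(1+|\vec x|)[\zeta]_0[\zeta]_1+[\zeta]_1[\na q]_0.
\end{align*}
\item For (iii), one entry is $(h_1+th_2,f_1+tf_2)$, all of whose derivatives up to order five are $O(\beta)$. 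The remaining factor collects $[\zeta]_2$, $(1+|\vec x|)[\zeta]_1$ and $[\na q]_0$, with $\Gamma'(\zeta)*\na(f_1+tf_2)$ absorbed into the $[\zeta]_1$ term.
\item For (iv), $\Phi_{tt}=\mathbf\Phi^{(2)}(\cdot,\cdot)+\mathbf\Phi^{(1)}((h_2,f_2))$. The first term is $O(\beta^2(1+|\vec x|))$ by the same bookkeeping. The second is $O((1+|\vec x|)\beta^2)$ by the item (i) estimate applied to $(h_2,f_2)$, using $\|h_2\|_{C^5}+\|f_2\|_{C^5}\le C\beta^2$.
\end{itemize}

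\textbf{Main obstacle.} The step needing care is the precise bookkeeping of which factor carries $\na f$ (hence the weight $1+|\vec x|$) and which carries the top derivative. In particular, one must check that in (ii) the $\na f$ factor always multiplies $[\zeta]_0[\zeta]_1$ and never $[\zeta]_1^2$ or $[\zeta]_0[\zeta]_2$. I would verify this by writing
\begin{align*}
\na^2_{g}f=\partial^2f-\Gamma(g)*\partial f.
\end{align*}
Its second $g$-variation is $\Gamma''*\partial f$, where $\Gamma''(h,k)=h*\na k$. Its mixed $(g,f)$-variation is $\Gamma'(h)*\partial\chi$. All other terms are curvature variations independent of $f$.
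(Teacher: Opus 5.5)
Your proposal is correct and follows essentially the same route as the paper. The paper bounds $\Phi_s$ and $\Phi_{ss}$ by the direct computation of \cite[Lemma 4.7]{fang2025strong}. It gets $\Phi_t$ and $\Phi_{tt}$ by substituting $(h_1+th_2,f_1+tf_2)$ for $(\zeta,q)$ and invoking \eqref{equ:basicesti1}. For (iii) it differentiates the coordinate expressions of $\Ric_{ij}$ and $f_{ij}=\partial_i\partial_j f-\Gamma^k_{ij}\partial_k f$, using $|(\Gamma^k_{ij})_s|\le C[\zeta]_1$, $|(\Gamma^k_{ij})_t|\le C\beta$ and $|(\Gamma^k_{ij})_{st}|\le C\beta[\zeta]_1$. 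This is the same bookkeeping as your schematic second variation, including your point that $\na f$ enters only through $\Gamma''(h,k)*\na f$ with $\Gamma''(h,k)=h*\na k+k*\na h$.
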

\begin{proof}
	Item (i) on $\Phi_s$ follows from \cite[Lemma 4.7]{fang2025strong}. Note that if we replace $\zeta$ by $h_1+th_2$ and $q$ by $f_1+tf_2$, we can obtain the estimate of $\Phi_t$ by the same calculations, i.e. 
	$$|\Phi_t|\leq C\big([h_1+th_2]_2+[\na (f_1+tf_2)]_1+(1+|\vec x|)[h_1+th_2]_1\big)\leq C\beta(1+|\vec x|),$$
	where in the last inequality, we have used \eqref{equ:basicesti1}. Similarly, item (ii) follows from \cite[Lemma 4.7]{fang2025strong} and item (iv) follows from the same calculations. Now we focus on item (iii).
	Recall the following local expression of $\Ric$ and $\na^2 f$,
	\begin{align*}
		\Ric_{ij}=\partial_l\Gamma^l_{ij}-\partial_i\Gamma^l_{ij}+\Gamma^a_{ij}\Gamma^{l}_{la}-\Gamma^a_{lj}\Gamma^l_{ia},\quad f_{ij}=\partial_i\partial_j f-\Gamma^k_{ij}\partial_k f.
	\end{align*}
	
	Since $\Gamma^k_{ij}=\frac{1}{2}g^{kl}(\partial_ig_{jl}+\partial_jg_{il}-\partial_lg_{ij})$, by the same calculations as \cite[Lemma 4.7]{fang2025strong}, we have that for $s,t\in [0,1]$, 
	\begin{align*}
		|(\Gamma^k_{ij})_s| \leq& C[\zeta]_1, \quad |(\Gamma^k_{ij})_t |\leq C\beta, \quad |(\Gamma^k_{ij})_{st}|\leq C\beta [\zeta]_1,
	\end{align*}
	and thus $|(\Ric_{ij})_{st}|\leq C\beta [\zeta]_2$. And similarly, 
	\begin{align*}
		|(f_{ij})_s|\leq C\big( [\na q]_1+(1+|\vec x|)[\zeta]_1\big),\quad |(f_{ij})_t|\leq C\beta(1+|\vec x|), \quad |(f_{ij})_{st}|\leq C\beta\big((1+|\vec x|)[\zeta]_1+[\na q]_0\big).
	\end{align*}
Thus we have $|\Phi_{st}|\leq C\beta\big((1+|\vec x|)[\zeta]_1+[\zeta]_2+[\na q]_0\big)$. This completes the proof.
\end{proof}

\begin{lem}\label{lem:2ndderi2}
	For any $s,t\in [0,1]$, the following estimates hold\textup{:}
	\begin{enumerate}[label=\textnormal{(\roman{*})}]
		\item $|w_{ss}|\leq C\big( [\zeta]_2^2+[\zeta]_1[\na q]_1+[\na q]_0^2+(1+|\vec x|)([\zeta]_1^2+[\zeta]_0[\na q]_0)+(1+|\vec x|^2)[\zeta]_0^2\big)$.
		\item $|w_{tt}|\leq C(1+|\vec x|^2)\beta^2$.
		\item $|w_{st}|\leq C\beta\big( [\zeta]_2+(1+|\vec x|)([\zeta]_1+[\na q]_0+\beta)+(1+|\vec x|^2)[\zeta]_0\big). $
	\end{enumerate}
\end{lem}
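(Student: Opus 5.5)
The plan is to differentiate the pointwise expression
\[
w=2\Delta_g f-|\na f|^2_g+\scal(g)+f-n
\]
twice along the two-parameter family $(g(s,t),f(s,t))$ and bound each resulting term, as in \cite[Lemma 4.8]{fang2025strong}. All computations are done in normal coordinates of $g(s,t)$. The ingredients are the standard variation formulas for $g^{ij}$, $\Gamma^k_{ij}$ and $\scal$, together with a few bounds:
\[
|\bar\na\bar f|\le C(1+|\vec x|),\qquad [\na^2\bar f]_1\le C,\qquad [\na\chi]_1\le C\delta,
\]
so that $|\na f(s,t)|\le C(1+|\vec x|)$ uniformly. For the $t$-directions we use $\partial_t g=h_1+th_2$, $\partial_t f=f_1+tf_2$ and $\partial_t^2 g=h_2$, $\partial_t^2 f=f_2$, which by \eqref{equ:basicesti1} are bounded in $C^5$ by $C\beta$ and $C\beta^2$ respectively. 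For the $s$-direction, $\partial_s g=\zeta$ and $\partial_s f=q$, with $\partial_s^2 g=\partial_s^2 f=0$.

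Item (i) is exactly \cite[Lemma 4.8]{fang2025strong}, whose proof used only these structural facts. Concretely:
\begin{itemize}
\item The second $s$-derivative of $\scal$ is quadratic in $(\zeta,\bar\na\zeta,\bar\na^2\zeta)$, giving $[\zeta]_2^2$.
\item The second $s$-derivative of $\Delta_g f=g^{ij}(\partial_{ij}f-\Gamma^k_{ij}\partial_k f)$ yields $[\zeta]_1[\na q]_1$ from terms like $\zeta\ast\na^2 q$ and $\na\zeta\ast\na q$. It also yields $(1+|\vec x|)([\zeta]_1^2+[\zeta]_0[\na q]_0)$ from $\zeta\ast\na\zeta\ast\na f$ and $\zeta\ast\na^2 q$. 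The term $\zeta\ast\zeta\ast\na^2 f$ gives $[\zeta]_0^2$.
\item From $|\na f|^2=g^{ij}f_if_j$ we get $[\na q]_0^2$, $(1+|\vec x|)[\zeta]_0[\na q]_0$ and $(1+|\vec x|^2)[\zeta]_0^2$.
\end{itemize}

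For item (ii), the same expansion is applied with $(\zeta,q)$ replaced by $(h_1+th_2,f_1+tf_2)$. There is now an additional first-order term involving $(h_2,f_2)$, coming from the second derivatives of $g$ and $f$ in $t$. Every $C^2$ quantity is at most $C\beta$ (respectively $C\beta^2$ for the $h_2,f_2$ terms), so the worst factor is $|\na f|^2\le C(1+|\vec x|^2)$. For example, the terms $(h_1\ast h_1+h_2)\ast\na f\ast\na f$ give $C(1+|\vec x|^2)\beta^2$, and all other terms are smaller.

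For item (iii), we compute the mixed derivative. Since $\partial_s\partial_t g=\partial_s\partial_t f=0$, the result is bilinear: one factor from $(\zeta,q)$ and one from $(h_1+th_2,f_1+tf_2)$, with coefficients depending on $\na f$ and $\na^2 f$. Bounding the $t$-factor by $C\beta$ in $C^2$, each term is $\beta$ times one of the following:
\begin{itemize}
\item $[\zeta]_2$, from the curvature and Laplacian terms;
\item $(1+|\vec x|)[\zeta]_1$, from $\na\zeta\ast h_1\ast\na f$;
\item $(1+|\vec x|)[\na q]_0$, from $\na q\ast h_1\ast\na f$ or $\na q\ast\na f_1$-type terms;
\item $(1+|\vec x|^2)[\zeta]_0$, from $\zeta\ast h_1\ast\na f\ast\na f$.
\end{itemize}
The extra $(1+|\vec x|)\beta$ inside the bracket comes from terms where the $s$-variation hits $\na f$ through $\partial_s f=q$ while the $t$-variation hits $\na f_1$. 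It is safe to include in any case, since $|\na f|\le C(1+|\vec x|)$ and only $\beta$-sized $t$-factors appear.

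The main obstacle is bookkeeping rather than any conceptual difficulty. One must ensure that no term carries more than two powers of $|\na f|$, and that second derivatives of $q$ never appear in (iii) in a way that is not absorbed into the stated bound. Terms like $h_1\ast\na^2 q$ arise from $\partial_t g^{ij}\,\partial_{ij}q$. I would handle these by integrating the structure into $[\zeta]_2$-type bounds using $\na^2 q=\na^2(\tfrac12\Tr\zeta-Q)$. Failing that, I would simply accept $[\na q]_1$ in place of $[\na q]_0$ and check that it is harmless in later uses.
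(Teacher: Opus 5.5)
Your plan is the paper's: (i) is quoted from \cite{fang2025strong}, (ii) reruns the same expansion with $(\zeta,q)$ replaced by $(h_1+th_2,f_1+tf_2)$, and for (iii) the paper says only that it follows from similar calculations. In (ii) you also correctly keep the first-order piece from $\partial_t^2 g=h_2$, $\partial_t^2 f=f_2$. The paper leaves this piece implicit, and it is again $O\big((1+|\vec x|^2)\beta^2\big)$.

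\textbf{Item (iii).} The term you flagged is a genuine obstruction to the statement as written, not a bookkeeping nuisance.
\begin{itemize}
\item Since $\partial_s\partial_t g=\partial_s\partial_t f=0$, $w_{st}$ is the second variation of $w$ evaluated on $(\zeta,q)$ and $(h_1+th_2,f_1+tf_2)$.
\item The $+f$ term is affine, so $q$ enters only through $\na q$ and $\na^2 q$. Every term therefore carries one of $\zeta,\na\zeta,\na^2\zeta,\na q,\na^2 q$.
\item The only $\na^2 q$ term is $-2\la h_1+th_2,\na^2 q\ra$. It comes from $\partial_t g^{ij}=-(h_1+th_2)^{ij}$ hitting $\partial_s(\na^2 f)_{ij}=\na^2_{ij}q+\cdots$ in $2\Delta_g f$.
\item At a point where $[\zeta]_2=0$ and $\na q=0$, one has exactly $w_{st}=-2\la h_1+th_2,\na^2 q\ra$. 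There the right-hand side of (iii) reduces to $C\beta^2(1+|\vec x|)$.
\item Since $\na^2 q$ is unconstrained at such a point (it is governed by $\chi$), no bound with $[\na q]_0$ can hold in general.
\end{itemize}

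Your first remedy does not help. Writing $\na^2 q=\na^2(\tfrac12\Tr\zeta-Q)$ only trades $\na^2 q$ for $\na^2 Q$, which the right-hand side does not control pointwise either.

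Your fallback is the correct fix: replace $[\na q]_0$ by $[\na q]_1$ in (iii). The same correction is needed in the $w_{stt}$ bound of Lemma \ref{lem:3rdw}(iii). This costs nothing, because these bounds are only used after integration, e.g.\ in \eqref{erroresti5}. There $\|\na^2 q\|_{L^2}\le\|\na q'\|_{W^{1,2}}+C\alpha\le C(\mathfrak{X}^{5/6}+\alpha)$ by Corollary \ref{cor:rigidineq}. That step already needs $W^{1,2}$-control of $\na q$ to handle $(1+|\vec x|)[\na q]_0$ via Lemma \ref{lem:coninequ}.

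One small correction to your accounting: by the bilinearity above, nothing produces the summand $(1+|\vec x|)\beta$ in the bracket. The $\la\na f_1,\na q\ra$ term you attribute it to is $O(\beta[\na q]_0)$. That summand is pure slack, harmless as you say.
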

\begin{proof}
	Item (i) follows from \cite[Lemma 4.10(i)]{fang2025strong}. For item (ii), by the same calculations with $\zeta$ replaced by $h_1+th_2$ and $q$ replaced by $f_1+tf_2$, we obtain
	\begin{align*}
		|w_{tt}|\leq& C\lc [h_1+th_2]_2^2+[h_1+th_2]_1[\na(f_1+tf_2)]_1+[\na (f_1+tf_2)]_0^2\rc\\
		&+C\lc(1+|\vec x|)([h_1+th_2]_1^2+[h_1+th_2]_0[\na (f_1+tf_2)]_0)+(1+|\vec x|^2)[h_1+th_2]_0^2\rc\\
		\leq & C(1+|\vec x|^2)\beta^2,
	\end{align*}
	where in the last inequality, we have used \eqref{equ:basicesti1}. The estimate for $w_{st}$ follows from similar calculations and we omit the details here.
\end{proof}

\begin{prop}\label{2ndderiat0}
	At $s=t=0$, we have\textup{:}
	\begin{enumerate}[label=\textnormal{(\roman{*})}]	
		\item $\Phi_{tt}\in \isd_N, \Phi_{ss}^0\perp \isd_N, \Phi^0_{st}\perp \isd$ and for $H_1=\Tr_{\bar g}(h_1)$, we have
		\begin{align}\label{phi2ndderi1}
			&\Phi_{ss}^0=-|\na u|^2g_N-nu\na^2 u-\frac{n}{2}\d u\otimes \d u,\\
			&\label{phi2ndderi2} \Phi_{st}^0=\frac{1}{2}uh_1-\frac{1}{2}\na^2(H_1u)+\frac{1}{4}(\d u\otimes \d H_1+\d H_1\otimes \d u).
		\end{align}
		\item $\rVert\Phi_{tt}\rVert_{L^2}\leq C\mathfrak{X}^{\frac{5}{6}}$.
		\item $\left|\int_{\bar M}(\Phi_{st})_{ij}\zeta_{jl}(h_1)_{il}\,\d V_{0,0}\right|\leq C(\alpha^4+\beta^4+\mathfrak{X}^{\frac{10}{3}})$.
		\item $\rVert \la \Phi_{ss},\zeta\ra\rVert_{L^1}\leq C(\alpha^3+\delta \mathfrak{X}^{\frac{5}{3}})$.
		\item $\rVert \la \Phi_{st},\zeta\ra\rVert_{L^1}\leq C(\alpha^4+\beta^4+\mathfrak{X}^{\frac{5}{3}})$.
		\item $\rVert \la \Phi_{tt},\zeta\ra\rVert_{L^1}\leq C\beta^4$.
		\item $\rVert \la \Phi_{tt},h_1\ra\rVert_{L^1}\leq C(\beta^4+\mathfrak{X}^{\frac{4}{3}})$.
	\end{enumerate}
\end{prop}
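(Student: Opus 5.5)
The plan is to handle the structural claims in (i) first, and then derive the integral estimates (ii)–(vii) from the structure together with Corollary \ref{cor:rigidineq}, \eqref{equ:basicesti1}, Lemma \ref{lem:coninequ} and the pointwise bounds of Lemma \ref{lem:2ndderi3}.

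For (i), the key observation is that $\Phi_{tt}(0,0)=\mathbf{\Phi}^{(2)}((h_1,f_1),(h_1,f_1))+\mathbf{\Phi}^{(1)}((h_2,f_2))$. By the defining equation of $h_2$ in the Proposition preceding \eqref{phideri1}, this equals $\pi_{\ker\Phi'}\mathbf{\Phi}^{(2)}((h_1,f_1),(h_1,f_1))$. Since $h_1$, $h_2$, $f_1$, $f_2$ live purely on $N$, this is a tensor on $N$ in $\ker\Phi'_N$. Using $\ker\Phi'=\isd_N\oplus \mathrm{Im}(\Div^*)$ and the normalization of $f_2$, I expect that it lies in $\isd_N$; otherwise one interprets the projection as the one onto $\isd_N$, as in \cite{li2023rigidity}.

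The formulas \eqref{phi2ndderi1} and \eqref{phi2ndderi2} are direct computations on the product $\R^{n-m}\times N$. The variation $ug_N$ is a conformal change of the $N$-factor with a factor depending only on $\vec x$, so one expands the Ricci curvature of a warped-type product to second order and uses $\na^2 u$ constant, $\Ric_N=g_N/2$ and $f$-variation $\frac m2 u$. For the mixed term, $h_1$ depends only on $N$ and $u$ only on $\vec x$, so the mixed Christoffel terms produce exactly the listed $\na^2(H_1u)$ and $\d u\otimes \d H_1$ terms. The orthogonality statements follow by separation of variables: $\Phi^0_{ss}$ has $\vec x$-components that are degree-$\le 2$ polynomials times $g_E$-type tensors or $g_N$. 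Pairing with $\isd_N$ (trace-free, divergence-free tensors on $N$) gives zero, because $\int_N\Tr_{g_N}h=0$ for $h\in\isd_N$; this uses $\isd_N\perp g_N$, which holds since $h\in\ker\Div$ and $\Phi'(h)=0$ forces $\Tr h=0$ after integration. $\Phi^0_{st}$ is linear in $u$ times an $N$-tensor, so it is orthogonal to $\KK_0 g_N$ because $u\cdot(\text{degree-2 Hermite})$ integrates against a degree-2 Hermite. Here I would check that $u\cdot u'$ contributes only via $\int_N \langle h_1,g_N\rangle=0$. It is orthogonal to $\isd_N$ because $\int u\,\d V=0$ for $u\in\KK_0$.

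For (ii), the plan is to write $\Phi(g,f)=\Phi(1,1)$ and Taylor expand in $(s,t)$ around $0$. The $t$-part $\Phi(0,1)-\Phi(0,0)=\Phi_t+\frac12\Phi_{tt}+O(\Phi_{ttt})$ has $\Phi_t=0$, and its cubic remainder is $O(\beta^3)$. Projecting $\Phi(1,1)$ onto $\isd_N$, all $s$-derivative contributions are controlled by $\|\zeta'\|_{W^{2,2}}+\alpha^2+\alpha\beta$. Thus $\|\Phi_{tt}\|\le C(\mathfrak X+\mathfrak X^{5/6}+\alpha^2+\beta^3)$. Since $\Phi_{tt}=\pi_{\isd_N}M_2(h_1)$ has size comparable to $\beta^2$, I would then use the order-3 obstruction and $\beta^3\le C\mathfrak X$ to absorb; the expected outcome is $C\mathfrak X^{5/6}$. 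This step is the main obstacle, since it requires carefully identifying the $\isd_N$-projection.

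For (iii)–(vii), the approach is to decompose $\zeta=\zeta'+ug_N$ and apply Hölder together with the bounds already established. In (iii), the $ug_N$ part pairs with $(h_1)_{il}$ to give $\langle\Phi_{st},h_1\rangle u$; the main part $\Phi^0_{st}$ integrates to zero by orthogonality, leaving terms of order $\alpha^2\beta^2$ plus terms with $\zeta'$ of order $\beta^2\mathfrak X^{5/3}$, and Young's inequality gives the stated bound. (iv) uses Lemma \ref{lem:2ndderi3}(ii) with $[\zeta]_0\le C\delta$ and the cubic term in $u$. (v) and (vi) follow from $\Phi_{st}=O(\beta)$ and $\Phi_{tt}=O(\beta^2)$ paired with $\zeta$, using the orthogonality of $\Phi_{tt}\in\isd_N$ to $ug_N$, and then Young's inequality. (vii) combines $\|\Phi_{tt}\|\le C\min(\beta^2,\mathfrak X^{5/6})$ with $\beta$: interpolating gives $\beta\cdot\beta^{2\theta}\mathfrak X^{5(1-\theta)/6}\le C(\beta^4+\mathfrak X^{4/3})$.
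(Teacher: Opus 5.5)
Your route to (i)--(ii) differs from the paper's, which simply cites \cite[Proposition 4.6, Equation (4.37)]{li2023rigidity}. Your warped-product computation of $\Phi^0_{ss},\Phi^0_{st}$ and your separation-of-variables orthogonality checks are sound.

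\textbf{Items (v) and (vi).} Here size bounds plus Young's inequality cannot reach the stated right-hand sides; exact cancellations are needed. Accordingly, the left-hand sides must be read as $\bigl|\int_{\bar M}\la\cdot,\zeta\ra\,\d V_{0,0}\bigr|$, not as $L^1$ norms.

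In (v), pairing $\Phi_{st}=O(\beta)$ with $\zeta=ug_N+\zeta'$ leaves $\int\la\Phi^0_{st},ug_N\ra$, of crude size $\alpha^2\beta$. But $\alpha^2\beta\not\le C(\alpha^4+\beta^4+\mathfrak X^{5/3})$: take $\alpha\sim\beta\sim\mathfrak X^{1/2}$. You must split, by bilinearity at $(0,0)$, $\Phi_{st}=\Phi^0_{st}+\mathbf{\Phi}^{(2)}((\zeta',q'),(h_1,f_1))$. Then kill $\int\la\Phi^0_{st},ug_N\ra$ using the orthogonality $\Phi^0_{st}\perp\KK_0g_N$ you proved in (i), and bound the rest by $C(\alpha\beta\mathfrak X^{5/6}+\beta\mathfrak X^{5/3})$.

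In (vi), orthogonality to $ug_N$ still leaves $\int\la\Phi_{tt},\zeta'\ra$, which your argument bounds only by $C\beta^2\mathfrak X^{5/6}$, not $C\beta^4$. The missing point is that $\pi_{\isd}(h)=h_1+ug_N$ forces $\pi_{\isd}(\zeta'+\frac12h_2)=0$. Hence $\int\la\Phi_{tt},\zeta'\ra=-\frac12\int\la\Phi_{tt},h_2\ra=O(\beta^4)$.

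This step needs $\Phi_{tt}\in\isd_N$, which your (i) leaves open; reinterpreting the projection changes the statement. From $\Phi_{tt}=\pi_{\ker\Phi'}\mathbf{\Phi}^{(2)}((h_1,f_1),(h_1,f_1))$ you still need $\Div_{\bar f}\Phi_{tt}=0$. This follows by differentiating twice in $t$ the weighted Bianchi identity $\Div_f\mathbf{\Phi}(g,f)=-\frac12\na\mathbf{\mu}(g,f)$ (up to the sign convention for $\Div_f$). Its right side is $O(t^3)$ along $(g(0,t),f(0,t))$, since $f_1,f_2$ are the Taylor coefficients of the entropy minimizer, for which $\mathbf{\mu}$ is constant. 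Moreover $\Phi=\Phi_t=0$ at $t=0$, so no other terms survive.

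By contrast, in (iii) the $\Phi^0_{st}$ part does not vanish by orthogonality, since the integrand is $\la\Phi^0_{st},uh_1\ra$ and $uh_1\notin\isd$. However, the crude bound $C\alpha^2\beta^2$ suffices there.

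\textbf{Item (vii).} The interpolation is arithmetically false. The bound $\beta^{a}\mathfrak X^{b}\le C(\beta^4+\mathfrak X^{4/3})$ needs $a+3b\ge 4$, whereas $a=1+2\theta$, $b=\frac56(1-\theta)$ give $a+3b=\frac{7-\theta}{2}<4$. Even using $\beta\le C\mathfrak X^{1/3}$, you get at best $C\mathfrak X^{7/6}$. For $\beta\sim\mathfrak X^{1/3}$ this exceeds $\beta^4+\mathfrak X^{4/3}\sim\mathfrak X^{4/3}$. So Cauchy--Schwarz against any bound $\|\Phi_{tt}\|_{L^2}\le C\min(\beta^2,\mathfrak X^{5/6})$ cannot give (vii). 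The same arithmetic shows (vii) does not follow from (ii) in its displayed $\mathfrak X^{5/6}$ form, so a sharper input on $\Phi_{tt}$ is needed whichever way one argues.

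Your projected Taylor expansion supplies this input if you use exact vanishing instead of size estimates:
\begin{itemize}
\item $\pi_{\isd_N}\Phi_s(0,0)=0$, because $\LL$ is self-adjoint and $\LL k=0$, $\Div_{\bar f}k=0$ for $k\in\isd_N$;
\item $\pi_{\isd_N}\Phi^0_{ss}=\pi_{\isd_N}\Phi^0_{st}=0$.
\end{itemize}
What survives is of order $\mathfrak X+\alpha\mathfrak X^{5/6}+\beta\mathfrak X^{5/6}+\mathfrak X^{5/3}+\alpha^3+\alpha^2\beta+\alpha\beta^2+\beta^3$. Corollary \ref{cor:rigidineq} then yields $\|\Phi_{tt}\|_{L^2}\le C\mathfrak X$, and hence $\|\la\Phi_{tt},h_1\ra\|_{L^1}\le C\beta\mathfrak X\le C\mathfrak X^{4/3}$. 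Bounding the $s$-terms by $\|\zeta'\|_{W^{2,2}}+\alpha\beta$, as you propose, discards exactly this gain.

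Relatedly, the order-$3$ obstruction enters only through $\beta^3\le C\mathfrak X$ from Proposition \ref{pro:rigidityineq2}. $\Phi_{tt}$ need not be comparable to $\beta^2$, and nothing in (ii) requires it to be.
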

\begin{proof}
In this proof, all quantities are evaluated at $s=t=0$.
	Item (i) follows from \cite[Proposition 4.6]{li2023rigidity} and item (ii) follows from \cite[Equation (4.37)]{li2023rigidity} (note that the quantity $\mathfrak{X}$ defined there is different from the quantity here). For item (iii), we first claim that 
	\begin{align}\label{phi2ndesti1}
		\left|\int_{\bar M}(\Phi_{st}^0)_{ij}\zeta_{jl}(h_1)_{il}\,\d V_{0,0}\right|\leq C(\alpha^4+\beta^4+\mathfrak{X}^\frac{10}{3}).
	\end{align}
	In fact, by \eqref{phi2ndderi2}, we have
	\begin{align*}
		\left|\int_{\bar M}(\Phi_{st}^0)_{ij}(ug_N)_{jl}(h_1)_{il}\,\d V_{0,0}\right|\leq C\alpha^2\beta^2\leq C(\alpha^4+\beta^4).
	\end{align*}
	By \eqref{basicesti2}, we obtain
	\begin{align*}
		\left|\int_{\bar M}(\Phi_{st}^0)_{ij}\zeta'_{jl}(h_1)_{il}\,\d V_{0,0}\right|\leq
		C\alpha\beta^2\int_{\bar M}(1+|\vec x|^2)|\zeta'|\, \d V_{0,0}\leq C\alpha\beta^2\mathfrak{X}^{\frac{5}{6}}\leq C(\alpha^4+\beta^4+\mathfrak{X}^{\frac{10}{3}}).
	\end{align*}
	Then \eqref{phi2ndesti1} follows from combination of the above two inequalities. On the other hand, by \cite[Lemma 4.8]{li2023rigidity}, 
	$$[\Phi_{st}-\Phi_{st}^0]_0\leq C\beta ([\zeta']_2+(1+|\vec x|)[\zeta']_1+[\na q']_0),$$
	which implies that
	\begin{align}\label{phi2ndesti2}
		\left|\int_{\bar M}(\Phi_{st}-\Phi_{st}^0)_{ij}\zeta_{jl}(h_1)_{il}\,\d V_{0,0}\right|&\leq C\beta^2 \int_{\bar M}\lc [\zeta']_2+(1+|\vec x|)[\zeta']_1+[\na q']_0 \rc [\zeta]_0\, \d V_{0,0} \nonumber\\&\leq C\beta^2 \mathfrak{X}^{\frac{5}{6}}(\alpha+\mathfrak{X}^{\frac{5}{6}})\leq C(\alpha^4+\beta^4+\mathfrak{X}^{\frac{10}{3}}).
	\end{align}
	Combining \eqref{phi2ndesti1} with \eqref{phi2ndesti2}, it follows that
	\begin{align*}
		\left|\int_{\bar M}(\Phi_{st})_{ij}\zeta_{jl}(h_1)_{il}\,\d V_{0,0}\right|\leq C(\alpha^4+\beta^4+\mathfrak{X}^{\frac{10}{3}}),
	\end{align*}
	which proves item (iii). By \eqref{phi2ndderi1}, we can get that $[\Phi_{ss}^0]_0\leq C(1+|\vec x|^2)\alpha^2$, which gives
	\begin{align}\label{phi2ndesti3}
		\left|\int_{\bar M}\la \Phi_{ss}^0,\zeta\ra \, \d V_{0,0}\right|&\leq C\alpha^2\int_{\bar M}(1+|\vec x|^2)([\zeta']_0+[u]_0)\, \d V_{0,0}\nonumber\\
		&\leq C\alpha^2(\mathfrak{X}^{\frac{5}{6}}+\alpha)\leq C(\alpha^3+\mathfrak{X}^{\frac{5}{2}}).
	\end{align}
	By \cite[Lemma 4.8]{li2023rigidity}, we have
	$$[\Phi_{ss}-\Phi_{ss}^0]_0\leq C\left([\zeta']_2+(1+|\vec x|)[\zeta']_1+[\na q']_0\right)^2.$$
	Thus
	\begin{align}\label{phi2ndesti4}
		\left|\int_{\bar M}\la \Phi_{ss}-\Phi_{ss}^0,\zeta\ra \, \d V_{0,0}\right|&\leq C\int_{\bar M}\left([\zeta']_2+(1+|\vec x|)[\zeta']_1+[\na q']_0\right)^2[\zeta]_0 \, \d V_{0,0}\nonumber\\
		&\leq C\delta \mathfrak{X}^{\frac{5}{3}},
	\end{align}
	where in the second inequality, we have used the fact $[\zeta]_0\leq C\delta$.
	Then item (iv) follows from \eqref{phi2ndesti3} and \eqref{phi2ndesti4}. For item (v), we first note that by item (i), we have
	\begin{align*}
		\int_{\bar M}\la\Phi_{st}^0,ug_N\ra \,\d V_{0,0}=0.
	\end{align*}
	By \eqref{phi2ndderi2}, $[\Phi_{st}^0]_0\leq C(1+|\vec x|^2)\alpha\beta$, therefore from item (i) we have
	\begin{align}\label{phi2ndesti5}
		\left|\int_{\bar M}\la\Phi_{st}^0,\zeta\ra \,\d V_{0,0}\right|&=\left|\int_{\bar M}\la\Phi_{st}^0,\zeta'\ra \,\d V_{0,0}\right|\leq C\alpha\beta \int_{\bar M}(1+|\vec x|^2)[\zeta']_0\, \d V_{0,0}\nonumber\\
		&\leq C\alpha\beta \mathfrak{X}^{\frac{5}{6}}\leq C(\alpha^4+\beta^4+\mathfrak{X}^{\frac{5}{3}}).
	\end{align}
	On the other hand, by \cite[Lemma 4.8]{li2023rigidity}, we obtain
	\begin{align}\label{phi2ndesti6}
		\left|\int_{\bar M}\la\Phi_{st}-\Phi_{st}^0,\zeta\ra \,\d V_{0,0}\right|&\leq C\beta\int_{\bar M}([\zeta']_2+(1+|\vec x|)[\zeta']_1+[\na q']_0)([\zeta']_0+(1+|\vec x|^2)\alpha)\, \d V_{0,0}\nonumber\\
		&\leq C\beta\mathfrak{X}^{\frac{5}{3}}+C\alpha\beta\mathfrak{X}^{\frac{5}{6}} \leq C(\alpha^4+\beta^4+\mathfrak{X}^{\frac{5}{3}}).
	\end{align}
	Combining \eqref{phi2ndesti5} and \eqref{phi2ndesti6}, we obtain item (v).
	Since $\pi_{\isd}(h)=h_1+ug_N$, we have $\pi_{\isd}(\frac{1}{2}h_2+\zeta')=0$, which, by item (i), implies
	\begin{align*}
		\left|\int_{\bar M}\la \Phi_{tt},\zeta\ra\, \d V_{0,0}\right|&=\left|\int_{\bar M}\la \Phi_{tt},\zeta'\ra\, \d V_{0,0}\right|=\left|\int_{\bar M}\la \Phi_{tt},-\frac{1}{2}h_2\ra \,\d V_{0,0}\right|\leq C\beta^4.
	\end{align*}
	Here in the last inequality, we have used Lemma \ref{lem:2ndderi3} (iv). This gives item (vi). Finally, item (vii) follows directly from item (ii) and Cauchy-Schwarz inequality. This completes the proof.
\end{proof}

Next, we estimate the third derivatives of $\Phi$:
\begin{lem}\label{3rdphiesti}
	For any $s,t\in [0,1]$, the following holds\textup{:}
	\begin{enumerate}[label=\textnormal{(\roman{*})}]
		\item $|\Phi_{sss}|\leq C\big([\zeta]_2^3+(1+|\vec x|)[\zeta]_1^3+[\zeta]_1^2[\na q]_0\big)$.
		\item $|\Phi_{sst}|\leq C\beta\big([\zeta]_2^2+(1+|\vec x|)[\zeta]_1^2+[\zeta]_1[\na q]_0\big)$.
		\item $|\Phi_{stt}|\leq C\beta^2\big([\zeta]_2+(1+|\vec x|)[\zeta]_1+[\na q]_0\big)$.
		\item $|\Phi_{ttt}|\leq C\beta^3(1+|\vec x|)$.
	\end{enumerate}
\end{lem}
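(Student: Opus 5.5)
The plan is to proceed exactly as in the proof of Lemma \ref{lem:2ndderi3}, working in local coordinates and differentiating the expressions
\[
\Ric_{ij}=\partial_l\Gamma^l_{ij}-\partial_i\Gamma^l_{lj}+\Gamma^a_{ij}\Gamma^{l}_{la}-\Gamma^a_{lj}\Gamma^l_{ia},\qquad f_{ij}=\partial_i\partial_j f-\Gamma^k_{ij}\partial_k f
\]
three times in the parameters $(s,t)$. Since $g(s,t)$ is linear in $s$ and quadratic in $t$, every derivative of $g$ is explicit. We have $g_s=\zeta$, $g_t=h_1+th_2$, $g_{tt}=h_2$, and all other higher derivatives vanish. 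Similarly $f_s=q$, $f_t=f_1+tf_2$, $f_{tt}=f_2$. Hence each mixed derivative of $\Phi=\frac g2-\Ric-\na^2 f$ is a universal polynomial expression, which is the content of the claimed bounds.

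First, I will record the derivatives of $g^{-1}$ and of $\Gamma$. For instance, $(g^{-1})_s=-g^{-1}\zeta g^{-1}$, and each further derivative adds a factor of $\zeta$, $h_1+th_2$ or $h_2$. Each $\Gamma$-derivative is a sum of terms containing exactly one first derivative of a perturbation tensor times zeroth-order tensors. This gives
\[
|(\Gamma)_{s^at^b}|\le C[\zeta]_1^{a}\beta^{b}\quad (a+b\le 3),
\]
using $[\zeta]_0\le C\delta\ll1$ and \eqref{equ:basicesti1} to bound $[h_1+th_2]_2+[h_2]_2\le C\beta$. The $t$-derivatives landing on $g_{tt}=h_2$ give $\beta^2$ in place of $\beta\cdot\beta$, which is consistent.

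Next, for $\Ric$, the terms $\partial\Gamma$ contribute at most one factor with two derivatives, so $|\Ric_{s^at^b}|\le C[\zeta]_2^{a}\beta^b$ when $a\ge1$ (using $[\zeta]_1\le[\zeta]_2$), and $\le C\beta^3$ when $a=0$. For $\na^2 f$, the term $\partial_i\partial_j f$ is linear in $(s,t)$ apart from the $t^2 f_2/2$ part, so its third derivatives vanish. The remaining term $\Gamma\partial f$ produces, via Leibniz, either $\Gamma_{s^at^b}\,\partial_k \bar f$, or lower derivatives of $\Gamma$ times $\partial_k q$, $\partial_k(f_1+tf_2)$ or $\partial_k f_2$. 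Since $|\na\bar f|\le C(1+|\vec x|)$, the first type gives the $(1+|\vec x|)[\zeta]_1^a\beta^b$ terms. The second type gives $[\zeta]_1^{a-1}\beta^b[\na q]_0$ when a derivative hits $q$, and otherwise $\beta$-factors times $[\zeta]_1^a$, which are absorbed. The $g/2$ term is linear and vanishes after three derivatives. Summing these contributions yields items (i)--(iv); for (iv) only $\beta^3(1+|\vec x|)$ survives, since $[\na f_1]+[\na f_2]\le C\beta$.

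The main obstacle is purely bookkeeping: tracking, in cases like $\Phi_{sst}$, that every term carries at least one factor of $\beta$ and at least two "$\zeta$-type" factors. I would organize this by noting that $\Phi$ is a smooth function of the jets $(g,\partial g,\partial^2 g,\partial f,\partial^2 f)$ with uniformly bounded coefficients, since $g$ is close to $\bar g$ in $C^2$. Each $\partial_s$ then inserts one factor from $\{\zeta,\partial\zeta,\partial^2\zeta,\partial q,\partial^2 q\}$, and each $\partial_t$ inserts one factor from the analogous $\beta$-sized family. The only unbounded coefficient is $\partial\bar f$, which appears at most linearly. This structural observation immediately gives all four estimates.
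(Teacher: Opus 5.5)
Your proposal is correct and follows essentially the same route as the paper, which likewise splits $\Phi$ into its $\Ric$ and $\na^2 f$ parts. The paper bounds $[\Ric_{sss}]_0\le C[\zeta]_2^3,\dots,[\Ric_{ttt}]_0\le C\beta^3$ and the corresponding third derivatives of $\na^2 f=\partial^2 f-\Gamma\,\partial f$ via the Christoffel-symbol bookkeeping of Lemma \ref{lem:2ndderi3}, and then adds them up. One caveat: the exact shape of the bounds (linear in $[\na q]_0$, with no $\na^2 q$ or $(\na q)^2$ terms) comes from your explicit Leibniz computation, which uses that $f$ enters $\na^2 f$ affinely and that $\partial_s^2 f=\partial_s\partial_t f=0$. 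It does not follow from the generic ``smooth function of jets'' remark in your last paragraph.
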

\begin{proof}
	By a similar argument as in proof of Lemma \ref{lem:2ndderi3} (see also \cite[Lemma 4.7]{li2023rigidity}), we have
	\begin{align*}
		[\Ric_{sss}]_0\leq C[\zeta]_2^3,\quad [\Ric_{sst}]_0\leq C\beta[\zeta]_2^2,\quad [\Ric_{stt}]_0\leq C\beta^2[\zeta]_2,\quad [\Ric_{ttt}]_0\leq C\beta^3,
	\end{align*}
	and 
	\begin{align*}
		&[(\na^2f)_{sss}]_0\leq C\big((1+|\vec x|)[\zeta]_1^3+[\zeta]_1^2[\na q]_0\big),\\ 
		&[(\na^2 f)_{sst}]_0\leq C\beta\big((1+|\vec x|)[\zeta]_1^2+[\zeta]_1[\na q]_0\big),\\
	    &[(\na^2 f)_{stt}]_0\leq C\beta^2\big((1+|\vec x|)[\zeta]_1+[\na q]_0\big),\\ 
	    &[(\na^2 f)_{ttt}]_0\leq C\beta^3(1+|\vec x|).
	\end{align*}
	Combining all the above inequalities, the lemma follows immediately.
\end{proof}

Similarly, the following estimates for third derivatives of $w$ hold.
\begin{lem}
	\label{lem:3rdw}For any $s,t\in [0,1]$, we have\textup{:}
	\begin{enumerate}[label=\textnormal{(\roman{*})}]
		\item $|w_{sss}|\leq C( [\zeta]_2^3+[\zeta]_1^2[\na q]_1+[\zeta]_0[\na q]_0^2 +(1+|\vec x|)([\zeta]_1^3+[\zeta]_0^2[\na q]_0)+(1+|\vec x|^2)[\zeta]_0^3)$.
		\item $|w_{sst}|\leq C\beta \big( [\zeta]_2^2+[\zeta]_1[\na q]_1+[\na q]_0^2+(1+|\vec x|)([\zeta]_1^2+[\zeta]_0[\na q]_0)+(1+|\vec x|^2)[\zeta]_0^2\big)$.
		\item $|w_{stt}|\leq C\beta^2 \big( [\zeta]_2+(1+|\vec x|)([\zeta]_1+[\na q]_0+\beta)+(1+|\vec x|^2)[\zeta]_0\big)$.
		\item $|w_{ttt}| \leq C\beta^3(1+|\vec x|^2)$.
	\end{enumerate}
\end{lem}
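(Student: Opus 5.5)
The plan is to differentiate the explicit expression for $w(s,t)$ three times along the two-parameter family. This mirrors the proofs of Lemma \ref{lem:2ndderi2} and Lemma \ref{3rdphiesti}, and reduces each item to already-available bounds on the derivatives of the metric, the Christoffel symbols and $f$. Recall that
\[
w=2\Delta_{g(s,t)} f-|\na f|^2_{g(s,t)}+\scal(g(s,t))+f-n.
\]
The dependence on $(s,t)$ enters only through two pieces of data. In the $s$-direction, $g_s=\zeta$ and $f_s=q$. In the $t$-direction, $g_t=h_1+th_2$, $f_t=f_1+tf_2$, $g_{tt}=h_2$ and $f_{tt}=f_2$, and all higher derivatives of the data vanish. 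Working in normal coordinates of $g(s,t)$, I would expand each term of $w$ as a polynomial in three kinds of quantities. The first kind is $g^{-1}$ and its derivatives, which are polynomial in the perturbation tensors. The second kind is $\partial g$ and $\partial^2 g$, which enter through $\Gamma$ and $\scal$. The third kind is $\partial f$ and $\partial^2 f$.

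For item (i) I would quote \cite[Lemma 4.10]{fang2025strong} (its third-order analogue) directly, since the $s$-direction is identical to the round cylindrical case. For items (ii)--(iv) I would apply the same computation with some or all of the $s$-slots replaced by $t$-slots. The inputs are the following.
\begin{itemize}
\item $|\na^j(h_1+th_2)|+|\na^{j}(f_1+tf_2)|\le C\beta$ and $|\na^j h_2|+|\na^j f_2|\le C\beta^2$ for $j\le 5$, by \eqref{equ:basicesti1}.
\item $|\na \bar f|\le C(1+|\vec x|)$ and $|\bar f|\le C(1+|\vec x|^2)$; the background $|\na f|$ is comparable to $|\na\bar f|$.
\end{itemize}

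Next I would sort the terms by type. The derivatives of $\scal$ and $\Delta f$ contribute $[\cdot]_2$-type factors with no weight. The terms in which $\Gamma$-derivatives meet $\na f$, or in which $g^{-1}$-derivatives meet $\na^2 f$, carry one factor $(1+|\vec x|)$. The terms of $|\na f|^2$ in which only the metric is differentiated carry $(1+|\vec x|^2)$. The term $f-n$ is linear in $f$, so it drops out of third derivatives except through the mixed pieces already counted.

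For instance, in $w_{stt}$ every term contains exactly one $s$-slot, filled with $\zeta$ or $q$, and two $t$-slots. The $t$-slots contribute either $\beta^2$ (two first-order factors, or one $h_2,f_2$ factor from a second $t$-derivative of the data), which gives exactly the listed bound. The extra $\beta$ inside the $(1+|\vec x|)$ term of (iii) comes from terms with no $s$-slot on a tensor. Such a term arises when $q$ appears undifferentiated through $f$ in $|\na f|^2$ paired with $\na(f_1+tf_2)$, or from $\Delta f$ where $f$ carries a $t$-slot. To be safe I would simply bound all these by $\beta^3(1+|\vec x|)$. Item (iv) is the case with three $t$-slots, giving $\beta^3$ with weight at most $(1+|\vec x|^2)$.

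The only real care needed is bookkeeping the weights. One has to check that $\na f$ appears at most quadratically, so that the weight never exceeds $(1+|\vec x|^2)$. One also has to check that whenever $q$ appears undifferentiated it does so only through the linear term $f$, which is killed by a second derivative in any direction. With these points verified, the lemma follows by collecting terms. I expect no genuine obstacle: the argument is a routine extension of the computations in \cite[Lemma 4.10]{fang2025strong} and Lemma \ref{lem:2ndderi2}.
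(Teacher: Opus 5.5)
Your proposal is correct and takes the same route the paper does implicitly: the paper states the lemma with ``Similarly'', meaning the computation of \cite{fang2025strong} with some $s$-slots replaced by $t$-slots and the bounds \eqref{equ:basicesti1} used, exactly as in Lemmas \ref{lem:2ndderi2} and \ref{3rdphiesti}. One minor correction: $g(s,t)$ and $f(s,t)$ are affine in $s$, so every term of $w_{stt}$ is linear in $(\zeta,q)$ and no term without an $s$-slot exists; the extra $\beta$ in item (iii) is therefore harmless slack, not something produced by the mechanism you describe (likewise, $g^{-1}$-derivatives times $\na^2 f$ need no $(1+|\vec x|)$ weight, since $\na^2 f$ is bounded).
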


By the formulas of third derivatives of $V$ (see \eqref{equ:3rdQderi1}, \eqref{equ:3rdQderi2}, \eqref{equ:3rdQderi3} and \eqref{equ:3rdQderi4}) and Corollary \ref{cor:1stderiat0}, we know that at $s=t=0$, we have
		\begin{align*}
		V_{sss}(0,0)&=\int_{\bar M} \zeta_{ij}\zeta_{jl}\zeta_{il}-\frac{3}{2}|\zeta|^2Q(0,0)+Q^3(0,0)\,\d V_{0,0},\\		
			V_{ttt}(0,0)&=\int_{\bar M}-\frac{3}{2}\la h_1,h_2\ra+(h_1)_{ij}(h_1)_{jl}(h_1)_{il}\,\d V_{0,0},\\	
		V_{sst}(0,0)&=\int_{\bar M}\zeta_{ij}\zeta_{jl}(h_1)_{il}-\la \zeta,h_1\ra Q(0,0)\,\d V_{0,0},\\
		 V_{stt}(0,0)&=\int_{\bar M}-\frac{1}{2}\la h_2,\zeta\ra+(h_1)_{ij}\zeta_{jl}(h_1)_{il}+\left(\frac{1}{2}\Tr_{\bar g}h_2-f_2-\frac{1}{2}|h_1|^2\right)Q(0,0)\,\d V_{0,0}.
	\end{align*}

\begin{lem}\label{4thderiV}
	The following estimates hold\textup{:}
	\begin{enumerate}[label=\textnormal{(\roman{*})}]
		\item $\sup_{s,t\in [0,1]}|V_{sss}|\leq C (\alpha^3+\beta^4+\mathfrak{X}^{\frac{5}{3}}+|V(1,1)-V(0,0)|^2)$.
		\item $\sup_{s,t\in [0,1]}\big(|V_{ssss}|+|V_{ssst}|+|V_{sstt}|+|V_{sttt}|+|V_{tttt}|\big)\leq C(\alpha^4+\beta^4+\mathfrak{X}^{\frac{5}{3}}+|V(1,1)-V(0,0)|^2).$
		\end{enumerate}
\end{lem}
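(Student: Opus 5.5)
The plan is to write every third and fourth derivative of $V$ as an integral of polynomial expressions in $P,Q$ and their $s,t$-derivatives against $\d V_{s,t}$, and bound each term by splitting $\zeta=\zeta'+ug_N$. Since $(\d V_{s,t})_s=Q\,\d V_{s,t}$ and $(\d V_{s,t})_t=P\,\d V_{s,t}$, differentiating $V_{sss}=\int Q_{ss}+3QQ_s+Q^3\,\d V_{s,t}$ once more gives integrands such as $Q_{sss}+4QQ_{ss}+3Q_s^2+6Q^2Q_s+Q^4$, and analogous expressions for the mixed derivatives. The needed pointwise bounds on $Q_{sss}$, $Q_{sst}$, $P_{stt}$, $P_{ttt}$ and so on come from differentiating Lemma \ref{lem:2ndderi1} once more. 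Each of these is a quartic contraction: $\zeta^{4}$, $\zeta^3(h_1+th_2)$, down to $(h_1+th_2)^4$, plus terms containing $h_2$. This yields bounds like $[Q_{sss}]_0\le C[\zeta]_0^4$, $[Q_{sst}]_0\le C\beta[\zeta]_0^3$, $[P_{ttt}]_0\le C\beta^4$, in parallel with Corollary \ref{estihighderiPQ}.

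For item (ii), every fourth-derivative integrand is of total order four in $(\zeta,\beta)$, or contains a factor $Q$ or $P$ multiplied by an order-two quantity. The pure quartic terms are handled as follows:
\begin{itemize}
\item The $\zeta$-quartic terms use \eqref{equ:zeta4}, $\int|\zeta|^4\le C(\alpha^4+\delta^2\mathfrak X^{5/3}+\dots)$.
\item Mixed terms $\beta^j|\zeta|^{4-j}$ are treated by Young's inequality, after noting $\int(1+|\vec x|^4)|\zeta|^2\le C(\alpha^2+\mathfrak X^{5/3})$ via Lemma \ref{lem:coninequ} and Corollary \ref{cor:rigidineq}.
\item Terms like $\beta^2\alpha^2$ are absorbed into $\alpha^4+\beta^4$.
\end{itemize}
Terms with a factor $Q$ or $P$ are estimated by Cauchy--Schwarz, combining Lemma \ref{lem:PQesti2}, i.e. $\|(1+|\vec x|)Q\|_{L^2}^2+\|(1+|\vec x|)P\|_{L^2}^2\le C(\delta\alpha^3+\beta^4+\mathfrak X^{5/3}+|V(1,1)-V(0,0)|^2)$, with $L^2$ bounds on the accompanying quadratic factors, for example $\|Q_s\|_{L^2}\le C(\alpha^2+\mathfrak X^{5/3})$. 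Products of two such $L^2$ bounds give at most the right-hand side, since $\delta\alpha^3\le\alpha^4+\delta^4$ is not what we want; instead we use $\delta\alpha^3\cdot(\cdot)$ only when multiplied by another small factor, or bound $Q^2$ directly through Proposition \ref{pro:Qesti2}. The $Q^4$ and $P^4$ terms are controlled by $C\delta^2\int Q^2$.

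Item (i) is the main obstacle, because $V_{sss}$ contains the genuinely cubic term $\int\zeta_{ij}\zeta_{jl}\zeta_{il}$. Expanding with $\zeta=\zeta'+ug_N$, the $u^3$ part is bounded by $C\alpha^3$ using the polynomial growth bound $|u|\le C(1+|\vec x|^2)\alpha$. The terms containing $\zeta'$ are bounded by $C(\alpha^2+\delta)\mathfrak X^{5/6}\cdot\|\zeta'\|_{W^{2,2}}$-type quantities, hence by $C\mathfrak X^{5/3}$ plus $\alpha^3$ after Young's inequality, e.g. $\alpha^2\mathfrak X^{5/6}\le \alpha^3+\mathfrak X^{5/2}$. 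The remaining terms $QQ_s$ and $Q^3$ are handled as above: $\int|Q||\zeta|^2$ is at most $\|Q\|_{L^2}\|\zeta\|_{L^4}^2$, and $\|Q\|_{L^2}^2\le C(\alpha^4+\beta^4+\mathfrak X^{5/3}+|V(1,1)-V(0,0)|^2)$ by Proposition \ref{pro:Qesti2}, so the product is bounded by the square of this plus $(\alpha^2+\dots)^2$. This is where the specific right-hand side $\alpha^3$ (rather than $\alpha^4$) arises and why only $V_{sss}$ loses a power. Finally, I will check that these bounds are uniform in $s,t\in[0,1]$, which follows from the comparability of $\d V_{s,t}$ and $\d V_{0,0}$.
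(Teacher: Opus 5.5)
Your proposal is correct and follows essentially the paper's route. You split $\zeta=\zeta'+ug_N$ so that the cubic term $\int Q_{ss}$ yields $C(\alpha^3+\delta\mathfrak X^{5/3})$, which is the only source of the $\alpha^3$, and you control every term carrying a factor $Q$ or $P$ by Cauchy--Schwarz with Propositions \ref{pro:Pesti}, \ref{pro:Qesti2} and \eqref{equ:zeta4}.

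Two small corrections. Drop Lemma \ref{lem:PQesti2} entirely, since, as you noticed, its $\delta\alpha^3$ term is not admissible here. Also, \eqref{equ:zeta4} only gives $\|Q_s\|_{L^2}\le C(\alpha^2+\delta\mathfrak X^{5/6})$, not $C(\alpha^2+\mathfrak X^{5/3})$, but this weaker bound still suffices after squaring.
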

\begin{proof}
	Recall from \eqref{equ:3rdQderi1} that
	$$V_{sss}=\int_{\bar M}Q_{ss}+3QQ_s+Q^3\,\d V_{s,t}.$$	
	By Corollary \ref{estihighderiPQ}, we have $[Q_{ss}]_0\leq C\big([\zeta']_0^3+(1+|\vec x|^6)\alpha^3\big)$, which implies
	\begin{align}\label{estiV3rd1}
		\left|\int_{\bar M}Q_{ss}\, \d V_{s,t}\right|&\leq C\int_{\bar M}[\zeta']_0^3+(1+|\vec x|^6)\alpha^3\, \d V_{s,t}\nonumber\\
		&\leq C\int_{\bar M} ([\zeta]_0+[u]_0)[\zeta']_0^2 +(1+|\vec x|^6)\alpha^3\, \d V_{s,t} \nonumber\\
		&\leq C(\delta\mathfrak{X}^{\frac{5}{3}}+\alpha \mathfrak{X}^{\frac{5}{3}})+C\alpha^3\nonumber\\
		&\leq C(\alpha^3+\delta\mathfrak{X}^{\frac{5}{3}}).
	\end{align}
	Since $[Q_s]_0\leq C\big([\zeta']_0^2+(1+|\vec x|^4)\alpha^2\big)$, we have by Cauchy-Schwarz inequality and Proposition \ref{pro:Qesti2} that
	\begin{align}\label{estiV3rd2}
		\left|\int_{\bar M}QQ_s\, \d V_{s,t}\right|&\leq C\int_{\bar M}\big([\zeta']_0^2+(1+|\vec x|^4)\alpha^2\big)|Q|\,\d V_{s,t}\nonumber\\
		&\leq C\delta \mathfrak{X}^{\frac{5}{3}}+C\alpha^2(\alpha^2+\beta^2+\mathfrak{X}^{\frac{5}{6}}+|V(1,1)-V(0,0)|)\nonumber\\
		&\leq C(\alpha^4+\beta^4+\mathfrak{X}^{\frac{5}{3}}+|V(1,1)-V(0,0)|^2).
	\end{align}
	According to Proposition \ref{pro:Qesti2}, it follows that
	\begin{align}\label{estiV3rd3}
		\left|\int_{\bar M}Q^3\, \d V_{s,t}\right|\leq C\delta \left|\int_{\bar M}Q^2\, \d V_{s,t}\right|\leq C\delta (\alpha^4+\beta^4+\mathfrak{X}^{\frac{5}{3}}+|V(1,1)-V(0,0)|^2).	\end{align}
	Then item (i) follows from combination of \eqref{estiV3rd1}, \eqref{estiV3rd2} and \eqref{estiV3rd3}.
	
	Now we focus on the estimates of fourth derivatives of $V$. We only give all details of $\sup_{s,t\in [0,1]}|V_{sstt}|\leq C(\alpha^4+\beta^4+\mathfrak{X}^{\frac{5}{3}}+|V(1,1)-V(0,0)|^2)$ and other estimates will follow in a similar way. Recall that $V_{stt}=\int_{\bar M}Q_{tt}+2Q_tP+QP_t+QP^2\,\d V_{s,t}$, thus
	\begin{align}\label{4thVesti1}
		V_{sstt}=\int_{\bar M}Q_{stt}+2Q_{st}P+2Q_tP_s+Q_sP_t+QP_{st}+Q_sP^2+2QPP_s+(Q_{tt}+2Q_tP+QP_t+QP^2)Q\,\d V_{s,t}.
	\end{align}
	By Propositions \ref{pro:Pesti}, \ref{pro:Qesti2} and the fact that all derivatives of $P,Q$ up to order 2 are bounded by $C\delta$, we have
	\begin{align}\label{4thVesti2}
		\left|\int_{\bar M}Q_sP^2+2QPP_s+2Q_tPQ+Q^2P_t+Q^2P^2\,\d V_{s,t}\right|\leq C(\alpha^4+\beta^4+\mathfrak{X}^{\frac{5}{3}}+|V(1,1)-V(0,0)|^2).
	\end{align}
    Since $|Q_{stt}|\leq |\zeta*\zeta*h_2|+|\zeta*\zeta*(h_1+th_2)*(h_1+th_2)|$, it follows 
    \begin{align}\label{4thVesti3}
    	\left|\int_{\bar M}Q_{stt}\,\d V_{s,t}\right|&\leq C\beta^2\int_{\bar M}[\zeta]_0^2\,\d V_{s,t} \leq C(\alpha^4+\beta^4+\mathfrak{X}^{\frac{10}{3}}).
    \end{align}
    By Corollary \ref{estihighderiPQ}, we obtain
    \begin{align*}
    	\rVert P_{st}\rVert_{L^2}+\rVert Q_{st}\rVert_{L^2}+\rVert Q_{tt}\rVert_{L^2}&\leq C\beta^2(\alpha+\mathfrak{X}^{\frac{5}{6}})+C\beta(\alpha^2+\mathfrak{X}^{\frac{5}{3}}) \leq C(\alpha^2\beta+\alpha\beta^2+\beta^2\mathfrak{X}^{\frac{5}{6}}+\beta\mathfrak{X}^{\frac{5}{3}}),
    \end{align*}
    which implies that
    \begin{align}\label{4thVesti4}
    	\left|\int_{\bar M}2Q_{st}P+QP_{st}+Q_{tt}Q\,\d V_{s,t}\right|&\leq C(\rVert P_{st}\rVert_{L^2}+\rVert Q_{st}\rVert_{L^2}+\rVert Q_{tt}\rVert_{L^2})(\rVert P\rVert_{L^2}+\rVert Q\rVert_{L^2})\nonumber\\
    	&\leq C(\alpha^2\beta+\alpha\beta^2+\beta^2\mathfrak{X}^{\frac{5}{6}}+\beta\mathfrak{X}^{\frac{5}{3}})(\alpha^2+\beta^2+\mathfrak{X}^{\frac{5}{6}}+|V(1,1)-V(0,0)|)\nonumber\\
    	&\leq C(\alpha^4+\beta^4+\mathfrak{X}^{\frac{5}{3}}+|V(1,1)-V(0,0)|^2).
    \end{align}
    By Corollary \ref{estihighderiPQ}, we get $|P_s|+|Q_t|\leq C\beta[\zeta]_0, |Q_s|\leq C[\zeta]_0^2$ and $|P_t|\leq C\beta^2$, which implies
    \begin{align}\label{4thVesti5}
    	\left|\int_{\bar M}2Q_tP_s+Q_sP_t\,\d V_{s,t}\right|&\leq C\beta^2\int_{\bar M}[\zeta]_0^2\,\d V_{s,t}\leq C\beta^2\big(\alpha^2+\mathfrak{X}^{\frac{5}{3}}\big)\leq C(\alpha^4+\beta^4+\mathfrak{X}^{\frac{10}{3}}).
    \end{align}
    Plugging \eqref{4thVesti2}, \eqref{4thVesti3}, \eqref{4thVesti4} and \eqref{4thVesti5} into \eqref{4thVesti1}, we obtain 
    $$\sup_{s,t\in [0,1]}|V_{sstt}|\leq C(\alpha^4+\beta^4+\mathfrak{X}^{\frac{5}{3}}+|V(1,1)-V(0,0)|^2).$$
    This completes the proof.
\end{proof}

As a direct corollary of Lemma \ref{4thderiV} and Taylor expansion of $V$, we have
\begin{cor}\label{taylorofV6}
	\begin{align*}
		&\left|V_s+\frac{1}{2}\left(V_{ss}+2V_{st}+V_{tt}\right)+\frac{1}{6} \left(3V_{sst}+3V_{stt}+V_{ttt}\right)\right|
		\leq C \big(\alpha^3+\beta^4+\mathfrak{X}^{\frac{5}{3}}+|V(1,1)-V(0,0)|\big),
	\end{align*}
	where on the left hand side, all derivatives are evaluated at $(0,0)$.
\end{cor}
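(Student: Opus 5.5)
The estimate comes from the fourth-order Taylor expansion of $V$ in the two variables $(s,t)$ about $(0,0)$, evaluated at $(1,1)$. Write
\begin{align*}
V(1,1)-V(0,0)=V_s+V_t+\tfrac12\left(V_{ss}+2V_{st}+V_{tt}\right)+\tfrac16\left(V_{sss}+3V_{sst}+3V_{stt}+V_{ttt}\right)+R_4 ,
\end{align*}
with all derivatives on the right evaluated at $(0,0)$. The remainder satisfies $|R_4|\le C\sup_{s,t\in[0,1]}\sum|V^{(4)}|$, where the sum runs over all fourth-order derivatives $V_{ssss},\dots,V_{tttt}$. We have $V_t(0,0)=0$ by Corollary \ref{cor:1stderiat0}. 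Moving $V(1,1)-V(0,0)$ to the right-hand side and isolating the expression in the statement gives
\begin{align*}
\Big|V_s+\tfrac12(V_{ss}+2V_{st}+V_{tt})+\tfrac16(3V_{sst}+3V_{stt}+V_{ttt})\Big|\le |V(1,1)-V(0,0)|+\tfrac16|V_{sss}(0,0)|+|R_4|.
\end{align*}

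The two error terms are bounded by Lemma \ref{4thderiV}. Part (i), applied at $(0,0)$, gives
\[
|V_{sss}(0,0)|\le C(\alpha^3+\beta^4+\mathfrak{X}^{5/3}+|V(1,1)-V(0,0)|^2).
\]
Part (ii) gives
\[
|R_4|\le C(\alpha^4+\beta^4+\mathfrak{X}^{5/3}+|V(1,1)-V(0,0)|^2).
\]
Since $\alpha\le C\delta\ll1$, the term $\alpha^4$ is absorbed into $\alpha^3$.

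The only remaining point is to reduce $|V(1,1)-V(0,0)|^2$ to a linear term. We have $|V(s,t)-1|\le C\delta$, because $\d V_{s,t}$ is uniformly comparable to $\d V_{\bar f}$ and $|P|+|Q|\le C\delta$; integrating $V_s,V_t$ along the path gives the bound. Hence $|V(1,1)-V(0,0)|\le C\delta\le 1$, and so $|V(1,1)-V(0,0)|^2\le C|V(1,1)-V(0,0)|$. Combining these bounds gives the stated inequality.

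There is no real obstacle here: all the analytic work sits in Lemma \ref{4thderiV}. The one point to check is that the $V_{sss}$ term is deliberately left out of the bracket in the statement. Its bound has only $\alpha^3$ rather than fourth order, and that is exactly why the right-hand side contains $\alpha^3$.
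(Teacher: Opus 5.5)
Your proposal is correct and is exactly the argument the paper has in mind: the paper states this as a direct corollary of Lemma \ref{4thderiV} and the Taylor expansion of $V$. Concretely, that means expanding $V(1,1)-V(0,0)$ to third order with a fourth-order remainder, using $V_t(0,0)=0$, bounding $V_{sss}(0,0)$ by part (i) and the remainder by part (ii), and using the smallness of $|V(1,1)-V(0,0)|$ to replace its square by the linear term.
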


Now we calculate second derivatives of $W$ at $s=t=0$. By Lemma \ref{lem:1stderi}, for any $s,t\in [0,1]$, we have
\begin{align}\label{2ndderiW1}
 W_{ss}=\int_{\bar M} -2\Phi_{ij}\zeta_{jl}\zeta_{il}+\la \Phi_s,\zeta\ra +\la \Phi,\zeta\ra Q+Q_s(w-1)+Qw_s+Q^2(w-1)\, \d V_{s,t},
\end{align}
and 
\begin{align}\label{2ndederiW2}
	W_{st}=\int_{\bar M} -2\Phi_{ij}\zeta_{jl}(h_1+th_2)_{il}+\la \Phi_t,\zeta\ra +\la \Phi,\zeta\ra P+Q_t(w-1)+Qw_t+Q(w-1)P\, \d V_{s,t},
\end{align}
and
\begin{align}\label{2ndderiW3}
	W_{tt}=\int_{\bar M}& -2\Phi_{ij}(h_1+th_2)_{jl}(h_1+th_2)_{il}+\la \Phi,h_2\ra +\la \Phi_t,h_1+th_2\ra +\la \Phi,h_1+th_2\ra P\nonumber\\
	&+P_t(w-1)+Pw_t+P^2(w-1)\,\d V_{s,t}.
\end{align}

Therefore, when evaluating at $s=t=0$, we obtain from \eqref{phideri1} and Corollary \ref{cor:1stderiat0} that
\begin{align}\label{2ndderiWat01}
	W_{ss}=&\int_{\bar M}\la\Phi_s,\zeta\ra +Q_s(w-1)+Qw_s+Q^2(w-1)\,\d V_{0,0}\nonumber\\
	=&\frac{1}{2}\int_{\bar M}\la \LL\zeta+2\Div_{\bar f}^*\,\Div_{\bar f}\zeta+2\na^2 Q(0,0),\zeta\ra \,\d V_{0,0}+\int_{\bar M}2|\na Q(0,0)|^2-Q^2(0,0)+Q(0,0)\Div_{\bar f}\,\Div_{\bar f}\zeta\,\d V_{0,0}\nonumber\\
		&+\big(\bar W-1\big)V_{ss}(0,0),
\end{align}
and (recall that $\Phi_t=0,\, \Div_{\bar f}h_1=0,\, P=0$ at $s=t=0$)
\begin{align}\label{2ndderiWat02}
	W_{st}&=\int_{\bar M}Q_t(w-1)+Qw_t\,\d V_{0,0} =(\bar{W}-1)V_{st}(0,0),
\end{align}
and
\begin{align}\label{2ndderiWat03}
	W_{tt}&=\int_{\bar M} P_t(w-1) \,\d V_{0,0} =(\bar W-1)V_{tt}(0,0).
\end{align}

Then we calculate third derivatives of $W$. From \eqref{2ndderiW1}, we can calculate $W_{sss}$ as
\begin{align}\label{equ:3rdWsss}
	W_{sss}=&\int_{\bar M}-2(\Phi_s)_{ij}\zeta_{jl}\zeta_{il}+6\Phi_{ij}\zeta_{jl}\zeta_{lm}\zeta_{im}+\la \Phi_{ss},\zeta\ra -2(\Phi_s)_{ij}\zeta_{jl}\zeta_{il}\,\d V_{s,t}\nonumber\\
	&+\int_{\bar M}\la\Phi_s,\zeta\ra Q-2\Phi_{ij}\zeta_{jl}\zeta_{il}Q+\la \Phi,\zeta\ra Q_s+Q_{ss}(w-1)+2Q_sw_s+Qw_{ss}+2QQ_s(w-1)+Q^2w_s\,\d V_{s,t}\nonumber\\
	&+\int_{\bar M}\big(-2\Phi_{ij}\zeta_{jl}\zeta_{il}+\la \Phi_s,\zeta\ra +\la \Phi,\zeta\ra Q+Q_s(w-1)+Qw_s+Q^2(w-1)\big)Q\,\d V_{s,t}\nonumber\\
	=&\int_{\bar M}-4(\Phi_s)_{ij}\zeta_{jl}\zeta_{il}+6\Phi_{ij}\zeta_{jl}\zeta_{lm}\zeta_{im}+\la \Phi_{ss},\zeta\ra \,\d V_{s,t}\nonumber\\
	&+\int_{\bar M}2\la\Phi_s,\zeta\ra Q-4\Phi_{ij}\zeta_{jl}\zeta_{il}Q+\la \Phi,\zeta\ra Q_s+Q_{ss}(w-1)+2Q_sw_s+Qw_{ss}+3QQ_s(w-1)+2Q^2w_s\,\d V_{s,t}\nonumber\\
	&+\int_{\bar M}\big(\la \Phi,\zeta\ra Q+Q^2(w-1)\big)Q\,\d V_{s,t}.
\end{align}
In particular, we have
\begin{align*}
W_{sss}(0,0)=\int_{\bar M}-4(\Phi_s)_{ij}\zeta_{jl}\zeta_{il}+\langle \Phi_{ss},\zeta\rangle +2\langle\Phi_s,\zeta\rangle Q+2Q_s w_s+Qw_{ss}+2Q^2 w_s\, \d V_{0,0}+(\bar W-1)V_{sss}(0,0).	
\end{align*}

Similarly, we calculate $W_{sst}$ from \eqref{2ndderiW1} as
\begin{align*}
	W_{sst}=&\int_{\bar M}-2(\Phi_t)_{ij}\zeta_{jl}\zeta_{il}+6\Phi_{ij}\zeta_{jl}\zeta_{lm}(h_1+th_2)_{im}+\la \Phi_{st},\zeta\ra-2(\Phi_s)_{ij}\zeta_{jl}(h_1+th_2)_{il}\,\d V_{s,t}\nonumber\\
	+&\int_{\bar M}\la\Phi_t,\zeta\ra Q-2\Phi_{ij}\zeta_{jl}(h_1+th_2)_{il}Q+\la \Phi,\zeta\ra Q_t+Q_{st}(w-1)+Q_sw_t+Q_tw_s+Qw_{st}\nonumber\\
	&+2QQ_t(w-1)+Q^2w_t\,\d V_{s,t}\nonumber\\
	+&\int_{\bar M}\big( -2\Phi_{ij}\zeta_{jl}\zeta_{il}+\la \Phi_s,\zeta\ra +\la \Phi,\zeta\ra Q+Q_s(w-1)+Qw_s+Q^2(w-1)\big)P\,\d V_{s,t}.
\end{align*}
Note that by Corollary \ref{cor:1stderiat0}, $\Phi=0,\,\Phi_t=0,\, P=0,\, w_t=0$ at $s=t=0$, then we obtain
\begin{align*}
	W_{sst}(0,0)=&\int_{\bar M}\la \Phi_{st},\zeta\ra-2(\Phi_s)_{ij}\zeta_{jl}(h_1)_{il}+Q_tw_s+Qw_{st}\,\d V_{0,0}+(\bar W-1)V_{sst}(0,0).
\end{align*}
And we calculate $W_{stt}$ from \eqref{2ndederiW2} as
\begin{align}\label{3rdderiW3}
	W_{stt}=&\int_{\bar M}-2(\Phi_t)_{ij}\zeta_{jl}(h_1+th_2)_{il}-2\Phi_{ij}\zeta_{jl}(h_2)_{il}+6\Phi_{ij}\zeta_{jl}(h_1+th_2)_{lm}(h_1+th_2)_{im}\,\d V_{s,t}\nonumber\\
	&+\int_{\bar M}\la \Phi_{tt},\zeta\ra-2(\Phi_t)_{ij}\zeta_{jl}(h_1+th_2)_{il}+\la \Phi_t,\zeta\ra P-2\Phi_{ij}\zeta_{jl}(h_1+th_2)_{il}P\,\d V_{s,t}\nonumber\\
	&+\int_{\bar M}\la \Phi,\zeta\ra P_t+ Q_{tt}(w-1)+2Q_tw_t+Qw_{tt}+Q_t(w-1)P+Qw_tP+Q(w-1)P_t\,\d V_{s,t}\nonumber\\
	&+\int_{\bar M}\big( -2\Phi_{ij}\zeta_{jl}(h_1+th_2)_{il}+\la \Phi_t,\zeta\ra +\la \Phi,\zeta\ra P+Q_t(w-1)+Qw_t+Q(w-1)P\big)P\, \d V_{s,t},
\end{align}
which, when evaluating at $s=t=0$, gives
\begin{align*}
	W_{stt}(0,0)=&\int_{\bar M}\la \Phi_{tt},\zeta\ra + Qw_{tt}\,\d V_{0,0}
	+(\bar W-1)V_{stt}(0,0).
\end{align*}

Lastly we calculate $W_{ttt}$ from \eqref{2ndderiW3} as
\begin{align*}
	W_{ttt}=&\int_{\bar M}-2(\Phi_t)_{ij}(h_1+th_2)_{jl}(h_1+th_2)_{il}-4\Phi_{ij}(h_2)_{jl}(h_1+th_2)_{il}\,\d V_{s,t}\nonumber\\
	&+\int_{\bar M}6\Phi_{ij}(h_1+th_2)_{jl}(h_1+th_2)_{lm}(h_1+th_2)_{im}+\la \Phi_t,h_2\ra -2\Phi_{ij}(h_2)_{jl}(h_1+th_2)_{il}\,\d V_{s,t}\nonumber\\
	&+\int_{\bar M}\la \Phi_{tt},h_1+th_2\ra +\la \Phi_t,h_2\ra-2(\Phi_t)_{ij}(h_1+th_2)_{jl}(h_1+th_2)_{il}+\la \Phi_t,h_1+th_2\ra P+\la \Phi,h_2\ra P\,\d V_{s,t}\nonumber\\
	&+\int_{\bar M}-2\Phi_{ij}(h_1+th_2)_{jl}(h_1+th_2)_{il}P+\la \Phi,h_1+th_2\ra P_t+P_{tt}(w-1)+2P_tw_t+Pw_{tt}+2PP_t(w-1)\,\d V_{s,t}\nonumber\\
	&+\int_{\bar M}P^2w_t+\big( -2\Phi_{ij}(h_1+th_2)_{jl}(h_1+th_2)^{il}+\la \Phi,h_2\ra +\la \Phi_t,h_1+th_2\ra +\la \Phi,h_1+th_2\ra P\nonumber\\
	&+P_t(w-1)+Pw_t+P^2(w-1)\big)P\, \d V_{s,t}.
\end{align*}
When evaluating at $s=t=0$, it follows that 
\begin{align*}
	W_{ttt}(0,0)=&\int_{\bar M}\la \Phi_{tt},h_1\ra\,\d V_{0,0}+(\bar W-1)V_{ttt}(0,0).
\end{align*}

\begin{lem}\label{3rdderiWesti}
	At $s=t=0$, we have the following estimates\textup{:}
	\begin{align*}
		\left|\int_{\bar M} Q_tw_s \,\d V_{0,0}\right|+\left|\int_{\bar M} Qw_{tt} \,\d V_{0,0}\right|+\left|\int_{\bar M} Qw_{st} \,\d V_{0,0}\right|\leq C(\alpha^4+\beta^4+\mathfrak{X}^{\frac{5}{3}}+|V(1,1)-V(0,0)|^2).
	\end{align*}
\end{lem}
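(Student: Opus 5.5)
Each of the three terms splits as (a pointwise bound on the factor other than $Q$) times (an $L^2$ bound on $Q(0,0)$), with one extra cancellation needed for the first term. All quantities are evaluated at $s=t=0$. The $L^2$ input is Proposition \ref{pro:Qesti2}, which gives
\begin{align*}
\|Q(0,0)\|_{L^2}\le C(\alpha^2+\beta^2+\mathfrak{X}^{\frac{5}{6}}+|V(1,1)-V(0,0)|),
\end{align*}
and, through Lemma \ref{lem:PQesti2}, a weighted version carrying a factor $(1+|\vec x|^2)$, which absorbs the polynomial growth in $|\vec x|$ of the other factors. I will use Lemma \ref{lem:coninequ} and Corollary \ref{cor:rigidineq} to bound weighted norms of $\zeta'$ by $\mathfrak{X}^{5/6}$, together with the splitting $\zeta=\zeta'+ug_N$.

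\textbf{The term $\int Qw_{tt}$.} By Lemma \ref{lem:2ndderi2}(ii), $|w_{tt}|\le C(1+|\vec x|^2)\beta^2$. Cauchy--Schwarz against the weighted $L^2$ norm of $Q$ then gives the bound $C\beta^2(\alpha^2+\beta^2+\mathfrak{X}^{5/6}+|V(1,1)-V(0,0)|)$. Young's inequality reduces each product to the required form, for example $\beta^2\mathfrak{X}^{5/6}\le\beta^4+\mathfrak{X}^{5/3}$.

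\textbf{The term $\int Qw_{st}$.} Here I use Lemma \ref{lem:2ndderi2}(iii). I split $[\zeta]_l\le[\zeta']_l+C(1+|\vec x|^2)\alpha$ and $[\na q]_0\le[\na q']_0+C|\vec x|\alpha$. This bounds $|w_{st}|$ by $\beta$ times two pieces: an error of size $\mathfrak{X}^{5/6}$ in weighted $L^2$, and an explicit term $(1+|\vec x|^4)(\alpha+\beta)$. The resulting product $\beta(\mathfrak{X}^{5/6}+\alpha+\beta)\,\|(1+|\vec x|^2)Q\|_{L^2}$ is handled by the same Young argument. Polynomial weights of any fixed degree are fine because the weighted $L^2$ norm of $Q$ can be controlled through $Q$ and $\na Q$ via Lemma \ref{lem:coninequ}, as in \eqref{equ:Qesti11}.

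\textbf{The term $\int Q_tw_s$, the main obstacle.} Corollary \ref{cor:1stderiat0} gives $Q_t=-\frac12\langle\zeta,h_1\rangle$ and $w_s=-2(\Delta_{\bar f}+\frac12)Q+\Div_{\bar f}\Div_{\bar f}\zeta$. I integrate the Laplacian by parts onto $Q_t$ and estimate the gradient piece by
\begin{align*}
\int|\na Q_t||\na Q|\le C\beta\,\|\zeta\|_{W^{1,2}}\,\mathfrak{X}^{5/6},
\end{align*}
using \eqref{taylorofV4}, which gives $\|\na Q\|_{L^2}\le C\mathfrak{X}^{5/6}$. The zeroth-order piece $\int Q_tQ$ is controlled by $\beta(\alpha+\mathfrak{X}^{5/6})\|Q\|_{L^2}$. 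The delicate piece is $\int Q_t\Div_{\bar f}\Div_{\bar f}\zeta$. I write $\Div_{\bar f}\zeta=\Div_{\bar f}h-\frac12\Div_{\bar f}h_2$, using $\Div_{\bar f}h_1=0$, so that $\|\Div_{\bar f}\zeta\|_{L^2}\le\mathfrak{X}+C\beta^2$. After one integration by parts the remaining factor is $\na Q_t$, bounded in $L^2$ by $\beta(\alpha+\mathfrak{X}^{5/6})$. This yields the estimate $\beta(\alpha+\mathfrak{X}^{5/6})(\mathfrak{X}+\beta^2)$.

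The difficulty is the product $\alpha\beta\mathfrak{X}$, which is not obviously of the required size. I would first try Corollary \ref{cor:rigidineq}, which gives $\alpha\le C\mathfrak{X}^{1/2}$ and $\beta\le C\mathfrak{X}^{1/3}$. This yields $\alpha\beta\mathfrak{X}\le C\mathfrak{X}^{11/6}\le C\mathfrak{X}^{5/3}$, and likewise $\alpha\beta^3\le\alpha^4+\beta^4$ by Young. If a term of the form $\alpha\beta\cdot(\alpha+\beta)$ survives from the $ug_N$ part of $\zeta$, I would instead pair the explicit Hermite structure of $u$ against $h_1$. The product $u\,h_1$ has $L^2$-orthogonality properties, since $h_1$ lives on $N$ and $ug_N$ is $\isd$, which should make that contribution vanish or reduce it to order $\alpha^2\beta^2$.
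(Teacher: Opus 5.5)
Your proposal is correct and is essentially the paper's proof: Cauchy--Schwarz against $\|Q(0,0)\|_{L^2}$ from Proposition \ref{pro:Qesti2}, integration by parts for the $\Delta_{\bar f}Q$ piece, and Young's inequality. The polynomial weights in $w_{tt}$ and $w_{st}$ can simply be left on the explicit factor, which is Gaussian-integrable; you should not rely on Lemma \ref{lem:PQesti2}, whose $\delta\alpha^3$ term would be too weak, although \eqref{equ:Qesti11} at $(0,0)$ is fine.

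The one local difference is in $\int Q_t\Div_{\bar f}\Div_{\bar f}\zeta$: instead of integrating by parts, the paper bounds $\Div_{\bar f}\Div_{\bar f}\zeta=\Div_{\bar f}\Div_{\bar f}\zeta'$ pointwise, using $\Div_{\bar f}(ug_N)=0$. Also, the product $\alpha\beta\mathfrak{X}$ you flagged is immediately at most $\alpha^4+\beta^4+\mathfrak{X}^2$ by Young, so no $\alpha^2\beta$ term ever arises and your final contingency paragraph is unnecessary.
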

\begin{proof}
	By Corollary \ref{cor:1stderiat0}, we know that at $s=t=0$, $Q_t=-\frac{1}{2}\la h_1,\zeta\ra$ and $w_s=-2(\Delta_{\bar f}+\frac{1}{2})Q(0,0)+\Div_{\bar f}\,\Div_{\bar f} \zeta$. By Proposition \ref{pro:Qesti2} and Cauchy-Schwarz inequality, we have
	\begin{align}\label{erroresti1}
	\int_{\bar M}|\la h_1,\zeta\ra||Q(0,0)|\,\d V_{0,0}&\leq C\beta \rVert \zeta\rVert_{L^2}\rVert Q(0,0)\rVert_{L^2}\nonumber\\
	&\leq C\beta(\alpha+\mathfrak{X}^{\frac{5}{6}})(\alpha^2+\beta^2+\mathfrak{X}^{\frac{5}{6}}+|V(1,1)-V(0,0)|)\nonumber\\
	&\leq C(\alpha^4+\beta^4+\mathfrak{X}^{\frac{5}{3}}+|V(1,1)-V(0,0)|^2).
	\end{align}
	By the fact $\Div_{\bar f}(ug_N)=0$, we obtain that $[\Div_{\bar f}\,\Div_{\bar f} \zeta]_0\leq C\big([\zeta']_2+(1+|\vec x|)[\zeta']_1+(1+|\vec x|^2)[\zeta']_0\big)$, which implies
	\begin{align}\label{erroresti2}
		\int_{\bar M}|\la h_1,\zeta\ra||\Div_{\bar f}\,\Div_{\bar f} \zeta|\,\d V_{0,0}&\leq C\beta \int_{\bar M}[\zeta]_0\big([\zeta']_2+(1+|\vec x|)[\zeta']_1+(1+|\vec x|^2)[\zeta']_0\big)\,\d V_{0,0}\nonumber\\
		&\leq C\beta \int_{\bar M}\lc (1+|\vec x|^2)\alpha+[\zeta']_0\rc \left([\zeta']_2+(1+|\vec x|)[\zeta']_1+(1+|\vec x|^2)[\zeta']_0\right)\,\d V_{0,0}\nonumber\\
		& \leq C\beta(\alpha\mathfrak{X}^{\frac{5}{6}}+\mathfrak{X}^{\frac{5}{3}}) \leq C(\alpha^4+\beta^4+\mathfrak{X}^{\frac{5}{3}}).
	\end{align}
	Note that $\int_{\bar M}|\na Q(0,0)|^2\,\d V_{0,0}\leq C\mathfrak{X}^{\frac{5}{3}}$ (see \eqref{taylorofV4}), then it follows that
	\begin{align}\label{erroresti3}
		\left|\int_{\bar M}\la h_1,\zeta\ra \Delta_{\bar f}Q(0,0)\,\d V_{0,0}\right|&\leq \left|\int_{\bar M}\big\la \na\la h_1,\zeta\ra, \na Q(0,0)\big\ra\,\d V_{0,0}\right| \leq C\beta (\alpha+\mathfrak{X}^{\frac{5}{6}})\mathfrak{X}^{\frac{5}{6}} \leq C(\alpha^4+\beta^4+\mathfrak{X}^{\frac{5}{3}}).
	\end{align}
	Combining \eqref{erroresti1},\,\eqref{erroresti2} and \eqref{erroresti3}, it follows that
	\begin{align}\label{equ:erroresti111}
		\left|\int_{\bar M} Q_tw_s \,\d V_{0,0}\right|\leq C(\alpha^4+\beta^4+\mathfrak{X}^{\frac{5}{3}}+|V(1,1)-V(0,0)|^2).
\end{align}
	By Proposition \ref{pro:Qesti2} and Lemma \ref{lem:2ndderi2} and Cauchy-Schwarz inequality, it follows that
	\begin{align}\label{erroresti4}
		\left|\int_{\bar M} Qw_{tt} \,\d V_{0,0}\right|\leq C\beta^2 \rVert Q\rVert_{L^2}&\leq C\beta^2(\alpha^2+\beta^2+\mathfrak{X}^{\frac{5}{6}}+|V(1,1)-V(0,0)|)\nonumber\\
		&\leq C(\alpha^4+\beta^4+\mathfrak{X}^{\frac{5}{3}}+|V(1,1)-V(0,0)|^2).
	\end{align}
	By Lemma \ref{lem:2ndderi2} again, we have $|w_{st}|\leq C\beta\big( [\zeta]_2+(1+|\vec x|)([\zeta]_1+[\na q]_0+\beta)+(1+|\vec x|^2)[\zeta]_0 \big)$, which implies that
	\begin{align}\label{erroresti5}
		\left|\int_{\bar M} Qw_{st} \,\d V_{0,0}\right|&\leq C\beta \rVert Q\rVert_{L^2}\big\rVert [\zeta]_2+(1+|\vec x|)([\zeta]_1+[\na q]_0+\beta)+(1+|\vec x|^2)[\zeta]_0 \big\rVert_{L^2}\nonumber\\
		&\leq C\beta(\alpha^2+\beta^2+\mathfrak{X}^{\frac{5}{6}}+|V(1,1)-V(0,0)|)(\alpha+\beta+\mathfrak{X}^{\frac{5}{6}})\nonumber\\
		&\leq C(\alpha^4+\beta^4+\mathfrak{X}^{\frac{5}{3}}+|V(1,1)-V(0,0)|^2).
	\end{align}
	The lemma now follows from \eqref{equ:erroresti111},\,\eqref{erroresti4} and \eqref{erroresti5}. This finishes the proof.
\end{proof}

In particular, from Lemmas \ref{lem:2ndderi3} and \ref{3rdderiWesti} and Proposition \ref{2ndderiat0}, we have
\begin{align}
\label{equ:diff3rd}\left|W_{sst}(0,0)-(\bar W-1)V_{sst}(0,0)\right|\leq C(\alpha^4+\beta^4+\alpha^2\beta+\mathfrak{X}^{\frac{5}{3}}+|V(1,1)-V(0,0)|^2),	
\end{align}
where we have used from Lemma \ref{lem:2ndderi3} (i) that
\begin{align*}
	\left|\int_{\bar M}-2(\Phi_s)_{ij}\zeta_{jl}(h_1)_{il}\,\d V_{0,0}\right|\leq C\beta(\alpha^2+\mathfrak{X}^{\frac{5}{3}}).
\end{align*}

\begin{prop}\label{4thderiW}
	There exists a constant $C$ such that
	\begin{enumerate}[label=\textnormal{(\roman{*})}]
		\item $\sup_{s,t\in [0,1]}|W_{sss}|\leq C (\alpha^3+\beta^4+\mathfrak{X}^{\frac{5}{3}}+|V(1,1)-V(0,0)|^2)$.
	\item $\sup_{s,t\in [0,1]}\big(|W_{ssst}|+|W_{sstt}|+|W_{sttt}|+|W_{tttt}|\big)\leq C (\alpha^4+\beta^4+\mathfrak{X}^{\frac{5}{3}}+|V(1,1)-V(0,0)|^2)$.
	
	\end{enumerate}

\end{prop}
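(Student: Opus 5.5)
The plan is to bound the third and fourth derivatives of $W$ term by term from their explicit formulas, mirroring the treatment of $V$ in Lemma \ref{4thderiV}. For (i) I start from the general formula \eqref{equ:3rdWsss} for $W_{sss}$ at arbitrary $(s,t)\in[0,1]^2$. The fourth-order bounds in (ii) come from differentiating \eqref{equ:3rdWsss}, the analogous formula for $W_{sst}$, \eqref{3rdderiW3} and the formula for $W_{ttt}$ once more in $s$ or $t$. Every resulting term is a product built from four kinds of factors:
\begin{itemize}
\item $\Phi$ and its $s,t$-derivatives up to order three;
\item $w$ and its derivatives;
\item $P,Q$ and their derivatives;
\item the tensors $\zeta$, $h_1+th_2$, $h_2$.
\end{itemize}
The derivatives of $(\d V_{s,t})$ only contribute additional factors of $P$ or $Q$, by Lemma \ref{lem:1stderi}(iii).

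The pointwise inputs are the following. For $\Phi$ I use Lemmas \ref{lem:2ndderi3} and \ref{3rdphiesti}. For $w$ I use Lemmas \ref{lem:2ndderi2} and \ref{lem:3rdw}, and the analogue of Lemma \ref{lem:1stderi}(i)--(ii) for first derivatives. For $P,Q$ I use Corollary \ref{estihighderiPQ} and Lemma \ref{lem:2ndderi1}. For $h_1,h_2$ I use \eqref{equ:basicesti1}. The integral inputs are:
\begin{itemize}
\item $\|\zeta'\|_{W^{2,2}}+\|\na q'\|_{W^{1,2}}\le C\mathfrak X^{5/6}$ (Corollary \ref{cor:rigidineq}), together with the Hermite bound on $u$, giving $[\zeta]_2\lesssim [\zeta']_2+(1+|\vec x|^2)\alpha$;
\item the $L^2$ bounds on $P,Q$ from Propositions \ref{pro:Pesti}, \ref{pro:Qesti2} and Lemma \ref{lem:PQesti2}, including their weighted versions;
\item Lemma \ref{lem:coninequ}, to absorb polynomial weights $(1+|\vec x|^k)$ into $W^{k,2}$ norms.
\end{itemize}

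The basic counting principle is this. Each term in a fourth derivative is at least quartic in the "small" quantities $\zeta$, $h_1$, $\Phi$, $P$, $Q$, where $\Phi$ counts as small because $|\Phi|\le C([\zeta]_2+\dots+(1+|\vec x|)\beta)$. Each $t$-derivative carries a factor $\beta$, and each $s$-derivative carries a factor of $\zeta$ or $Q$. After using $|\cdot|\le C\delta$ on the surplus factors, I reduce to:
\begin{itemize}
\item $\int(1+|\vec x|^N)[\zeta]_2^2\,\d V\lesssim \alpha^2+\mathfrak X^{5/3}$;
\item $\beta^2\cdot(\alpha^2+\mathfrak X^{5/3})$;
\item $\|Q\|_{L^2}^2$ and $\|P\|_{L^2}^2$ type quantities.
\end{itemize}
Young's inequality then places every such product below $C(\alpha^4+\beta^4+\mathfrak X^{5/3}+|V(1,1)-V(0,0)|^2)$.

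For $W_{sss}$ the terms are only cubic in $\zeta$, so at best I get the bound $\alpha^3$. This comes from terms such as $\int\la\Phi_{ss},\zeta\ra$ and $\int Q_{ss}(w-1)$, handled as in Proposition \ref{2ndderiat0}(iv) and \eqref{estiV3rd1} but at general $(s,t)$. The terms $\int Qw_{ss}$, $\int Q_sw_s$ and $\int Q^2 w_s$ are handled via Cauchy--Schwarz and Proposition \ref{pro:Qesti2}, as in \eqref{estiV3rd2}. The term $(\bar W-1)V_{sss}$-type part is handled by Lemma \ref{4thderiV}(i).

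The main obstacle is the set of terms whose small factors are mostly pure $u$-modes with polynomial growth. An example is $\int(\Phi_s)_{ij}\zeta_{jl}\zeta_{il}$ with $\Phi_s$ itself of size $(1+|\vec x|)[\zeta]_1$. These terms do not directly come with a $\mathfrak X$ factor, and they must be bounded by $\alpha^3$ (respectively $\alpha^4$) using the pointwise Hermite estimate $|u|+|\na u|+|\na^2u|\le C(1+|\vec x|^2)\alpha$ together with finiteness of Gaussian moments. The mixed terms containing $[\zeta']$ are bounded by $\alpha^2\mathfrak X^{5/6}$ or $\delta\mathfrak X^{5/3}$. I also need to check that no term involves derivatives of $\zeta$ of order higher than two; this holds because $\Phi$ and $w$ involve at most second derivatives and differentiating in $s,t$ does not increase spatial order. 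With that, only $W^{2,2}$ control of $\zeta'$ is needed, which Corollary \ref{cor:rigidineq} provides.

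For the mixed derivatives in (ii), every $t$-differentiation either hits $h_1+th_2$, producing $h_2=O(\beta^2)$, or hits $\Phi,w,P,Q$, producing a factor $\beta$. So each term has the form $\beta^a\times(\text{order }4-a \text{ in }\zeta,Q)$, and Young gives $\beta^4+\alpha^4+\mathfrak X^{5/3}$.
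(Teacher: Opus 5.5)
Your plan is essentially the paper's proof. You bound each term of \eqref{equ:3rdWsss}, \eqref{3rdderiW3} and their further $s,t$-derivatives pointwise via Lemmas \ref{lem:2ndderi3}, \ref{3rdphiesti}, \ref{lem:2ndderi2}, \ref{lem:3rdw} and Corollary \ref{estihighderiPQ}, and integrate using Corollary \ref{cor:rigidineq}, Propositions \ref{pro:Pesti}, \ref{pro:Qesti2} and Lemma \ref{lem:PQesti2}. Each $t$-derivative supplies a factor $\beta$ and each $s$-derivative a factor of $\zeta$, $q$ or $Q$, so Young's inequality gives the stated bounds; the paper carries this out explicitly for $W_{sss}$ and $W_{sstt}$.

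One correction: the bound $\int(1+|\vec x|^N)[\zeta]_2^2\,\d V_{s,t}\lesssim\alpha^2+\mathfrak X^{5/3}$ is not available for $N>0$. With only $W^{2,2}$ control of $\zeta'$, Lemma \ref{lem:coninequ} allows no weight on $[\zeta']_2^2$, weight $1+|\vec x|^2$ on $[\zeta']_1^2$, and weight $1+|\vec x|^4$ on $[\zeta']_0^2$. This is harmless, because every term contains exactly one factor from the $\Phi$- or $w$-families, the rest being algebraic in $\zeta,h_1,h_2,Q,P$. So when you bound a growing factor such as $\Phi$, $w_s$ or $w-1$ by $C(1+|\vec x|^k)\delta$, the weight lands only on zeroth-order quantities, or on $Q^2$ via Lemma \ref{lem:PQesti2}, exactly as in \eqref{3rdWesti2} and \eqref{3rdWesti5}.
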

\begin{proof}
Recall that we have the following rough estimates:
\begin{align}\label{equ:roughesti}
	[Q]_2\leq C\delta,\quad [\Phi]_0+[\Phi_s]_0 \leq C(1+|\vec x|)\delta,\quad [w_s]_0\leq (1+|\vec x|^2)\delta,\quad [\la \Phi,\zeta\ra]_0+[\Phi_{ss}]_0\leq C(1+|\vec x|)\delta^2.
\end{align}
By Lemma \ref{lem:2ndderi3}, we have
\begin{align}\label{3rdWesti1}
	\left|\int_{\bar M}-4(\Phi_s)_{ij}\zeta_{jl}\zeta_{il}+6\Phi_{ij}\zeta_{jl}\zeta_{lm}\zeta_{im}+\la \Phi_{ss},\zeta\ra \,\d V_{s,t}\right|\leq C(\alpha^3+\delta\mathfrak{X}^{\frac{5}{3}}),
\end{align}
where we have used
\begin{align*}
\int_{\bar M} [\zeta]_2[\zeta]_0^2\,\d V_{s,t}&\leq C\int_{\bar M} [u]_2^3+ [\zeta']_2[\zeta']_0^2+[\zeta']_2[u]_0^2\,\d V_{s,t}\\
& \leq C\int_{\bar M} [u]_2^3+(1+|\vec x|^2)\delta [\zeta']_0^2+(1+|\vec x|^2)\alpha^2 [\zeta']_2\,\d V_{s,t}\\
& \leq C(\alpha^3+\delta \mathfrak{X}^{\frac{5}{3}}+\alpha^2\mathfrak{X}^{\frac{5}{6}}),	
\end{align*}
and similarly,
\begin{align*}
\int_{\bar M}	 (1+|\vec x|)[\zeta]_1[\zeta]_0^2+[\zeta]_1^2[\zeta]_0+[\na q]_1[\zeta]_0^2+[\na q]_0[\zeta]_0[\zeta]_1\,\d V_{s,t}\leq C(\alpha^3+ \delta\mathfrak{X}^{\frac{5}{3}}).
\end{align*}
By Lemma \ref{lem:PQesti2} and \eqref{equ:roughesti}, we obtain
\begin{align}\label{3rdWesti2}
\int_{\bar M}|w_s|Q^2+|\la \Phi,\zeta\ra | Q^2\,\d V_{s,t} \leq C\delta(\delta \alpha^3+\beta^4+\mathfrak{X}^{\frac{5}{3}}+|V(1,1)-V(0,0)|^2).
\end{align}
By Lemma \ref{lem:2ndderi3}, $|\Phi_s|\leq C\big([\zeta]_2+[\na q]_1+(1+|\vec x|)[\zeta]_1\big)$, which implies
\begin{align}\label{3rdWesti3}
	\left|\int_{\bar M}\la\Phi_s,\zeta\ra Q\,\d V_{s,t}\right|&\leq C\int_{\bar M}\big([\zeta]_2+[\na q]_1+(1+|\vec x|)[\zeta]_1\big)[\zeta]_0|Q|\,\d V_{s,t}\nonumber\\
	&\leq C\delta \int_{\bar M} [\zeta']_2^2+[\na q']_1^2+(1+|\vec x|)
[\zeta']_1^2\,\d V_{s,t}+C\alpha^2\int_{\bar M}(1+|\vec x|^4)|Q|\,\d V_{s,t}\nonumber\\
&\leq C\delta\mathfrak{X}^{\frac{5}{3}}+C\alpha^2(\alpha^2+\beta^2+\mathfrak{X}^{\frac{5}{6}}+|V(1,1)-V(0,0)|)\nonumber\\
&\leq C(\alpha^4+\beta^4+\mathfrak{X}^{\frac{5}{3}}+|V(1,1)-V(0,0)|^2),
\end{align}
where in the second inequality, we have used the fact that $|Q|\leq C\delta$ and in the third inequality, we have used Proposition \ref{pro:Qesti2} and Cauchy-Schwarz inequality. By a similar argument, we also have
\begin{align}\label{3rdWesti4}
	\left|\int_{\bar M}\Phi_{ij}\zeta_{jl}\zeta_{il} Q\,\d V_{s,t}\right|\leq C(\alpha^4+\beta^4+\mathfrak{X}^{\frac{5}{3}}+|V(1,1)-V(0,0)|^2).
\end{align}
By Lemma \ref{lem:1stderi}, $Q_s=-\frac{1}{2}|\zeta|^2$, and thus from \eqref{equ:roughesti}, we get
\begin{align}\label{3rdWesti5}
	\left|\int_{\bar M}\la \Phi,\zeta\ra Q_s\,\d V_{s,t}\right|\leq C\delta \int_{\bar M}(1+|\vec x|)|\zeta'|^3+\alpha^3(1+|\vec x|^7)\,\d V_{s,t}\leq C(\alpha^3+\delta\mathfrak{X}^{\frac{5}{3}}).
\end{align}
Since $w_s=-2(\Delta_{ f}+\frac{1}{2})Q+\la\Phi,\zeta\ra+\Div_{f}\,\Div_{\bar f}\zeta$, we obtain
\begin{align}\label{3rdWesti6}
	\left|\int_{\bar M}Q_sw_s\,\d V_{s,t}\right|&\leq C\int_{\bar M}|\zeta|^2|\Delta_{ f}Q|+|Q||\zeta|^2+|\zeta|^2|\Div_{f}\,\Div_{\bar f}\zeta|\,\d V_{s,t} +C\left|\int_{\bar M}\la \Phi,\zeta\ra Q_s\,\d V_{s,t}\right|\nonumber\\
	&\leq C\int_{\bar M}[\zeta]_0^2[\na Q]_1+(1+|\vec x|^2)[\zeta]_0^2[\na Q]_0+[Q]_0^2+[\zeta]_0^4+[\zeta]_0^2[\Div_{f}\, \Div_{\bar f}\zeta]_0\,\d V_{s,t}+C(\alpha^3+\delta\mathfrak{X}^{\frac{5}{3}})\nonumber\\
	&\leq C(\alpha^3+\beta^4+\mathfrak{X}^{\frac{5}{3}}+|V(1,1)-V(0,0)|^2),
	\end{align}
	where in the last inequality, we have used \eqref{equ:zeta4} and
	\begin{align*}
		\int_{\bar M}|\zeta|^2|\Div_{f}^2\zeta|\,\d V_{s,t}&\leq C\int_{\bar M}\big([\zeta']_0^2+(1+|\vec x|^4)\alpha^2\big)\big([\zeta']_2+(1+|\vec x|)[\zeta']_1+(1+|\vec x|^2)[\zeta']_0+(1+|\vec x|^2)\alpha\big)\,\d V_{s,t}\\
		&\leq C(\alpha^3+\mathfrak{X}^{\frac{5}{3}}),
	\end{align*}
	and by Lemma \ref{lem:1stderi} and \eqref{equ:zeta4}, 
	\begin{align*}
		\int_{\bar M}[\zeta]_0^2[\na Q]_1\,\d V_{s,t}&\leq C\int_{\bar M}[\zeta]_0^2 \left([\na Q]_1(0,0)+[\zeta]_2^2+\beta[\zeta]_2 \right) \,\d V_{s,t}\\
		&\leq C\int_{\bar M}[\zeta]_0^2\left( [\zeta']_2+[\na q']_1+\beta^2+[\zeta]_2^2\right)\,\d V_{s,t}\\
		&\leq C\int_{\bar M}[\zeta]_0^4+[\zeta']_2^2+[\na q']_1^2+\beta^2 [\zeta]_0^2+ [\zeta]_0^2[\zeta]_2^2 \,\d V_{s,t}\\
		&\leq C(\alpha^4+\mathfrak{X}^{\frac{5}{3}}+\beta^2(\mathfrak{X}^{\frac{5}{3}}+\alpha^2))+C(\alpha^4+\beta^4+\delta\mathfrak{X}^{\frac{5}{3}})\\
		&\leq C(\alpha^4+\beta^4+\mathfrak{X}^{\frac{5}{3}}).
	\end{align*}
	Here, in the fourth inequality, we need to use the fact $[\zeta]_2\leq C\delta$ and
	\begin{align*}
		\int_{\bar M}[\zeta]_0^2[\zeta]_2^2\,\d V_{s,t}&\leq C\int_{\bar M}[\zeta']_2^2 [\zeta]_0^2+(1+|\vec x|^4)\alpha^2[\zeta]_0^2 \,\d V_{s,t}\\
		&\leq C(\delta^2 \mathfrak{X}^{\frac{5}{3}}+\alpha^2(\alpha^2+\mathfrak{X}^{\frac{5}{3}}))
		\leq C(\alpha^4+\beta^4+\mathfrak{X}^{\frac{5}{3}}).
	\end{align*}
	By Lemma \ref{lem:2ndderi2}, $|w_{ss}|\leq C\big( [\zeta]_2^2+[\na q]_1^2+(1+|\vec x|)([\zeta]_1^2+[\zeta]_0[\na q]_0)+(1+|\vec x|^2)[\zeta]_0^2\big)$, thus it follows from Proposition \ref{pro:Qesti2} that
	\begin{align}\label{3rdWesti7}
		\left|\int_{\bar M}Qw_{ss}\,\d V_{s,t}\right|&\leq C\alpha^2\int_{\bar M}(1+|\vec x|^6)[Q]_0\,\d V_{s,t}+C\delta\int_{\bar M}[\zeta']_2^2+[\na q']_1^2\,\d V_{s,t}\nonumber\\
		&\leq C\alpha^2(\alpha^2+\beta^2+\mathfrak{X}^{\frac{5}{6}}+|V(1,1)-V(0,0)|)+C\delta\mathfrak{X}^{\frac{5}{3}}\nonumber\\
		&\leq C(\alpha^4+\beta^4+\mathfrak{X}^{\frac{5}{3}}+|V(1,1)-V(0,0)|^2),
	\end{align}
	where in the first inequality, we have used the fact that $|Q|\leq C\delta$. By a similar argument, we can also obtain
	\begin{align}
	\label{equ:3rdderi8}\int_{\bar M}|w-1|(|Q_{ss}|+|QQ_s|+|Q^3|)\,\d V_{s,t}\leq C(\alpha^3+\beta^4+\mathfrak{X}^{\frac{5}{3}}+|V(1,1)-V(0,0)|^2).	
	\end{align}
	Combining \eqref{equ:3rdWsss}, \eqref{3rdWesti1}, \eqref{3rdWesti2}, \eqref{3rdWesti3}, \eqref{3rdWesti4}, \eqref{3rdWesti5}, \eqref{3rdWesti6}, \eqref{3rdWesti7} and \eqref{equ:3rdderi8}, we can conclude item (i).
	
	For item (ii), we only give all details of $\sup_{s,t\in [0,1]}|W_{sstt}|\leq C (\alpha^4+\beta^4+\mathfrak{X}^{\frac{5}{3}}+|V(1,1)-V(0,0)|^2)$ and other estimates follow in a similar way. Recall that the formula of $W_{stt}$ is given in \eqref{3rdderiW3}. By similar argument as in proof of Proposition \ref{2ndderiat0} and Lemma \ref{4thderiV}, all terms in $W_{sstt}$ except those involving $\Phi_{st},\Phi_{tt},\Phi_{stt}$ and third derivatives of $w$ can be estimated by $C(\alpha^4+\beta^4+\mathfrak{X}^2)$. The remaining terms in $W_{sstt}$ involving $\Phi_{st},\Phi_{tt},\Phi_{stt}$ are
	$$\int_{\bar M}(\Phi_{st})_{ij}\zeta_{jl}(h_1+th_2)_{il}+(\Phi_{tt})_{ij}\zeta_{jl}\zeta_{il}+\la \Phi_{tt},\zeta\ra Q +\la\Phi_{st},\zeta\ra P+\la\Phi_{stt},\zeta\ra \,\d V_{s,t}.$$
	
	Now we claim
	\begin{align}\label{4thWesti1}
	&\left|\int_{\bar M}(\Phi_{st})_{ij}\zeta_{jl}(h_1+th_2)_{il}+(\Phi_{tt})_{ij}\zeta_{jl}\zeta_{il}+\la \Phi_{tt},\zeta\ra Q +\la\Phi_{st},\zeta\ra P+\la\Phi_{stt},\zeta\ra \,\d V_{s,t} \right|\nonumber\\
	&\leq C (\alpha^4+\beta^4+\mathfrak{X}^{\frac{5}{3}}+|V(1,1)-V(0,0)|^2).
	\end{align}
	Note that by Lemma \ref{3rdphiesti}, we have
	\begin{align}\label{4thWesti2}
		\left|\int_{\bar M}\la \Phi_{stt}, \zeta\ra\,\d V_{s,t}\right|&\leq C\beta^2\int_{\bar M}[\zeta]_0\big([\zeta]_2+(1+|\vec x|)[\zeta]_1+[\na q]_0\big)\,\d V_{s,t}\nonumber\\
		&\leq C\beta^2\int_{\bar M}[\zeta]_2^2+[\na q]_0^2\,\d V_{s,t}\nonumber\\
		&\leq C\beta^2\big(\alpha^2+\mathfrak{X}^{\frac{5}{3}}\big) \leq C(\alpha^4+\beta^4+\beta^2\mathfrak{X}^{\frac{5}{3}}).
	\end{align}
	By Propositions \ref{pro:Pesti} and Lemma \ref{lem:2ndderi3} (iv), we obtain
	\begin{align}\label{4thWesti3}
		\left|\int_{\bar M}(\Phi_{tt})_{ij}\zeta_{jl}\zeta_{il}\,\d V_{s,t}\right|&\leq C\beta^2\int_{\bar M}(1+|\vec x|)[\zeta]_0^2\,\d V_{s,t}\nonumber\\
		&\leq C\beta^2(\alpha^2+\mathfrak{X}^{\frac{5}{3}}) \leq C(\alpha^4+\beta^4+\mathfrak{X}^{\frac{10}{3}}),
	\end{align}
	and
	\begin{align}
	\label{equ:4thWesti44}\left|\int_{\bar M}\la \Phi_{tt},\zeta\ra Q\, \d V_{s,t} \right|&	\leq C\beta^2 \int_{\bar M} (1+|\vec x|)[\zeta]_0[Q]_0\, \d V_{s,t}\nonumber\\
	&\leq C\beta^2 \lc \int_{\bar M} (1+|\vec x|^2)[\zeta]_0^2 \,\d V_{s,t}\rc^{1/2} \lc \int_{\bar M}|Q|^2\,\d V_{s,t}\rc^{1/2}\nonumber\\
	&\leq C\beta^2(\alpha+\mathfrak X^{\frac{5}{6}})(\alpha^2+\beta^2+\mathfrak X^{\frac{5}{6}}+|V(1,1)-V(0,0)|)\nonumber\\
	&\leq C(\alpha^4+\beta^4+\mathfrak X^{\frac{5}{3}}+|V(1,1)-V(0,0)|^2),
	\end{align}
and
	\begin{align}\label{4thWesti4}
		\left|\int_{\bar M}\la\Phi_{st},\zeta\ra P\,\d V_{s,t}\right|&\leq C\beta\int_{\bar M}\big((1+|\vec x|)[\zeta]_1+[\na q]_0\big)[P]_0\,\d V_{s,t}\nonumber\\
		&\leq C\beta\lc\int_{\bar M}[\zeta]_2^2+[\na q]_1^2\,\d V_{s,t}\rc^{1/2}\lc\int_{\bar M}P^2\,\d V_{s,t}\rc^{1/2}\nonumber\\
		&\leq C\beta(\alpha+\mathfrak{X}^{\frac{5}{6}})(\alpha^2+\beta^2+\mathfrak{X}^{\frac{5}{6}})\leq C(\alpha^4+\beta^4+\mathfrak{X}^{\frac{5}{3}}),
	\end{align}
	and
	\begin{align}\label{4thWesti5}
		\left|\int_{\bar M}(\Phi_{st})_{ij}\zeta_{jl}(h_1+th_2)_{il}\,\d V_{s,t}\right|&\leq C\beta^2\int_{\bar M}\big((1+|\vec x|)[\zeta]_1+[\na q]_0\big)[\zeta]_0\,\d V_{s,t}\nonumber\\
		&\leq C\beta^2\lc\int_{\bar M}[\zeta]_2^2+[\na q]_0^2\,\d V_{s,t}\rc^{1/2}\lc\int_{\bar M}[\zeta]_0^2\,\d V_{s,t}\rc^{1/2}\nonumber\\
		&\leq C\beta^2(\alpha+\mathfrak{X}^{\frac{5}{6}})^2 \leq C(\alpha^4+\beta^4+\beta^2\mathfrak{X}^{\frac{5}{3}}).
	\end{align}
	Combining \eqref{4thWesti2}, \eqref{4thWesti3}, \eqref{equ:4thWesti44}, \eqref{4thWesti4} and \eqref{4thWesti5}, we obtain \eqref{4thWesti1}. For those terms involving third derivatives of $w$, we can use Lemma \ref{lem:3rdw} and similar argument as above to obtain the estimates. This completes the proof.
\end{proof}

By Taylor expansion, we have
\begin{align}\label{taylorW}
	&\left|W(1,1)-W(0,0)-W_s-W_t-\frac{1}{2}(W_{ss}+2W_{st}+W_{tt})-\frac{1}{6}(W_{ttt}+3W_{stt}+3 W_{sst})\right|\nonumber\\
	\leq& \frac{1}{6}|W_{sss}(0,0)|+ \frac{2}{3}\sup_{s,t\in [0,1]} |W^{(4)}(s,t)|.
\end{align} 
Here, on the left hand side, all derivatives are evaluated at $t=s=0$ and $W^{(4)}$ denotes fourth derivatives of $W$.

Now we can prove the following theorem.

\begin{thm}\label{lojaforF}
For the weighted Riemannian manifold $(\bar M,g,f)$ such that $\{h,\chi\}=\{g-\bar g, f-\bar f\}$ are compactly supported, if
$$\rVert h\rVert_{C^2}+\rVert \chi\rVert_{C^2}\leq \delta,$$
for a small constant $\delta$ depending only on $n,\, N$, then we have
	\begin{align*}
		|\WW(g,f)-\Theta_{N,m}|\leq C(n,m,N)\left(\rVert u\rVert_{L^2}^3+\rVert h_1\rVert_{L^2}^4+\rVert u\rVert_{L^2}^2\rVert h_1\rVert_{L^2} +\mathfrak{X}^{\frac{5}{3}}+|\VV(g,f)-1|\right),
	\end{align*}
	where $u,\, h_1$ and $\mathfrak{X}$ are defined as in \eqref{equ:defuh_1} and \eqref{equ:defmathfrakX}.
\end{thm}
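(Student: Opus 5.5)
The plan is to use the Taylor expansion \eqref{taylorW} of $W(s,t)$ about $(0,0)$ up to third order, with remainder controlled by $|W_{sss}(0,0)|$ and the fourth derivatives. By Proposition \ref{4thderiW}, the remainder is bounded by $C(\alpha^3+\beta^4+\mathfrak{X}^{5/3}+|V(1,1)-V(0,0)|^2)$. Since $V(0,0)=1$ and $V(1,1)=\VV(g,f)$, the last term is at most $C|\VV(g,f)-1|$, because $|V(1,1)-1|\le C\delta$.

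The first step is to collect the lower-order derivatives at $(0,0)$. By Corollary \ref{cor:1stderiat0}, $W_s=(\bar W-1)V_s$ and $W_t=0=(\bar W-1)V_t$. By \eqref{2ndderiWat02} and \eqref{2ndderiWat03}, $W_{st}=(\bar W-1)V_{st}$ and $W_{tt}=(\bar W-1)V_{tt}$. For third order, the formulas after \eqref{3rdderiW3} give the following:
\begin{itemize}
\item $W_{ttt}-(\bar W-1)V_{ttt}=\int\langle\Phi_{tt},h_1\rangle$, which is $O(\beta^4+\mathfrak{X}^{4/3})$ by Proposition \ref{2ndderiat0}(vii).
\item $W_{stt}-(\bar W-1)V_{stt}=\int\langle\Phi_{tt},\zeta\rangle+Qw_{tt}$, which is controlled by Proposition \ref{2ndderiat0}(vi) and Lemma \ref{3rdderiWesti}.
\item $W_{sst}-(\bar W-1)V_{sst}$ is bounded by \eqref{equ:diff3rd}, which produces the $\alpha^2\beta$ term.
\end{itemize}
Hence
\[
W(1,1)-\bar W=(\bar W-1)\Big[V_s+V_t+\tfrac12(V_{ss}+2V_{st}+V_{tt})+\tfrac16(3V_{sst}+3V_{stt}+V_{ttt})\Big]+\tfrac12\big(W_{ss}-(\bar W-1)V_{ss}\big)+E,
\]
with $E$ bounded by the right-hand side of the theorem. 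The $\mathfrak{X}^{4/3}$ term from (vii) must be handled here. The statement allows only $\mathfrak{X}^{5/3}$, so I would check whether (vii) can be sharpened using $\Phi_{tt}\in\isd_N$, for instance by pairing it with $h_1$ via the obstruction structure. Failing that, I expect the statement to carry $\mathfrak{X}^{4/3}$, consistent with Corollary \ref{cor:lojaforF} in the introduction.

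The bracket is then handled by Corollary \ref{taylorofV6}. By that result the bracket is $O(\alpha^3+\beta^4+\mathfrak{X}^{5/3}+|V(1,1)-V(0,0)|)$, and $|\bar W-1|$ is bounded.

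The main obstacle is the second variation $W_{ss}-(\bar W-1)V_{ss}$ from \eqref{2ndderiWat01}, which equals
\[
\tfrac12\int\langle\LL\zeta+2\Div^*\Div\zeta+2\nabla^2Q,\zeta\rangle+\int 2|\nabla Q|^2-Q^2+Q\Div\Div\zeta .
\]
I would split $\zeta=\zeta'+ug_N$ and $Q(0,0)=\tfrac12\Tr\zeta'-q'+(\tfrac m2u-\tfrac m2u)$, so the $u$ part of $Q$ cancels. Since $ug_N$ is in $\ker\LL$ and is divergence free, every term containing $u$ must also pair with $\zeta'$; these cross terms are then $O(\alpha\|\zeta'\|_{W^{2,2}})$. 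Terms quadratic in $\zeta',\nabla q'$ are $O(\mathfrak{X}^{5/3})$ by Corollary \ref{cor:rigidineq}. The one problematic piece is $-\int Q^2$. For it I would use \eqref{taylorofV4} and the bound on $|\bar Q|$ from the proof of Proposition \ref{pro:Qesti2}, which give $\int Q^2\le C(\mathfrak{X}^{5/3}+\bar Q^2)$ with $\bar Q$ quadratically small. The crude bound on $\bar Q$ there involves $\alpha^2+\beta^2$, so I would redo that estimate with the finer $V$-expansion (Corollary \ref{taylorofV6}), reducing it to $\alpha^3+\beta^4+\alpha^2\beta+\mathfrak{X}^{5/3}+|V-1|$.

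For the cross terms such as $\alpha\|\zeta'\|$, I would rely on the refined self-adjointness $\int\langle\LL\zeta',ug_N\rangle=0$. The remaining $\alpha\mathfrak{X}^{5/6}$-type terms would be absorbed via Young's inequality, $\alpha\mathfrak{X}^{5/6}\le\alpha^3+\mathfrak{X}^{5/4}$; this needs care and may force the final exponent down. Summing all contributions gives the theorem.
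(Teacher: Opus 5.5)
Your outline is the paper's proof. It uses the expansion \eqref{taylorW} with the remainder from Proposition \ref{4thderiW}, and the identities $W_s=(\bar W-1)V_s$, $W_t=0$, $W_{st}=(\bar W-1)V_{st}$, $W_{tt}=(\bar W-1)V_{tt}$. The third-order corrections go through Proposition \ref{2ndderiat0}, Lemma \ref{3rdderiWesti} and \eqref{equ:diff3rd}, the $V$-bracket through Corollary \ref{taylorofV6}, and $W_{ss}-(\bar W-1)V_{ss}$ through \eqref{mainerror1}.

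\textbf{The term you flagged.} Your worry about item (vii) is a real defect, but it lies in the statement, not in your plan. The paper's final line simply writes $\mathfrak{X}^{5/3}$. Yet the $W_{ttt}$ correction $\frac16\int\la\Phi_{tt},h_1\ra\,\d V_{\bar f}$ is only bounded through (vii), by $C(\beta^4+\mathfrak{X}^{4/3})$. The obstruction structure cannot improve this in general. That pairing is essentially the third variation of Perelman's entropy of $N$ along $h_1$, a cubic form on $\isd_N$. Suppose this form is not identically zero; I believe this holds for $\mathbb{CP}^n$, where a non-vanishing third variation of this kind underlies its dynamical instability. Then the statement fails:
\begin{itemize}
\item Take $u=0$ and $g=\bar g+\epsilon h_1+\frac{\epsilon^2}{2}h_2$.
\item Take $f=\frac{|\vec x|^2}{4}+\rho_{g'}$, where $\rho_{g'}$ is the normalized minimizer for $g'=g_N+\epsilon h_1+\frac{\epsilon^2}{2}h_2$.
\item Cut both off radially, far out in $\vec x$.
\end{itemize}
Up to negligible errors, $\Div_{\bar f}h=0$, $\BB=0$, $\VV=1$ and $\mathfrak{X}=O(\epsilon^2)$. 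Yet $|\WW(g,f)-\Theta_{N,m}|\sim\epsilon^3\gg\epsilon^{10/3}$.

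So your argument and the paper's both prove the inequality with $\mathfrak{X}^{4/3}$ in place of $\mathfrak{X}^{5/3}$. That is exactly what Corollary \ref{cor:lojaforF} uses, so nothing downstream is affected.

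\textbf{Your other two obstacles.} Neither is actually present, and your remedy for the second would cost you.
\begin{itemize}
\item The bound $\int Q^2(0,0)\,\d V_{0,0}\le C(\alpha^4+\beta^4+\mathfrak{X}^{5/3}+|V(1,1)-V(0,0)|^2)$ is already \eqref{taylorofV5}. The estimate $|\bar Q|\lesssim\alpha^2+\beta^2+\cdots$ enters squared, so no refined $V$-expansion is needed.
\item $W_{ss}-(\bar W-1)V_{ss}$ contains no $u$--$\zeta'$ cross terms at all. Besides $\LL(ug_N)=0$ with $\LL$ self-adjoint, one has $\Div_{\bar f}(ug_N)=0$ and $Q(0,0)=\frac12\Tr\zeta'-q'$. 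The terms are therefore exactly $\|\Div_{\bar f}\zeta'\|_{L^2}^2$, $\int\la\na Q,\Div_{\bar f}\zeta'\ra$, $\int\la\LL\zeta',\zeta'\ra$, $\int|\na Q|^2$, $\int Q^2$ and $\int Q\,\Div_{\bar f}\Div_{\bar f}\zeta'$. Each is at most $C(\alpha^4+\beta^4+\mathfrak{X}^{5/3}+|V(1,1)-V(0,0)|^2)$ by Corollary \ref{cor:rigidineq}, \eqref{taylorofV4}, \eqref{taylorofV5} and Cauchy--Schwarz; this is \eqref{mainerror1}.
\end{itemize}
In particular, drop the Young step $\alpha\mathfrak{X}^{5/6}\le\alpha^3+\mathfrak{X}^{5/4}$. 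An $\mathfrak{X}^{5/4}$ error is weaker than $\mathfrak{X}^{4/3}$. It would lower the exponent in Corollary \ref{cor:lojaforF}, and hence the Lojasiewicz exponent.
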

\begin{proof}
We adopt the notations as in this section and all constants $C$ depend on $n,\,m,\,N$.
	By \eqref{taylorW} and Proposition \ref{4thderiW}, we obtain
	\begin{align}\label{taylorW1}
		&\left|W(1,1)-W(0,0)-W_s-W_t-\frac{1}{2}(W_{ss}+2W_{st}+W_{tt})-\frac{1}{6}(W_{ttt}+3W_{stt}+3 W_{sst})\right|\nonumber\\
		\leq& C(\alpha^3+\beta^4+\alpha^2\beta+\mathfrak{X}^{\frac{5}{3}}+|V(1,1)-V(0,0)|^2).
	\end{align} 
	Here all the derivatives on the left hand side are evaluated at $s=t=0$. Next we claim that
	\begin{align}\label{mainerror1}
		&\left|\frac{1}{2}\int_{\bar M}\la \LL\zeta+2\Div_{\bar f}^*\, \Div_{\bar f}\zeta+2\na^2 Q(0,0),\zeta\ra \,\d V_{0,0}+\int_{\bar M}2|\na Q(0,0)|^2-Q^2(0,0)+Q(0,0)\Div_{\bar f}\,\Div_{\bar f} \zeta\,\d V_{0,0}\right|\nonumber\\
		\leq &C(\alpha^4+\beta^4+\mathfrak{X}^{\frac{5}{3}}+|V(1,1)-V(0,0)|^2).
	\end{align}
	In fact, using the fact that $\Div_{\bar f}(ug_N)=0,\, \rVert\na Q(0,0)\rVert_{L^2}\leq C\mathfrak{X}^{\frac{5}{6}}$ and $\rVert\Div_{\bar f}\zeta'\rVert_{L^2}\leq C\mathfrak{X}^{\frac{5}{6}}$, we have
	\begin{align}\label{mainerror2}
		\left|\int_{\bar M}\la \Div_{\bar f}^*\, \Div_{\bar f}\zeta, \zeta\ra \,\d V_{0,0}\right|&=\left|\int_{\bar M}\la \Div_{\bar f}\zeta, \Div_{\bar f}\zeta\ra \,\d V_{0,0}\right|=\rVert\Div_{\bar f}\zeta'\rVert_{L^2}^2\leq C\mathfrak{X}^{\frac{5}{3}},
	\end{align}
	and
	\begin{align}\label{mainerror3}
		\left|\int_{\bar M}\la \na^2Q(0,0), \zeta\ra \,\d V_{0,0}\right|&=\left|\int_{\bar M}\la \na Q(0,0), \Div_{\bar f}\zeta\ra \,\d V_{0,0}\right|=\left|\int_{\bar M}\la \na Q(0,0), \Div_{\bar f}\zeta'\ra \,\d V_{0,0}\right|\nonumber\\
		&\leq \rVert\na Q(0,0)\rVert_{L^2}\rVert\Div_{\bar f}\zeta'\rVert_{L^2} \leq C\mathfrak{X}^{\frac{5}{3}}.
	\end{align}
	Note that by Proposition \ref{pro:Qesti2}, we have $\rVert Q(0,0)\rVert_{L^2}\leq C(\alpha^2+\beta^2+\mathfrak{X}^{\frac{5}{6}}+|V(1,1)-V(0,0)|)$ and thus
	\begin{align}\label{mainerror4}
		\left|\int_{\bar M}Q(0,0)\Div_{\bar f}\,\Div_{\bar f}\zeta\,\d V_{0,0}\right|\leq \rVert Q(0,0)\rVert_{L^2}\rVert \Div_{\bar f}\,\Div_{\bar f}\zeta'\rVert_{L^2}&\leq C(\alpha^2+\beta^2+\mathfrak{X}^{\frac{5}{6}}+|V(1,1)-V(0,0)|)\mathfrak{X}^{\frac{5}{6}}\nonumber\\
		&\leq C(\alpha^4+\beta^4+\mathfrak{X}^{\frac{5}{3}}+|V(1,1)-V(0,0)|^2).
	\end{align}
	By definition of $\isd$, we know that $\LL (u g_N)=0$, therefore
	\begin{align}\label{mainerror5}
	\left|\int_{\bar M}\la \LL\zeta,\zeta\ra\,\d V_{0,0}\right|&=\left|\int_{\bar M}\la \LL\zeta',\zeta'\ra\,\d V_{0,0}\right|\leq C\mathfrak{X}^{\frac{5}{3}}.
	\end{align}
	Combining \eqref{mainerror2}, \eqref{mainerror3}, \eqref{mainerror4} and \eqref{mainerror5}, we get \eqref{mainerror1}. By Proposition \ref{2ndderiat0}, Corollary \ref{taylorofV6}, Lemma \ref{3rdderiWesti}, \eqref{2ndderiWat01}, \eqref{2ndderiWat02}, \eqref{2ndderiWat03}, \eqref{equ:diff3rd}, \eqref{taylorW1} and \eqref{mainerror1}, we can conclude that
	\begin{align*}
	|W(1,1)-W(0,0)|\leq C(\alpha^3+\beta^4+\alpha^2\beta+\mathfrak{X}^{\frac{5}{3}}+|V(1,1)-V(0,0)|).
	\end{align*}
	 This completes the proof.
\end{proof}

By Corollary \ref{cor:rigidineq}, we have $\rVert u\rVert_{L^2} \leq C\mathfrak{X}^{\frac{1}{2}}$ and $\rVert h_1\rVert_{L^2} \leq C\mathfrak{X}^{\frac{1}{3}}$. Therefore, the next corollary follows immediately from Theorem \ref{lojaforF}:

\begin{cor}\label{cor:lojaforF}
For the weighted Riemannian manifold $(\bar M,g,f)$ such that $\{h,\chi\}=\{g-\bar g, f-\bar f\}$ are compactly supported, if
$$\rVert h\rVert_{C^2}+\rVert \chi\rVert_{C^2}\leq \delta,$$
for a small constant $\delta$ depending only on $n, N$, then we have
	\begin{align*}
		|\WW(g,f)-\WW(\bar g,\bar f)|\leq C(n,m,N)\left(\mathfrak{X}^{\frac{4}{3}}+|\VV(g,f)-1|\right),
	\end{align*}
		where $u,\, h_1$ and $\mathfrak{X}$ are defined as in \eqref{equ:defuh_1} and \eqref{equ:defmathfrakX}.
\end{cor}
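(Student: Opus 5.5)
This follows from Theorem \ref{lojaforF} once each term on its right-hand side is converted into a power of $\mathfrak{X}$ using Corollary \ref{cor:rigidineq}. Note first that $\WW(\bar g,\bar f)=\Theta_{N,m}$, so the left-hand sides of the two statements agree. It therefore suffices to show
\begin{align*}
\alpha^3+\beta^4+\alpha^2\beta+\mathfrak{X}^{\frac{5}{3}}\leq C\mathfrak{X}^{\frac{4}{3}},
\end{align*}
where $\alpha=\rVert u\rVert_{L^2}$ and $\beta=\rVert h_1\rVert_{L^2}$. The term $|\VV(g,f)-1|$ is simply carried over unchanged.

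The key input is the first inequality in \eqref{basicesti2}, namely $\alpha^2+\beta^3\leq C\mathfrak{X}$. It gives $\alpha\leq C\mathfrak{X}^{1/2}$ and $\beta\leq C\mathfrak{X}^{1/3}$, and we estimate each term with these:
\begin{itemize}
\item $\alpha^3\leq C\mathfrak{X}^{3/2}$;
\item $\beta^4\leq C\mathfrak{X}^{4/3}$;
\item $\alpha^2\beta\leq C\mathfrak{X}\cdot\mathfrak{X}^{1/3}=C\mathfrak{X}^{4/3}$;
\item $\mathfrak{X}^{5/3}$ needs no conversion.
\end{itemize}
Since $\mathfrak{X}\leq C\delta\leq 1$ once $\delta$ is small, each power with exponent at least $4/3$ is bounded by $C\mathfrak{X}^{4/3}$. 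Summing these bounds gives the corollary, with the same $\delta$ as in Theorem \ref{lojaforF}.

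The worst exponent comes from $\beta^4$ and $\alpha^2\beta$, and it is exactly $4/3$. This is the source of the Lojasiewicz exponent restriction $\gamma<2/3$ in Theorem \ref{thm:loja}. There is no genuine obstacle here: all the analytic work is contained in Theorem \ref{lojaforF} and in the rigidity inequalities of Propositions \ref{pro:rigidityineq1} and \ref{pro:rigidityineq2}. The only points to check are that the smallness hypothesis on $\rVert h\rVert_{C^2}+\rVert\chi\rVert_{C^2}$ is the one under which Corollary \ref{cor:rigidineq} was derived, and that $\mathfrak{X}\leq 1$, which lets the lower-order powers be absorbed.
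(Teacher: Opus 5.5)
Your proposal is correct and follows the paper's own argument: the paper also reads off $\|u\|_{L^2}\le C\mathfrak{X}^{1/2}$ and $\|h_1\|_{L^2}\le C\mathfrak{X}^{1/3}$ from Corollary \ref{cor:rigidineq} and substitutes these into Theorem \ref{lojaforF}. It then uses $\WW(\bar g,\bar f)=\Theta_{N,m}$ and $\mathfrak{X}\le C\delta$ to absorb the higher powers into $C\mathfrak{X}^{4/3}$, exactly as you do.
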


\section{Lojasiewicz inequalities and strong uniqueness for generalized cylindrical singularities}\label{sec:loja}
In this section, we will prove Theorems \ref{thm:stong} and \ref{thm:loja}, extending results from \cite{fang2025strong}. The method is very similar to that in \cite{fang2025strong}, so we only sketch the proof. As in Section \ref{sec:variationineq}, we fix a weighted generalized cylinder $(\bar M,\bar g,\bar f)$ from Example \ref{exa:cylinder} which has obstruction of order \(3\) and satisfies the spectral condition
\[
    -\frac l2 \notin \operatorname{spec}\left(\LL_N|_{\mathrm{TT}}\right),
    \qquad \forall l\in \mathbb N^+.
\]
Recall
\begin{align*}
\lc \mathcal C^{n-m}_{N}\rc_{-1}=(\bar M,\bar g,\bar f)=\left(\R^{n-m}\times N^{m}, g_E \times g_{N}, \frac{|\vec{x}|^2}{4}+\frac{m}{2}+\Theta_{N,m} \right).
\end{align*}
Set $\bar b=2\sqrt{\bar f}$\index{$\bar b$}, and fix a base point $\bar p \in \bar M$, which is a minimum point of $\bar f$. We denote by $C(A,B,\ldots)$ the constants depending on $A,B$, etc.

\subsection*{Definitions of radius functions}
Throughout this subsection, the weighted Riemannian manifold $\left(M,g,f\right)$ is always assumed to be normalized (see \eqref{equ:nor}) and satisfies, for some constant $C_V>0$,
\begin{align}\label{normalization22}
	\int_{M\setminus B(p,L)} 1 \,\mathrm{d}V_f \leq C_V e^{-\frac{L^2}{15}}, \quad\forall L>0,
\end{align}
where $p$ is a fixed minimum point of $f$. The following definitions are adapted from \cite[Definition 5.1]{fang2025strong} to the current setting:
\begin{defn}\label{defnradii}
	For $\sigma\in (0,1/10)$ and a weighted Riemannian manifold $(M,g,f)$ with $\Phi=\mathbf{\Phi}(g, f)=\dfrac{g}{2}-\Ric(g)-\nabla^2 f$, we define
	\begin{enumerate}[label=\textnormal{(\Alph{*})}]
		\item \emph{($\rA$-radius)}\index{$\rA$} $\rA$ as the largest number $L$ such that there exists a diffeomorphism $\varphi_A$ from $\{\bar b \le L\} \subset \bar M$ onto a subset of $M$ such that
		\begin{align*}
			\left[\bar g-\varphi_A^* g\right]_5+\left[\bar f-\varphi_A^* f\right]_5 \leq e^{\frac{\bar f}{4}-\frac{L^2}{16}},
		\end{align*}
		\item \emph{($\rBC$-radius)}\index{$\rBC$} $\rBC$ as the largest number $L$ such that there exists a diffeomorphism $\varphi_B$ from $\{\bar b \leq L\} \subset \bar M$ onto a subset of $M$ such that
		\begin{align*}
			\left[\bar g-\varphi_B^* g\right]_0+\left[\bar f-\varphi_B^* f\right]_0\leq e^{-\frac{L^2}{33}},
		\end{align*}
		and
		\begin{equation*}
			\int_{\{\bar b\le L\}}\left|\varphi_B^*\Phi\right|^2 \,\mathrm{d}V_{\bar f}\leq e^{-\frac{L^2}{4-\sigma}}.
		\end{equation*}
Moreover, for all $k \in [1, 10^{10} n\sigma^{-1}]$, the $C^k$-norms of $\bar g-\varphi_B^* g$ and $\bar f-\varphi_B^* f$ are bounded by $1$. 
	\end{enumerate}
Here and for the remainder of the paper, all norms $[\cdot]_l$ are taken with respect to $\bar g$, unless explicitly stated otherwise.
\end{defn}

By the same argument as \cite[Proposition 5.2]{fang2025strong}, we have:
\begin{prop}\label{prop:con}
	With assumption \eqref{normalization22}, there exists a constant $L_1=L_1(n,N,C_V, \sigma)>0$ such that if $\rBC\geq L_1$, then
	\begin{equation*}
		\rA\geq \rBC-2.
	\end{equation*}
\end{prop}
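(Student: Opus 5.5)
The plan is to run the contraction (``extension'') argument of \cite[Proposition 5.2]{fang2025strong}, changing only the linear ingredients on the cross section. Set $L=\rBC$ and let $\varphi_B$ be the diffeomorphism from Definition \ref{defnradii}(B). On $\{\bar b\le L\}$ we then have:
\begin{itemize}
\item $C^0$-closeness $e^{-L^2/33}$;
\item $L^2$-smallness $e^{-L^2/(4-\sigma)}$ of $\varphi_B^*\Phi$ with respect to $\d V_{\bar f}$;
\item uniform $C^k$ bounds for $k\le 10^{10}n\sigma^{-1}$.
\end{itemize}
The task is to upgrade these to the weighted $C^5$ estimate in (A) on the slightly smaller region $\{\bar b\le L-2\}$, after possibly modifying the diffeomorphism.

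\textbf{Step 1 (gauge fixing).} On $\{\bar b\le L-1\}$, compose $\varphi_B$ with a diffeomorphism close to the identity so that the pulled-back pair $(h,\chi)$ satisfies a weighted divergence-free gauge condition $\Div_{\bar f}h\approx 0$ up to the cutoff error. This uses the invertibility of $\Div_{\bar f}\Div_{\bar f}^*$ modulo Killing fields and the translations $\partial_{x_i}$, as in \cite{fang2025strong}. The $N$-factor enters only through the Killing fields of $N$, a finite-dimensional kernel that is handled by an orthogonality condition.

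\textbf{Step 2 (local $L^2$ and pointwise bounds).} Since $\Phi(g,f)=\Phi'(h,\chi)+O(|h|+|\chi|)^2$ in derivatives, the equation $\Phi'(h,\chi)=\varphi^*\Phi-\text{quadratic}$ is elliptic after gauge fixing. Interpolating the $C^0$ bound $e^{-L^2/33}$ against the high $C^k$ bounds gives $C^5$ bounds of size $e^{-L^2(1-\sigma')/33}$. Localized interior estimates on balls of unit size, with weights $e^{-\bar f}$, then convert the $L^2$ bound on $\Phi$ into the pointwise bound
\[
[\Phi]_3\le C e^{\bar f/2-L^2/(2(4-\sigma))}\,e^{O(\sigma)L^2}.
\]
Feeding this into Schauder estimates for the elliptic system controls $[h]_5+[\chi]_5$ by $[\Phi]_3$ plus the kernel components. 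The kernel components are the ISD part ($\KK_0g_N\oplus\isd_N$), the Hermite linear modes and the Killing directions.

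\textbf{Step 3 (kernel components).} This is the main obstacle. The kernel components must be shown to be exponentially small in the right sense, not merely polynomially, because the target bound $e^{\bar f/4-L^2/16}$ at $\bar b\approx L-2$ is $\approx e^{-L/2\cdot O(1)}$. Here one uses that the $C^0$ bound $e^{-L^2/33}$ already holds on the whole region $\{\bar b\le L\}$. A nonzero component in $\KK_0$ grows like $|\vec x|^2$, and a nonzero $\isd_N$ component has fixed size. Comparing with the smallness at $\bar b\approx L$ therefore forces their coefficients to be $\le Ce^{-L^2/33}L^{-2}$. Such quantities are then negligible compared with $e^{\bar f/4-L^2/16}$ once $\bar f/4\ge \bar b^2/16$ and $L\ge L_1$.

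The spectral condition $-l/2\notin\operatorname{spec}(\LL_N|_{\mathrm{TT}})$ ensures that the weighted operator on $\R^{n-m}\times N$ has no further polynomially growing kernel elements mixing Hermite degree $l$ with TT eigentensors of $N$. Such mixed elements would otherwise escape this comparison. The argument is that separation of variables gives eigenvalues $-l/2+\lambda$ with $\lambda\in\operatorname{spec}(\LL_N)$, and the assumption excludes exact cancellation for the TT part. The remaining non-TT parts are handled exactly as for the sphere.

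\textbf{Step 4 (conclusion).} The estimate is weighted with $e^{\bar f/4}$ and is exactly the one obtained in \cite{fang2025strong} via the maximum principle / three-annulus-type growth for $\LL$ on the Gaussian weight. Combining Steps 2 and 3 yields
\[
[\bar g-\varphi_A^*g]_5+[\bar f-\varphi_A^*f]_5\le e^{\bar f/4-(L-2)^2/16}
\]
on $\{\bar b\le L-2\}$ for $L\ge L_1(n,N,C_V,\sigma)$. Hence $\rA\ge\rBC-2$.
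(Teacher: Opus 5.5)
Your Step 3 fails quantitatively, and it is the crux of the argument. The $C^0$ part of (B) only gives kernel coefficients of size about $e^{-L^2/33}$, up to powers of $L$. But (A) must also hold near the minimum of $\bar f$, where $\bar f$ is bounded, and there it demands $e^{\bar f/4-(L-2)^2/16}\approx e^{-L^2/16}\ll e^{-L^2/33}$. Since $\bar f/4=\bar b^2/16$ identically, your proviso ``once $\bar f/4\ge \bar b^2/16$'' buys nothing. The local Schauder estimates of Step 2 cannot rescue this, because their lower-order term in $h$ is again only $e^{-L^2/33}$-small.

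The missing idea is that the kernel components must be controlled by the much stronger $L^2$-smallness $\|\varphi_B^*\Phi\|_{L^2}\le e^{-L^2/(2(4-\sigma))}$, passed to the kernel directions through a nonlinear rigidity (obstruction) estimate. This is exactly what the paper does. It runs the argument of \cite[Proposition 5.2]{fang2025strong}, replacing \cite[Theorem 9.1]{colding2015rigidity} by Propositions \ref{pro:rigidityineq1} and \ref{pro:rigidityineq2}:
\begin{itemize}
\item the first bounds the coercive part $(\zeta',\nabla q')$ in $W^{k,2}$ linearly by $\|\Phi\|+\|\Div_{\bar f}h\|_{W^{1,2}}+|\BB|$, up to $\alpha^2+\alpha\beta+\beta^3$;
\item the second bounds $\alpha^2+\beta^3$ by $\|(1+|\vec x|^2)\Phi\|_{L^1}$ plus lower-order terms.
\end{itemize}
The coercive part then has weighted-$L^2$ size about $e^{-L^2/(2(4-\sigma))}$. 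Pointwise this is about $e^{\bar f/2-L^2/(2(4-\sigma))}\le e^{\bar f/4-L^2/16}$ on $\{\bar b\le L\}$. The Hermite part satisfies $\alpha\lesssim e^{-L^2/(4(4-\sigma))}$, and the $\sigma$-slack absorbs both the growth of $u$ and the shift $L\to L-2$.

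Be aware that the $\isd_N$ component is not settled by this route either. Proposition \ref{pro:rigidityineq2} only yields $\beta\lesssim e^{-L^2/(6(4-\sigma))}$, which is still far above $e^{-(L-2)^2/16}$, and this appears to be sharp.
\begin{itemize}
\item Suppose $N$ has no obstruction of order two, which is the situation that makes order three necessary. Then there is $0\neq h_1\in\isd_N$ with $\pi_{\isd_N}\Phi^{(2)}(h_1,h_1)=0$.
\item Take the product $\bigl(\bar M,\ g_E+g_N+h_1+\tfrac12 h_2,\ \tfrac{|\vec x|^2}{4}+\rho_{g_N+h_1+h_2/2}\bigr)$ with $\beta$ small. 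It satisfies $|\Phi|=O(\beta^3)$, so $\rBC^2\ge 6(4-\sigma)\log\beta^{-1}-C$.
\item Its $N$-factor lies at distance $\gtrsim\beta$ from the diffeomorphism orbit of $g_N$, since $h_1\perp\mathrm{Im}(\Div^*)$. This forces $\rA^2\le 16\log\beta^{-1}+C$.
\end{itemize}
So for such $N$ the $\isd_N$ direction cannot be pushed to the rate required in (A), neither by your argument nor by a verbatim transfer of \cite[Proposition 5.2]{fang2025strong}. It has to be isolated and treated separately, for instance by adjusting the threshold in the definition of $\rA$, rather than folded into a $C^0$ comparison.
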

\begin{proof}
	The proposition follows from the same argument as in \cite[Proposition 5.2]{fang2025strong}. The only difference is that instead of the rigidity inequality \cite[Theorem 9.1]{colding2015rigidity}, we need to use \cite[Proposition 4.9, Theorem 4.10]{li2023rigidity} (see also Propositions \ref{pro:rigidityineq1} and \ref{pro:rigidityineq2}).
\end{proof}

Combining with Corollary \ref{cor:lojaforF}, we obtain the next Lojasiewicz inequality within $\rBC$-radius, following the same argument as \cite[Theorem 5.4]{fang2025strong}:

\begin{thm}\label{thm:lojarBC}
There exist constants $L_2= L_2(n,N,Y, \sigma)>0$ and $C=C(n,N,Y)>0$ such that the following holds.
	Let $\XX=\{M^n,(g(t))_{t\in I}\}$ be a closed Ricci flow with entropy bounded below by $-Y$. Assume $x_0^*=(x_0,t_0)\in \XX$ and $[t_0-2r^2,t_0]\subset I$. If the weighted Riemannian manifold $\left(M,r^{-2}g(t_0-r^2),f_{x_0^*}(t_0-r^2)\right)$ satisfies $\rBC\geq L_2 $, then
	\begin{equation*}
		\left|\WW_{x_0^*}(r^2)-\Theta_{N,m}\right|\leq C \exp\lc-\frac{\rBC^2}{6} \rc,
	\end{equation*}
	where $\WW_{x_0^*}$ is the pointed $\WW$-entropy at $x_0^*$; see \emph{\cite[Definition 2.7]{fang2025RFlimit}}.
\end{thm}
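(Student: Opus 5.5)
The plan follows \cite[Theorem 5.4]{fang2025strong}, with the variational inequality replaced by Corollary \ref{cor:lojaforF}. First rescale so that $r=1$ and $t_0-r^2$ becomes time $-1$. Set $(M,g,f)=(M,r^{-2}g(t_0-r^2),f_{x_0^*}(t_0-r^2))$. This triple is normalized, and by standard heat kernel Gaussian bounds (depending on $Y$) it satisfies \eqref{normalization22} with $C_V=C_V(n,Y)$. Then $\WW_{x_0^*}(r^2)=\WW(g,f)$, where the integral is over all of $M$. Let $L:=\rBC\ge L_2$. Choosing $L_2\ge L_1+2$, Proposition \ref{prop:con} gives $\rA\ge L-2$. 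We therefore obtain a diffeomorphism $\varphi_A$ on $\{\bar b\le L-2\}$ with
\[
\bigl[\bar g-\varphi_A^*g\bigr]_5+\bigl[\bar f-\varphi_A^*f\bigr]_5\le e^{\bar f/4-(L-2)^2/16}.
\]

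Next, build a compactly supported perturbation. Take a cutoff $\eta$ equal to $1$ on $\{\bar b\le L-4\}$ and supported in $\{\bar b\le L-3\}$, with bounded derivatives. Set $h=\eta(\varphi_A^*g-\bar g)$ and $\chi=\eta(\varphi_A^*f-\bar f)$, and let $(\tilde g,\tilde f)=(\bar g+h,\bar f+\chi)$ on $\bar M$. Since $\bar f\approx \bar b^2/4$, on the support we have $e^{\bar f/4}\le e^{(L-3)^2/16}$. Hence $\|h\|_{C^2}+\|\chi\|_{C^2}\le Ce^{-cL}\le\delta$ once $L_2$ is large, and Corollary \ref{cor:lojaforF} applies:
\[
|\WW(\tilde g,\tilde f)-\Theta_{N,m}|\le C\bigl(\mathfrak{X}^{4/3}+|\VV(\tilde g,\tilde f)-1|\bigr).
\]

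We then estimate each term.
\begin{enumerate}[label=\textnormal{(\roman*)}]
\item $\|\Phi(\tilde g,\tilde f)\|_{L^2}$. On $\{\bar b\le L-4\}$ it equals $\varphi_A^*\Phi$, which is bounded by $e^{-L^2/(2(4-\sigma))}$ by the $\rBC$ condition. On the transition annulus it is at most $Ce^{\bar f/4-(L-2)^2/16}$ times the weight $e^{-\bar f}\approx e^{-L^2/4}$ integrated there, which is far smaller. So $\|\Phi\|_{L^2}\le Ce^{-L^2/(8-2\sigma)}$.
\item $\|\Div_{\bar f}h\|_{W^{1,2}}$ and $|\BB(h,\chi)|$. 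These are not automatically small, and this is the main obstacle. I would first precompose $\varphi_A$ with a diffeomorphism close to the identity, obtained by solving the weighted gauge equation on the bounded domain. This is done by the implicit-function/contraction argument of \cite[Section 5]{fang2025strong}, so that $\Div_{\bar f}h$ is supported in the cutoff annulus. The rigidity inequalities are invariant under this gauge fixing, so I expect the argument to go through verbatim. The center-of-mass term $\BB$ is killed by translating in the $\R^{n-m}$ factor, again as in that paper. After this, both terms are bounded by the annulus contribution, of order $e^{-L^2/8+CL}$.
\end{enumerate}
Altogether $\mathfrak{X}\le Ce^{-L^2/(8-2\sigma)}$, so
\[
\mathfrak{X}^{4/3}\le Ce^{-\frac{4L^2}{3(8-2\sigma)}}\le Ce^{-L^2/6}
\]
for $\sigma<1/10$. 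Note that $4/(3\cdot 8)=1/6$ exactly; the extra room from $\sigma$ absorbs the polynomial factors, which is why $\sigma$ appears in $L_2$ but not in $C$.

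Finally, compare the functionals. $|\VV(\tilde g,\tilde f)-1|$ is controlled by the weighted volume outside $\{\bar b\le L-4\}$, both for $\bar f$ and, via \eqref{normalization22}, for $M$. This is of order $e^{-(L-4)^2/4}\ll e^{-L^2/6}$. Similarly, $|\WW(g,f)-\WW(\tilde g,\tilde f)|$ is bounded by the integral of $|\mu|\,\d V_f$ over the region outside $\varphi_A(\{\bar b\le L-4\})$. For the $M$ side, I use the pointwise bounds on the integrand from \cite{fang2025strong} (heat kernel estimates give $|\mu|\le C(1+f)$ in $L^2$) together with \eqref{normalization22}; for the $\bar M$ side, the explicit Gaussian decay. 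Both are $\le Ce^{-L^2/6}$, which gives the statement.
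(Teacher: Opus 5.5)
Your outline (cut off, apply Corollary~\ref{cor:lojaforF}, compare the functionals) is the right one, but two of your estimates fail, and there is no slack here. The bound $\mathfrak X\approx e^{-L^2/8}$ gives exactly $\mathfrak X^{4/3}\approx e^{-L^2/6}$. So any $e^{O(L)}$ loss that is not absorbed by the $\sigma$-improvement in the interior bound on $\Phi$ breaks the statement.

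\emph{The transition region.} You cut off $\varphi_A^*g-\bar g$ at $\bar b\in[L-4,L-3]$, where $\bar f\le (L-3)^2/4$. There the $\rA$-bound $e^{\bar f/4-(L-2)^2/16}$ is only about $e^{-L/8}$. The weighted $L^2$-norm sees the square root of the weight, $L^Ce^{-(L-4)^2/8}$, not $e^{-L^2/4}$. So the annulus contributes $e^{-L^2/8+O(L)}$ to $\|\Phi(\tilde g,\tilde f)\|_{L^2}$, which is much \emph{larger} than $e^{-L^2/(8-2\sigma)}$. You yourself get the same order $e^{-L^2/8+CL}$ for the gauge terms. Hence $\mathfrak X\le Ce^{-L^2/(8-2\sigma)}$ does not follow, and you only obtain $Ce^{-L^2/6+CL}$. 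Moving the cutoff inward cannot help: with the weighted $\rA$-bound, a cutoff at $\bar b=a\le L$ costs about $e^{-a^2/16-L^2/16}\ge e^{-L^2/8}$.

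The missing idea is to cut off near the edge of $\{\bar b\le\rBC\}$ using $\varphi_B$. There you use the unweighted bound $e^{-L^2/33}$ from Definition~\ref{defnradii}(B), interpolated with the $C^k$-bounds for $k\le 10^{10}n\sigma^{-1}$, to get $[\,\cdot\,]_5\le Ce^{-(1-\sigma)L^2/33}$ on the transition region. Then its contributions to $\Phi$, to the gauge terms (after gauge fixing) and to $\VV$ are $O(L^Ce^{-(L-1)^2/8-(1-\sigma)L^2/33})$, which is $\ll e^{-L^2/(8-2\sigma)}$.

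\emph{The part of $M$ outside the cylindrical region $\Omega$.} Estimate \eqref{normalization22} only gives $C_Ve^{-L^2/15}$, and Cauchy--Schwarz against $|\mu|$ gives even less. Since $1/15<1/6$, neither $|\VV(\tilde g,\tilde f)-1|$ (which enters Corollary~\ref{cor:lojaforF} linearly) nor $|\WW(g,f)-\WW(\tilde g,\tilde f)|$ is shown to be $O(e^{-L^2/6})$. Your claimed order $e^{-(L-4)^2/4}$ is therefore unjustified. Comparing masses with the model does not help either, because the $\nu$-mass of $\Omega$ is known only up to the interior error of $\varphi^*f$.

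You need a genuine tail estimate. One way: take $\psi$ equal to $1$ on $M\setminus\Omega$, with $\nabla\psi$ supported in the image of $\{L-1\le\bar b\le L\}$. Perelman's Harnack inequality $\mu\le 0$ and integration by parts give
\[
0\ge\int_M\psi^2\mu\,\mathrm{d}\nu=\int_M\psi^2\bigl(|\nabla f|^2+\scal+f-n\bigr)\,\mathrm{d}\nu-2\int_M\langle\nabla\psi^2,\nabla f\rangle\,\mathrm{d}\nu .
\]
After rescaling, $\scal\ge-n/2$ because the flow exists on $[t_0-2r^2,t_0]$. Also $f\ge cL^2$ on $\mathrm{supp}\,\psi$: on $M\setminus\Omega$ by the a priori Gaussian lower bound away from $p$, and on the annulus because $f\approx\bar f$. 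Hence the first integrand is at least $1$ there. This bounds both $\nu(M\setminus\Omega)$ and $\int_{M\setminus\Omega}|\mu|\,\mathrm{d}\nu$ by $2\int|\nabla\psi^2||\nabla f|\,\mathrm{d}\nu=O(L^Ce^{-(L-1)^2/4})$, which is $\ll e^{-L^2/6}$.
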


\subsection*{Comparison of radius functions}
As in \cite[Section 6]{fang2025strong}, we consider a closed Ricci flow $\XX=\{M^n,(g(t))_{t\in I} \}$ with entropy bounded below by $-Y$ such that $[-10,0]\subset I$. Throughout, we fix a spacetime point $x_0^*=(x_0,0)$, define $\tau=-t$, and set
\begin{align*}
  \begin{dcases}
  & \mathrm{d}\nu_t=\mathrm{d}\nu_{x^*_0;t}=(4\pi \tau)^{-\frac n 2}e^{-f} \,\mathrm{d}V_{g(t)}, \\
		& \Phi= \frac{g}{2}-\tau \left( \Ric+\na^2 f\right),\\
		& F=\tau f.
        \end{dcases}
\end{align*}
Moreover, we define $b=2\sqrt{\left|f(-1)\right|}$.

As before, the model space we consider is the weighted cylinder $\lc \mathcal C^{n-m}_{N}\rc_{-1}$. We set $(\bar M, \bar g(t), \bar f(t))$ to be the induced Ricci flow such that $t=0$ is the singular time coupled with $\bar f(t):=|\vec{x}|^2/4\tau+m/2+\Theta_{N,m}$. We set $\bar F=|\vec{x}|^2/4$, $\bar b=2\sqrt{\bar f(-1)}$, and fix $\bar p$ to be a minimum point of $\bar b$.

\begin{defn}[$\r_{C, \delta}$-radius]
	For the weighted Riemannian manifold $\left(M,g(-1),f(-1)\right)$,
	$\r_{C,\delta}$ is defined as the largest $L$ such that there exists a diffeomorphism $\varphi_C$ from $\{\bar b \le L\}$ of $\bar M$ onto a subset of $M$ such that $f \lc \varphi_C(\bar p), -1 \rc \le n$ and
	\begin{align*}
		\left[\bar g-\varphi_C^* g(-1)\right]_2\leq \delta.
	\end{align*}
\end{defn}

\begin{defn}[Entropy radius]
	The \textbf{entropy radius} $\rE$ is defined as
	\begin{align*}
		\exp\left(-\frac{\mathbf{r}^2_E}{4}\right):=\mathcal W_{x_0^*}(1/2)-\mathcal W_{x_0^*}(2).
	\end{align*}\index{$\rE$}
	Here, we implicitly assume that $\mathcal W_{x_0^*}(1/2)-\mathcal W_{x_0^*}(2) <1$ so that $\rE$ is well-defined.
\end{defn}

Within $\r_{C,\delta}$-radius, we can obtain the estimates for metrics and potential as in \cite[Propositions 6.2, 6.3, Corollary 6.6]{fang2025strong}:
\begin{prop}\label{pro:stabilitymetric}
	For any small $\ep>0$, there exists $\bar \delta=\bar \delta(n,N,\ep)>0$ such that if $\delta \le \bar \delta$ and $\r_{C,\delta} \ge \bar \delta^{-2}$, then the following statements hold.
	\begin{enumerate}[label=\textnormal{(\roman{*})}]
	\item On $\left\{\bar b \le \mathbf{r}_{C,\delta}- \bar \delta^{-1}\right\} \times [-9 , -\ep]$,
		\begin{align*}
		\left[\varphi_C^*g(t)-\bar g(t)\right]_{[\ep^{-1}]} \le \ep,
	\end{align*}
where the norm $[\cdot]_{[\ep^{-1}]}$ is taken with respect to $\bar g(t)$.

\item For $(x,t)\in \left\{\bar b \le \mathbf{r}_{C,\delta}- \bar \delta^{-1}\right\} \times [-2 , -1/2]$, we have
	\begin{align*}
		(1-\ep)\bar F(x)-C(n,N,Y,\ep)\leq F(\varphi_C(x),t)\leq (1+\ep)\bar F(x)+C(n,N,Y,\ep).
	\end{align*}
Moreover, for any $1 \le l \le \ep^{-1}$,
		\begin{align*}
[\varphi_C^*F]_{l} \le C(n,N, Y,\ep) e^{\ep l \bar F}
	\end{align*}	
on $\{\bar b \le \r_{C,\delta}- 2 \bar \delta^{-1}\} \times [-2,-1/2]$.	
	\end{enumerate}
\end{prop}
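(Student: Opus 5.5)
The plan is to follow the argument of \cite[Propositions 6.2, 6.3, Corollary 6.6]{fang2025strong} essentially verbatim. The only model-dependent inputs are two features of the generalized cylinder: its rigidity, namely uniqueness of the Ricci shrinker structure nearby, and the fact that its Ricci flow is self-similar. Neither uses the specific geometry of $S^{n-k}$ in any essential way.

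For item (i), I would argue by contradiction and compactness. Suppose there are flows $\XX_i$ with $\delta_i\to 0$ and $\r_{C,\delta_i}\to\infty$, yet (i) fails at some point $x_i$ with $\bar b(x_i)\le \r_{C,\delta_i}-\bar\delta_i^{-1}$. Since $[\bar g-\varphi_C^*g(-1)]_2\le\delta_i$ holds on $\{\bar b\le \r_{C,\delta_i}\}$, the curvature is bounded there. Pseudolocality, in the form given in \cite[Section 2]{fang2025RFlimit}, together with the entropy lower bound $-Y$, then gives curvature bounds on parabolic neighbourhoods of size comparable to one around points of the time $-1$ slice. These propagate backward to $[-9,-1]$ and forward to $[-1,-\ep]$, shrinking the spatial region by a fixed amount. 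Shi's estimates then give higher-derivative bounds of order $[\ep^{-1}]$. Centering at $\varphi_C(x_i)$, I pass to a smooth limit flow on a parabolic ball of definite size whose time $-1$ slice is isometric to a piece of $\bar g(-1)$. Uniqueness of the Ricci flow for complete bounded-curvature data, or more concretely the backward uniqueness / local forward uniqueness argument used in \cite{fang2025strong}, identifies the limit with $\bar g(t)$ on $[-9,-\ep]$. This contradicts the assumed failure of (i).

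The main obstacle is the backward-in-time part on $[-9,-1]$, since forward uniqueness does not apply there. I would handle it as \cite{fang2025strong} does. The limit flow is smooth on $[-9,-\ep]$ and agrees with the cylinder at $t=-1$. Backward uniqueness (Kotschwar) applied to the self-similar model then gives agreement for $t\le -1$. This step only needs bounded curvature, which pseudolocality and the noncollapsing from the entropy bound supply. The gauge $\varphi_C$ is fixed at $t=-1$ and the flows are compared via $\varphi_C$ without reparametrization, so no diffeomorphism drift arises.

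For item (ii), I would use the standard estimates on the conjugate heat kernel potential. Two-sided Gaussian bounds for $f$ give $F(\cdot,t)\approx d_t^2(x_0,\cdot)/4$ up to $C(n,Y)$. The condition $f(\varphi_C(\bar p),-1)\le n$ forces the base point to be close to $\bar p$. By item (i), $d_t$ is $(1\pm\ep)$-comparable to the cylinder distance, and $\bar F=|\vec x|^2/4$ is comparable to $\bar d^2/4$ up to constants coming from the compact factor $N$. Together these give the two-sided bound on $F$. For the derivative bounds $[\varphi_C^*F]_l\le Ce^{\ep l\bar F}$, I would apply the interior estimates for $\partial_t F$-type equations (the potential satisfies $\square^*$-type equations). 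The gradient bound $|\nabla f|^2\le Cf/\tau$ and the Hessian bounds from \cite{fang2025RFlimit} give $l=1,2$. Higher $l$ follow by local parabolic regularity in the uniformly controlled geometry from (i), with a constant growing at most exponentially in $\bar F$ through the Gaussian weight. The loss of $\bar\delta^{-1}$ in the radius accommodates the parabolic neighbourhoods used.
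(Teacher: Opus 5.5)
Your top-level plan matches the paper, which simply invokes \cite[Propositions 6.2, 6.3, Corollary 6.6]{fang2025strong} with the generalized cylinder as the model. However, the mechanism you propose for item (ii) does not work. The claim that two-sided Gaussian bounds give $F(\cdot,t)=d_t^2(x_0,\cdot)/4+O(C(n,Y))$ does not follow from any known estimate under an entropy lower bound alone. The available conjugate heat kernel bounds are centered at an $H_n$-center rather than at $x_0$, and their exponents are not sharp. Bamler's upper bound has $(8+\ep)\tau$ in the denominator of the Gaussian exponent, and no general Gaussian lower bound with the sharp constant $4$ is available. At best they give $c\,d^2-C\le F\le C\,d^2+C$, which can never produce the factors $1\pm\ep$ in (ii). Note that the constant in (ii) is allowed to depend on $N$: the sharp comparison has to come from the cylindrical geometry supplied by (i), applied with a smaller parameter in place of $\ep$. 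One natural route is a contradiction argument. The conjugate heat kernels subconverge on the cylindrical region to a conjugate heat flow on $\bar{\mathcal C}^k_N$ that concentrates at a spine point as $t\nearrow 0$, so it is the model Gaussian. The normalization $f(\varphi_C(\bar p),-1)\le n$ pins its center near $\vec x=0$. You would then still need an additional argument making the comparison uniform on all of $\{\bar b\le \mathbf{r}_{C,\delta}-\bar\delta^{-1}\}$, not just on compact pieces. The gap also affects the derivative bounds. The natural source of the factor $e^{\ep l\bar F}$ is local parabolic regularity for $u=(4\pi\tau)^{-n/2}e^{-f}$, combined with the oscillation of $F$ over unit parabolic neighbourhoods. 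That oscillation is at most $2\ep\bar F+C$ only once the sharp two-sided bound is known; the crude bounds give only $e^{Cl\bar F}$.

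Item (i) is closer to correct but needs two repairs. First, backward pseudolocality only controls a time interval of length about $\ep(n,Y)^2$, with degraded constants, so curvature bounds cannot simply be propagated back to $t=-9$. You must iterate:
\begin{enumerate}
\item obtain short-time backward control;
\item take a smooth limit and identify it with $\bar g(t)$ by Kotschwar's backward uniqueness;
\item conclude that the flows are again cylinder-close, with the original bounds, at the earlier time;
\item repeat finitely many times.
\end{enumerate}
The same iteration is needed forward up to $t=-\ep$. Second, Ricci flow on a ball has no uniqueness, so the limit must be taken on the complete cylinder. This is possible because the margin $\bar\delta_i^{-1}\to\infty$ lets you recenter at $x_i$. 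A limit on a parabolic ball of definite size, as you wrote, is not enough.
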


Next, we define a function $\tilde F$ on $M$ such that
\begin{align*} 
	\square \tilde F=-\frac{n}{2} \quad \text{and} \quad \tilde F=F \quad \text{on} \quad t=-2.
\end{align*}
Then using the same argument as \cite[Lemma 6.8]{fang2025strong}, we have:

\begin{prop} \label{pro:estimateforbetterpotential}
In the same setting as Proposition \ref{pro:stabilitymetric}, we have
	\begin{equation*}
		(1-300\ep)\bar F(x)-C(n,N,Y,\ep)\leq \tilde F(\varphi_C(x),t)\leq (1+300\ep)\bar F(x)+C(n,N,Y,\ep),
	\end{equation*}
for any $(x,t)\in \left\{\bar b\leq \r_{C,\delta}-\bar \delta^{-1}\right\}\times [-2,-1/2]$. Moreover, for any $1 \le l \le \ep^{-1}$, we have on $\{\bar b \le \mathbf{r}_{C,\delta}- 2 \bar \delta^{-1}\} \times [-2, -1/2]$,
	\begin{align*}
		[\varphi_C^*\tilde F]_{l} \le C(n,N, Y,\ep) \exp\lc 10^5 \ep l \bar F \rc.
	\end{align*}
\end{prop}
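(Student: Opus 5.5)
We follow the argument of \cite[Lemma 6.8]{fang2025strong}, comparing $\tilde F$ with $F$ via the evolution equations. Since $F=\tau f$ and $f$ satisfies the conjugate heat equation, a direct computation gives
\begin{align*}
\square F=-\frac n2+\tau\lc \scal+\Delta f-\frac{n}{2\tau}\rc+\text{(terms from }\tau|\na f|^2) ,
\end{align*}
more precisely $\square F=-\frac n2 -\tau\,\mu$-type quantity, where the defect is controlled by $\Tr\Phi$ and $\tau(|\na f|^2+\scal)-F$. On the model, $\square \bar F=-\frac n2$ exactly, since $\bar F=|\vec x|^2/4$ solves this on the Euclidean factor and $\bar g(t)$ is static on $\R^{n-m}$. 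So the plan is to write $D:=\tilde F-F$, note $D=0$ at $t=-2$, and $\square D=-\frac n2-\square F=:E$, where $E$ is a local expression in $g,f$ and derivatives. Using Proposition \ref{pro:stabilitymetric}, on $\{\bar b\le \r_{C,\delta}-\bar\delta^{-1}\}\times[-2,-1/2]$ the metric is $\ep$-close to the model in $C^{[\ep^{-1}]}$ and $[\varphi_C^*F]_l\le C e^{\ep l\bar F}$, so $|E|\le C\,e^{2\ep\bar F}(1+\bar F)$ there, i.e.\ at most $C\ep \bar F+C$ after absorbing, up to adjusting.

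Next I would bound $D$ by a maximum-principle/barrier argument in the region. Construct supersolutions of the form $\Psi=A\ep\,\bar F\circ\varphi_C^{-1}+Bt+C'$ (pushed forward via $\varphi_C$); on the model $\square\bar F=-n/2$ and $|\na \bar F|^2=\bar F$, and closeness gives $\square(\varphi_{C*}\bar F)\ge -n/2-\ep(\bar F+1)$, so $\square(e^{\lambda(t+2)}\Psi)$ dominates $|E|$ for suitable $\lambda$. The main obstacle is the boundary: the region is not all of $M$, and $\tilde F$ is defined globally. For this I would use heat kernel / Gaussian bounds for $\tilde F$ on the whole flow: since $F\ge -C$ and $\tilde F$ is a solution of a heat equation with initial data $F(-2)$ of at most quadratic growth in distance (from the standard bounds on $f$ in \cite{fang2025RFlimit}), one gets global $|\tilde F|\le C(1+d^2)$, while on the lateral boundary $\bar b\approx \r_{C,\delta}-\bar\delta^{-1}$ the barrier $\ep\bar F$ is huge only linearly in $\ep$. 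To handle this, use instead a localized weighted estimate: multiply by the cutoff and integrate against $\d\nu_t$, exploiting that $\nu_t$ mass beyond $\bar b\ge L$ is $\le e^{-L^2/15}$, then upgrade integral bounds to pointwise via parabolic mean value inequality at scale one, losing a factor that is absorbed into $300\ep\bar F+C$. This yields $|D|\le 299\ep\bar F+C$, and combined with Proposition \ref{pro:stabilitymetric}(ii) gives the two-sided bound with $300\ep$.

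For derivatives, apply interior parabolic Schauder estimates to $\tilde F$ (or to $D$) on unit parabolic cylinders centered in $\{\bar b\le \r_{C,\delta}-2\bar\delta^{-1}\}\times[-2,-1/2]$, using the $C^{[\ep^{-1}]}$ closeness of the metric: $[\varphi_C^*\tilde F]_l\le C(\sup_{\text{cyl}}|\tilde F|+ [E]_{l})$. The sup is $\le C\bar F+C\le Ce^{\ep\bar F}$ and $[E]_l$ involves $[\varphi_C^*F]_{l+2}\le Ce^{\ep(l+2)\bar F}$ products of up to a fixed number of factors, and $\bar F$ varies by $O(\sqrt{\bar F})$ across a unit cylinder; near time $-2$ use the initial data $F(-2)$ directly. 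Collecting the exponents gives the crude but sufficient bound $C\exp(10^5\ep l\bar F)$.
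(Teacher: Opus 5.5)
There is a genuine gap in the main ($C^0$) estimate. Your plan controls $D=\tilde F-F$ through its source term. That source is exactly Perelman's quantity: $\square F=-\frac n2-w$ with $w=\tau(2\Delta f-|\na f|^2+\scal)+f-n$, so $\square D=w$.

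The only control Proposition~\ref{pro:stabilitymetric}(ii) gives on $w$ comes from the crude bounds $[\varphi_C^*F]_l\le Ce^{\ep l\bar F}$, i.e.\ $|w|\le Ce^{2\ep\bar F}$. Your step ``i.e.\ at most $C\ep\bar F+C$ after absorbing'' is false, because no function linear in $\bar F$ dominates $e^{2\ep\bar F}(1+\bar F)$. A bound $|w|\lesssim \ep\bar F+C$ would require $|\na F|^2=\bar F$ up to a small relative error, and nothing available provides that.

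Hence no barrier of the form $A\ep\bar F+Bt+C'$ can dominate the source. Duhamel or the maximum principle for $D$ then yields at best $|D|\le Ce^{C\ep\bar F}$, not $299\ep\bar F+C$. The fallback does not repair this. The measure $\nu_t=\nu_{x_0^*;t}$ has density comparable to $e^{-\bar F}$, so converting bounds on $\int|D|\,\d\nu_t$ into pointwise bounds via a mean value inequality loses a factor of order $e^{\bar F}$. A minor slip as well: on the model $\square\bar F=-\frac{n-m}{2}$, not $-\frac n2$, though this only shifts things by a bounded multiple of $t+2$.

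The missing idea is to compare $\tilde F$ with $F$ through the initial slice $t=-2$, where they agree, rather than through the source. Since $\square\bigl(\tilde F+\frac n2(t+2)\bigr)=0$, the reproduction formula gives
\[
\tilde F(x,t)=\int_M F(y,-2)\,\d\nu_{x,t;-2}(y)-\frac n2(t+2).
\]

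The upper bound is in fact immediate from Perelman's Harnack inequality $w\le 0$. It gives $\square(F-\tilde F)=-w\ge 0$ with zero data at $t=-2$, so $\tilde F\le F\le (1+\ep)\bar F+C$ on the region.

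For the lower bound, combine the following:
\begin{itemize}
\item the global bound $F\ge -C(n,Y)$, which follows from the heat kernel upper bound;
\item the bound $F(\cdot,-2)\ge(1-\ep)\bar F-C$ on the region;
\item Gaussian concentration of $\nu_{\varphi_C(x),t;-2}$ around $\varphi_C(x)$, since the time gap is at most $3/2$ and the geometry is close to the model by Proposition~\ref{pro:stabilitymetric}(i);
\item the elementary inequality $\bar F(y)\ge(1-\ep)\bar F(x)-\frac{1}{4\ep}|\vec x-\vec y|^2$.
\end{itemize}
The mass of $\nu$ outside a ball of radius $D_\ep\sim\sqrt{\log(1/\ep)}$ is at most $\ep$, so it costs only $\ep\bar F(x)$. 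The inside contributes at least $(1-\ep)^3\bar F(x)-C_\ep$. Shrinking $\bar\delta$ provides the margin needed near the edge of the region.

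Once this $C^0$ bound is in hand, your derivative step goes through: interior parabolic estimates for $\square\tilde F=-\frac n2$, using the bounds on $F(\cdot,-2)$ near the initial time.
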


We now fix a small constant $\sigma\in (0,1/10)$, an integer $l \in \mathbb N$ and define:
	\begin{align*}
\bar \delta_l := \frac{1}{2}\bar \delta \lc n, N, 10^{-100}n^{-1} l^{-1}\sigma \rc,
	\end{align*}\index{$\bar \delta_l$}
for $l \in \mathbb N$, where $\bar \delta$ is the function defined in Proposition \ref{pro:stabilitymetric}. By the same contradiction argument as in \cite[Theorem 6.14]{fang2025strong}, using the compactness theory established in \cite{fang2025RFlimit} and Propositions \ref{pro:stabilitymetric} and \ref{pro:estimateforbetterpotential}, we can bound the $\r_{C,\bar \delta_l}$-radius by $\rA$:
\begin{thm}\label{thm:ext1}
For any $\sigma\in (0, 1/10)$, $D>1$ and $l \in \mathbb N$, there exists a large constant $L'=L'(n,N, Y,  \sigma, l, D)>1$ satisfying the following property.

Let $\varphi_A$ be a diffeomorphism corresponding to $\rA$ in Definition \ref{defnradii}. If $\mathbf{r}_A \in [ L', (1-\sigma) \mathbf{r}_E]$, then there exists another diffeomorphism $\varphi$ from $\{\bar b \le \mathbf{r}_A+D \}$ onto a subset of $M$ such that $\varphi=\varphi_A$ on $\{\bar b \le \rA-2\bar \delta_l^{-1}\}$ and
	\begin{align*}
		\left[\varphi^* g(-1)-\bar g\right]_2\leq \bar \delta_l
	\end{align*}
on	$\{\bar b \le \mathbf{r}_A+D \}$. In particular, we have
	\begin{align*} 
		\mathbf{r}_{C,\bar \delta_l} \ge \mathbf{r}_A+D.
	\end{align*}
\end{thm}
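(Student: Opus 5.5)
We argue by contradiction, following the scheme of \cite[Theorem 6.14]{fang2025strong}. Fix $\sigma,D,l$ and suppose no such $L'$ exists. Then there are closed Ricci flows $\XX_i$ with entropy bounded below by $-Y$, base points $x_{0,i}^*=(x_{0,i},0)$, and diffeomorphisms $\varphi_{A,i}$ realizing $\rA^i\to\infty$ with $\rA^i\le(1-\sigma)\mathbf r_E^i$, for which the desired extension $\varphi_i$ fails to exist. Since $\mathbf r_E^i\ge (1-\sigma)^{-1}\rA^i\to\infty$, we get
\begin{align*}
\WW_{x_{0,i}^*}(1/2)-\WW_{x_{0,i}^*}(2)=e^{-(\mathbf r_E^i)^2/4}\le e^{-(\rA^i)^2/(4(1-\sigma)^2)}.
\end{align*}
Integrating Perelman's monotonicity formula then gives
\begin{align*}
\int_{-2}^{-1/2}\int_M \tau\,|\Phi|^2\,\d\nu_t\,\d t\lesssim e^{-(\rA^i)^2/(4(1-\sigma)^2)}.
\end{align*}

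The key observation concerns the weight $e^{-f}$ in $\d\nu_t$. On the region $\{\bar b\approx \rA^i+D\}$, Propositions \ref{pro:stabilitymetric} and \ref{pro:estimateforbetterpotential} give $f\approx \bar b^2/4$. So the weight there is roughly $e^{-(\rA^i+D)^2/4}$, which is much larger than the entropy defect $e^{-(\rA^i)^2/(4(1-\sigma)^2)}$ once $\rA^i$ is large. Consequently the unweighted $L^2$-norm of $\Phi$ over the annulus $\{\rA^i-C\le \bar b\le \rA^i+D\}\times[-2,-1/2]$ tends to $0$. We then recenter the flows at points $\varphi_{A,i}(y_i)$ with $\bar b(y_i)\approx\rA^i$. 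By Proposition \ref{pro:stabilitymetric}(i), applied with $\delta$ small on the already-controlled region $\{\bar b\le\rA^i\}$, these flows are locally close to the cylinder on one side.

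Using the compactness theory of \cite{fang2025RFlimit}, we pass to a limit of the recentered flows on a spacetime neighbourhood of size $\sim D+\bar\delta_l^{-1}$. Smallness of $\Phi$ in $L^2$, together with the potential estimates of Proposition \ref{pro:estimateforbetterpotential} (to show $\nabla^2\tilde F$ converges and $F$ stays comparable to $\bar F$), forces the limit to satisfy the shrinker equation on this region. The limit also agrees with $\mathbb R^{n-m}\times N^m$ on the half coming from $\{\bar b\le \rA^i\}$. By unique continuation for the shrinker (analyticity), the limit is the generalized cylinder on the whole neighbourhood. Smooth convergence, obtained from the noncollapsing and $\ep$-regularity in \cite{fang2025RFlimit}, then yields maps close in $C^2$ to the cylinder on $\{\rA^i-\bar\delta_l^{-1}\le\bar b\le \rA^i+D\}$, with error $\le\bar\delta_l/2$. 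We glue these to $\varphi_{A,i}$ on the overlap $\{\rA^i-2\bar\delta_l^{-1}\le\bar b\le \rA^i-\bar\delta_l^{-1}\}$, either by center-of-mass interpolation or by choosing the limit isometry to match $\varphi_{A,i}$ there. This produces the required $\varphi_i$ with $[\varphi_i^*g(-1)-\bar g]_2\le\bar\delta_l$, contradicting the assumption.

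The main obstacle is the limiting step near the edge $\bar b\approx\rA^i$. There we must rule out that the limit region differs from the cylinder, for instance because the $N$-factor deforms along a nonintegrable ISD direction, or because a quotient or a different shrinker appears. I would first try to use that the $\rA$-control already pins the geometry, with error $e^{\bar f/4-\rA^2/16}\le e^{-c\rA^2}$, on a large overlapping region. Unique continuation of the limit shrinker from this overlap then identifies the limit as exactly $\mathbb R^{n-m}\times N^m$, bypassing any rigidity of $N$ itself. Some care is needed to ensure the $[\cdot]_2$ closeness holds uniformly in the growing $\mathbb R^{n-m}$ directions; this is handled by covering the annulus with finitely many balls of bounded size, each treated by the same contradiction argument.
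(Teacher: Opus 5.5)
Your overall strategy matches the one the paper invokes: the contradiction argument of \cite[Theorem 6.14]{fang2025strong}, the compactness theory of \cite{fang2025RFlimit}, and Propositions \ref{pro:stabilitymetric} and \ref{pro:estimateforbetterpotential}. However, two load-bearing steps do not work as written.

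\textbf{The weight control is circular.} Propositions \ref{pro:stabilitymetric} and \ref{pro:estimateforbetterpotential} only hold on $\{\bar b\le \mathbf r_{C,\delta}-\bar\delta^{-1}\}$. A priori you only know $\mathbf r_{C,\delta}\gtrsim \mathbf r_A$. Indeed, the $\mathbf r_A$-bound $e^{\bar f/4-\mathbf r_A^2/16}$ equals $1$ at $\bar b=\mathbf r_A$, so $\delta$-closeness already stops slightly inside the edge. Hence $f\approx \bar b^2/4$ on $\{\bar b\approx \mathbf r_A+D\}$ is part of what must be proved, and so is the unweighted smallness of $\Phi$ that you derive from it. This needs an independent argument. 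One option is to transport $f$ from the controlled region along the gauge generated by $-\nabla f$, along which $\frac{\d}{\d t}f=\mathrm{tr}\,\Phi/\tau$. Another is a continuity argument in the radius. There is also a problem with your time interval. Even in the model, the weight $e^{-\bar f(t)}\approx e^{-\mathbf r_A^2/(4\tau)}$ dominates the defect $e^{-\mathbf r_E^2/4}$ only when $\tau\gtrsim(1-\sigma)^2$. So unweighted smallness can hold at most for $t\le-(1-\sigma)^2$, not on all of $[-2,-1/2]$.

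\textbf{The recentered limit carries no shrinker equation, so unique continuation has nothing to act on.} This is the more serious gap. At $\bar b\approx\mathbf r_A$ one has $|\nabla F|\approx \mathbf r_A/2\to\infty$. Consequently $F-F(y_i)$ (or $\tilde F-\tilde F(y_i)$) has no limit, and $\lim\nabla^2F$ is not the Hessian of any limit potential. The identity $\nabla^2F=\frac g2-\tau\Ric-\Phi$ survives only as the definition of $H:=\frac g2-\tau\Ric$. Since $(\partial_t-\Delta_L)H=0$ holds identically along every Ricci flow, no elliptic or analytic constraint on $g_\infty$ results.

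What $\Phi\to0$ does give far out is
\[
\nabla\bigl(\nabla F/|\nabla F|\bigr)=\nabla^2F/|\nabla F|\to0 .
\]
Approximate dilation invariance at distance $\mathbf r_A$ thus becomes translation invariance in the radial direction, so the limit splits a line. Translating along that line carries the cylindrical structure from the controlled side out to $\{\bar b\le\mathbf r_A+D\}$. Equivalently, run the almost self-similarity over a time step of size $\sim(C+D)/\mathbf r_A$ to enlarge the controlled region at a slightly earlier time. Then return to $t=-1$ using the forward stability of Proposition \ref{pro:stabilitymetric}.

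This translation mechanism is the missing idea. It also disposes of the worries in your last paragraph (ISD deformations of $N$, quotients, other shrinkers), because the uncontrolled piece becomes isometric to a translate of a cylindrical piece. Finally, smoothness of the limit in that region cannot come from $\ep$-regularity, since the model's entropy $\Theta_{N,m}$ is bounded away from $0$. Smoothness follows only after the limit has been identified there.
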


Next, based on Theorem \ref{thm:ext1}, we have the following crucial extension result, using the same argument as \cite[Theorem 6.18]{fang2025strong}.

\begin{thm}\label{thm:ext2}
There exists a large constant $\hat L=\hat L(n, N, Y, \sigma)>1$ such that if $\mathbf{r}_A \in [ \hat L, (1-2\sigma) \mathbf{r}_E]$, then
	\begin{align*}
		\rBC \ge \mathbf{r}_A+\bar D_{100}/10.
	\end{align*}
	Here, both $\rA$ and $\rBC$ denote the radius functions for $(M, g(-1), f(-1))$ (cf. Definition \ref{defnradii}), and $\bar D_{100}=10^4\bar \delta_{100}^{-1}$.
\end{thm}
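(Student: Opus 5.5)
We follow the scheme of \cite[Theorem 6.18]{fang2025strong}. Apply Theorem \ref{thm:ext1} with $l=100$ and $D=\bar D_{100}$, which is possible once $\hat L\ge L'(n,N,Y,\sigma,100,\bar D_{100})$, since $\rA\in[\hat L,(1-2\sigma)\rE]\subset[L',(1-\sigma)\rE]$. This gives a diffeomorphism $\varphi$ on $\{\bar b\le \rA+\bar D_{100}\}$ that agrees with $\varphi_A$ on $\{\bar b\le\rA-2\bar\delta_{100}^{-1}\}$ and satisfies $[\varphi^*g(-1)-\bar g]_2\le\bar\delta_{100}$. So $\r_{C,\bar\delta_{100}}\ge\rA+\bar D_{100}$. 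Propositions \ref{pro:stabilitymetric} and \ref{pro:estimateforbetterpotential} then give smooth closeness of $\varphi^*g(t)$ to $\bar g(t)$ for $t\in[-9,-\ep]$, with $\ep=10^{-100}n^{-1}100^{-1}\sigma$, on $\{\bar b\le\rA+\bar D_{100}-\bar\delta_{100}^{-1}\}$. The same propositions control $F$ and $\tilde F$ up to multiplicative errors $e^{C\ep l\bar F}$ in $C^l$. On the smaller region $\{\bar b\le \rA+\bar D_{100}/10\}$ we still have a margin of order $\bar D_{100}$ from the edge.

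Next we verify the three requirements of $\rBC$ at $L=\rA+\bar D_{100}/10$. The $C^k$ bounds for $k\le 10^{10}n\sigma^{-1}$ follow from the interior smooth closeness together with the choice of $\ep$ in $\bar\delta_l$.

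For the $C^0$ bound $e^{-L^2/33}$, we interpolate. On $\{\bar b\le\rA\}$ the $\rA$ estimate gives $e^{\bar f/4-\rA^2/16}\le e^{-3\rA^2/16}$ after $\bar f\le \rA^2/4$, which is far below $e^{-L^2/33}$. On the annulus, where $\varphi$ is only $\bar\delta_{100}$-close, we propagate the smallness outward along the flow. The plan is to use the parabolic equation for $h=\varphi^*g-\bar g$ and $\varphi^*F-\bar F$ over $t\in[-2,-1/2]$, with the pseudolocality/interior estimates of \cite[Section 6]{fang2025strong}. A three-annulus / growth argument shows that $|h|$ grows at most like $e^{(1+C\ep)(\bar b^2-\rA^2)/16}$ times its size near $\bar b=\rA$. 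Because $\bar D_{100}/10$ is a fixed constant, the loss from $\rA$ to $\rA+\bar D_{100}/10$ is only a factor $e^{C\rA\bar D_{100}}$. This is dominated, since $3\rA^2/16-(\rA+\bar D_{100}/10)^2/33\gg C\rA\bar D_{100}$ for $\rA\ge\hat L$.

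The main step is the integral bound $\int_{\{\bar b\le L\}}|\varphi^*\Phi|^2\,\d V_{\bar f}\le e^{-L^2/(4-\sigma)}$. Here we use the entropy radius. By the standard monotonicity identity,
\[
\WW_{x_0^*}(1/2)-\WW_{x_0^*}(2)=\int_{-2}^{-1/2}2\tau\int_M|\Ric+\na^2f-g/(2\tau)|^2\,\d\nu_t\,\d t=e^{-\rE^2/4}.
\]
Pulling back by $\varphi$, and using Proposition \ref{pro:estimateforbetterpotential} to compare $\d\nu_t$ with $e^{-(1+300\ep)\bar f}\d V_{\bar g}$, we get a time-integrated bound
\[
\int_{\{\bar b\le L\}}|\varphi^*\Phi|^2e^{-C\ep\bar f}\,\d V_{\bar f}\le Ce^{-\rE^2/4}.
\]
Since $L\le(1-2\sigma)\rE+\bar D_{100}$, the loss $e^{C\ep L^2}$ is absorbed, giving $\le e^{-L^2/(4-\sigma)}$ at some times. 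To pass from a time average to $t=-1$ we use the smooth closeness and the evolution equation of $\Phi$ along Ricci flow (a heat-type equation, $\square\Phi=\mathrm{Rm}*\Phi$) with a parabolic mean value / backward-forward estimate in a unit parabolic neighborhood. The margin $2\sigma$ in the hypothesis $\rA\le(1-2\sigma)\rE$ is exactly what absorbs the $\ep$-losses from the potential comparison and this time localization. We expect this localization, and the careful bookkeeping of exponents $\ep\bar f$ against $\sigma$, to be the main obstacle.
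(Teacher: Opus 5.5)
\textbf{Gap: the $C^0$ condition of $\rBC$.} Your skeleton is right: apply Theorem~\ref{thm:ext1} with $l=100$, $D=\bar D_{100}$, then check the three conditions defining $\rBC$ at $L=\rA+\bar D_{100}/10$, with the $\Phi$-integral coming from the entropy drop and the margin $2\sigma$. The gap is the bound $[\bar g-\varphi_B^*g]_0+[\bar f-\varphi_B^*f]_0\le e^{-L^2/33}$, and it starts from a false inequality. On $\{\bar b\le\rA\}$ we have $\bar f=\bar b^2/4\le\rA^2/4$, so $\frac{\bar f}{4}-\frac{\rA^2}{16}\le 0$, not $\le -\frac{3\rA^2}{16}$. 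The $\rA$-estimate therefore only gives $e^{(\bar b^2-\rA^2)/16}$, which is $O(1)$ near $\bar b=\rA$. It lies below $e^{-L^2/33}$ only on roughly $\{\bar b\le\sqrt{17/33}\,\rA\}$.

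Your growth step starts from the size of $h$ near $\bar b=\rA$ and grows with the profile $e^{(\bar b^2-\rA^2)/16}$. It therefore yields an $O(e^{C\rA\bar D_{100}})$ bound on the annulus, not an exponentially small one. Starting instead from $\{\bar b\le\sqrt{17/33}\,\rA\}$ with the same profile just reproduces the $\rA$-bound, which is again $O(1)$ at the edge.

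Nor can the edge be handled through $\Phi$. The entropy drop only gives $|\Phi|\lesssim e^{\bar f/2-\rE^2/8}$ up to $\ep$-losses. At $\bar b\approx L$ with $\rA\approx(1-2\sigma)\rE$ this is about $e^{-\sigma\rE^2/2}$, which is much larger than $e^{-L^2/33}$ for small $\sigma$ (e.g.\ $\sigma\le 1/20$).

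\textbf{What is missing.} The exponentially small closeness near $\bar b\approx\rA$ has to be imported from the interior through the time direction. This must be done inside the region where Theorem~\ref{thm:ext1} and Proposition~\ref{pro:stabilitymetric} give a priori control for all $t\in[-9,-\ep]$; that restriction is why the gain is only $O(\bar D_{100})$. One way to do it:
\begin{itemize}
\item In the unrescaled flow, a point at radius $\bar b$ at $t=-1$ sits at scaled radius $\bar b/\sqrt2$ at $t=-2$.
\item Near self-similarity on $[-2,-1/2]$ transfers the $\rA$-bound to $t=-2$. The error $e^{\rho^2/8-\rE^2/8}$ at scaled radius $\rho\le\rA$ is dominated by $e^{\rho^2/16-\rA^2/16}$, since $\rA\le\rE$.
\item Forward parabolic propagation to $t=-1$, possibly in a new gauge $\varphi_B$, then gives closeness of order $e^{\bar b^2/32-\rA^2/16}=e^{\bar f/8-\rA^2/16}$.
\item Since $\tfrac1{32}+\tfrac1{33}<\tfrac1{16}$, this is $\le e^{-L^2/33}$ on $\{\bar b\le L\}$ whenever $L^2\le\tfrac{66}{65}\rA^2$. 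That covers $L=\rA+\bar D_{100}/10$ once $\hat L$ is large.
\end{itemize}
The exponent $33$ is consistent with exactly this halving of the growth rate. I cannot confirm from the paper that \cite[Theorem 6.18]{fang2025strong} argues this way, but some input of this kind is needed.

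\textbf{Secondary points.} Only after the $C^0$ bound is available do the $C^k$ bounds for $\bar f-\varphi_B^*f$ follow, by interpolation or from $\nabla^2 f=\frac g2-\Ric-\Phi$. Proposition~\ref{pro:estimateforbetterpotential}, whose bounds grow like $e^{10^5\ep l\bar F}$, does not give them. The step you flagged as the main obstacle, localizing the entropy bound on $\Phi$ in time, is the comparatively routine part. Note that $\Phi$ satisfies a backward equation with a drift of size about $\bar b$, not $\square\Phi=\mathrm{Rm}*\Phi$. Interior estimates on small parabolic cylinders still work with only polynomial losses.
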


By using Proposition \ref{prop:con} and Theorem \ref{thm:ext2} iteratively as in \cite[Theorem 6.24]{fang2025strong}, we obtain the following result. Note that by \cite[Theorem 2.13 (i)]{fang2025strong}, the assumption \eqref{normalization22} holds for a constant $C_V=C_V(n,Y)$.

\begin{thm} \label{thm:ra}
	Let $\XX=\{M^n,(g(t))_{t\in I}\}$ be a closed Ricci flow with entropy bounded below by $-Y$. Assume $x_0^*=(x_0,0)\in \XX$ and $[-10,0]\subset I$. For any small $\sigma\in (0,1/10)$, there exists a constant $L=L(n,N,Y,\sigma)$ such that if the weighted Riemannian manifold $\left(M,g(-1),f_{x_0^*}(-1)\right)$ satisfies $\rA\geq L$, then
	\begin{align*}
		\min\{\mathbf{r}_A, \rBC\} \ge (1-3\sigma) \mathbf{r}_E.
	\end{align*}
\end{thm}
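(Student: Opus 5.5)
The plan is to run the iteration of \cite[Theorem 6.24]{fang2025strong}, alternating the contraction step (Proposition \ref{prop:con}) with the extension step (Theorem \ref{thm:ext2}). Fix $\sigma$ and recall that \eqref{normalization22} holds with $C_V=C_V(n,Y)$ by \cite[Theorem 2.13 (i)]{fang2025strong}. Choose
\[
L\ge \max\{\hat L(n,N,Y,\sigma),\,L_1(n,N,C_V,\sigma)+\bar D_{100}\}.
\]
We then distinguish two cases according to whether $\rA$ already exceeds $(1-2\sigma)\rE$.

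\emph{Case 1: $\rA\ge(1-2\sigma)\rE$.} Then the bound on $\rA$ holds trivially. For $\rBC$ I will compare directly from the definitions. On $\{\bar b\le \rA\}$ the pointwise bound $e^{\bar f/4-\rA^2/16}$ on $[\bar g-\varphi_A^*g]_5+[\bar f-\varphi_A^*f]_5$ controls $\varphi_A^*\Phi$, since $\Phi$ vanishes on the model and is linear to leading order in the perturbation. This gives
\[
\int_{\{\bar b\le L\}}|\varphi_A^*\Phi|^2\,\d V_{\bar f}\lesssim e^{-\rA^2/8}\int_{\{\bar b\le L\}} e^{\bar f/2}\,\d V_{\bar f}.
\]
The right-hand side is $\ll e^{-L^2/(4-\sigma)}$ for $L=\rA-C$, as long as $\rA$ is large. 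The $C^0$ bound $e^{-L^2/33}$ follows the same way, because $e^{\bar f/4}\le e^{L^2/16}$ there and $\rA^2/16-L^2/16\gg L^2/33$ at $L\approx\rA$. Hence $\rBC\ge \rA-C\ge(1-3\sigma)\rE$ once $\rE$ is large, and $\rE$ is large because $\rE\ge\rA\ge L$. If this direct route turns out to be lossy, I will instead appeal to the same statement in \cite[Theorem 6.24]{fang2025strong}, whose proof is purely radius-theoretic.

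\emph{Case 2: $\rA\in[L,(1-2\sigma)\rE]$.} Theorem \ref{thm:ext2} gives $\rBC\ge\rA+\bar D_{100}/10$, and Proposition \ref{prop:con} (valid since $\rBC\ge L_1$) gives $\rA^{\rm new}\ge\rBC-2$. The main issue is that $\rA$ is a single number, so ``iterating'' needs interpretation. The plan is to argue by contradiction via a supremum. Let $\rA$ be the largest admissible $L$, which is attained by a compactness argument on the $C^5$-bounds. Chaining the two steps then gives $\rA\ge \rA+\bar D_{100}/10-2$, which is impossible because $\bar D_{100}/10=10^3\bar\delta_{100}^{-1}>2$. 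So Case 2 cannot occur, and $\rA>(1-2\sigma)\rE$, returning us to Case 1.

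The remaining claim is $\rBC\ge(1-3\sigma)\rE$. In Case 1 this follows as above. Whenever Theorem \ref{thm:ext2} applies, it gives the bound directly.

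The main obstacle I expect is the justification that $\rA$ is attained, so that the supremum argument closes, together with the comparison $\rBC\gtrsim\rA$ in Case 1, where the $e^{\bar f/4}$ weight must be absorbed. The key bookkeeping there is that the region $\{\bar b\le L\}$ corresponds to $\bar f\le L^2/4$. If attainment fails, I will apply the argument to $\rA-\eta$ for small $\eta$, using the monotonicity of the defining inequalities in $L$, and then let $\eta\to0$.
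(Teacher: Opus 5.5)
Your Case 2 matches the paper's mechanism and is correct. If $\rA\in[\hat L,(1-2\sigma)\rE]$, Theorem~\ref{thm:ext2} followed by Proposition~\ref{prop:con} gives $\rA\ge\rA+\bar D_{100}/10-2$. This is absurd for any finite number, so no attainment argument is needed, and you obtain $\rA>(1-2\sigma)\rE$.

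The gap is the bound on $\rBC$ in the remaining case $\rA>(1-2\sigma)\rE$. There Theorem~\ref{thm:ext2} does not apply as stated, and your comparison from the definitions is false:
\begin{itemize}
\item On $\{\bar b\le L\}$ the $\rA$-bound gives only $C^0$ closeness $e^{(L^2-\rA^2)/16}$. This is $\le e^{-L^2/33}$ only if $L\le\sqrt{33/49}\,\rA\approx 0.82\,\rA$. For $L=\rA-C$ it equals $e^{-C(2\rA-C)/16}$, far larger than $e^{-L^2/33}$; your claim that $\rA^2/16-L^2/16\gg L^2/33$ at $L\approx\rA$ is backwards.
\item The weight is harmless rather than helpful: $\int e^{\bar f/2}\,\d V_{\bar f}=(4\pi)^{-n/2}\int e^{-\bar f/2}\,\d V_{\bar g}<\infty$. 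So you only get $\int_{\{\bar b\le L\}}|\varphi_A^*\Phi|^2\,\d V_{\bar f}\lesssim e^{-\rA^2/8}$, which lies below $e^{-L^2/(4-\sigma)}$ only for $L\lesssim\sqrt{(4-\sigma)/8}\,\rA<\rA/\sqrt2$.
\item $\rBC$ requires control of up to $10^{10}n\sigma^{-1}$ derivatives, whereas $\rA$ controls only five.
\end{itemize}
So the definitions give at best $\rBC\gtrsim 0.7\,\rA$, which does not reach $(1-3\sigma)\rE$. A $\rBC$-radius comparable to $\rE$ has to come from the entropy-driven extension step; it cannot be read off from $\rA$. Note also that Case 1 gives $\rE\le\rA/(1-2\sigma)$, not $\rE\ge\rA$. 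Falling back on \cite[Theorem 6.24]{fang2025strong} does not close the gap, since that is the round-cylinder statement. What has to be shown here is exactly how its argument, run with Proposition~\ref{prop:con} and Theorem~\ref{thm:ext2}, produces $\rBC$.

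The missing idea is to apply the extension step at admissible levels below $\rA$, not only at $\rA$ itself. The $\rA$-condition is monotone under restriction: for $L'\le\rA$, restricting $\varphi_A$ to $\{\bar b\le L'\}$ gives the bound at level $L'$, since $e^{-\rA^2/16}\le e^{-L'^2/16}$. The extension argument behind Theorems~\ref{thm:ext1} and \ref{thm:ext2} needs only a diffeomorphism with the $\rA$-type bound at the level in question, with that level in $[\hat L,(1-2\sigma)\rE]$; it does not use maximality. Applying it at $L'=(1-2\sigma)\rE$, when this is at least $\hat L$, gives $\rBC\ge(1-2\sigma)\rE+\bar D_{100}/10>(1-3\sigma)\rE$.

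This is also what ``iteratively'' refers to. Start from an admissible level in $[\hat L,(1-2\sigma)\rE]$. Each extension--contraction pair raises the admissible $\rA$-level by $\bar D_{100}/10-2$. The last pair before the level exceeds $(1-2\sigma)\rE$ puts both $\rBC$ and $\rA$ above $(1-2\sigma)\rE$.

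If instead $(1-2\sigma)\rE<\hat L$, the target is a bounded level. There a comparison of the kind you attempted, made at a fixed level of order $\hat L$ with $L\gg\hat L$, is the natural tool, because the exponent loss is then harmless. The high derivatives in the $\rBC$-condition still need separate control in that regime.
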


Combining Theorem \ref{thm:lojarBC} and Theorem \ref{thm:ra}, we obtain the desired Lojasiewicz inequality, i.e. Theorem \ref{thm:loja} (by a standard limiting argument). Note that here we need to choose $\gamma=2(1-3\sigma)^2/3\in (0,2/3)$.

\begin{thm}[Lojasiewicz inequality] \label{thm:lo}
For any $\gamma \in (0,2/3)$, there exist constants $C=C(n,N,Y)$ and $L_\gamma=L(n,N,Y,\gamma)$ such that the following property holds.

Let $\XX=\{M^n,(g(t))_{t\in I} \}$ be a closed Ricci flow with entropy bounded below by $-Y$. Assume $x_0^*=(x_0,t_0)\in\XX$ and $[t_0-10 r^2,t_0]\subset I$. If the weighted Riemannian manifold $\left(M,r^{-2}g(t_0-r^2),f_{x_0^*}(t_0-r^2)\right)$ satisfies $\rA \geq L_\gamma$, then
	\begin{align*}
		\left|\mathcal W_{x_0^*}(r^2)-\Theta_{N,m}\right| \le C \left( \mathcal W_{x_0^*}(r^2/2)-\mathcal W_{x_0^*}(2 r^2) \right)^{\gamma}.
	\end{align*}
\end{thm}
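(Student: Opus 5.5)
By parabolic rescaling we reduce to $t_0=0$, $r=1$. Then $\WW_{x_0^*}(r^2)$ becomes $\WW_{x_0^*}(1)$, and the right-hand side becomes $\exp(-\rE^2/4)$ by the definition of the entropy radius. Fix $\sigma\in(0,1/10)$ with $\gamma=2(1-3\sigma)^2/3$, and take $L_\gamma\ge \max\{L(n,N,Y,\sigma),L_2(n,N,Y,\sigma)\}$, where $L$ is from Theorem \ref{thm:ra} and $L_2$ is from Theorem \ref{thm:lojarBC}.

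We may assume $\WW_{x_0^*}(1/2)-\WW_{x_0^*}(2)<1$; otherwise the inequality is trivial. Indeed, $|\WW_{x_0^*}(1)-\Theta_{N,m}|\le C(n,Y)$ by the entropy lower bound $-Y$ and $\WW\le 0$, so any difference $\ge 1$ is absorbed into $C$. In this case $\rE$ is well defined.

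Since $\rA\ge L_\gamma\ge L$, Theorem \ref{thm:ra} gives $\min\{\rA,\rBC\}\ge(1-3\sigma)\rE$. If $\rE$ is infinite (entropy constant on $[1/2,2]$), then both radii are infinite. In that case we let $\rBC\to\infty$ in Theorem \ref{thm:lojarBC} and get $\WW_{x_0^*}(1)=\Theta_{N,m}$.

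Otherwise $\rBC\ge \rA\ge L_2$, so Theorem \ref{thm:lojarBC} applies with $r=1$ and yields
\[
|\WW_{x_0^*}(1)-\Theta_{N,m}|\le C\exp\lc-\tfrac{\rBC^2}{6}\rc\le C\exp\lc-\tfrac{(1-3\sigma)^2\rE^2}{6}\rc=C\lc\exp\lc-\tfrac{\rE^2}{4}\rc\rc^{\frac{2(1-3\sigma)^2}{3}}.
\]
The last expression equals $C\bigl(\WW_{x_0^*}(1/2)-\WW_{x_0^*}(2)\bigr)^{\gamma}$, which is the claim. The constant $C$ is that of Theorem \ref{thm:lojarBC}, so it depends only on $n,N,Y$; only $L_\gamma$ depends on $\gamma$.

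The substantive input is Theorem \ref{thm:ra}, which we take as given, so the only real care point is bookkeeping. Theorem \ref{thm:ra} is stated at time $-1$ with $x_0^*$ at time $0$. Theorem \ref{thm:lojarBC} uses the same weighted manifold $(M,g(-1),f_{x_0^*}(-1))$, so the two radii are the same object. The hypothesis $[t_0-10r^2,t_0]\subset I$ covers the requirements of both theorems. Finally, the exponent arithmetic $(1-3\sigma)^2/6=\gamma/4$ is exactly what produces the stated range $\gamma<2/3$.
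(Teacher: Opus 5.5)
Your overall route is the paper's: the paper simply combines Theorem \ref{thm:ra} with Theorem \ref{thm:lojarBC} and takes $\gamma=2(1-3\sigma)^2/3$. Your exponent arithmetic and your reduction to $\WW_{x_0^*}(1/2)-\WW_{x_0^*}(2)<1$ are fine.

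The gap is the line ``$\rBC\ge\rA\ge L_2$'', which is exactly what you use to verify the hypothesis of Theorem \ref{thm:lojarBC}. Nothing available gives $\rBC\ge\rA$:
\begin{itemize}
\item Proposition \ref{prop:con} goes the other way ($\rA\ge\rBC-2$).
\item Theorem \ref{thm:ext2} gives $\rBC>\rA$ only in the regime $\rA\le(1-2\sigma)\rE$, which is not where you are. Combined with Proposition \ref{prop:con}, that regime would give $\rA\ge\rA+\bar D_{100}/10-2$, so it is in fact excluded for large $\rA$.
\item It does not follow from the definitions either. At $\bar b=L$ the $\rA$-bound $e^{\bar f/4-L^2/16}$ is of size $1$. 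The $\rBC$-condition instead demands $e^{-L^2/33}$ there, an exponentially small $L^2$-bound on $\Phi$, and $C^k$-bounds up to $k\sim 10^{10}n\sigma^{-1}$ that $C^5$-control does not provide. So one should expect $\rBC$ to be only a fixed fraction of $\rA$, not larger.
\end{itemize}
The only lower bound you actually have is $\rBC\ge(1-3\sigma)\rE$ from Theorem \ref{thm:ra}. This gives $\rBC\ge L_2$ only when $(1-3\sigma)\rE\ge L_2$.

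So a case distinction is needed. If $(1-3\sigma)\rE\ge L_2$, your computation goes through verbatim. If $(1-3\sigma)\rE<L_2$, then $\gamma\rE^2/4=(1-3\sigma)^2\rE^2/6<L_2^2/6$. Hence the right-hand side exceeds $Ce^{-L_2^2/6}$, while the left-hand side is bounded by the constant you already noted. The inequality then holds after enlarging $C$, but the new constant depends on $L_2(n,N,Y,\sigma)$, i.e.\ on $\gamma$.

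To keep $C=C(n,N,Y)$ as stated, you must instead show that in this case $|\WW_{x_0^*}(1)-\Theta_{N,m}|\le e^{-L_2^2/6}$ once $L_\gamma$ is large. One way is to compare the $\WW$-integrand with the cylinder's on $\{\bar b\le\rA/2\}$, where the $\rA$-error is at most $e^{-3\rA^2/64}$. The rest is then controlled via the Gaussian concentration \eqref{normalization22}, as in the cutoff argument behind Theorem \ref{thm:lojarBC}. The paper's one-line proof leaves this point implicit, but the justification you supply for it is not valid.

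A smaller bookkeeping point: $\sigma<1/10$ only produces $\gamma>0.98/3$. Smaller $\gamma$ follow from larger ones because the base $\WW_{x_0^*}(1/2)-\WW_{x_0^*}(2)$ is $<1$.
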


\begin{cor}\label{cor:quantisummabilityW}
	For any small $0<\ep\leq\ep(n,N,Y)$, $\zeta\in (1/3,1)$ and $\delta\leq\delta(n,N,Y,\ep,\zeta)$, the following holds. Let $\XX=\{M^n,(g(t))_{t\in I}\}$ be a closed Ricci flow with entropy bounded below by $-Y$. Fix $x_0^*=(x_0,t_0)\in \XX$ and constants $s_2>s_1>0$. If
	\begin{align*}
		\left|\WW_{x_0^*}(s_1)-\WW_{x_0^*}(s_2)\right|<\delta,
	\end{align*}
and, for any $s\in [s_1,s_2]$, $x_0^*$ is $(n-m,\delta,\sqrt{s})$-$N$-cylindrical (see Definition \ref{def:almost0}), then
	\begin{align*}
		\sum_{s_1\leq r_j=2^{-j} \leq s_2}\left|\WW_{x_0^*}(r_j)-\WW_{x_0^*}(r_{j-1})\right|^{\zeta}<\ep.
	\end{align*}
\end{cor}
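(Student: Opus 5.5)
The plan is to run a standard Lojasiewicz-to-summability argument, as in the corresponding corollary of \cite{fang2025strong}, using Theorem \ref{thm:lo} at every dyadic scale in $[s_1,s_2]$. Write $W_j:=\WW_{x_0^*}(r_j)$ with $r_j=2^{-j}$. Since $\WW$ is monotone in $\tau$, the differences $a_j:=W_{j-1}-W_j$ have a fixed sign, so $\sum_j |a_j|$ telescopes and is bounded by $|\WW_{x_0^*}(s_1)-\WW_{x_0^*}(s_2)|<\delta$ (up to endpoint scales).

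\textbf{Step 1 (Lojasiewicz at each scale).} Fix $\gamma\in(0,2/3)$ close to $2/3$, chosen in terms of $\zeta$ below. I first argue that $(n-m,\delta,\sqrt{s})$-$N$-cylindricity with $\delta$ small forces $\rA\ge L_\gamma$ for $(M,s^{-1}g(t_0-s),f_{x_0^*}(t_0-s))$. This should follow by contradiction and compactness: a sequence with $\delta\to0$ converges smoothly, on larger and larger regions, to the weighted cylinder, with potentials converging by the Gaussian estimates of \cite{fang2025RFlimit}, so $\rA\to\infty$. Theorem \ref{thm:lo} then gives, for every dyadic $r_j$ in the range,
\[
|W_j-\Theta_{N,m}|\le C\,(W_{j+1}-W_{j-1})^{\gamma}=C\,(a_j+a_{j+1})^{\gamma}.
\]

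\textbf{Step 2 (discrete ODE lemma).} Set $E_j:=|W_j-\Theta_{N,m}|$. Monotonicity gives $a_j\le E_{j-1}+E_j$, and the tail sum satisfies $\sum_{i\ge j}a_i \approx E_j$, so Step 1 becomes
\[
\sum_{i\ge j}a_i\le C\Bigl(\sum_{i\ge j}a_i-\sum_{i\ge j+2}a_i\Bigr)^{\gamma}.
\]
The key difficulty is that $\gamma<1$ here. For $\gamma>1/2$ the decay inequality $T_j\le C(T_j-T_{j+2})^{\gamma}$ yields polynomial decay $T_j\lesssim (j-j_0)^{-\gamma/(1-\gamma)}$, with the constant controlled by $T_{j_0}\le\delta$. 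Then $\sum_j a_j^{\zeta}$ is controlled by dyadic blocking and H\"older's inequality: on blocks $[2^k,2^{k+1}]$,
\[
\sum_{\text{block}}a_j^{\zeta}\le 2^{k(1-\zeta)}\,T_{2^k}^{\zeta}.
\]
This is summable in $k$ provided $\zeta\gamma/(1-\gamma)>1-\zeta$, that is $\gamma>1-\zeta$. Since $\zeta>1/3$, we have $1-\zeta<2/3$, so a valid $\gamma\in(1-\zeta,2/3)$ exists; this explains the hypothesis $\zeta\in(1/3,1)$.

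\textbf{Step 3 (smallness of the bound).} The resulting sum is bounded by $C(\zeta,\gamma)\,\delta^{c}$ for some $c>0$, which is less than $\ep$ once $\delta\le\delta(n,N,Y,\ep,\zeta)$. An alternative route is to use $a_j\le\delta$ together with the standard Lojasiewicz trick on $T_j^{1-\gamma}$ (or on $T_j^{\theta}$ for appropriate $\theta$). I expect the main obstacle to be carrying out Step 2 carefully when only $\gamma<1$ is available, including the index shifts at the endpoints $s_1$ and $s_2$. A secondary obstacle is Step 1's compactness argument ensuring $\rA\ge L_\gamma$ uniformly from closeness to $\bar{\mathcal C}^{n-m}_N$.
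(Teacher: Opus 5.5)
Your proposal follows essentially the same route as the paper, which likewise applies Theorem~\ref{thm:lo} with $\gamma=1/(1+\theta)\in(1/2,2/3)$ and defers to the argument of \cite[Corollary 6.28]{fang2025strong}. That argument gets polynomial decay of the tail from the Lojasiewicz inequality, then uses a weighted H\"older step with an exponent $q$ satisfying $\max\{1,(1-\zeta)/\zeta\}<q<\theta^{-1}$; this is exactly your condition $\gamma>1-\zeta$, and it is where $\zeta>1/3$ enters.

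One point to tighten: on the finite range $[s_1,s_2]$ the values $W_j$ need not converge to $\Theta_{N,m}$ and may cross it. So instead of $\sum_{i\ge j}a_i\approx E_j$, use only the one-sided bound (the tail sum toward the crossing index is at most $E_j$). Then run your decay argument toward that index from both ends of the range.
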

\begin{proof}
For a given $\zeta \in (1/3, 1)$, we choose constants $\theta \in (1/2, 1)$ and $q$ such that 
	\begin{align*} 
		\max \left\{1, \frac{1-\zeta}{\zeta} \right\}<q< \theta^{-1}.
	\end{align*}
Notice that by our assumption on $\zeta$, such choices are possible. Then we choose $\gamma$ in Theorem \ref{thm:lo} by $\gamma=\frac{2(1-3\sigma)^2}{3}=\frac{1}{1+\theta}$. The remaining arguments are the same as in \cite[Corollary 6.28]{fang2025strong}.
	\end{proof}

\subsection*{Strong uniqueness of tangent flows at generalized cylindrical singularities}
Using Theorem \ref{thm:lo}, we can prove the strong uniqueness of tangent flows at generalized cylindrical singularities as \cite[Section 7]{fang2025strong}. Let $\XX=\{M^n, (g(t))_{t \in [-T,0)}\}$ be a closed Ricci flow with entropy bounded below by $-Y$, where $0$ is the first singular time. Suppose $(Z, d_Z, \t)$ is the completion of $\XX$ (see Subsection \ref{subsec:cylsing}). Fix a point $z \in Z_0$ and consider the modified Ricci flow $(M, g^z(s),f^z(s))$ with respect to $z$ (see \eqref{equ:MRFintroI}). Next, we prove Theorem \ref{thm:stong}, which we restate here for the reader's convenience.

\begin{thm}\label{thm:stronguni2intro}
In the setting above, suppose $z \in Z_0$ is a generalized cylindrical singularity with respect to $\bar{\mathcal C}^k_N$. Then for any small $\ep>0$, there exists a large constant $\bar s$ such that for any integer $j \ge \bar s$, there exists a diffeomorphism $\psi_{j}$ from $\Omega^j:=\left\{\bar f \le (1-\ep)\log j\right\} \subset \bar M$ onto a subset of $M$ satisfying the compatibility condition
	\begin{align*}
		\psi_{j+1}=\psi_j \quad \mathrm{on}\quad \Omega^j,
	\end{align*}
and for all $s\geq j$, the following decay estimate holds on $\Omega^j$:
	\begin{align*}
	\left[\psi_{j}^*g^z(s)-\bar g \right]_{[\ep^{-1}]}+\left[\psi_{j}^*f^z(s)-\bar f \right]_{[\ep^{-1}]}\leq C(n,Y,\ep)e^{\frac{\bar f}{2}} s^{-\frac{1}{2}+\ep}.
	\end{align*}
Here, for any integer $l \ge 0$, the norm is defined by
	\begin{align*}
[\cdot]_l:=\sum_{i=0}^l \left|\na_{\bar g}^i (\cdot) \right|_{\bar g}.
\end{align*}
\end{thm}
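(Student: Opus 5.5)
We follow the scheme of \cite[Section 7]{fang2025strong} and replace the cylindrical Lojasiewicz exponent $3/4$ by the exponent $\gamma<2/3$ from Theorem \ref{thm:lo}. The plan has three steps: a rate of convergence for the entropy, a rate for the radius functions, and integration of the modified flow in a fixed gauge.

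\textbf{Step 1: entropy decay rate.} Set $\mathcal W(s):=\WW_z(e^{-s})-\Theta_{N,n-k}\ge 0$; this is nonincreasing in $s$ and tends to $0$ because one tangent flow at $z$ is $\bar{\mathcal C}^k_N$. The tangent flow is unique by \cite[Remark 2.24]{fang2025strong}, so for all large $s$ the rescaled weighted manifold $(M,g^z(s),f^z(s))$ has $\rA\ge L_\gamma$. Theorem \ref{thm:lo} then gives $\mathcal W(s)\le C(\mathcal W(s-1)-\mathcal W(s+1))^{\gamma}$. Writing $\gamma=1/(1+\theta)$ with $\theta>1/2$ close to $1/2$, the standard discrete ODE argument yields
\[
\mathcal W(s)\le Cs^{-1/\theta}.
\]
We then apply Theorem \ref{thm:ra} on each unit scale. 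It gives $\rBC(s)\ge(1-3\sigma)\rE(s)$, and $e^{-\rE^2/4}=\mathcal W(s-\log 2)-\mathcal W(s+\log 2)\le Cs^{-1/\theta}$. Hence $\rE(s)^2\ge (4/\theta-o(1))\log s$, and therefore $\bar f\le \rBC^2/4$ covers $\{\bar f\le(1-\ep)\log s\}$ once $\sigma,\theta$ are chosen in terms of $\ep$. (Note $4/\theta\to 8>4$, so there is room to spare.)

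\textbf{Step 2: pointwise control on each time slab.} The $\rBC$ and $\rA$ radii (Proposition \ref{prop:con}) give diffeomorphisms $\varphi_s$ defined on $\{\bar b\le \rA(s)\}$ with the $C^5$ bound $e^{\bar f/4-\rA^2/16}$. For the rate itself we do not use these radii. Instead we bound the velocity of the modified flow, which is $\Phi$ (up to the gauge), in weighted $L^2$ by the entropy drop:
\[
\int_s^{s+1}\!\!\int |\Phi|^2\,\d V_{f^z}\,\d s'\le C(\mathcal W(s)-\mathcal W(s+1)).
\]
Local parabolic estimates in the good region, together with the Gaussian weight $e^{-\bar f}$, convert this into pointwise bounds $[\Phi]_{[\ep^{-1}]}\le Ce^{\bar f/2}(\mathcal W(s)-\mathcal W(s+1))^{1/2-\ep'}$.

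\textbf{Step 3: fixed gauge and summation.} Define $\psi_j$ by pulling $\varphi_j$ back along the modified flow. For compatibility, $\psi_{j+1}$ is obtained from $\psi_j$ by composing with the flow of the gauge correction, exactly as in \cite[Section 7]{fang2025strong}; the $\ep$-loss in the domain is absorbed by the nested sets $\Omega^j$. Then
\[
|\psi_j^*g^z(s)-\bar g|\le \sum_{i\ge s}\|\Phi(i)\|\le Ce^{\bar f/2}\sum_{i\ge s}(\mathcal W(i)-\mathcal W(i+1))^{1/2-\ep'}.
\]
A dyadic Cauchy--Schwarz with $\mathcal W(i)\lesssim i^{-1/\theta}$ gives the tail bound $\sum_{i\ge s}\lesssim s^{1/2}\mathcal W(s)^{1/2}\lesssim s^{1/2-1/(2\theta)}$. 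As $\theta\downarrow1/2$ this is $s^{-1/2+\ep}$. The same argument controls $f$, since $\partial_s f^z$ is $\mu$ minus a constant, and the $\WW$-drop controls it in $L^2$ as well.

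\textbf{Main obstacle.} The hardest part is Step 2: upgrading the integral entropy-drop control to pointwise $C^{[\ep^{-1}]}$ bounds with the sharp weight $e^{\bar f/2}$ all the way out to $\bar f\sim(1-\ep)\log s$. This requires interior estimates uniform on the region where Proposition \ref{pro:stabilitymetric} applies. For that we would reuse \cite[Section 7]{fang2025strong} verbatim, after checking that only the exponent bookkeeping ($\gamma<2/3$ in place of $3/4$, yielding $s^{-1/2+\ep}$ in place of $s^{-1+\ep}$) changes.
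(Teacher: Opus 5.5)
Your proposal is correct and follows essentially the same route as the paper. The Lojasiewicz inequality with $\gamma=1/(1+\theta)$ gives polynomial entropy decay, hence radii of size $\sqrt{(8-\ep)\log s}$ and the tail bound of Claim~\ref{cla:sumW}. It also gives a pointwise bound $|\Phi|\le Ce^{\bar f/2}(\text{drop})^{1/2-\ep'}$: the paper phrases this via the $\rA$-bound of \cite[Corollary 6.17]{fang2025strong} together with $\rA\ge(1-\ep)\rE$, while you derive it directly from $\frac{d}{ds}W=2\int|\Phi|^2\,\d V_{f^z}$ plus interior estimates, which is the same mechanism. The modified flow is then integrated in a fixed gauge, with the gauge fixing taken from \cite[Section 7]{fang2025strong}.

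Two small corrections. First, $\partial_s f^z=\frac n2-\scal-\Delta f^z=\Tr\Phi$, not $\mu$ minus a constant, so it is controlled pointwise by the same bound on $\Phi$. Second, integrating at a fixed point of $M$ requires the nesting statement of Claim~\ref{cla:potential}, which is proved by integrating $\partial_s e^{-f^z/2}=-\tfrac12 e^{-f^z/2}\Tr\Phi$. You should state this explicitly rather than absorb it into ``nested sets''.
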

\begin{proof}
	We only sketch the proof here. First, we set $W(s)=\WW(g^z(s),f^z(s))$, which is increasing in $s$. The next claim gives an integral estimate of entropy gap and follows from the same argument as \cite[Claim 7.3]{fang2025strong}:
	\begin{claim}\label{cla:sumW}
		For any small $\ep>0$, if $s$ is sufficiently large and $\zeta\in (1/3,1)$, then
\begin{align}
    \label{equ:integrability}\int_{s}^\infty \left|W(s'+\log 2)-W(s'-\log 2)\right|^\zeta \,\d s'\leq C_1(n,N,Y,\ep) s^{-3\zeta+1+\ep}.	
    \end{align}
        \end{claim}

    On the other hand, by the same argument as \cite[Theorem 7.1]{fang2025strong}, we know that if $s$ is sufficiently large, 
    \begin{align}\label{equ:radiusMRF}
    \min \{\rA(s),\rBC (s),\rE(s) \} \geq \sqrt{(8-\ep ) \log s}.	
    \end{align}
    And by Theorem \ref{thm:ra} with modified constants, we have
    \begin{align*}
    \min \{\rA(s),\rBC (s) \}	\geq (1-\ep)\rE.
    \end{align*}

   Let $\varphi_s$ be the diffeomorphism corresponding to $\rA(s)$ in Definition \ref{defnradii}. Then by \cite[Corollary 6.17]{fang2025strong} and its proof, for any $l \in \mathbb{N}$, on the set $\{\bar b \leq \sqrt{(4 - \ep/3)\log s}\}$, we have
	\begin{align*}
		\left[\varphi_s^*\lc \frac{g^z(s)}{2}-\na^2f^z(s)-\Ric(g^z(s))\rc\right]_{l}&\leq C(n,N,Y,\ep,l)\exp\lc \frac{\bar f}{2}-\frac{\rA(s)^2}{8}\rc\nonumber\\
		&\leq C(n,N,Y,\ep,l) e^{\frac{\bar f}{2}}|W(s+\log 2)-W(s-\log 2)|^{\frac{(1-\ep)^2}{2}},
	\end{align*}
provided $s$ is sufficiently large.

Next, we set $\Omega_s:=\varphi_s\lc \left\{\bar b\leq  \sqrt{(4 -\ep/3)\log s}\right\}\rc$. Then by \eqref{equ:radiusMRF}, for sufficiently large $s$, we have on $\Omega_s$,
		\begin{align}\label{estishrinkerquanex001}
		\left[ \frac{g^z(s)}{2}-\na^2f^z(s)-\Ric(g^z(s))\right]_{l}&\leq C(n,N,Y,\ep,l) e^{\frac{\bar f}{2}}|W(s+\log 2)-W(s-\log 2)|^{\frac{(1-\ep)^2}{2}},
			\end{align}	
	where the norm is taken with respect to $g^z(s)$. By taking the trace, we have that on $\Omega_s$, 
	\begin{align}\label{estishrinkerquanex0001}
		\left[ \frac{n}{2}-\Delta f^z(s)-\scal (g^z(s))\right]_{l}&\leq C(n,N,Y,\ep,l) e^{\frac{\bar f}{2}}|W(s+\log 2)-W(s-\log 2)|^{\frac{(1-\ep)^2}{2}}.
			\end{align}
	
	By \eqref{estishrinkerquanex001}, \eqref{estishrinkerquanex0001}, Claim \ref{cla:sumW} and the same argument as \cite[Claim 7.4]{fang2025strong}, the following claim is immediate:
\begin{claim}\label{cla:potential}
	If $s_0$ is sufficiently large, then we have
	\begin{align*}
\Omega^{s_0}:=\varphi_{s_0}\lc\left\{\bar b \leq \sqrt{(4-\ep/2)\log s_0}\right\}\rc \subset \Omega_s, \quad \forall s \ge s_0.
	\end{align*}	
	Moreover, for any $x \in \Omega^{s_0}$,
		\begin{align*}
\abs{\exp \lc -\frac{f^z(x, s)}{2} \rc-\exp \lc -\frac{f^z(x, s_0)}{2} \rc} \le C(n, N,Y, \ep) s_0^{-\frac{1}{2}+4\ep}.
		\end{align*}
	\end{claim}   
    
By Claim \ref{cla:potential}, we conclude that $\Omega^{s_0} \subset \Omega_{s}$ for any $s\geq s_0$, so the estimate \eqref{estishrinkerquanex001} holds on $\Omega^{s_0}$. Since the right-hand sides of these estimates are integrable in $s$ by Claim \ref{cla:sumW}, it follows from the evolution equation:
\begin{align*}
	\begin{cases}
		\partial_s g^z(s) = g^z - 2\operatorname{Ric}(g^z) - 2\nabla^2 f^z, \\[4pt]
		\partial_s f^z(s) = \dfrac{n}{2} - \scal_{g^z} - \Delta f^z,
	\end{cases}
\end{align*}
that on $\left\{\bar b \leq \sqrt{(4-\ep/2)\log s_0}\right\}$, $\lc \varphi_{s_0}^*g^z(s),\varphi_{s_0}^*f^z(s)\rc$ converges smoothly to $\lc g_\infty, f_\infty\rc$ as $s \to \infty$. Moreover, by integration, we obtain from \eqref{equ:integrability} that for all $s \ge s_0$,
		\begin{align}\label{equ:decay1}
		&[\varphi_{s_0}^*g^z(s)-g_\infty]_{[\ep^{-1}]}+[\varphi_{s_0}^*f^z(s)-f_\infty]_{[\ep^{-1}]} \notag \\
		\leq & e^{\frac{\bar f}{2}} \int_{s}^\infty \left|W(s'+\log 2)-W(s'-\log 2)\right|^{\frac{(1-\ep)^2}{2}} \,\d s' \leq C(n,N,Y,\ep)e^{\frac{\bar f}{2}} s^{-\frac{1}{2}+4\ep}.
	\end{align}
   
    The remaining argument is the same as in \cite[Claim 7.5, Lemma 7.6, Theorem 7.7]{fang2025strong}, using \eqref{equ:decay1}. This completes the proof by modifying constants.
\end{proof}

Finally, we mention that using similar methods to those in \cite[Section 8]{fang2025strong}, we can extend results in this section to the case of quotients of generalized cylinders. We only state the strong uniqueness theorem for quotients of generalized cylinders but omit the details of proof. Let $(\bar M_\Gamma, \bar g_\Gamma, \bar f_\Gamma)$ be the quotient of $(\bar M, \bar g,\bar f)$, where $\Gamma\leq \mathrm{Iso}((\CC^{n-m}_N)_{-1})$ acts freely on $(\bar M, \bar g,\bar f)$.

\begin{thm}
In the same setting as Theorem \ref{thm:stronguni2intro}, suppose $z \in Z_0$ is a singularity with one tangent flow given by the associated flow induced by $(\bar M_\Gamma, \bar g_\Gamma, \bar f_\Gamma)$. Then for any small $\ep>0$, there exists a large constant $\bar s$ such that for any integer $j \ge \bar s$, there exists a diffeomorphism $\psi_{j}$ from $\Omega^j:=\left\{\bar f_\Gamma \le (1-\ep)\log j\right\} \subset \bar M_\Gamma$ onto a subset of $M$ satisfying the compatibility condition
	\begin{align*}
		\psi_{j+1}=\psi_j \quad \mathrm{on}\quad \Omega^j,
	\end{align*}
and for all $s\geq j$, the following decay estimate holds on $\Omega^j$:
	\begin{align*}
	\left[\psi_{j}^*g^z(s)-\bar g_\Gamma \right]_{[\ep^{-1}]}+\left[\psi_{j}^*f^z(s)-\bar f_\Gamma \right]_{[\ep^{-1}]}\leq C(n,Y,\ep)e^{\frac{\bar f_\Gamma}{2}} s^{-\frac{1}{2}+\ep}.
	\end{align*}
Here, for any integer $l \ge 0$, the norm is defined by
	\begin{align*}
[\cdot]_l:=\sum_{i=0}^l \left|\na_{\bar g_\Gamma}^i (\cdot) \right|_{\bar g_\Gamma}.
\end{align*}
\end{thm}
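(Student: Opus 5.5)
The plan is to run the argument of Theorem \ref{thm:stronguni2intro} on the universal-type cover $\bar M\to\bar M_\Gamma$, following \cite[Section 8]{fang2025strong}. Almost everything is local near the base point and depends only on the $\Gamma$-invariant analysis. The first step is the variational inequality. For a perturbation $(g,f)$ of $(\bar g_\Gamma,\bar f_\Gamma)$ on $\bar M_\Gamma$, I would lift it to a $\Gamma$-invariant perturbation $(\tilde g,\tilde f)$ of $(\bar g,\bar f)$ on $\bar M$. The quantities $\WW$, $\VV$ and $\Phi$ are then unchanged, up to the factor $|\Gamma|$ in volume and the corresponding shift of the normalization constant. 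The weighted norms also agree up to the same factor. Since the cutoff/compact-support assumption of Theorem \ref{lojaforF} is only used through finite weighted integrals, I would apply Theorem \ref{lojaforF} and Corollary \ref{cor:lojaforF} to the lift. This gives $|\WW(g,f)-\WW(\bar g_\Gamma,\bar f_\Gamma)|\le C(\mathfrak X^{4/3}+|\VV-1|)$.

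One point needs checking here. The decomposition \eqref{equ:defuh_1} is $\Gamma$-equivariant, because $\pi_{\isd}$ commutes with isometries. Hence $u$, $h_1$, $\zeta'$ and $q'$ are all $\Gamma$-invariant. The rigidity inequalities (Propositions \ref{pro:rigidityineq1}, \ref{pro:rigidityineq2}) therefore apply verbatim. The only change is that the center-of-mass term $\BB$ is restricted to the $\Gamma$-invariant translations, and it may simply vanish for directions not fixed by $\Gamma$.

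The second step is to redefine the radii $\rA$ and $\rBC$ of Definition \ref{defnradii} with $\bar M_\Gamma$ and $\bar b_\Gamma=2\sqrt{\bar f_\Gamma}$ in place of $\bar M$ and $\bar b$. I would then redo, in this order:
\begin{itemize}
\item the contraction step, Proposition \ref{prop:con}, via the lifted rigidity inequalities;
\item Theorem \ref{thm:lojarBC};
\item the extension Theorems \ref{thm:ext1} and \ref{thm:ext2}, via compactness in \cite{fang2025RFlimit} with model $\bar{\mathcal C}_{N,\Gamma}$;
\item finally Theorem \ref{thm:ra}.
\end{itemize}
Together these yield the Lojasiewicz inequality of Theorem \ref{thm:lo} with $\Theta_{N,m}$ replaced by the entropy of the quotient and any $\gamma<2/3$. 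The pseudolocality and heat-kernel estimates behind Propositions \ref{pro:stabilitymetric} and \ref{pro:estimateforbetterpotential} are local and transfer directly.

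The third step repeats the proof of Theorem \ref{thm:stronguni2intro} verbatim. First, uniqueness of the tangent flow at $z$ follows as in \cite[Remark 2.24]{fang2025strong}. Next I obtain Claim \ref{cla:sumW} with exponent $s^{-3\zeta+1+\ep}$ and the radius lower bound \eqref{equ:radiusMRF}. I then get the pointwise soliton estimate \eqref{estishrinkerquanex001} on $\varphi_s(\{\bar b_\Gamma\le\sqrt{(4-\ep/3)\log s}\})$ and the nesting Claim \ref{cla:potential}. Integrating the modified Ricci flow equation against the summable entropy gaps gives convergence to some $(g_\infty,f_\infty)$ with rate $e^{\bar f_\Gamma/2}s^{-1/2+4\ep}$. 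The gauge-fixing argument of \cite[Claim 7.5, Lemma 7.6, Theorem 7.7]{fang2025strong} then identifies $(g_\infty,f_\infty)$ with $(\bar g_\Gamma,\bar f_\Gamma)$ and produces the compatible family $\psi_j$ on $\Omega^j$.

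I expect the main obstacle to be the extension step, Theorem \ref{thm:ext1}, in the quotient setting. The contradiction/compactness argument must produce a limit that is isometric to the quotient, with the diffeomorphism extending the given $\varphi_A$, rather than a different quotient or a cover. I would first try to pass to local lifts: on $\{\bar b_\Gamma\le L\}$ the fundamental group is controlled by $\Gamma$, since $N$ is fixed and compact. By Cheeger–Gromov-type rigidity of nearby flat quotient structures, a limit close to $\bar{\mathcal C}_{N,\Gamma}$ on a large region is exactly $\bar{\mathcal C}_{N,\Gamma}$. The possibility that $\Gamma$ acts nontrivially on $\isd_N$ is harmless, since we only use $\Gamma$-invariant elements.
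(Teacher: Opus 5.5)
Your outline follows essentially the same route as the paper. The paper gives no separate proof: it only says that the results of Section~\ref{sec:loja} extend to quotients by the methods of \cite[Section 8]{fang2025strong}, namely working with $\Gamma$-invariant perturbations on the cover and rerunning the contraction--extension scheme and the proof of Theorem~\ref{thm:stronguni2intro} with $\bar M_\Gamma$ and $\bar b_\Gamma$. Your equivariance checks for $u$, $h_1$, $\zeta'$, $q'$ and $\mathcal{B}$ are what make Corollary~\ref{cor:lojaforF} transfer, and for the extension step you flag the paper gives no more detail than you do.
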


\section{Rectifiability of generalized cylindrical singular sets}\label{sec:singset}
In this section, we prove the rectifiability of generalized cylindrical singularities in a noncollapsed Ricci flow limit space. Throughout, we fix a noncollapsed Ricci flow limit space $(Z,d_Z,\t)$, which is obtained as a pointed Gromov-Hausdorff limit of a sequence in $\MM(n, Y, T)$ (see \cite[Section 3]{fang2025RFlimit}). As before, we set 
\begin{align*}
\III:=[-0.98 T,0] \quad \text{and} \quad \III^-:=(-0.98 T,0].
	\end{align*}
Moreover, the regular part is given by a Ricci flow spacetime $(\RR, \t, \partial_\t, g^Z)$. For simplicity, we use $B^*(x,r)$ instead of $B_Z^*(x,r)$ for the metric balls in $Z$ with respect to $d_Z$.

We fix an Einstein manifold $(N,g_N)$ with $\Ric(g_N)=g_N/2$, which has obstruction of order \(3\) and satisfies the spectral condition
\[
    -\frac l2 \notin \operatorname{spec}\left(\LL_N|_{\mathrm{TT}}\right),
    \qquad \forall l\in \mathbb N^+.
\]
Consider the standard Ricci flow solution on the cylinder:
\begin{align*}\index{$\mathcal C^k$}
\mathcal C^k_N:=(\bar M,(\bar g(t))_{t<0},(\bar f(t))_{t<0})=\left(\R^{k}\times N^{n-k}, g_E \times |t| g_{N}, \frac{|\vec{x}|^2}{4|t|}+\frac{n-k}{2}+\Theta_{N,n-k} \right).
\end{align*}
With Theorem \ref{thm:lo} and Corollary \ref{cor:quantisummabilityW}, we can obtain the Lojasiewicz inequality for Ricci flow limit spaces and extend \cite[Proposition 5.1]{fang2025singular} to the setting of generalized cylinders by the same argument:
\begin{thm}
For any $\gamma \in (0,2/3)$, there exist constants $C=C(n,N,Y)$ and $\delta=\delta(n,N,Y,\gamma)$ such that the following property holds.

If $z\in Z_{\III^{-}}$ is $(k,\delta, r)$-$N$-cylindrical, then
	\begin{align*}
		\left|\widetilde \WW_z(r^2)-\Theta_{N,n-k}\right| \le C \left( \widetilde\WW_{z}(r^2/2)-\widetilde\WW_{z}(2 r^2) \right)^{\gamma}.
	\end{align*}
Here, $\widetilde \WW_z$ is the modified pointed $\WW$-entropy on Ricci flow limit spaces, see \emph{\cite[Section 4.1]{fang2025singular}}.
\end{thm}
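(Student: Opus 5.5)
The plan is to reduce the statement to the smooth Lojasiewicz inequality, Theorem \ref{thm:lo}, by approximation: a noncollapsed limit space is a limit of smooth flows, and the modified pointed entropy on the limit is a limit of pointed entropies along those flows. This is the same argument as \cite[Proposition 5.1]{fang2025singular}. After parabolic rescaling we may assume $r=1$.

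Step 1 (approximation). Let $(Z,d_Z,\t)$ be the pointed limit of flows $\XX^i\in\MM(n,Y,T)$. Fix a $(k,\delta,1)$-$N$-cylindrical point $z$. By \cite{fang2025RFlimit}, choose points $x_i^*\in\XX^i$ converging to $z$. The conditional heat kernel measures $\nu_{x_i^*;t}$ then converge to $\nu_{z;t}$. By \cite[Section 4.1]{fang2025singular}, the pointed entropies $\WW_{x_i^*}(\tau)$ converge to $\widetilde\WW_z(\tau)$ for $\tau\in\{1/2,1,2\}$.

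Step 2 (verify the hypothesis of Theorem \ref{thm:lo}). Here one must show that $(M_i, g_i(t_i-1), f_{x_i^*}(t_i-1))$ satisfies $\rA\ge L_\gamma$ once $i$ is large and $\delta$ is small. The point $z$ is $\delta$-close to $\bar{\mathcal C}^k_N$ over $[-\delta^{-1},\delta^{-1}]$. Hence, for $i$ large, $x_i^*$ is $2\delta$-close to $\bar{\mathcal C}^k_N$ in the $d^*$ sense.

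\begin{itemize}
\item By the smooth convergence on the regular part (the local smooth convergence theory in \cite{fang2025RFlimit}), there is a diffeomorphism from $\{\bar b\le L\}$ into $M_i$. Its metric and potential are $C^5$-close to $(\bar g,\bar f)$ at time $t_i-1$, with $L=L(\delta)\to\infty$ as $\delta\to 0$.
\item The potential convergence uses the heat kernel convergence from Step 1 together with the local parabolic estimates on the regular part.
\item Choosing $\delta$ small makes the error smaller than $e^{\bar f/4-L^2/16}$ on $\{\bar b\le L\}$ with $L\ge L_\gamma$. This is possible because that threshold is a fixed positive number once $L$ is fixed.
\end{itemize}

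This step is the main obstacle. It requires the quantitative link between $d^*$-closeness to the model and smooth closeness of $(g,f)$ on a large but bounded region. The potential needs particular care, since $f_z$ is only defined through the limiting conjugate heat kernel. I would try to cite the analogous step in \cite[Proposition 5.1]{fang2025singular} essentially verbatim: that argument only uses the spine structure and smooth convergence away from the singular set, and not the specific cross-section $S^{n-k}$.

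Step 3 (pass to the limit). Theorem \ref{thm:lo} gives
\[
|\WW_{x_i^*}(1)-\Theta_{N,n-k}|\le C\bigl(\WW_{x_i^*}(1/2)-\WW_{x_i^*}(2)\bigr)^{\gamma}.
\]
Letting $i\to\infty$ and using Step 1 yields the claimed inequality for $\widetilde\WW_z$. The constant $C=C(n,N,Y)$ is the one from Theorem \ref{thm:lo}, and $\delta$ depends on $\gamma$ through $L_\gamma$.

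Alternatively, if $\t(z)$ is not attained by smooth points, replace $x_i^*$ by nearby regular points in $\RR$. Then apply continuity of $\widetilde\WW$ in the base point (\cite[Section 4.1]{fang2025singular}), which runs the same argument with an extra limit.
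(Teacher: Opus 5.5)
Your proposal is correct and takes the same route as the paper: it applies the smooth Lojasiewicz inequality (Theorem \ref{thm:lo}) to approximating closed flows, uses $(k,\delta,r)$-$N$-cylindricity and smooth convergence on the regular part to get $\rA\ge L_\gamma$ for large $i$, and passes the entropies to the limit, exactly as in \cite[Proposition 5.1]{fang2025singular}. Your final ``alternatively'' paragraph is unnecessary, since every $z\in Z_{\III^-}$ is already a limit of points $x_i^*\in\XX^i$ of the smooth approximating flows.
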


\begin{prop}\label{sumWonRFlimit}
	For any $0<\ep\leq\ep(n,N,Y)$, $\zeta\in (1/3,1)$, and $\delta\leq\delta(n,N,Y,\ep,\zeta)$, the following holds. Suppose
	\begin{align*}
		\left|\widetilde \WW_{z}(s_1)-\widetilde \WW_{z}(s_2)\right|<\delta,
	\end{align*}
for $0<s_1<s_2$, and for any $s\in [s_1,s_2]$, $z$ is $(k,\delta,\sqrt{s})$-$N$-cylindrical (see Definition \ref{def:almost0}). Then
	\begin{align*}
		\sum_{s_1\leq r_j=2^{-j} \leq s_2}\left|\widetilde \WW_{z}(r_j)-\widetilde \WW_{z}(r_{j-1})\right|^{\zeta}<\ep.
	\end{align*}
\end{prop}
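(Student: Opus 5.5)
The plan is to deduce Proposition \ref{sumWonRFlimit} from the limit-space Lojasiewicz inequality stated just above it, by the same discrete iteration used in Corollary \ref{cor:quantisummabilityW} (that is, \cite[Corollary 6.28]{fang2025strong}, transplanted to limit spaces as in \cite[Proposition 5.2]{fang2025singular}).

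\textbf{Setting up the recursion.} Fix $\zeta\in(1/3,1)$. Choose $\theta\in(1/2,1)$ and $q$ with $\max\{1,(1-\zeta)/\zeta\}<q<\theta^{-1}$, and set $\gamma=1/(1+\theta)\in(1/2,2/3)$. For the dyadic scales $r_j=2^{-j}\in[s_1,s_2]$ define the tail gaps
\begin{align*}
a_j:=\widetilde\WW_z(r_j)-\Theta_{N,n-k}\ge 0 .
\end{align*}
Nonnegativity comes from monotonicity of $\widetilde\WW_z$ together with $\widetilde\WW_z\to\Theta_{N,n-k}$ along cylindrical scales; if needed, one replaces $a_j$ by differences with the value at the smallest scale. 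Since $z$ is $(k,\delta,\sqrt s)$-$N$-cylindrical for every $s\in[s_1,s_2]$, the limit-space Lojasiewicz inequality applies at each scale $r=\sqrt{r_j}$. It gives
\begin{align*}
a_j\le C\,(a_{j+1}-a_{j-1})^{\gamma}
\end{align*}
up to index shifts and the sign convention for $\widetilde\WW_z$. Equivalently, $d_j:=|a_{j-1}-a_{j+1}|\ge c\,a_j^{1+\theta}$.

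\textbf{Summing the gaps.} Apply the standard Lojasiewicz summation trick to $a_j^{\theta'}$ with a suitable $\theta'$ (e.g. $1-\theta$). Convexity gives
\begin{align*}
a_{j-1}^{1-\theta}-a_{j+1}^{1-\theta}\ge c\,d_j\,a_{j-1}^{-\theta}.
\end{align*}
When $d_j$ is comparable to $a_j^{1+\theta}$, this yields $d_j\lesssim$ a telescoping difference. Otherwise $a_{j+1}\le a_j/2$, so the terms decay geometrically and those scales contribute a geometric tail. For the $\zeta$-power, split $d_j^\zeta$ via H\"older with weights $j^{q}$, which is where the conditions $q>(1-\zeta)/\zeta$ and $q\theta<1$ are used. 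Telescoping then gives
\begin{align*}
\sum_j d_j^{\zeta}\le C\,a_{j_0}^{\kappa}+C\sum_j(\text{geometric}),
\end{align*}
with $\kappa=\kappa(\zeta,\theta,q)>0$ and $j_0$ the largest-scale index. Since $a_{j_0}\le|\widetilde\WW_z(s_1)-\widetilde\WW_z(s_2)|+o(1)<\delta$ after normalizing, the bound is smaller than $\ep$ once $\delta\le\delta(n,N,Y,\ep,\zeta)$.

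\textbf{Main obstacle.} The delicate point is the case analysis in the summation: the exponent constraint $\zeta>1/3$ is exactly what makes $\gamma<2/3$ compatible with summing $d_j^{\zeta}$, and the case split must be arranged to use it. I would copy the bookkeeping of \cite[Corollary 6.28]{fang2025strong} verbatim, using $\gamma=1/(1+\theta)$. A secondary issue is checking that the limit-space Lojasiewicz inequality applies uniformly at every dyadic scale, including the endpoints $s_1,s_2$; this follows from the hypothesis holding for all $s\in[s_1,s_2]$, possibly after shrinking the interval by a factor $2$ at each end.
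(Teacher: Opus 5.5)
Your plan matches the paper's, which derives this statement from Theorem \ref{thm:lo} together with Corollary \ref{cor:quantisummabilityW}, namely the dyadic summation argument of \cite[Corollary 6.28]{fang2025strong} with exactly your choices of $\theta$, $q$ and $\gamma=1/(1+\theta)$. That argument only uses the discrete inequality for the real sequence $\widetilde\WW_z(2^{-j})$, so running it directly on the limit space via the limit-space Lojasiewicz inequality is equivalent to passing the smooth statement to the limit.

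One correction to your bookkeeping: $\widetilde\WW_z(r_j)-\Theta_{N,n-k}$ has no fixed sign. Monotonicity makes it $\le 0$, not $\ge 0$, at a genuinely $N$-cylindrical point, and since $z$ is only almost cylindrical on $[s_1,s_2]$ it may change sign, at most once. Re-anchoring at the smallest scale does not fix this, because the inequality is anchored at $\Theta_{N,n-k}$. Instead, split the dyadic range at the crossing and run the concavity/telescoping estimate for $|\widetilde\WW_z(r_j)-\Theta_{N,n-k}|^{1-\theta}$ on each monotone piece.
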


We then introduce the following notion of an $N$-cylindrical neck region, adapted from \cite[Definition 5.9]{fang2025singular}.

\begin{defn}[Cylindrical neck region]\label{defiofcylneckregion}
Given constants $\delta>0$, $\cc \in (0, 10^{-10 n })$, $r>0$ and $z \in Z_{\III^-}$ with $\t(z)-2\delta^{-1}r^2 \in \III^-$, we call a subset $\NNN \subset B^*(z, 2r)$ a \textbf{$(k,\delta, \cc, r)$-$N$-cylindrical neck region}\index{$(k,\delta,\cc,r)$-cylindrical neck region} if $\NNN=B^*(z,2r)\setminus B_{r_x}^*(\CCC)$, where $\CCC \subset B^*(z, 2r)$ is a nonempty closed subset with $r_x: \CCC \to \R_{+}:=[0, +\infty)$, satisfies\textup{:}
	\begin{itemize}[leftmargin=*, label={}]
		\item \emph{(n1)} for any $x, y \in \CCC$, $d_Z(x, y) \ge \cc^2(r_x+r_y);$\index{$\cc$}
		\item \emph{(n2)} for all $x\in \CCC$,
		$$\widetilde \WW_{x}(\delta r^2_x)-\widetilde \WW_{x}(\delta^{-1} r^2)<\delta;$$
		\item \emph{(n3)} for each $x\in \CCC$ and $\cc^2 r_x\leq s\leq 2r$, $x$ is $(k,\delta,s)$-$N$-cylindrical with respect to $\mathcal L_{x,s};$
		\item \emph{(n4)} for each $x\in \CCC$ and $\cc^{-5} r_x\leq s\leq r-d_Z(x, z)/2$, we have $\mathcal{L}_{x,s} \cap B^*(x, s) \subset B^*_{\cc s}(\CCC)$ and $\CCC\bigcap B^*(x,s)\subset B^*_{\cc s}(\mathcal{L}_{x,s})$.
	\end{itemize}
Here, $\CCC$\index{$\CCC$} is called the \textbf{center} of the neck region, and $r_x$\index{$r_x$} is referred to as the \textbf{radius function}. We decompose $\CCC=\CCC_0\bigcup\CCC_{+}$\index{$\CCC_0$}\index{$\CCC_+$}, where $r_x>0$ on $\CCC_+$ and $r_x=0$ on $\CCC_0$. In addition, we use the notation
	\begin{align*}
	B^*_{r_x}(\CCC):=\CCC_0 \bigcup \bigcup_{x \in \CCC_+} B^*(x, r_x).
	\end{align*}
\end{defn}

We denote by \(S^k(N)\subset S^k\) the set consisting of all points at which some tangent flow is given by the generalized cylinder $\bar \CC_N^k$.
By the same decomposition argument as \cite[Proposition 5.11, Theorem 5.21]{fang2025singular}, we can obtain the following construction of an $N$-cylindrical neck region and its structure result.
\begin{prop}\label{pro:cylneckdecomp}
Given $\delta>0$ and $\cc \in (0, 10^{-10 n})$, for any $x_0\in \MS^k(N)$ with some tangent flow given by $\bar \CC_N^k$, there exist $r_0>0$ and a subset $\CCC=\CCC_0\bigcup\CCC_+\subset B^*(x_0,2r_0)$ with $r_x:\CCC\to \R_{+}$ satisfying $r_x>0$ on $\CCC_+$ and $r_x=0$ on $\CCC_0$, such that the following hold\textup{:}
	\begin{enumerate}[label=\textnormal{(\roman{*})}]
		\item $\NNN=B^*(x_0,2r_0)\setminus B^*_{r_x}(\CCC)$ is a $(k,\delta,\cc, r_0)$-$N$-cylindrical neck region.
		\item $\MS^k(N) \bigcap B^*(x_0,r_0)\subset\CCC_0$.
	\end{enumerate}
\end{prop}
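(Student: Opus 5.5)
The plan is to follow the decomposition scheme of \cite[Proposition 5.11, Theorem 5.21]{fang2025singular}. The only model-dependent inputs are the entropy summability in Proposition \ref{sumWonRFlimit} and the uniqueness of the tangent flow at $N$-cylindrical points (\cite[Remark 2.24]{fang2025strong}).

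\textbf{Choice of $r_0$.} Fix $x_0\in\MS^k(N)$. Since the tangent flow at $x_0$ is unique and equal to $\bar\CC^k_N$, the point $x_0$ is $(k,\delta',s)$-$N$-cylindrical for all sufficiently small $s$, for any prescribed $\delta'\ll\delta$. The entropy $\widetilde\WW_{x_0}(\tau)$ is monotone and converges to $\Theta_{N,n-k}$ as $\tau\to 0$. We therefore choose $r_0$ with $\widetilde\WW_{x_0}(\delta'r_0^2)-\widetilde\WW_{x_0}(\delta'^{-1}r_0^2)<\delta'$.

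\textbf{Entropy pinching on a neighborhood.} By continuity of $\widetilde\WW$ on limit spaces, every $y\in B^*(x_0,2r_0)$ has $\widetilde\WW_y(\delta'^{-1}r_0^2)\le\Theta_{N,n-k}+2\delta'$. Every point of $\MS^k(N)$ has entropy limit exactly $\Theta_{N,n-k}$, so condition (n2) holds down to scale $0$ for points of $\MS^k(N)\cap B^*(x_0,r_0)$.

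\textbf{Stopping-time radius.} For each $y$ in the almost-pinched set, define
\[
r_y:=\inf\{\,r\le 2r_0:\ y \text{ is } (k,\delta,s)\text{-}N\text{-cylindrical for all } s\in[r,2r_0]\,\}.
\]
By cone-splitting/rigidity (a point with almost-constant entropy is almost selfsimilar, and the only nearby selfsimilar model is $\bar\CC^k_N$), this set is nonempty once entropy is pinched. Then perform a Vitali-type covering at dyadic scales, descending from $r_0$. At each scale $s$, keep a maximal $\cc^2 s$-separated family of pinched points that are still cylindrical at that scale. Remove the balls around points whose stopping radius is reached, and let $\CCC_0$ be the limit set of points never stopped. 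This construction gives (n1) and (n3) directly. Points of $\MS^k(N)\cap B^*(x_0,r_0)$ are cylindrical at all scales and pinched, so they never stop and lie in $\CCC_0$, which is (ii).

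\textbf{Main obstacle: (n4).} This is the Hausdorff-closeness of the center set to the spines $\mathcal L_{x,s}$. One direction is straightforward. Each point of $\CCC$ near $x$ is pinched, hence almost selfsimilar, hence lies near the spine, by the neck-type cone splitting in \cite{fang2025singular}.

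For the other direction, that every spine point is near $\CCC$, I would use a two-step argument. First, the selfsimilarity of a $(k,\delta,s)$-cylindrical point propagates to points along the spine: entropy is almost constant along $\mathcal L_{x,s}$ by the parabolic splitting and the explicit model $\bar\CC^k_N$. Second, the summability bound in Proposition \ref{sumWonRFlimit}, with $\zeta\in(1/3,1)$, prevents drift of the spine across scales. The total rotation is controlled by $\sum_j|\widetilde\WW(r_j)-\widetilde\WW(r_{j-1})|^{\zeta}<\ep$, which keeps all $\mathcal L_{x,s}$ within $\cc s$ of one another.

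This drift control is where the weaker exponent $\gamma<2/3$ enters. It requires $\zeta>1/3$ rather than the $\zeta>1/2$-type threshold of the round case. I expect the main technical check to be verifying that the scale-comparison argument of \cite{fang2025singular} only uses summability with some $\zeta<1$. Once that holds, the rest transfers verbatim with $S^{n-k}$ replaced by $N$.
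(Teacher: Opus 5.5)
Your outline is essentially the paper's proof, which consists only of rerunning the decomposition argument of \cite[Proposition 5.11]{fang2025singular} with $S^{n-k}$ replaced by $N$: uniqueness of the tangent flow at $x_0$ to choose $r_0$, entropy pinching, a stopping-time covering, and (ii) because points of $\MS^k(N)$ never stop. A few details need correcting: the continuity step must supply the lower bound $\widetilde{\WW}_y((\delta')^{-1}r_0^2)\ge\Theta_{N,n-k}-2\delta'$ rather than an upper bound (at points of $\MS^k(N)$ the bound $\widetilde{\WW}_y\le\Theta_{N,n-k}$ is automatic by monotonicity); (n4) is a condition at a single scale $s$, so it follows from the pinching of points of $\mathcal L_{x,s}$ together with the covering, not from cross-scale drift control; and your exponent remark is backwards, since the weaker $\gamma<2/3$ narrows the range in Proposition \ref{sumWonRFlimit} to $\zeta\in(1/3,1)$ (the same computation gives $(1/4,1)$ when $\gamma<3/4$), and what matters wherever summability is used is only that the exponent actually needed, about $1/2$ as in \eqref{equ:decay1}, lies in that range.
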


\begin{prop}[Ahlfors regularity]\label{pro:ahlforsregucyl1}
Given $\ep>0$, if $\cc \le \cc(n,N)$ and $\delta \le \delta(n, N, Y, \cc,\ep)$, then for the $(k,\delta, \cc,r_0)$-$N$-cylindrical neck region $\NNN=B^*(x_0 ,2r_0)\setminus B_{r_x}^*(\CCC)$ constructed in Proposition \ref{pro:cylneckdecomp}, any $x\in\CCC$ and $r_x\leq s\leq r_0-d_Z(x, x_0)/2$, we have
	\begin{align*} 
		 C^{-1}(n, N, Y, \cc) s^k\leq\mu(B^*(x,s))\leq C(n, N, Y, \cc)s^k.
	\end{align*} 
Moreover, we can find a countable collection of $\HHH^k$-measurable subsets $ E_i\subset \CCC_0$ such that $\HHH^k(\CCC_0\setminus\bigcup_i E_i)=0$ and for each $i$, there exists a bi-Lipschitz map $u_i:E_i\to \R^k$, where $\R^k$ is equipped with the standard Euclidean distance, such that
	\begin{align*}
\sqrt{|\t(x)-\t(y)|}\leq\ep d_Z(x,y),\quad \forall x, y \in E_i.
	\end{align*} 
	\end{prop}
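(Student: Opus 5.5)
The plan is to follow the proof of \cite[Theorem 5.21]{fang2025singular} for round cylinders, replacing the round sphere by $N$. I will check that each ingredient used there for $\bar{\mathcal C}^k$ is either geometry of the model, which is unchanged up to constants depending on $N$, or entropy summability, which Proposition \ref{sumWonRFlimit} now supplies. The argument has two parts: Ahlfors regularity of the packing measure $\mu$ on the neck region, and then the bi-Lipschitz decomposition of $\CCC_0$ with small time oscillation.

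\textbf{Ahlfors regularity.} I will take $\mu$ to be the packing measure of the neck region, as in \cite{fang2025singular}:
\[
\mu=\sum_{x\in\CCC_+} r_x^k\,\delta_x+\HHH^k|_{\CCC_0}.
\]
The two-sided bound $C^{-1}s^k\le\mu(B^*(x,s))\le Cs^k$ will be proved by induction on scales, starting at $s=r_x$ and doubling up to $r_0-d_Z(x,x_0)/2$.

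At each scale I will use two inputs.
\begin{itemize}
\item Condition (n4) places the centers within $\cc s$ of the $k$-dimensional spine $\LL_{x,s}$, which is the image of $\R^k\times\{0\}$ in $\bar{\mathcal C}^k_N$ under the $\ep$-map. The elementary Euclidean covering and packing counts in $\R^k$ then go through verbatim.
\item To keep the multiplicative errors summable along the induction, I will use the cone-splitting and transformation estimates from \cite{fang2025singular}. These measure how far the almost-splitting maps deviate from linear coordinates on the spine. They are controlled by the local entropy drop $\widetilde\WW_y(r_j)-\widetilde\WW_y(r_{j-1})$ at nearby centers $y$, raised to a power $\zeta\in(1/3,1)$.
\end{itemize}
Proposition \ref{sumWonRFlimit} gives $\sum_j|\widetilde\WW_y(r_j)-\widetilde\WW_y(r_{j-1})|^\zeta<\ep$ along every center, using (n2) and (n3). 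The product of the error factors $(1+C|\Delta\widetilde\WW|^\zeta)$ is then bounded, and this yields both bounds with $C=C(n,N,Y,\cc)$.

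\textbf{Main obstacle.} The one delicate point is the exponent. In the round case the summability exponent was tied to the Lojasiewicz exponent $3/4$; here Theorem \ref{thm:lo} only gives $\gamma<2/3$. I must verify that the neck-region estimates only ever need $\sum|\Delta\widetilde\WW|^{\zeta}$ for \emph{some} $\zeta<1$, in fact $\zeta=1/2$ as arises from Cauchy--Schwarz for the $L^2$ almost-splitting errors. Proposition \ref{sumWonRFlimit} allows any $\zeta\in(1/3,1)$, so $\zeta=1/2$ is admissible. I would first check precisely where the power appears in the transformation estimate of \cite{fang2025singular} to confirm this.

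\textbf{Rectifiability and time oscillation.} With the Ahlfors bounds and the summable transformation estimates in hand, I will build the maps as in \cite{fang2025singular}. At each center the $\ep$-map from the model induces coordinates $u=(x_1,\dots,x_k)$ on the spine. Summability makes these coordinates converge, up to composition with matrices whose product is controlled, to a map $u_i$ on a large-measure subset $E_i\subset\CCC_0$, and $u_i$ is bi-Lipschitz there by the neck-region conditions. The bound $\sqrt{|\t(x)-\t(y)|}\le\ep d_Z(x,y)$ comes from (n3)–(n4): nearby centers lie within $\cc s$ of the spine, which sits in the time slice $(\bar{\mathcal C}^k_N)_0$, so their time difference is $\ll s^2$ once $\delta$ is small. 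An exhaustion argument then gives $\HHH^k(\CCC_0\setminus\bigcup_iE_i)=0$.
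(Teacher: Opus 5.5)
Your overall route is the paper's. It obtains the proposition by rerunning the neck-region argument of \cite[Proposition 5.11, Theorem 5.21]{fang2025singular}, with Proposition \ref{sumWonRFlimit} replacing the round-cylinder summability. Your check that only $\sum|\Delta\widetilde\WW|^{\zeta}$ for some $\zeta\in(1/3,1)$ is needed is the right point to verify.

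The step that fails is your derivation of the horizontality bound. Condition (n4) gives only $\CCC\cap B^*(x,s)\subset B^*_{\cc s}(\LL_{x,s})$. For $x,y\in\CCC_0$ with $d_Z(x,y)\approx s$, this places $y$ within $\cc s$ of a point $y'\in\LL_{x,s}$ with $|\t(y')-\t(x)|\lesssim\delta s^2$. At best you get
\[
\sqrt{|\t(x)-\t(y)|}\lesssim(\cc+\sqrt{\delta})\,d_Z(x,y).
\]
The $\cc$-term does not shrink as $\delta\to 0$. In the statement $\cc\le\cc(n,N)$ is fixed independently of $\ep$, and only $\delta$ may depend on $\ep$. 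So this cannot give $\sqrt{|\t(x)-\t(y)|}\le\ep\,d_Z(x,y)$ when $\ep\ll\cc$. As written you prove the estimate only with $\ep$ replaced by $C\cc$. Alternatively you would have to let $\cc$ depend on $\ep$, which changes the quantifiers and makes the Ahlfors constants depend on $\ep$.

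The missing idea is to use (n3) at $y$ itself rather than the spine of $x$. Since $r_y=0$, $y$ is $(k,\delta,\rho)$-$N$-cylindrical at every scale $\rho\le 2r$, in particular at $\rho\approx d_Z(x,y)$, where $x$ is also cylindrical. Now argue by compactness and contradiction. Seen from $x$ at that scale, the space converges to the model $\bar{\mathcal C}^k_N$, and $y$ converges to a point $\bar y$ at which the flow is isometric to $\bar{\mathcal C}^k_N$ based at $p^*$. Hence $\bar y$ is a singular point, so it lies in $\R^k\times\{0\}$ at time $0$. This yields $|\t(x)-\t(y)|\le\ep^2 d_Z(x,y)^2$ once $\delta\le\delta(n,N,Y,\cc,\ep)$; this is where the $\ep$-dependence of $\delta$ in the statement is used.

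Relatedly, the bi-Lipschitz maps $u_i$ should be built from the almost-splitting maps of the neck-region theory, not from the $\ep$-maps. The $\ep$-maps are only Gromov--Hausdorff approximations, neither continuous nor canonical, so there is nothing for ``these coordinates converge'' to refer to.
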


Combining Propositions \ref{pro:cylneckdecomp}, \ref{pro:ahlforsregucyl1} with a standard covering argument, the next theorem is immediate (see also \cite[Theorem 1.3]{fang2025singular}). For the notion of horizontal parabolic rectifiability, see \cite[Definition 1.2]{fang2025singular}. 
\begin{thm} 
The set $\MS^k(N)$ is horizontally parabolic $k$-rectifiable with respect to $d_Z$.
\end{thm}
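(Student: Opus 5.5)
The statement follows from Propositions \ref{pro:cylneckdecomp} and \ref{pro:ahlforsregucyl1} by a covering argument, as in \cite[Theorem 1.3]{fang2025singular}. Recall that horizontal parabolic $k$-rectifiability of $\MS^k(N)$ asks for a countable family of $\HHH^k$-measurable sets covering $\MS^k(N)$ up to an $\HHH^k$-null set. Each set in the family should be bi-Lipschitz to a subset of $\R^k$ and satisfy the horizontality condition $\sqrt{|\t(x)-\t(y)|}\le \ep\, d_Z(x,y)$. I would first reduce to a countable cover by balls. Fix $\ep>0$ (eventually taken along a sequence $\ep_i\to 0$), then fix $\cc\le \cc(n,N)$ and $\delta\le\delta(n,N,Y,\cc,\ep)$ as in Proposition \ref{pro:ahlforsregucyl1}. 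For each $x_0\in\MS^k(N)$, Proposition \ref{pro:cylneckdecomp} gives a radius $r_0=r_0(x_0)>0$ and an $N$-cylindrical neck region with $\MS^k(N)\cap B^*(x_0,r_0)\subset \CCC_0(x_0)$. The space $Z$ is separable, since it is a pointed Gromov--Hausdorff limit and hence a countable union of bounded pieces that are limits of closed flows. So the cover $\{B^*(x_0,r_0(x_0))\}_{x_0\in\MS^k(N)}$ admits a countable subcover $\{B^*(x_\alpha,r_\alpha)\}_{\alpha}$. I would get this via the Lindelöf property, or via a Vitali-type selection on dyadic scales restricted to compact time slabs $Z_{[-0.98T+\eta,0]}\cap B^*(p,R)$.

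Next, for each $\alpha$, apply the second part of Proposition \ref{pro:ahlforsregucyl1} to the neck region centered at $x_\alpha$. This gives countably many $\HHH^k$-measurable sets $E_{\alpha,i}\subset\CCC_0(x_\alpha)$ with $\HHH^k(\CCC_0(x_\alpha)\setminus\bigcup_i E_{\alpha,i})=0$. Each $E_{\alpha,i}$ carries a bi-Lipschitz map $u_{\alpha,i}:E_{\alpha,i}\to\R^k$ and satisfies the horizontality estimate with constant $\ep$. Since $\MS^k(N)\subset\bigcup_\alpha(\MS^k(N)\cap B^*(x_\alpha,r_\alpha))\subset\bigcup_\alpha\CCC_0(x_\alpha)$, the family $\{E_{\alpha,i}\cap \MS^k(N)\}$ covers $\MS^k(N)$ up to an $\HHH^k$-null set. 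Measurability of these intersections needs only that $\MS^k(N)$ is $\HHH^k$-measurable, or we may simply work with the $E_{\alpha,i}$ themselves. The Ahlfors upper bound also shows that $\HHH^k\llcorner\CCC_0$ is locally finite, so the null-set bookkeeping is legitimate.

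To obtain the definition's requirement in the form with horizontality constant tending to zero, I would take $\ep=\ep_j=2^{-j}$ and intersect. For each $j$, the good sets $G_j=\bigcup E^{(j)}_{\alpha,i}$ cover $\MS^k(N)$ up to a null set. Hence $\bigcap_j G_j$ still has null complement in $\MS^k(N)$. Every point of it lies in a bi-Lipschitz piece with horizontality constant $2^{-j}$ for all $j$, which is what \cite[Definition 1.2]{fang2025singular} requires, at least in its pointwise/limiting form. The analogous statement for finite quotients follows verbatim once the quotient versions of Propositions \ref{pro:cylneckdecomp} and \ref{pro:ahlforsregucyl1} are in hand, and those rest on the quotient Lojasiewicz inequality mentioned at the end of Section \ref{sec:loja}.

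The main obstacle is not the covering itself but making sure the covering is countable and measurable in the spacetime metric $d_Z$. The radii $r_0(x_0)$ from Proposition \ref{pro:cylneckdecomp} are not uniform, and they depend on when entropy pinching sets in at $x_0$. I would handle this by stratifying $\MS^k(N)$ into the sets $\{x_0: r_0(x_0)\ge 2^{-l}\}$. On each such set a maximal $2^{-l-2}$-separated subset is countable, by local compactness and the volume doubling available on $Z$. The balls around the points of this separated subset then cover the stratum, and the remaining steps are formal.
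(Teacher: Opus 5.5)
Your proposal is correct and is essentially the paper's argument: the paper gets the theorem by combining the neck decomposition of Proposition \ref{pro:cylneckdecomp} with the bi-Lipschitz, $\ep$-horizontal pieces from Proposition \ref{pro:ahlforsregucyl1}, using a standard countable covering as in \cite[Theorem 1.3]{fang2025singular}. Your extra steps are harmless but unnecessary. Intersecting over $\ep_j=2^{-j}$ is not needed, because Proposition \ref{pro:ahlforsregucyl1} already works for any prescribed $\ep$. Local finiteness of $\HHH^k$ on $\CCC_0$ is not needed, because countable subadditivity handles the null sets. Volume doubling is not needed, because separability of $Z$ alone makes the subcover, or any separated set, countable.
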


We note that the above theorem can also be generalized to quotients of generalized cylinders using the same argument:
\begin{thm}
The set $\MS^k_{\mathrm{qc}}(N)$, consisting of all points at which some tangent flow is given by the generalized cylinder $\bar\CC_N^k$ or its quotient, is horizontally parabolic $k$-rectifiable with respect to $d_Z$.
\end{thm}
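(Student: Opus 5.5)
The plan is to reduce the quotient case to the already established machinery for $\MS^k(N)$. We cover $\MS^k_{\mathrm{qc}}(N)$ by countably many pieces, one for each admissible quotient model, and prove rectifiability of each piece. Since $N$ is a fixed closed Einstein manifold and $\Gamma\le \mathrm{Iso}((\CC^{k}_N)_{-1})$ acts freely, the relevant quotients $\bar M_\Gamma$ are parametrized by the finite (or at least countable) set of conjugacy classes of such $\Gamma$. This uses the fact that entropy is bounded below by $-Y$, which bounds $|\Gamma|$. Because a countable union of horizontally parabolic $k$-rectifiable sets is again horizontally parabolic $k$-rectifiable, it suffices to fix one $\Gamma$ and treat the set $\MS^k_\Gamma(N)$ of points having a tangent flow $\bar\CC^k_{N,\Gamma}$.

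\textbf{Step 1: Lojasiewicz inequality and summability for the quotient model.} I will establish the Lojasiewicz inequality and the entropy summability (the analogues of Theorem \ref{thm:lo} and Proposition \ref{sumWonRFlimit}) with $\Theta_{N,n-k}$ replaced by the entropy $\Theta_{N,n-k}+\log|\Gamma|$ of the quotient, which is the normalized entropy of the quotient shrinker. The variational inequality, Corollary \ref{cor:lojaforF}, passes to $\Gamma$-invariant perturbations on the cover: we lift $(g,f)$ on $\bar M_\Gamma$ to $\bar M$, where $\WW$ and $\mathfrak X$ change only by the factor from normalization. The rigidity inequalities of Propositions \ref{pro:rigidityineq1} and \ref{pro:rigidityineq2} apply to the lifted pair, and the projections onto $\isd$ of $\Gamma$-invariant tensors remain $\Gamma$-invariant. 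The radius comparison arguments (Proposition \ref{prop:con} and Theorems \ref{thm:ext1}, \ref{thm:ext2}, \ref{thm:ra}) are local and compactness-based, so they go through verbatim once the model is $\bar M_\Gamma$; this is exactly the adaptation of \cite[Section 8]{fang2025strong} invoked for the quotient strong uniqueness theorem.

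\textbf{Step 2: neck regions and Ahlfors regularity.} Next I define $(k,\delta,\cc,r)$-$\Gamma$-$N$-cylindrical neck regions as in Definition \ref{defiofcylneckregion}, with closeness to $\bar\CC^k_{N,\Gamma}$, and with $\mathcal L_{x,s}$ the image of the spine $\R^k\times\{0\}$. This spine is still a copy of $\R^k$ in the quotient, because $\Gamma$ acts freely and hence acts on the $\R^k$ factor only through the $N$-direction combined with translations/rotations that preserve the spine set. Then I repeat the decomposition of Proposition \ref{pro:cylneckdecomp}, using the quotient summability from Step 1 to control the entropy drops $\widetilde\WW_x(\delta r_x^2)-\widetilde\WW_x(\delta^{-1}r^2)$. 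Finally I repeat the Ahlfors regularity and bi-Lipschitz parametrization of Proposition \ref{pro:ahlforsregucyl1}, which rely only on the summability, the spine geometry, and the almost-splitting maps along the $\R^k$ factor.

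\textbf{Step 3 and the main obstacle.} A standard covering argument then gives the rectifiability of each $\MS^k_\Gamma(N)$, and taking the countable union concludes. The main obstacle I expect is checking that the spine structure of the quotient model behaves like that of $\bar\CC^k_N$ in the neck region argument. Concretely, I must check that $\Gamma$ can be taken to act trivially on the $\R^k$ factor up to isometries preserving the spine, so that $\mathcal L_{x,s}$ is a $k$-dimensional Euclidean set and the almost-splitting functions descend. I would handle this by working on the local universal cover (the $|\Gamma|$-fold cover given by closeness to the model), where the neck region is a genuine $N$-cylindrical one. Distances on the center set $\CCC$ are comparable between cover and quotient up to $|\Gamma|$-dependent constants, so the results for $\MS^k(N)$ transfer directly.
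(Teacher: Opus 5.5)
Your main line matches what the paper means by ``the same argument'': a quotient Lojasiewicz inequality and summability via the adaptation of \cite[Section 8]{fang2025strong}, then quotient neck regions, Ahlfors regularity, a covering argument, and a finite union over the admissible $\Gamma$. Two points need correcting. The first is minor. The normalized quotient has $\bar f_\Gamma=\bar f-\log|\Gamma|$, so its entropy is $\Theta_{N,n-k}-\log|\Gamma|$, not $\Theta_{N,n-k}+\log|\Gamma|$. Only the minus sign gives the bound $|\Gamma|\le e^{Y+\Theta_{N,n-k}}$ that you invoke.

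The second is more serious: the justification in Step 2 is false, because freeness does not force $\Gamma$ to act trivially on $\R^k$. Since $\Gamma$ preserves $\bar f$, it acts on $\R^k$ through some $\rho:\Gamma\to O(k)$. If $N$ has a free isometric involution $\sigma$, then $(\vec x,y)\mapsto(-\vec x,\sigma(y))$ is a free isometry of the weighted cylinder. For such $\Gamma$ the time-zero slice of the completed quotient is the cone $\R^k/\rho(\Gamma)$, and the shrinker splits off only $\R^{k'}$ with $k'=\dim(\R^k)^{\rho(\Gamma)}<k$.

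Your ``local universal cover'' repair does not work as stated. Nothing in \cite{fang2025RFlimit,fang2025singular} provides a cover of a neighborhood of a singular point of $Z$ that is again a noncollapsed Ricci flow limit space, with its own conjugate heat kernels and $\widetilde\WW$. Moreover, when $\rho(\Gamma)\neq 1$ the induced map on the singular slice is the branched quotient $\R^k\to\R^k/\rho(\Gamma)$, which is not bi-Lipschitz, so distances are not comparable up to $|\Gamma|$-dependent constants.

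No cover is needed; use a dichotomy instead.
\begin{itemize}
\item If $\rho(\Gamma)$ is trivial, the model is $\R^k\times(N/\Gamma)$ and its completion adds exactly $\R^k\times\{0\}$. Definition \ref{defiofcylneckregion} and Propositions \ref{pro:cylneckdecomp} and \ref{pro:ahlforsregucyl1} then go through verbatim on $Z$ itself with the quotient model. This is the situation in which the paper's ``same argument'' applies directly.
\item If $\rho(\Gamma)$ is nontrivial, the (unique) tangent flow at such a point is only $k'$-symmetric, so these points lie in $\MS^{k'}$. That stratum has parabolic Hausdorff dimension at most $k'<k$, so these points form an $\HHH^k$-null set and contribute nothing to the rectifiability statement.
\end{itemize}
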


\bibliographystyle{alpha}
\bibliography{lojageneralcyl}
\vskip10pt

Hanbing Fang, Mathematics Department, Stony Brook University, Stony Brook, NY 11794, United States; Email: hanbing.fang@stonybrook.edu;\\

Yu Li, Institute of Geometry and Physics, University of Science and Technology of China, No. 96 Jinzhai Road, Hefei, Anhui Province, 230026, China; Hefei National Laboratory, No. 5099 West Wangjiang Road, Hefei, Anhui Province, 230088, China; E-mail: yuli21@ustc.edu.cn. \\

\end{document}